\documentclass[a4paper]{amsart}

\usepackage{amscd,amsthm,amsfonts,amssymb,amsmath}
\usepackage[colorlinks=true]{hyperref}
\usepackage{fullpage}
\usepackage[all]{xy}
\usepackage{mathrsfs}
\usepackage{bm}
\usepackage{tikz}
\usepackage{bbm}
\usepackage{graphics}
\usepackage{enumerate}

\newcommand{\BA}{{\mathbb {A}}}\newcommand{\BC}{{\mathbb {C}}}
\newcommand{\BG}{{\mathbb {G}}}\newcommand{\BH}{{\mathbb {H}}}

\newcommand{\BQ}{{\mathbb {Q}}}\newcommand{\BR}{{\mathbb {R}}}

\newcommand{\BZ}{{\mathbb {Z}}}

\newcommand{\bfA}{{\mathbf {A}}}

\newcommand{\CC}{{\mathcal {C}}}
\newcommand{\CH}{{\mathcal {H}}}
\newcommand{\CI}{{\mathcal {I}}}
\newcommand{\CO}{{\mathcal {O}}}\newcommand{\CP}{{\mathcal {P}}}
\newcommand{\CS}{{\mathcal {S}}}

\newcommand{\msl}{\mathscr{L}}

\newcommand{\fa}{{\mathfrak{a}}} \newcommand{\fc}{{\mathfrak{c}}} 
 \newcommand{\fg}{{\mathfrak{g}}} \newcommand{\fh}{{\mathfrak{h}}}
  \newcommand{\fl}{{\mathfrak{l}}}
\newcommand{\fm}{{\mathfrak{m}}} \newcommand{\fn}{{\mathfrak{n}}}\newcommand{\fo}{{\mathfrak{o}}} 
\newcommand{\fq}{{\mathfrak{q}}} \newcommand{\fr}{{\mathfrak{r}}}\newcommand{\fs}{{\mathfrak{s}}} 
 \newcommand{\fv}{{\mathfrak{v}}}

 \newcommand{\fS}{{\mathfrak{S}}}

                      \newcommand{\Ad}{{\mathrm{Ad}}}              \newcommand{\ad}{{\mathrm{ad}}}

\newcommand{\adeles}{ad\`{e}les~}

                      \newcommand{\bs}{\backslash}

                      		\newcommand{\Cent}{\mathrm{Cent}}
\newcommand{\capa}{{\mathrm{capa}}}

\newcommand{\End}{{\mathrm{End}}}

\newcommand{\Hom}{{\mathrm{Hom}}}

\newcommand{\ideles}{id\`{e}les~}

\newcommand{\ov}{\overline}

                  \newcommand{\ra}{\rightarrow}

                                  \newcommand{\Res}{{\mathrm{Res}}}

 \newcommand{\sk}{\medskip}                      \newcommand{\s}{\sk\noindent}

\newcommand{\Trd}{{\mathrm{Trd}}}

\newcommand{\vol}{{\mathrm{vol}}}                          \newcommand{\val}{{\mathrm{val}}}

\newcommand{\wt}{\widetilde}                        \newcommand{\wh}{\widehat}

\newtheorem{thm}{Theorem}[section]
\newtheorem{coro}[thm]{Corollary}
\newtheorem{lem}[thm]{Lemma}
\newtheorem{prop}[thm]{Proposition}

\newtheorem{defn}[thm]{Definition}

\theoremstyle{definition}

\theoremstyle{remark}
\newtheorem{remark}[thm]{Remark}

\numberwithin{equation}{subsection}
\setcounter{tocdepth}{1}

\def\mat(#1,#2,#3,#4){
  \begin{pmatrix}
  #1 & #2 \\ #3 & #4
  \end{pmatrix}
}

\begin{document}
\title{An infinitesimal variant of Guo-Jacquet trace formula II}
\author{Huajie LI}
\date{\today}
\maketitle

\begin{abstract}
We establish an infinitesimal variant of Guo-Jacquet trace formula for the case of a central simple algebra over a number field $F$ containing a quadratic field extension $E/F$. It is an equality between a sum of geometric distributions on the tangent space of some symmetric space and its Fourier transform. To prove this, we need to define an analogue of Arthur's truncation and then use the Poisson summation formula. We describe the terms attached to regular semi-simple orbits as explicit weighted orbital integrals. To compare them to those for another case studied in our previous work, we state and prove the weighted fundamental lemma at the infinitesimal level by using Labesse's work on the base change for $GL_n$. 
\end{abstract}

\tableofcontents


\section{\textbf{Introduction}}

Guo and Jacquet have proposed a conjecture \cite{MR1382478} in order to generalize Waldspurger's famous result \cite{MR783511}, which relates toric periods and central values of automorphic $L$-functions for $GL_2$, to higher ranks. The approach of relative trace formulae makes it possible to reduce the conjectural comparison of periods (related to the spectral side) to the comparison of (weighted) orbital integrals (related to the geometric side) on different symmetric spaces. This approach was first adopted by Jacquet \cite{MR868299} to reprove Waldspurger's theorem. For higher ranks, Feigon-Martin-Whitehouse \cite{MR3805647} obtained some partial results using a simple form of relative trace formulae. For the comparison of local orbital integrals, Guo reduced the fundamental lemma \cite{MR1382478} to that of the base change for $GL_n$ and Zhang proved the smooth transfer \cite{MR3414387} by global methods. 

However, an obstruction in the approach is the divergence of sums of integrals in both sides of relative trace formulae. Such a problem has already existed in the classical Arthur-Selberg trace formula and Arthur introduced a truncation process \cite{MR518111}\cite{MR558260} to tackle it (see also \cite{MR1893921} for its Lie algebra variant). We start working at the infinitesimal level (namely the tangent space of a symmetric space) for a couple of reasons. Firstly, our truncation for the tangent space is expected to be adapted to a truncation for the symmetric space. Secondly, infinitesimal trace formulae should be useful for the proof of results on the transfer (see Zhang's work \cite{MR3414387} on the ordinary orbital integrals). 

Guo-Jacquet trace formulae concern two symmetric pairs. The first one is associated to a pair denoted by $(G',H')$, where $G':=GL_{2n}$ and $H':=GL_n\times GL_n$ are reductive groups over a number field $F$ and $H'$ embeds diagonally in $G'$. Let $\fs'\simeq \fg\fl_n\oplus\fg\fl_n$ be the tangent space at the neutral element of the symmetric space $G'/H'$. We have established an infinitesimal trace formula in \cite{2019arXiv190407102L} for the action of $H'$ on $\fs'$ by conjugation. The second one, associated to a pair denoted by $(G,H)$, is the main object in this paper. Before introducing it, we remark that we shall work in a more general setting than the original one. The reason is that the converse direction of Guo-Jacquet conjecture was originally proposed only for $n$ odd.  In our searching for an analogue for $n$ even, the related local conjecture of Prasad and Takloo-Bighash \cite[Conjecture 1]{MR2806111} suggests that we should consider more inner forms of $G'$. Some recent progress on this local conjecture has been made by Xue \cite{Xue} with the help of a simple form of global relative trace formulae. 

Let $E/F$ be a quadratic extension of number fields. Suppose that $E=F(\alpha)$, where $\alpha\in E$ and $\alpha^2\in F$. Let $\fg$ be a central simple algebra over $F$ containing $E$. Write $\fh$ to be the centralizer of $\alpha$ in $\fg$. Denote by $G$ and $H$ the groups of invertible elements in $\fg$ and $\fh$ respectively. Both of them are viewed as reductive groups over $F$. Let $\fs:=\{X\in\fg: \Ad(\alpha)(X)=-X\}$, where $\Ad$ denotes the adjoint action of $G$ on $\fg$. It can be identified with the tangent space at the neutral element of the symmetric space $G/H$. The main global result in this paper is an infinitesimal trace formula for the action of $H$ on $\fs$ by conjugation. 

Denote by $\BA$ the ring of \adeles of $F$ and by $H(\BA)^1$ the subset of elements in $H(\BA)$ with absolute-value-$1$ reduced norm. We define a relation of equivalence on $\fs(F)$: two elements of $\fs(F)$ are equivalent if and only if they lie in the same fibre over the categorical quotient $\fs//H$. Denote by $\CO$ the set of classes of equivalence. Let $f$ be a Bruhat-Schwartz function on $\fs(\BA)$. For each $\fo\in\CO$ and $x\in H(F)\bs H(\BA)$, define
$$ k_{f,\fo}(x):=\sum_{Y\in\fo} f(\Ad(x^{-1})(Y)). $$
As mentioned, we are facing the problem that
$$ \sum_{\fo\in\CO} \int_{H(F)\bs H(\BA)^1} k_{f,\fo}(x)dx $$
is divergent. We define the truncation $k_{f,\fo}^T(x)$ (see (\ref{deftrunc2.1})) which is an analogue of Arthur's truncation in \cite{MR518111}, where $T$ is a truncation parameter in some cone $T_+ +\fa_{P_0}^+$ of the coroot space of $H$, such that the following theorem holds. 

\begin{thm}[see Theorem \ref{convergence2}]
For all $T\in T_+ +\fa_{P_0}^+$, 
$$ \sum_{\fo\in\CO} \int_{H(F)\bs H(\BA)^1} k_{f,\fo}^T(x) dx $$
is absolutely convergent. 
\end{thm}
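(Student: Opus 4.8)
**The plan is to adapt Arthur's argument from \cite{MR518111} (and its Lie algebra analogue in \cite{MR1893921}) to the symmetric space setting.** The first step is to express the truncated kernel $k_{f,\fo}^T(x)$ via reduction along parabolic subgroups. Following Arthur's method, one introduces for each $\fo\in\CO$ and each $H$-parabolic $P$ a family of "parabolic kernels" $k_{f,\fo,P}(x)$ obtained by summing $f$ over lattice points in an appropriate Lie-subalgebra attached to $P$ (the $\fs$-analogue of the unipotent radical contribution), and then forms the alternating sum
$$ k_{f,\fo}^T(x) = \sum_{\{P\}} (-1)^{\dim \fa_P/\fa_H} \sum_{\delta\in P(F)\bs H(F)} \wh\tau_P\left(H_P(\delta x) - T\right) k_{f,\fo,P}(\delta x). $$
The geometric input needed here is a parabolic descent for the categorical quotient $\fs//H$: one must check that the "relevant" elements — those $Y\in\fo$ whose $\fs$-component along $P$ actually contributes — behave well under the Levi decomposition, so that the combinatorial identity $\sum_{Q\supseteq P}(-1)^{\dots}\wh\tau = $ characteristic function argument of Arthur goes through verbatim.

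**The second step is the reduction to a single parabolic and the use of Arthur's combinatorial lemma.** After substituting the alternating sum and unfolding, the integral over $H(F)\bs H(\BA)^1$ breaks, via the usual manipulation, into a sum over associate classes of parabolics $P$ of integrals over $P(F)\bs H(\BA)^1$; using the Iwasawa-type decomposition one reduces to integrating over $N_P(\BA) M_P(F)\bs \dots \times A_P$, and the $\wh\tau_P$-factor confines the split torus variable to a region where one gains exponential decay. The crux is to bound, uniformly in $x$ in a Siegel-type domain, the sum
$$ \sum_{\fo\in\CO}\ \sum_{\xi\in(\text{relevant part of }\fs(F)\text{ for }P)} |f(\Ad(x^{-1})\xi)|, $$
and to show this sum times the volume of the truncated cone is integrable. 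Here one imports the Poisson summation / rapid-decay estimates for Bruhat–Schwartz functions on $\fs(\BA)$ exactly as in the $\fs'$ case of \cite{2019arXiv190407102L}: away from the "singular" directions the sum is dominated by a theta-like series with Gaussian decay, and the combinatorial cancellation in the alternating sum kills the contribution of the directions where $f$ does not decay (the directions corresponding to the center of Levi factors).

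**The main obstacle** I expect is precisely this last point: controlling the non-absolutely-convergent "constant-term" directions. Concretely, the naive bound on $k_{f,\fo,P}(x)$ only gives polynomial growth in the split variable $H_P(x)$ along $\fa_P/\fa_H$, which is not enough to integrate over the cone cut out by $\wh\tau_P$; one must use the alternating sum over $Q\supseteq P$ to replace each $k_{f,\fo,Q}$ by its difference with $k_{f,\fo,P}$, and show that this difference does decay. In Arthur's setting this is the passage from $k^T$ to the "modified kernel" and it relies on the fact that the extra terms are genuinely lower-dimensional. In the present setting the analogous statement requires knowing that the fibres of $\fs//H$ and their $M_P$-analogues are compatible — i.e. an identification of $\CO$ with a disjoint union indexed over Levi subgroups of classes for the smaller symmetric spaces — together with the separation of semisimple from nilpotent parts on $\fs$. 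Once these structural facts are in place (they are the $\fs$-versions of what is done for $\fs'$ in \cite{2019arXiv190407102L}), the convergence follows by the same estimates: reduce to a finite sum of parabolics, bound each truncated integral by a convergent integral of a Schwartz function against a polynomial over a region of exponential decay, and sum over $\fo$ using that only finitely many classes meet any compact set while the rest contribute a rapidly converging tail.

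**In summary,** the proof is: (i) establish the alternating-sum expansion of $k_{f,\fo}^T$ along $H$-parabolics, using a descent property for $\fs//H$; (ii) plug it into the integral and unfold to get a finite sum over associate classes of parabolics of integrals over $P(F)\bs H(\BA)^1$ with the truncation cutting out a cone; (iii) bound the resulting sums over $\fs(F)$-lattice points by theta-type series with Gaussian/exponential decay, exploiting the alternating-sum cancellation to handle the central directions of Levi factors; and (iv) conclude absolute convergence, uniformly for $T$ in the positive cone. The technical heart, and the step most likely to need genuinely new work beyond transcribing \cite{MR518111} and \cite{2019arXiv190407102L}, is (iii) — specifically the cancellation argument ensuring decay along the directions where the Schwartz function $f$ fails to decay on its own.
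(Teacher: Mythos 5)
Your proposal follows essentially the same route as the paper: the combinatorial rewriting of $k_{f,\fo}^T$ via Arthur's Lemma 6.4 into pieces $\chi_{P_1,P_2}^T\cdot k_{P_1,P_2,\fo}$, the reduction along Siegel domains, and the use of Poisson summation plus rapid-decay estimates on lattice sums are all the right ideas, and you correctly single out the decay along the central/Levi directions as the technical heart. One imprecision worth flagging: you describe the cancellation as ``replacing each $k_{f,\fo,Q}$ by its difference with $k_{f,\fo,P}$ and showing the difference decays,'' but the mechanism the paper actually uses is cleaner and worth internalizing: one first applies Poisson summation over $(\fn_{\wt R}^{\wt P}\cap\fs)(F)$ to turn each $k_{f,P,\fo}$ into a sum of partial Fourier transforms $\Phi_\xi^{x,R}(\wh Y)$ indexed by the dual lattice $(\ov\fn_{\wt R}^{\wt P}\cap\fs)(F)$; then the alternating sum over $P$ telescopes (Arthur's Proposition 1.1) to a sum restricted to the \emph{primitive} dual points $(\ov\fn_{\wt R}^{\wt{P_2}})'$, which are nonzero in precisely the directions where the split torus variable is large, so that integration by parts in those directions gives the exponential decay that the $\sigma_{P_1}^{P_2}$-cone needs. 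Also note that the paper proves ahead of time (Corollary \ref{orbit2} and Propositions \ref{equalwt2.1}–\ref{equalwt2.2}) exactly the descent and weight-matching properties for $\fs//H$ that your step (i) requires, so there is no new geometric input beyond what is established for the $(G',H')$ case.
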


We also know the behaviour of each term (viewed as a distribution) with respect to the truncation parameter. It is even simpler than that in the case of $(G',H')$ (cf. \cite[Theorem 1.2]{2019arXiv190407102L}). 

\begin{thm}[see Corollary \ref{polynomial2}]
For all $T\in T_+ +\fa_{P_0}^+$ and $\fo\in\CO$, define 
$$ J_\fo^{T}(f):=\int_{H(F)\bs H(\BA)^1} k_{f,\fo}^T(x) dx. $$
Then the function $T\mapsto J_\fo^{T}(f)$ is the restriction of a polynomial in $T$. 
\end{thm}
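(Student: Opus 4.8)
The plan is to follow Arthur's argument for the polynomiality of truncated trace-formula distributions, adapted to the $H$-action on $\fs$. Recall that, by construction, the truncated kernel is an alternating sum over the standard parabolic subgroups $P\supseteq P_0$ of $H$,
$$ k_{f,\fo}^T(x)=\sum_{P\supseteq P_0}(-1)^{\dim(\fa_P/\fa_H)}\sum_{\delta\in P(F)\bs H(F)}\wh\tau_P\bigl(H_P(\delta x)-T\bigr)\,k_{f,\fo,P}(\delta x), $$
where $k_{f,\fo,P}$ is the partial constant-term kernel attached to $P$ and to the class $\fo$, and is independent of $T$; the parameter $T$ enters only through the characteristic-type functions $\wh\tau_P(\,\cdot\,-T)$.

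First I would fix an auxiliary parameter $T_1\in T_++\fa_{P_0}^+$ and invoke Arthur's change-of-truncation combinatorics to rewrite
$$ k_{f,\fo}^T(x)=\sum_{Q\supseteq P_0}\sum_{\delta\in Q(F)\bs H(F)}k_{f,\fo,Q}^{T_1}(\delta x)\,\Gamma_Q'\bigl(H_Q(\delta x)-T_1,\,T-T_1\bigr), $$
where $k_{f,\fo,Q}^{T_1}$ is obtained from the partial kernels by the usual recipe (truncation relative to $Q$ at level $T_1$) and $\Gamma_Q'$ is Arthur's function from \cite{MR518111,MR558260}. This step is purely formal once the truncation has been set up in the style of Arthur: it rests on the standard identities among $\tau$, $\wh\tau$ and $\Gamma_Q'$. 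The two properties of $\Gamma_Q'(\,\cdot\,,X)$ that I would use are that it is a polynomial in $X$ of degree at most $\dim(\fa_Q/\fa_H)$, and that, for $X$ ranging over any bounded set, it is supported, as a function of its first argument, in a fixed compact subset of $\fa_Q/\fa_H$.

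Next I would integrate over $H(F)\bs H(\BA)^1$. The absolute convergence furnished by Theorem \ref{convergence2} (applied with $T_1$ as truncation parameter), together with the compact support of $\Gamma_Q'$ in its first argument, would justify interchanging the sums over $Q$ and $\delta$ with the integral and unfolding the sum over $\delta$, so that
$$ J_\fo^T(f)=\sum_{Q\supseteq P_0}\int_{Q(F)\bs H(\BA)^1}k_{f,\fo,Q}^{T_1}(x)\,\Gamma_Q'\bigl(H_Q(x)-T_1,\,T-T_1\bigr)\,dx. $$
Each integral is then effectively taken over the preimage of a fixed compact set, on which $k_{f,\fo,Q}^{T_1}$ is integrable; and since $\Gamma_Q'$ is polynomial in $T-T_1$ with coefficients that are integrable against $k_{f,\fo,Q}^{T_1}$ over that region, each summand, hence $J_\fo^T(f)$, is a polynomial in $T-T_1$, a fortiori in $T$. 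Finally, letting $T_1$ range over $T_++\fa_{P_0}^+$ and using that the polynomials so obtained must agree on overlaps, one gets a single polynomial whose restriction to $T_++\fa_{P_0}^+$ is $T\mapsto J_\fo^T(f)$.

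The combinatorial manipulation and the final patching are routine; the real work will be analytic. The main obstacle I anticipate is showing that the partial kernels $k_{f,\fo,Q}^{T_1}$ inherit, over $Q(F)\bs H(\BA)^1$, the rapid-decay and polynomial-growth estimates that underlie Theorem \ref{convergence2}, and with enough uniformity that the compact support of $\Gamma_Q'$ genuinely reduces each auxiliary integral to a convergent one and legitimizes every interchange of summation and integration. This is expected to be cleaner here than for the pair $(G',H')$ of \cite{2019arXiv190407102L}, as the parabolic descent of $\fs$ produces milder boundary contributions --- which is precisely why the dependence on $T$ is an honest polynomial.
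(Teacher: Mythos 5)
Your proposal follows the same route as the paper (Arthur's $\Gamma_Q$ change-of-truncation identity, Iwasawa decomposition, and the compact support of $\Gamma_Q$ in its first argument), but there is a genuine gap at the decisive step, and a small misstatement that obscures it.

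The misstatement: $\Gamma_Q(T_1,T_2)$ is not a polynomial in $T_2$ for fixed $T_1$. It is a $\BZ$-valued, piecewise-constant function built from $\tau$'s and $\wh\tau$'s. The relevant fact (the paper's Lemma after (\ref{defGammaP2})) is that the \emph{integral} $T_2\mapsto\int_{\fa_Q^H}\Gamma_Q(T_1,T_2)\,dT_1$ is a homogeneous polynomial of degree $\dim(A_Q/A_H)$. So the only way to extract a polynomial in $T-T_1$ from your expression $\int_{Q(F)\bs H(\BA)^1}k_{f,\fo,Q}^{T_1}(x)\,\Gamma_Q(H_Q(x)-T_1,\,T-T_1)\,dx$ is to show that, after Iwasawa decomposition $x\leftrightarrow(n,m,a,k)$, the $A_Q^{H,\infty}$-variable $a$ decouples entirely from the rest of the integrand, so that the $a$-integral is precisely $\int_{\fa_Q^H}\Gamma_Q(T_1,T-T')\,dT_1$ against Lebesgue measure, with no extra $a$-dependent factor.

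That decoupling is not automatic, and is where the real work lies. The Iwasawa change of measure produces the factor $e^{-2\rho_Q(H_{P_0}(a))}$, while the change of variables $U\mapsto\Ad(a^{-1})U$ inside $k_{f,P,\fo}(\delta namk)$ produces the factor $e^{2\rho_{Q,\fs}(H_{P_0}(a))}$, where $\rho_{Q,\fs}$ is the half-sum of weights of $A_0$ on $\fn_{\wt{Q}}\cap\fs$. These must cancel. If they did not, the $a$-integral would have the form $\int e^{c(H(a))}\Gamma_Q(H(a)-T',T-T')\,da$ with $c\neq0$, and in even the rank-one case this gives something like $(e^{c(T-T')}-1)/c$, not a polynomial in $T$. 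The cancellation is exactly the content of Corollary \ref{corequalwt2.1} ($\rho_{Q,\fs}=\rho_Q$), which in turn rests on Proposition \ref{equalwt2.1} comparing the $A_0$-weights (with multiplicities) on $\fn_{\wt{Q}}\cap\fs$ and on $\fn_Q$. Your proof diagnoses the ``real work'' as establishing decay and growth estimates for the Levi kernels, but those estimates address integrability, not factorization; the structural input that makes the result a polynomial is this exact cancellation of modular exponents, and it needs to be stated and proved.

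One secondary omission worth flagging: after the $n$- and $a$-variables drop out, the $K$-integral must be folded into the test function, producing the constant term $f_Q$ on $\fm_{\wt{Q}}\cap\fs$, and the remaining $M_Q(F)\bs M_Q(\BA)^1$-integral identified as the Levi analogue $J_\fo^{M_Q,T'}(f_Q)$. This is harmless but is part of recognizing the coefficients as honest, $T$-independent constants, which is what your ``coefficients that are integrable'' phrasing glosses over.
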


Now we can take the constant term of each term to eliminate the truncation parameter $T$. Denote by $J_\fo(f)$ the constant term of $J_\fo^T(f)$. These distributions are not invariant by $H(\BA)^1$ (see Proposition \ref{noninvariance2}), but we can write the regular semi-simple terms as explicit weighted orbital integrals with the same weights as Arthur's in \cite{MR518111}. 

\begin{thm}[see Theorem \ref{woi2}]
Let $\fo\in\CO$ be a regular semi-simple orbit, $P_1$ a standard parabolic subgroup of $H$ and $Y_1\in\fo$ an elliptic element with respect to $P_1$ (see the precise definition in Section \ref{wtorbint}). Denote by $H_{Y_1}$ the centralizer of $Y_1$ in $H$. We have
$$ J_\fo(f)=\vol(A_{P_1}^\infty H_{Y_1}(F)\bs H_{Y_1}(\BA))\cdot \int_{H_{Y_1}(\BA)\bs H(\BA)} f(\Ad(x^{-1})(Y_1)) v_{P_1}(x) dx, $$
where $A_{P_1}^\infty$ is defined in Section \ref{hcmap}, and $v_{P_1}(x)$ is the volume of some convex hull. Here we choose compatible measures on $A_{P_1}^\infty H_{Y_1}(F)\bs H_{Y_1}(\BA)$, $H_{Y_1}(\BA)\bs H(\BA)$ and the convex hull associated to $v_{P_1}(x)$ such that $J_\fo(f)$ is independent of these choices. 
\end{thm}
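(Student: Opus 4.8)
The plan is to adapt Arthur's calculation of the weighted orbital integral from \cite{MR518111} to the present infinitesimal, symmetric-space setting, following the same strategy already used in \cite{2019arXiv190407102L} for the pair $(G',H')$. We start from the definition $J_\fo(f)$ as the constant term of the polynomial $T\mapsto J_\fo^T(f)=\int_{H(F)\bs H(\BA)^1} k_{f,\fo}^T(x)\,dx$, where $\fo$ is a fixed regular semi-simple class. Since $\fo$ is regular semi-simple, the sum $k_{f,\fo}(x)=\sum_{Y\in\fo}f(\Ad(x^{-1})(Y))$ runs over a single $H(F)$-orbit; picking the representative $Y_1\in\fo$ that is elliptic with respect to the standard parabolic $P_1$, we can unfold the sum over $\fo$ as a sum over $H_{Y_1}(F)\bs H(F)$, and the outer integral over $H(F)\bs H(\BA)^1$ collapses, after the usual change of variables, to an integral over $H_{Y_1}(F)\bs H(\BA)^1$ of $\sum$ translates of $f$. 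The first key step is therefore to untangle the truncation: express $k_{f,\fo}^T(x)$ for this orbit explicitly in terms of the characteristic functions $\widehat\tau_P$, $\tau_P$ and Arthur's function $\Gamma_P^H(\cdot,T)$, exactly as in \cite[\S 8]{MR518111}.

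The second step is the geometric heart: one shows that after the unfolding the truncated integral becomes
$$
\int_{H_{Y_1}(\BA)\bs H(\BA)} f(\Ad(x^{-1})(Y_1))\,u_{P_1}(x,T)\,dx
$$
times the volume factor $\vol(A_{P_1}^\infty H_{Y_1}(F)\bs H_{Y_1}(\BA))$, where $u_{P_1}(x,T)$ is a compactly supported function of $T$ obtained by folding the truncation data against $H_{Y_1}(F)\bs H_{Y_1}(\BA)^1/(A_{P_1}^\infty\cap H(\BA)^1)$; here one uses that $H_{Y_1}$, being the centralizer of a regular semi-simple element lying in $\fs$, is a torus (a product of multiplicative groups of field extensions), so that its adelic quotient is compact modulo $A_{P_1}^\infty$ and the fibration over it is well-behaved. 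The convergence and absolute integrability needed to justify interchanging sum and integral are already guaranteed by the convergence theorem (Theorem \ref{convergence2}) together with the unfolding, so no new estimates are required. One then invokes Arthur's combinatorial lemma on the alternating sum $\sum_{P\supseteq P_1}(-1)^{\dim A_P/A_H}\,\widehat\tau_P(\cdot)\Gamma_P^H(\cdot,T)$ to identify $\lim$ of the $T$-average of $u_{P_1}(x,T)$ — equivalently the constant term of the polynomial — with the weight $v_{P_1}(x)$, the volume of the convex hull of the projections $\{-H_P(\delta x): \delta\}$, which is Arthur's $(G,M)$-family weight attached to $M=M_{P_1}$.

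The third step is purely formal: check that the polynomiality in $T$ of $J_\fo^T(f)$ supplied by Corollary \ref{polynomial2} is compatible with this pointwise description, so that taking the constant term commutes with the (now absolutely convergent) integral over $H_{Y_1}(\BA)\bs H(\BA)$; then verify the stated independence of the choice of Haar measures by tracking how each of the three measures — on $A_{P_1}^\infty H_{Y_1}(F)\bs H_{Y_1}(\BA)$, on $H_{Y_1}(\BA)\bs H(\BA)$, and on the convex hull defining $v_{P_1}(x)$ — enters, and observing that the product is a measure on a fixed space and hence intrinsic. I expect the main obstacle to be the second step: carefully controlling the truncation after unfolding, in particular showing that the relevant alternating sum of truncation functions has compact support in $T$ modulo $A_{P_1}^\infty$ and that its constant term is exactly $v_{P_1}(x)$. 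This is where the geometry of the cone $T_++\fa_{P_0}^+$, the positivity conditions defining an elliptic representative $Y_1$, and the precise shape of the truncation (\ref{deftrunc2.1}) must all be used in concert; the rest of the argument is a faithful transcription of Arthur's computation, simplified by the fact that $H_{Y_1}$ is a torus.
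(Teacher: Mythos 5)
Your overall plan — unfold to a quotient by $H_{Y_1}(F)$, arrive at a weight function of $(x,T)$, and take the constant term in $T$ to get $v_{P_1}(x)$ — correctly mirrors the paper's strategy and Arthur's in \cite[\S 8]{MR518111}. But there is a concrete gap that blocks the execution: you never explain how to get from the defining expression for $k_{f,\fo}^T(x)$, which involves \emph{integrals} over $(\fn_{\wt{P}}\cap\fs)(\BA)$, to something that can be unfolded as a sum over $H_{Y_1}(F)\backslash H(F)$. The paper first passes to the ``second modified kernel'' $j_{f,\fo}^T(x)$, built from
$$ j_{f,P,\fo}(x)=\sum_{Y\in\fm_{\wt{P}}(F)\cap\fo}\ \sum_{n\in N_P(F)} f(\Ad(nx)^{-1}(Y)), $$
i.e. the integral over $(\fn_{\wt{P}}\cap\fs)(\BA)$ is replaced by a rational sum over $N_P(F)$. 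It is only this second kernel that unfolds to a sum over $H_{Y_1}(F)\backslash H(F)$ (after parametrizing the elements of $\fm_{\wt{P}}(F)\cap\fo$ by Weyl elements $s\in\Omega^H(\fa_{P_1};P)$ and cosets of $M_{P,\Ad(\omega_s)(Y_1)}(F)$ in $P(F)$). The bridge between the two kernels is Lemma \ref{regsslemma2}: for $Y\in\fm_{\wt{P}}(F)\cap\fo$ regular semi-simple, the map $N_P\to\fn_{\wt{P}}\cap\fs$, $n\mapsto\Ad(n^{-1})(Y)-Y$ is an $F$-isomorphism of varieties preserving the ad\`elic Haar measures. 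This lemma, which is specific to the regular semi-simple situation, is what makes the $N_P(F)$-sum computable and the unfolding legitimate. Without introducing this device, the step you call the ``first key step'' cannot be carried out on $k_{f,\fo}^T$ directly.

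A smaller but real issue: you assert that ``no new estimates are required'' and that Theorem \ref{convergence2} already supplies the needed absolute convergence. In fact the paper must separately prove $\int_{H(F)\bs H(\BA)^1}|j_{f,\fo}^T(x)|\,dx<\infty$ and the equality $\int j_{f,\fo}^T = J_\fo^{H,T}(f)$ (Theorem \ref{secondkernel2}); both are proved by a reduction to $k_{P_1,P_2,\fo}$-type terms using Lemma \ref{regsslemma2} again, so while the techniques are similar, the argument is not free of charge. Once the second kernel and the parametrization by $\Omega^H(\fa_{P_1};P)$ are in place, the rest of your plan — folding the alternating sum of $\widehat\tau_P$'s into the characteristic function $\chi_T$, integrating over $A_H^\infty\backslash A_{P_1}^\infty$ to produce $v_{P_1}(x,T)$, identifying this with the volume of a convex hull via Arthur's \cite[p.~951]{MR518111}, using $\vol(A_{P_1}^\infty H_{Y_1}(F)\backslash H_{Y_1}(\BA))<\infty$ from ellipticity (Lemma \ref{lem91}), and taking constant terms — matches the paper's computation and is correct.
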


Thanks to the truncation, we solve the divergence issue in the following infinitesimal trace formula. It is a consequence of  the Poisson summation formula. 

\begin{thm}[see Theorem \ref{infitf2}]
We have the equality
$$ \sum_{\fo\in\CO} J_\fo(f)=\sum_{\fo\in\CO} J_\fo(\hat{f}), $$
where $\hat{f}$ (see (\ref{fouriertransform2})) is the Fourier transform of $f$. 
\end{thm}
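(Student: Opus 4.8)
The plan is to deduce the identity $\sum_{\fo} J_\fo(f) = \sum_{\fo} J_\fo(\hat f)$ from the Poisson summation formula applied to the truncated kernel, following the standard Arthur strategy adapted to the symmetric-space setting. The starting point is the observation that for fixed $T$ in the cone $T_+ + \fa_{P_0}^+$, Theorem~\ref{convergence2} lets us interchange the sum over $\fo$ and the integral over $H(F)\bs H(\BA)^1$, so that
$$ \sum_{\fo\in\CO} J_\fo^T(f) = \int_{H(F)\bs H(\BA)^1} \sum_{\fo\in\CO} k_{f,\fo}^T(x)\, dx = \int_{H(F)\bs H(\BA)^1} k_f^T(x)\, dx, $$
where $k_f^T(x)$ is the truncation of $k_f(x) := \sum_{Y\in\fs(F)} f(\Ad(x^{-1})(Y))$. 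By the Poisson summation formula on the $F$-vector space $\fs(\BA)$ (using that $\fs(F)$ is a lattice and that $\Ad(x^{-1})$ has trivial module for $x\in H(\BA)^1$, so the relevant self-dual Haar measure and the characters match up), one has $k_f(x) = k_{\hat f}(x)$ for every $x$; the Fourier transform $\hat f$ is taken with respect to the pairing on $\fs(\BA)$ fixed in~(\ref{fouriertransform2}). Hence $k_f^T(x) = k_{\hat f}^T(x)$ pointwise, and therefore $\sum_\fo J_\fo^T(f) = \sum_\fo J_\fo^T(\hat f)$ for all $T$ in the cone.

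Next I would promote this identity from the truncated distributions to the constant terms. By Theorem~\ref{polynomial2}, each $T\mapsto J_\fo^T(f)$ is (the restriction to the cone of) a polynomial in $T$; summing the finitely relevant contributions — or rather, using that the full sum $\sum_\fo J_\fo^T(f)$ is itself polynomial in $T$, which follows from absolute convergence together with the polynomiality of each term and a uniformity estimate — one gets that $T\mapsto \sum_\fo J_\fo^T(f)$ and $T\mapsto \sum_\fo J_\fo^T(\hat f)$ are two polynomials in $T$ that agree on the open cone $T_+ + \fa_{P_0}^+$. Two polynomials agreeing on a nonempty open set are equal, so their constant terms coincide. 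Since $J_\fo(f)$ is defined as the constant term of $J_\fo^T(f)$, and the constant term of a convergent sum of polynomials is the sum of the constant terms, we conclude $\sum_\fo J_\fo(f) = \sum_\fo J_\fo(\hat f)$.

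The main obstacle is the interchange of the sum over $\fo$ with the operation "take the constant term in $T$" — i.e. justifying that the constant term of $\sum_\fo J_\fo^T(f)$ equals $\sum_\fo J_\fo(f)$. This requires knowing not merely that each $J_\fo^T(f)$ is polynomial in $T$ but that the family is suitably uniform: one needs a bound on the degrees of these polynomials independent of $\fo$ and summable control on their coefficients, so that the full sum is a genuine polynomial and term-by-term extraction of the constant term is legitimate. In Arthur's setup this uniformity is built into the proof of polynomiality via the explicit description of the dependence on $T$; here I expect it to come out of the proof of Theorem~\ref{polynomial2} (Corollary~\ref{polynomial2}), combined with the convergence estimates underlying Theorem~\ref{convergence2}. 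A secondary point requiring care is the precise verification that the Poisson summation formula applies with the normalizations chosen in~(\ref{fouriertransform2}): one must check that the quadratic/bilinear form defining $\hat f$ is nondegenerate and $H$-invariant, that the Haar measure on $\fs(\BA)$ is self-dual for it, and that $\fs(F)$ is its own dual lattice (or that the discriminant factors cancel globally by the product formula) — all of which are routine but must be stated to make the pointwise identity $k_f = k_{\hat f}$ rigorous.
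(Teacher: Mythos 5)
There is a genuine gap at the step where you pass from the Poisson-summation identity $k_{f,H}(x)=k_{\hat f,H}(x)$ to the pointwise identity $k_f^T(x)=k_{\hat f}^T(x)$. You describe $k_f^T(x)$ as ``the truncation of $k_f(x)$,'' but $k_f^T$ is Arthur's modified kernel
$$ k_f^T(x)=\sum_{\{P:\,P_0\subseteq P\}}(-1)^{\dim(A_P/A_H)}\sum_{\delta\in P(F)\backslash H(F)}\widehat\tau_P^H\bigl(H_P(\delta x)-T\bigr)\,k_{f,P}(\delta x), $$
where $k_{f,P}(x)=\sum_{Y\in(\fm_{\wt P}\cap\fs)(F)}\int_{(\fn_{\wt P}\cap\fs)(\BA)}f(\Ad(x^{-1})(Y+U))\,dU$. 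Only the $P=H$ term involves $k_{f,H}$. For every proper $P$ one has to prove $k_{f,P}=k_{\hat f,P}$, and this is not a single Poisson summation on $\fs(F)$: one must show that taking the constant term along $\wt P$ commutes (up to Poisson duality on the Levi) with the global Fourier transform. This is precisely the content of the paper's Lemma~\ref{lemref}, which splits $\fs$ as $(\fn_{\ov{\wt P}}\cap\fs)\oplus(\fm_{\wt P}\cap\fs)\oplus(\fn_{\wt P}\cap\fs)$, uses the block structure of the pairing $\langle\cdot,\cdot\rangle$ together with Fourier inversion in the nilpotent directions to identify the $\wt P$-constant term of $\hat f$ with the Levi Fourier transform of the $\wt P$-constant term of $f$, and only then applies Poisson summation on $(\fm_{\wt P}\cap\fs)(F)$. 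Without an argument of this type (or the asymptotic alternative below) the claimed pointwise equality of truncated kernels is unjustified.

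There is a legitimate way to repair the argument using exactly the ingredients you cited: instead of a pointwise identity, use Corollary~\ref{comparewithnaive2}, which states $\int_{H(F)\backslash H(\BA)^1}\bigl|k_f^T(x)-F^H(x,T)k_{f,H}(x)\bigr|\,dx\le Ce^{-N\|T\|}$. Applying Poisson summation only in the form $k_{f,H}=k_{\hat f,H}$, one gets $|J^{H,T}(f)-J^{H,T}(\hat f)|\le 2Ce^{-N\|T\|}$ on the cone; since both are polynomials in $T$ by Corollary~\ref{polynomial2}, the difference is a polynomial that decays exponentially on an open cone, hence vanishes identically, and the theorem follows by taking constant terms. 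Finally, the issue you flag as the ``main obstacle'' (interchanging constant term with $\sum_\fo$) is not really one: Corollary~\ref{polynomial2} already asserts that the convergent sum $J^{H,T}(f)=\sum_\fo J_\fo^{H,T}(f)$ is itself a polynomial of degree $\le n-1$; as the $J_\fo^{H,T}(f)$ lie in a fixed finite-dimensional space of polynomials and the sum converges absolutely on an open cone, the coefficients converge termwise, so the constant term of the sum is the sum of the constant terms.
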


Notice that the symmetric pairs $(G,H)$ and $(G',H')$ are the same after the base change to an algebraic closure of $F$ containing $E$. In fact, the truncation and most proofs of the global results above are simpler than those in \cite{2019arXiv190407102L} in some sense. The simplicity results from the equality $H(\BA)^1=H(\BA)\cap G(\BA)^1$ here, where $G(\BA)^1$ denotes the subset of elements in $G(\BA)$ with absolute-value-$1$ reduced norm. Moreover, there is a bijection between the set of standard parabolic subgroups in $H$ and the set of semi-standard parabolic subgroups in $G$ whose intersection with $H$ is a standard parabolic subgroup in $H$. One may consult Section \ref{explicitdescription} for more details. However, there are still some rationality issues. We shall give sufficient details and self-contained proofs here for completeness. 

At the end of this paper, we hope to provide some new evidence of noninvariant comparison of Guo-Jacquet trace formulae. We shall turn to the local setting with $F$ denoting a local field. In the comparison of geometric sides, an important case is the so-called fundamental lemma. It roughly says that some basic functions for two symmetric pairs should have associated local orbital integrals on matching orbits at almost all unramified places. Guo \cite{MR1382478} proved it for the units of spherical Hecke algebras for Guo-Jacquet trace formulae with the help of the base change fundamental lemma for the full spherical Hecke algebras for $GL_n$ known by Kottwitz \cite[Lemma 8.8]{MR564478} and Arthur-Clozel \cite[Theorem 4.5 in Chapter 1]{MR1007299}. An infinitesimal version \cite[Lemma 5.18]{MR3414387} was used by Zhang to prove the smooth transfer for Guo-Jacquet trace formulae following the same philosophy of Waldspurger's work \cite{MR1440722} on the endoscopic transfer. We would like to generalize \cite[Lemma 5.18]{MR3414387} in the weighted context. 

For almost all unramified places, $(G, H)$ is isomorphic to $(GL_{2n}, \Res_{E/F} GL_{n,E})$ and $\fs(F)\simeq \fg\fl_n(E)$. Denote by $\CO_F$ (resp. $\CO_E$) the ring of integers in $F$ (resp. $E$). For $f$ and $f'$ a pair of locally constant and compactly supported complex functions on $\fs(F)$ and $\fs'(F)$ respectively, we define the notion of being ``strongly associated'' (see the precise definition in Definition \ref{defstrass}) inspired by \cite[Definition III.3.2]{MR1339717}. Roughly speaking, $f$ and $f'$ are said to be strongly associated if their local weighted orbital integrals are equal at matching orbits. Let $f_0$ and $f'_0$ be the characteristic functions of $\fs(\CO_F)\simeq \fg\fl_n(\CO_E)$ and $\fs'(\CO_F)\simeq (\fg\fl_n\oplus\fg\fl_n)(\CO_F)$ respectively. Because the weighted orbital integrals that we got share the same weights with those in twisted trace formulae (see Remark \ref{rmkwoi2} and \cite[Remark 9.3]{2019arXiv190407102L}), we are able to show the following result by using Labesse's work on the base change weighted fundamental lemma for the full spherical Hecke algebras for $GL_n$. 

\begin{thm} [see Theorem \ref{wfl}]
For almost all unramified places, $f_0$ and $f'_0$ are strongly associated. 
\end{thm}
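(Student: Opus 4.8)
The plan is to reduce the weighted fundamental lemma at the infinitesimal level to Labesse's base change weighted fundamental lemma for the full spherical Hecke algebra of $GL_n$, via a sequence of standard translations. First I would recall, at the unramified place under consideration, the explicit shape of the weighted orbital integrals $J_\fo(f_0)$ (resp. $J_{\fo'}(f'_0)$) as given by Theorem \ref{woi2}: for a regular semi-simple orbit with elliptic representative $Y_1$ relative to a standard parabolic $P_1$ of $H$, the distribution is a volume factor times $\int_{H_{Y_1}(F)\bs H(F)} f_0(\Ad(x^{-1})(Y_1)) v_{P_1}(x)\,dx$, with Arthur's weight $v_{P_1}$. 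The key point emphasized in the excerpt (Remark \ref{rmkwoi2}) is that these are exactly the weights appearing in the twisted (base change) trace formula for $GL_n$, so the two sides of the desired identity are literally of the same form as the geometric terms compared by Labesse. The matching of orbits on $\fs(F)\simeq\fg\fl_n(E)$ with orbits on $\fs'(F)\simeq\fg\fl_n(F)\oplus\fg\fl_n(F)$ is the infinitesimal analogue of the norm map of base change $GL_n(E)/GL_n(F)$ (the relevant dictionary is already present in \cite{MR1382478} and \cite[Lemma 5.18]{MR3414387}); I would spell this out and check it is compatible with the parabolic induction structure, i.e. that elliptic-with-respect-to-$P_1$ on the $H$-side corresponds to the appropriate Levi on the $GL_n$-side under the bijection between standard parabolics of $H$ and the relevant semi-standard parabolics of $G$ recalled in Section \ref{explicitdescription}.

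Next I would set up the comparison place by place. Using the Cayley transform (or a direct linearization) one passes from the infinitesimal setting $\fg\fl_n$ to a neighborhood of the identity in the group, as in Waldspurger \cite{MR1440722} and Zhang; since we only need the statement for \emph{almost all} unramified places, one may ignore the bad places and work with the characteristic functions $f_0$, $f'_0$ of the natural lattices, whose group-level counterparts are the units $\mathbf{1}_{GL_n(\CO_E)}$ and $\mathbf{1}_{GL_n(\CO_F)\times GL_n(\CO_F)}$ of the spherical Hecke algebras. The weighted orbital integrals of these units, with Arthur's weights $v_{P_1}$, are precisely the local geometric constituents that Labesse's weighted base change fundamental lemma identifies: the twisted weighted orbital integral of $\mathbf{1}_{GL_n(\CO_E)}$ (for the $\sigma$-conjugation action, $\sigma$ the nontrivial automorphism of $E/F$) equals the corresponding weighted orbital integral of $\mathbf{1}_{GL_n(\CO_F)}$ on the norm, up to the explicit transfer factor, which is trivial in the unramified base change case. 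I would then invoke the definition of ``strongly associated'' (Definition \ref{defstrass}): since that definition is precisely the equality of all local weighted orbital integrals at matching orbits, the above chain of identifications gives exactly what is required.

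Concretely the steps are: (1) write both sides as Arthur-type weighted orbital integrals using Theorem \ref{woi2} and identify the weights with the twisted ones (Remark \ref{rmkwoi2}); (2) establish the orbit correspondence $\fs \leftrightarrow \fs'$ as the infinitesimal base-change norm and check its compatibility with the parabolic/Levi dictionary of Section \ref{explicitdescription}; (3) linearize (Cayley transform) to move from $\fg\fl_n$ to $GL_n$ near the identity, so that $f_0,f'_0$ become the Hecke-algebra units; (4) apply Labesse's weighted base change fundamental lemma for the full spherical Hecke algebra of $GL_n$ to conclude the equality of weighted orbital integrals at matching regular semi-simple orbits; (5) translate back through Definition \ref{defstrass}. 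The main obstacle I expect is step (2)–(3): one must verify that the infinitesimal orbit matching is \emph{exactly} the norm correspondence underlying Labesse's statement, including at the level of Levi subgroups and weight functions (so that induction on the parabolic rank of $P_1$ is available and the reduction to elliptic terms is legitimate), and that the Cayley transform is compatible with all of this at almost all unramified places; once that bookkeeping is in place, the analytic content is entirely carried by Labesse's theorem and nothing further needs to be proved.
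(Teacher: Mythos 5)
Your high-level intuition is right — the proof does reduce to Labesse's weighted base change fundamental lemma for the full spherical Hecke algebra of $GL_n$, and Remark \ref{rmkwoi2} about the identification of weights with twisted ones is indeed the pivot — but there are several concrete gaps in your proposed route, and the actual argument is structured quite differently.

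First, your step (3) is wrong as stated. After identifying $\fs(F)$ with $\fg\fl_n(E)$ and restricting to regular semi-simple (hence invertible) elements, $f_0$ does \emph{not} become the unit $\mathbf{1}_{GL_n(\CO_E)}$ of the spherical Hecke algebra, and $f'_0$ does not become $\mathbf{1}_{GL_n(\CO_F)\times GL_n(\CO_F)}$. The characteristic function of the lattice $\fg\fl_n(\CO_E)$ has a support that meets infinitely many $K_E$-double cosets (all nonnegative Cartan coordinates), so it cannot be absorbed into the unit. This is precisely why one needs the fundamental lemma for the \emph{full} spherical Hecke algebra and not just for the unit element, and it forces a decomposition by $\val_F(\det)$: the paper introduces, for each fixed valuation $v$, specific Hecke elements $\Phi_v$, $\Theta_v$, $\Psi_v$ and $\Xi_w$ (the slices of $f_0$ and $f'_0$ at determinant valuation $v$ or $w$), and it is these elements — not the units — that are fed into Labesse's theorem. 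This decomposition is the combinatorial heart of the proof, and your proposal omits it entirely. Relatedly, no Cayley transform is used: the passage from $\fs(F)$ to $GL_n(E)$ is the identification $\fs=\fh\tau\simeq\fh$ with twisted conjugation, followed by the elementary observation that regular semi-simple elements of $\fs$ are automatically invertible.

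Second, the comparison is not a single application of Labesse's theorem; it is two applications pivoting about a function $\Psi_v$ on $GL_n(F)$. On the $\fs'$ side one shows $\Psi_v = bc_{F\times F/F}(\Theta_v)$ (Lemma \ref{bclemcas1}) and applies Labesse for the \emph{split} base change to pass from weighted $\eta$-orbital integrals on $\fs'$ to weighted orbital integrals of $\Psi_v$ on $GL_n(F)$; on the $\fs$ side one shows $\Psi_w = bc_{E/F}(\Xi_w)$ (Lemma \ref{bclemcas2}, essentially from Guo and Arthur–Clozel) and applies Labesse for the \emph{unramified quadratic} base change. Equating the two hands gives the matching (1) of Definition \ref{defstrass}. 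Your proposal's one-step direct comparison ``$\mathbf{1}_{GL_n(\CO_E)}$ versus $\mathbf{1}_{GL_n(\CO_F)}$'' neither matches the shape of the identity to be proven (which compares $\fs$ to $\fs'$, both linearizations, not a group to its subgroup) nor handles the $\eta$-twist and the transfer factor $\kappa$ correctly.

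Third, and independently of the above, your proposal does not address condition (2) in Definition \ref{defstrass}, the vanishing statement for orbits parametrized by elements with $\xi(AB)\notin NE^\times$. This requires its own argument in the paper: a parabolic reduction formula (Proposition \ref{redfor}) together with the observation that $\Psi_v = 0$ when $v$ is odd (Lemma \ref{eoplem}), using that $E/F$ is unramified. Without this, the proof of ``strongly associated'' is incomplete even if steps (1) through (5) of your plan were carried out.
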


\s{\textbf{Acknowledgement. }}I would like to thank my PhD advisor Pierre-Henri Chaudouard for suggesting considering a more general case than Guo-Jacquet's original one. I have benefited a lot from numerous discussions with him on this work. I would also like to thank Jean-Loup Waldspurger and the anonymous referee for valuable comments on my draft allowing me to correct several errors and improve some arguments. This work was supported by grants from R\'{e}gion Ile-de-France. 


\section{\textbf{Notation}}\label{notation}

We shall use $F$ to denote a number field in this article except for the last section where $F$ denotes a non-archimedean local field of characteristic $0$. 

\subsection{\textbf{Roots and weights}}

Let $F$ be a number field or a non-archimedean local field of characteristic $0$. Suppose that $H$ is a reductive group defined over $F$. Fix a minimal Levi $F$-subgroup $M_0$ of $H$. All the following groups are assumed to be defined over $F$ without further mention. We call a parabolic subgroup or a Levi subgroup of $H$ semi-standard if it contains $M_0$. Fix a minimal semi-standard parabolic subgroup $P_0$ of $H$. We call a parabolic subgroup $P$ of $H$ standard if $P_0\subseteq P$. For any semi-standard parabolic subgroup $P$ of $H$, we usually write $M_P$ for the Levi factor containing $M_0$ and $N_P$ the unipotent radical. Denote by $A_P$ the maximal $F$-split torus in the centre of $M_P$. Let $X(M_P)_F$ be the group of characters of $M_P$ defined over $F$. Then define
$$ \fa_P:=\Hom_\BZ(X(M_P)_F, \BR) $$
and its dual space
$$ \fa_P^*:=X(M_P)_F\otimes_\BZ \BR, $$
which are both $\BR$-linear spaces of dimension $\dim(A_P)$. Notice that the restriction $X(M_P)_F\hookrightarrow X(A_P)_F$ induces an isomorphism
$$ \fa_P^*\simeq X(A_P)_F\otimes_\BZ \BR. $$

Suppose that  $P_1\subseteq P_2$ is a pair of standard parabolic subgroups of $H$. The restriction $X(M_{P_2})_F\hookrightarrow X(M_{P_1})_F$ induces $\fa_{P_2}^*\hookrightarrow\fa_{P_1}^*$  and its dual map $\fa_{P_1}\twoheadrightarrow\fa_{P_2}$. Denote by $\fa_{P_1}^{P_2}$ the kernel of the latter map $\fa_{P_1}\twoheadrightarrow\fa_{P_2}$. The restriction $X(A_{P_1})_F\twoheadrightarrow X(A_{P_2})_F$ induces $\fa_{P_1}^*\twoheadrightarrow\fa_{P_2}^*$ and its dual map $\fa_{P_2}\hookrightarrow\fa_{P_1}$. The latter map $\fa_{P_2}\hookrightarrow\fa_{P_1}$  provides a section of the previous map $\fa_{P_1}\twoheadrightarrow\fa_{P_2}$. Thus we have decompositions
$$ \fa_{P_1}=\fa_{P_2}\oplus\fa_{P_1}^{P_2} $$
and
$$ \fa_{P_1}^*=\fa_{P_2}^*\oplus(\fa_{P_1}^{P_2})^*. $$
When $P_1=P_0$, we write $\fa_{P_1}$, $A_{P_1}$ and $\fa_{P_1}^{P_2}$ as $\fa_0$, $A_0$ and $\fa_0^{P_2}$ respectively. 

For a pair of standard parabolic subgroups $P_1\subseteq P_2$ of $H$, write $\Delta_{P_1}^{P_2}$ for the set of simple roots for the action of $A_{P_1}$ on $N_{P_1}^{P_2}:=N_{P_1}\cap M_{P_2}$. Notice that $\Delta_{P_1}^{P_2}$ is a basis of $(\fa_{P_1}^{P_2})^*$. Let
$$ (\wh{\Delta}_{P_1}^{P_2})^\vee:=\{\varpi_\alpha^\vee:\alpha\in\Delta_{P_1}^{P_2}\} $$
be the basis of $\fa_{P_1}^{P_2}$ dual to $\Delta_{P_1}^{P_2}$. 
One has the coroot $\beta^\vee$ associated to any $\beta\in\Delta_{P_0}^{P_2}$. For every $\alpha\in\Delta_{P_1}^{P_2}$, let $\alpha^\vee$ be the projection of $\beta^\vee$ to $\fa_{P_1}^{P_2}$, where $\beta\in\Delta_{P_0}^{P_2}$ whose restriction to $\fa_{P_1}^{P_2}$ is $\alpha$. 
Define
$$ (\Delta_{P_1}^{P_2})^\vee:=\{\alpha^\vee:\alpha\in\Delta_{P_1}^{P_2}\}, $$
which is a basis of $\fa_{P_1}^{P_2}$. Denote by
$$ \wh{\Delta}_{P_1}^{P_2}:=\{\varpi_\alpha:\alpha\in\Delta_{P_1}^{P_2}\} $$
the basis of $(\fa_{P_1}^{P_2})^*$ dual to $ (\Delta_{P_1}^{P_2})^\vee$. 

For a standard parabolic subgroup $P$ of $H$, set
$$ \fa_{P}^+:=\{T\in\fa_P: \forall \alpha\in\Delta_P^H, \alpha(T)>0\}. $$
For $P_1\subseteq P_2$ as above, define $\tau_{P_1}^{P_2}$ and $\wh{\tau}_{P_1}^{P_2}$ as the characteristic functions of
$$ \{T\in\fa_0: \forall \alpha\in\Delta_{P_1}^{P_2}, \alpha(T)>0\} $$
and
$$ \{T\in\fa_0: \forall \varpi\in\wh{\Delta}_{P_1}^{P_2}, \varpi(T)>0\} $$
respectively. 

\subsection{\textbf{The functions $H_P$ and $F^P$}}\label{hcmap}

Let $F$ be a number field. Let $\BA$ be the ring of \adeles of $F$ and let $|\cdot|_\BA$ be the product of normalized local absolute values on the group of \ideles $\BA^*$. Fix a maximal compact subgroup $K$ of $H(\BA)$ that is admissible relative to $M_0$ in the sense of \cite[p. 9]{MR625344}. In this paper, we choose such a $K$ as follows when $H(F)=GL_n(D)$, where $D$ is a central division algebra over a finite field extension $E$ of $F$. 
For every place $v$ of $E$, fix an isomorphism $D\otimes_{E}E_v\simeq \fg\fl_{r_v}(D_v)$, where $D_v$ is a central division algebra over $E_v$. Under this isomorphism, the completion at $v$ of $H(F)$ is $H_v\simeq GL_{n_v}(D_v)$, where $n_v=nr_v$. At every non-archimedean place $v$ of $E$ (see \cite[p. 191]{MR1344916}), let $K_v$ be $GL_{n_v}(\CO_{D_v})$, where $\CO_{D_v}$ is the ring of integers of $D_v$; at every archimedean place $v$ of $E$ (see \cite[p. 199]{MR1344916}), let $K_v$ be the orthogonal group, unitary group and hyperunitary group for $H_v\simeq GL_{n_v}(\BR)$, $GL_{n_v}(\BC)$ and $GL_{n_v}(\BH)$ respectively. Define $K:=\prod_{v} K_v$, which is a maximal compact subgroup of $H(\BA)$. 

Suppose that $P$ is a standard parabolic subgroup of $H$. Let $H_P$ be the homomorphism $M_P(\BA)\ra \fa_P$ given by
$$ \forall m\in M_P(\BA), \forall \chi\in X(M_P)_F, \langle H_P(m), \chi\rangle=\log(|\chi(m)|_\BA). $$
Write $M_P(\BA)^1$ for the kernel of $H_P$ and $A_P^\infty$ for the neutral component for the topology of $\BR$-manifolds of the group of $\BR$-points of the maximal $\BQ$-split torus in $\Res_{F/\BQ}A_P$. Then any element $x\in H(\BA)$ can be written as $x=nmak$, where $n\in N_P(\BA)$, $m\in M_P(\BA)^1$, $a\in A_P^\infty$ and $k\in K$. We can define a continuous map $H_P: H(\BA)\ra\fa_P$ by setting $H_P(x):=H_P(a)$ with respect to this decomposition. Notice that $H_P$ induces an isomorphism from $A_P^\infty$ to $\fa_P$. If $P\subseteq Q$ are a pair of semi-standard parabolic subgroups, write
$$ A_P^{Q, \infty}:=A_P^\infty\cap M_Q(\BA)^1. $$
Then $H_P$ also induces an isomorphism from $A_P^{Q, \infty}$ to $\fa_P^Q$. 

Denote by $\Omega^H$ the Weyl group of $(H, A_0)$. In the cases to be considered in this paper, for every $s\in\Omega^H$, we can always choose one representative $\omega_s\in H(F)\cap K$. In fact, we are dealing with the restriction of scalars of inner forms of $GL_n$, thus we can choose $\Omega^H$ to be permutation matrices. 

From the reduction theory (see \cite[p. 941]{MR518111}), we know that there exists a real number $t_0<0$ and a compact subset $\omega_{P_0}\subseteq N_{P_0}(\BA)M_0(\BA)^1$ such that for any standard parabolic subgroup $P$ of $H$, we have
$$ H(\BA)=P(F)\fS_{P_0}^P(\omega_{P_0}, t_0). $$
Here the Siegel set $\fS_{P_0}^P(\omega_{P_0}, t_0)$ is defined by
$$ \fS_{P_0}^P(\omega_{P_0}, t_0):=\omega_{P_0} A_{P_0}^\infty(P, t_0) K, $$
where 
$$ A_{P_0}^\infty(P, t_0):=\{a\in A_{P_0}^\infty: \forall \alpha\in\Delta_{P_0}^P, \alpha(H_{P_0}(a))>t_0\}. $$
We shall fix such $t_0$ and $\omega_{P_0}$. 
Moreover, we require that $(M_P(\BA)\cap\omega_{P_0}, M_P(\BA)\cap K, P_0\cap M_P, t_0)$ will satisfy the same properties of $(\omega_{P_0}, K, P_0, t_0)$ for any standard parabolic subgroup $P$ of $H$. 

Let $t_0$ be as above. For $T\in\fa_0$, define the truncated Siegel set
$$ \fS_{P_0}^P(\omega_{P_0}, t_0, T):=\omega_{P_0} A_{P_0}^\infty(P, t_0, T) K, $$
where 
$$ A_{P_0}^\infty(P, t_0, T):=\{a\in A_{P_0}^\infty(P, t_0): \forall \varpi\in\wh{\Delta}_{P_0}^P, \varpi(H_{P_0}(a)-T)\leq 0\}. $$
Denote by $F_{P_0}^P(\cdot, T)$ the characteristic function of the projection of $\fS_{P_0}^P(\omega_{P_0}, t_0, T)$ to $P(F)\bs H(\BA)$. 

\subsection{\textbf{Bruhat-Schwartz functions and Haar measures}}\label{BSandHaar2}

Let $F$ be a number field. Write $\fh$ for the Lie algebra of $H$. For an $F$-linear subspace $\fs$ of $\fh$, denote by $\CS(\fs(\BA))$ the Bruhat-Schwartz space of $\fs(\BA)$, namely the $\BC$-linear space of functions on $\fs(\BA)$ generated by $f_\infty\otimes\chi^\infty$, where $f_\infty$ is a Schwartz function on $\fs(F\otimes_\BQ \BR)$ and $\chi^\infty$ is the characteristic function of an open compact subset of $\fs(\BA^\infty)$, where $\BA^\infty$ denotes the ring of finite \adeles of $F$. 

Let $P$ be a standard parabolic subgroup of $H$. For every algebraic subgroup $V$ of $N_P$ (resp. every subspace $\fv$ of $\fh$), choose the unique Haar measure on $V(\BA)$ (resp. on $\fv(\BA)$) such that $\vol(V(F)\bs V(\BA))=1$ (resp. $\vol(\fv(F)\bs\fv(\BA))=1$). We also take the Haar measure on $K$ such that $\vol(K)=1$. 

Fix a Euclidean norm $\|\cdot\|$ on $\fa_0$ invariant by the group $\Omega^H$ and Haar measures on subspaces of $\fa_0$ compatible with this norm. If $P\subseteq Q$ are a pair of standard parabolic subgroups, we obtain Haar measures on $A_P^\infty$ and $A_P^{Q, \infty}$ via the isomorphism $H_P$. 

Denote by $\rho_P\in (\fa_P^H)^*$ the half of the sum of weights  (with multiplicities) for the action of $A_P$ on $\fn_P$. We choose compatible Haar measures on $H(\BA)$ and its Levi subgroups by requiring that for any $f\in L^1(H(\BA))$, 
  \[\begin{split}
   \int_{H(\BA)}f(x)dx&=\int_{N_P(\BA)} \int_{M_P(\BA)} \int_K f(nmk) e^{-2\rho_P(H_P(m))} dndmdk \\
                              &=\int_{N_P(\BA)} \int_{M_P(\BA)^1} \int_{A_P^\infty} \int_K f(nmak) e^{-2\rho_P(H_P(a))} dndmdadk.
  \end{split}\]


\section{\textbf{The symmetric pair}}\label{symmetricpair}

\subsection{\textbf{Groups and linear spaces}}\label{gpandls}

Let $F$ be a number field and $E$ a quadratic extension of $F$. Let $\fg$ be a central simple algebra over $F$ with a fixed embedding $E\ra\fg$ as $F$-algebras. Denote by $\fh:=\Cent_\fg(E)$ the centralizer of $E$ in $\fg$. Then by the the double centralizer theorem (see \cite[Theorem 3.1 in Chapter IV]{Milne} for example), $\fh$ is a central simple algebra over $E$. Write $G:=\fg^\times$ and $H:=\fh^\times$ for the group of invertible elements. They are considered as algebraic groups over $F$ with Lie algebra $\fg$ and $\fh$ respectively. 

Let $\alpha\in E$ such that $\alpha^2\in F$ and that $E=F(\alpha)$. Denote by $\Ad$ the adjoint action of $G$ on $\fg$. Define an involution $\theta$ on $\fg$ by $\theta(X):=\Ad(\alpha)(X)$, which is independent of the choice of $\alpha$. Then $H=G^\theta$, where $G^\theta$ denotes the $\theta$-invariant subgroup of $G$. Thus $G/H$ is a symmetric space. Define an anti-involution on $G$ by $\iota(g):=\theta(g^{-1})$. Denote by $S$ the $\iota$-invariant subvariety of $G$. Then there is a symmetrization map $s: G \ra S$ defined by $s(g):=g\iota (g)$. 

\begin{lem}\label{isosymandinv2}
The symmetrization map $s$ induces a bijection $(G/H)(F)\simeq S(F)$. 
\end{lem}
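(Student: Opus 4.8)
The statement is that the symmetrization map $s\colon G\to S$, $s(g)=g\iota(g)$, descends to a bijection $(G/H)(F)\xrightarrow{\sim} S(F)$. My plan is to break this into the two assertions one naturally separates: that $s$ is constant on $H$-cosets and induces an \emph{injection} $(G/H)(F)\hookrightarrow S(F)$, and that this injection is \emph{surjective}. First I would observe that $\iota$ is an anti-involution, $H=G^\theta$ is precisely the stabilizer of the base point of $S$ (namely $1=s(1)$) under the twisted action $g\cdot x = g\, x\, \iota(g)$, and that $S$ is exactly the orbit of $1$ under this action on $\bar F$-points; this is a standard fact for symmetric spaces (the map $G\to S$ is a smooth surjection of $\bar F$-varieties with fibers the $H$-cosets, because $\theta$ is an involution and we are in characteristic $0$ so there are no inseparability issues). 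Thus at the level of $\bar F$-points $s$ already identifies $G(\bar F)/H(\bar F)$ with $S(\bar F)$, and the content of the lemma is entirely about rationality: controlling which $F$-points of the quotient $G/H$ and of $S$ are hit.

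For \textbf{injectivity on $F$-points}: if $g_1\iota(g_1)=g_2\iota(g_2)$ with $g_1,g_2\in G(F)$, then $h:=g_2^{-1}g_1$ satisfies $h\iota(h)=1$, i.e. $h\,\theta(h^{-1})=1$, i.e. $\theta(h)=h$, so $h\in H(F)$; hence $g_1,g_2$ lie in the same $H(F)$-coset, and $s$ is injective on $(G/H)(F)=G(F)/H(F)$ — here I am using that $G/H$ as a variety has $H^1$-triviality issues that could in principle make $(G/H)(F)$ larger than $G(F)/H(F)$, so strictly I should phrase injectivity as: the composite $G(F)\to (G/H)(F)\to S(F)$ has image consisting of those $s(g)$, $g\in G(F)$, and these are in bijection with $G(F)/H(F)$; then separately argue $G(F)/H(F)=(G/H)(F)$, which follows because $H$ is a product of general linear groups over division algebras (so $H^1(F,H)$ is trivial by a Hilbert 90 / Skolem--Noether argument), making the fibration $G\to G/H$ have all $F$-fibers rational points. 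The key step for surjectivity is: given $x\in S(F)$, we know from the $\bar F$-theory that $x=g\iota(g)$ for some $g\in G(\bar F)$; the set of such $g$ is a torsor under $H$ (acting by $g\mapsto gh$), and it is defined over $F$ since $x$ is, so it represents a class in $H^1(F,H)$; this class vanishes because $H^1(F,H)=1$ as above, hence $g$ can be chosen in $G(F)$, giving $x=s(g)$ with $g\in G(F)$.

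The \textbf{main obstacle} is precisely this vanishing-of-cohomology / rationality point: one must pin down that $\fh$ being a central simple algebra over $E$ (so $H=\mathrm{GL}_m(D)$ for a division algebra $D/E$) forces $H^1(F,H)=\{1\}$, which is where Skolem--Noether (or equivalently the triviality of $H^1$ of $\mathrm{GL}_1$ of a central simple algebra, hence of $\mathrm{GL}_m$ of it, hence by Shapiro of its restriction of scalars from $E$ to $F$) enters. Everything else is formal manipulation of the identity $s(g)=g\theta(g^{-1})$ and the equivalence $h\iota(h)=1\iff \theta(h)=h$. I would also remark that one should check $S(F)$ as defined (the $F$-points of the $\iota$-fixed subvariety) coincides with the image of the twisted $G(\bar F)$-action orbit intersected with $G(F)$, i.e. that there is only one such orbit over $\bar F$; this is a connectedness statement for the symmetric space that again holds because over $\bar F$ we are in the split situation $(\mathrm{GL}_{2n},\mathrm{GL}_n\times\mathrm{GL}_n)$ where $S$ is irreducible and the orbit is dense and in fact all of $S$.
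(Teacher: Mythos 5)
Your overall strategy (reduce to $H^1(F,H)=1$ both for the identification $(G/H)(F)=G(F)/H(F)$ and for the surjectivity via the torsor $s^{-1}(x)$) is a reasonable one, and the injectivity computation and the torsor step are fine. But there is a real gap in the very step you flagged as needing a check. You assert that over $\bar F$ the variety $S$ is irreducible with the twisted orbit of $1$ dense and in fact all of $S$; that is false. Multiplying by $\alpha$, one has $S\alpha=\{g\in G: g^2=\alpha^2\}$, and over $\bar F$ (where $\alpha^2$ is a nonzero scalar $c$) this is the disjoint union of the $2m+1$ closed semisimple conjugacy classes with eigenvalue multiplicities $(p,2m-p)$ for $\pm\sqrt{c}$, $p=0,\dots,2m$. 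Only the balanced class $p=m$ is the twisted orbit of $1$, so $S(\bar F)$ is not a single orbit and $S$ is not irreducible. Consequently the assertion ``given $x\in S(F)$, we know from the $\bar F$-theory that $x=g\iota(g)$ for some $g\in G(\bar F)$'' does not follow from the $\bar F$-theory alone.

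What saves the surjectivity is precisely the rationality observation that you omit, and which is the heart of the paper's proof: for $g\in (S\alpha)(F)$ the minimal polynomial of $g$ in $\fg(F)$ is $\lambda^2-\alpha^2$, which is \emph{irreducible over $F$} (since $\alpha\notin F$), so the reduced characteristic polynomial over $F$ must be $(\lambda^2-\alpha^2)^m$. This forces the balanced eigenvalue structure $p=m$, i.e.\ it shows that $(S\alpha)(F)$ lies entirely in the single geometric component that is the twisted orbit of $\alpha$. Once that is in place, either your torsor argument or the paper's appeal to the fact that semisimple elements of a central simple algebra with the same reduced characteristic polynomial are $G(F)$-conjugate (together with $H^1(F,H_Y)=1$, here in the guise of Skolem--Noether) finishes the job. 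So the torsor packaging you propose is a legitimate alternative to the paper's conjugacy-theorem packaging, but your justification of the ``single-orbit over $\bar F$'' input is wrong, and the correct input is an $F$-rationality statement about characteristic polynomials rather than a geometric irreducibility statement over $\bar F$.
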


\begin{remark}
For the special case $(G,H)=(GL_{n,D}, \Res_{E/F}GL_{n,E})$, where $D$ is a quaternion algebra over $F$ containing $E$, this result is included in \cite[p. 282]{MR1487565}. 
\end{remark}

\begin{proof}[Proof of Lemma \ref{isosymandinv2}]
Since $H^1(F,H)=1$, we have $(G/H)(F)=G(F)/H(F)$. For $g\in G(F)$, let $s_0(g):=s(g)\alpha=\Ad(g)(\alpha)$. Let $S_0:=S\alpha=\{g\in G: g^2=\alpha^2\}$. It suffices to prove that the map $s_0: G(F) \ra S_0(F)$ is surjective. Let $g\in S_0(F)$. Its minimal polynomial in $\fg(F)$ is $\lambda^2-\alpha^2$, which is irreducible over $F$. Therefore, its reduced characteristic polynomial in $\fg(F)$ must be $(\lambda^2-\alpha^2)^m$, where $\dim_F(\fg(F))=(2m)^2$. We deduce that all elements in $S_0(F)$ are conjugate by $G(F)$ (see \cite[Theorem 9]{yu2013} for example). Since $\alpha\in S_0(F)$, we draw our conclusion. 
\end{proof}

One may consider the left and right translation of $H\times H$ on $G$ and the conjugation of $H$ on $S$. Denote by $\fs$ the tangent space of $S$ at the neutral element. We shall always view $\fs$ as the subspace of $\fg$. Then $\fs=\{X\in\fg: \theta(X)=-X\}$ and $H$ acts on $\fs$ by conjugation. 

\subsection{\textbf{Semi-simple elements}}

We say that an element $Y$ of $\fs$ is semi-simple if the orbit $\Ad(H)(Y)$ is Zariski closed in $\fs$. By a regular element $Y$ of $\fs$, we mean that the centralizer $H_Y$ of $Y$ in $H$ has minimal dimension. 

\begin{prop}\label{normmap}
The map $Y\mapsto Y^2$ from $\fs(F)$ to $\fh(F)$ induces an injection from the set of $H(F)$-conjugacy classes of semi-simple elements in $\fs(F)$ to the set of conjugacy classes of semi-simple elements in $\fh(F)$. 
\end{prop}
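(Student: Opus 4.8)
The plan is to establish both well-definedness and injectivity of the map $Y \mapsto Y^2$ on conjugacy classes, using the structure theory of the symmetric pair after passing to an algebraic closure, combined with Galois descent to handle the rationality. First I would check that the map is well-defined: if $Y \in \fs(F)$ is semi-simple then $Y^2 \in \fh(F)$, and if $Y$ is replaced by $\Ad(h)(Y)$ for $h \in H(F)$ then $Y^2$ is replaced by $\Ad(h)(Y^2)$, so the class of $Y^2$ is unchanged. I would also need that $Y^2$ is a semi-simple element of $\fh(F)$; this follows because $Y$ semi-simple in $\fs$ means $Y$ is a semi-simple element of $\fg$ (the closedness of the $H$-orbit translates, via Lemma~\ref{isosymandinv2}-type reasoning and the involution $\theta$, into $\Ad(\alpha)(Y) = -Y$ together with $Y$ being $\Ad(G)$-semisimple), hence $Y^2$ is semi-simple in $\fg$ and lies in $\fh$, so it is semi-simple in $\fh$.

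For injectivity, suppose $Y_1, Y_2 \in \fs(F)$ are semi-simple with $Y_1^2$ and $Y_2^2$ conjugate in $\fh(F)$; after replacing $Y_2$ by an $H(F)$-conjugate I may assume $Y_1^2 = Y_2^2 =: Z$. The key point is to analyze the fibre of the squaring map over a fixed $Z$. Working over $\ov{F}$, the symmetric pair $(G,H)$ becomes $(GL_{2n}, GL_n\times GL_n)$ with $\fs \simeq \fg\fl_n \oplus \fg\fl_n$ (the pair $(G',H')$ of the introduction), and on that model the squaring map sends $(A,B)$ to $(AB, BA)$; one understands which elements of $\fg\fl_n\oplus\fg\fl_n$ square to a given semi-simple $Z$, and that $H(\ov F) = GL_n(\ov F)\times GL_n(\ov F)$ acts transitively on each such fibre of semi-simple elements — this is essentially the statement that semi-simple orbits in $\fs$ are classified by their "norm" over the algebraic closure, which is standard for this symmetric pair. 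Thus $Y_1$ and $Y_2$ are conjugate under $H(\ov F)$. It then remains to descend this to $H(F)$-conjugacy.

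The main obstacle is this last Galois-descent step: knowing $Y_1 = \Ad(h)(Y_2)$ for some $h \in H(\ov F)$, I want $h \in H(F)$ up to multiplying by an element of the centralizer. The transporter $\{h \in H : \Ad(h)(Y_2) = Y_1\}$ is a torsor under $H_{Y_2}$, so the obstruction lies in $H^1(F, H_{Y_2})$. Here I would exploit that $H$ is (an inner form of, a restriction of scalars of) $GL_n$, so that $H_{Y_2}$, being the centralizer of a semi-simple element, is again a product of general linear groups over field extensions — more precisely $H_{Y_2}$ is the group of units of a product of central simple algebras — whose Galois cohomology $H^1$ vanishes by a Hilbert~90 / Skolem–Noether argument, exactly as in the proof of Lemma~\ref{isosymandinv2}. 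Hence the torsor is trivial, $h$ can be taken in $H(F)$, and injectivity follows. I expect that making precise "$H_{Y}$ is the unit group of a product of central simple algebras" — i.e. identifying the centralizer of a semi-simple element of $\fs$ inside $\fh$ with something whose $H^1$ vanishes — is the technical heart, and it is plausible the author isolates it as a separate lemma about the structure of $H_Y$.
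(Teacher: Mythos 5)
Your proposal follows essentially the same route as the paper: reduce to the two facts that semi-simple $H$-orbits in $\fs$ are determined by the reduced characteristic polynomial of $Y$ in $\fg$ (geometric conjugacy, cited from Jacquet--Rallis), and that $H^1(F,H_Y)$ vanishes for semi-simple $Y\in\fs(F)$ so that the geometric orbit is a single $H(F)$-orbit. The one place you hedge --- the structure of $H_Y$ --- is handled in the paper by a slicker observation that sidesteps any classification: $H_Y$ is the unit group of $\Cent_\fh(Y)$, which is simply a finite-dimensional associative $F$-algebra, and $H^1$ vanishes for the unit group of any such algebra (a Bourbaki exercise), so no identification of $H_Y$ with a product of central simple algebras is needed and no separate structural lemma is isolated.
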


\begin{remark}
In the special case $(G,H)=(GL_{n,D}, \Res_{E/F}GL_{n,E})$, where $D$ is a quaternion algebra over $F$ containing $E$, this map plays the role of the norm map (see \cite[\S1 in Chapter 1]{MR1007299}). 
\end{remark}

\begin{proof}[Proof of Proposition \ref{normmap}]
Let $\chi_{\fg,F}(X)$ be the reduced characteristic polynomial of $X\in \fg(F)$ and $\chi_{\fh,E}(X^\ast)$ the reduced characteristic polynomial of $X^\ast\in\fh(F)$ (viewed as a central simple algebra over $E$). After the base change to an algebraic closure of $F$ containing $E$, the embedding $\fh\subseteq\fg$ is identical to the diagonal embedding $\fh':=\fg\fl_m\oplus\fg\fl_m\subseteq\fg':=\fg\fl_{2m}$ and $\fs\subseteq\fg$ becomes $\fs':=\bigg\{
\left( \begin{array}{cc}
0 & A \\
B & 0 \\
\end{array} \right): A,B\in \fg\fl_m\bigg\} \subseteq\fg'$, where $m$ denotes the degree of $\fh(F)$ (viewed as a central simple algebra over $E$). By \cite[Proposition 2.1]{MR1394521}, we see that if $Y\in\fs$ is semi-simple, then $Y^2\in\fh$ is semi-simple (in the usual sense).  
Since
$$ \det\left(\lambda I_{2m}-\mat(0,A,B,0)\right)=\det(\lambda^2 I_{m}-AB), $$
we see that for $Y\in\fs(F)\subseteq\fg(F)$, 
$$ \chi_{\fg,F}(Y)(\lambda)=\chi_{\fh,E}(Y^2)(\lambda^2), $$
which implies that $\chi_{\fh,E}(Y^2)$ is actually defined over $F$. 

It is known that the semi-simple conjugacy classes in $\fh(F)$ are uniquely determined by $\chi_{\fh,E}$ (see \cite[Theorem 9]{yu2013} for example). Thus it suffices to prove that the semi-simple $H(F)$-conjugacy classes  in $\fs(F)$ are uniquely determined by $\chi_{\fg,F}$. From \cite[Proposition 2.1]{MR1394521}, we know that the semi-simple $H$-conjugacy classes  in $\fs(F)$ are uniquely determined by $\chi_{\fg,F}$. Therefore, we reduce ourselves to proving that each semi-simple $H$-conjugacy class in $\fs(F)$ contains a unique $H(F)$-conjugacy class. 

For a semi-simple element $Y\in\fs(F)$, the $H(F)$-orbits in $\Ad(H)(Y)(F)$ are parametrized by 
$$ \ker[H^1(F,H_Y)\ra H^1(F,H)]=H^1(F,H_Y), $$
where $H_Y$ is the centralizer of $Y$ in $H$. 
Since $H_Y$ is the group of invertible elements of a finite dimensional associative algebra (the centralizer of $Y$ in $\fh$), by \cite[Exercice 2 in p. 160]{MR0354618}, we obtain
$$ H^1(F,H_Y)=1, $$
which completes our proof. 
\end{proof}

\subsection{\textbf{Invariants}}\label{inv2}

Denote by $\fc$ the affine space $\bfA^m$, where $m$ denotes the degree of $\fh(F)$ (viewed as a central simple algebra over $E$). Define a morphism $\pi:\fs\ra\fc$ which is contant on $H$-orbits by mapping $Y\in\fs$ to the coefficients of the reduced characteristic polynomial of $Y\in\fg$. In fact, we see that the coefficients in odd degrees vanish for $Y\in\fs$ from the proof of Proposition \ref{normmap}. On $F$-points, alternatively, $\pi$ is given by mapping $Y\in\fs(F)$ to the coefficients of the reduced characteristic polynomial of $Y^2\in\fh(F)$ (viewed as a central simple algebra over $E$). 

\begin{prop}\label{propcatquot2}
The pair $(\fc,\pi)$ defines a categorical quotient of $\fs$ by $H$ over $F$. 
\end{prop}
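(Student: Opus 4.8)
The plan is to show that $(\fc,\pi)$ satisfies the universal property of the categorical quotient by reducing, via faithfully flat descent, to a situation over the algebraic closure where the analogous statement is already available in the literature. Recall that to say $(\fc,\pi)$ is a categorical quotient of $\fs$ by $H$ over $F$ means that $\pi$ is $H$-invariant and that every $H$-invariant morphism $\fs\to Z$ (with $Z$ any $F$-scheme) factors uniquely through $\pi$. Since $\fs$ and $\fc$ are affine, this is equivalent to the assertion that the comorphism identifies the coordinate ring $F[\fc]$ with the ring of invariants $F[\fs]^H$.

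First I would pass to $\ov{F}$, an algebraic closure of $F$ containing $E$. As recorded in the proof of Proposition \ref{normmap}, after this base change the pair $(H,\fs)$ becomes the pair $(H',\fs')$ with $H'=\GL_m\times\GL_m$ acting by conjugation on $\fs'=\left\{\matrixx{0}{A}{B}{0}:A,B\in\fg\fl_m\right\}$, and under this identification $\pi$ becomes the map sending $\matrixx{0}{A}{B}{0}$ to the coefficients of $\det(\lambda I_m - AB)$, i.e. to the invariants of the $\GL_m\times\GL_m$-action. This is a classical $\theta$-group / symmetric space situation: the fact that $(\bfA^m,\pi)$ is the categorical quotient $\fs'/\!/H'$ over $\ov F$ is exactly \cite[Proposition 2.1]{MR1394521} (or can be seen directly: $\GL_m\times\GL_m$ is reductive, so $\ov F[\fs']^{H'}$ is finitely generated and its Spec is the categorical quotient, and one checks this invariant ring is the polynomial ring generated by the coefficients of $\det(\lambda I_m - AB)$, e.g. by relating it to the adjoint quotient of $\fg\fl_m$ via $AB$ together with a dimension/normality argument). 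Hence $\ov F[\fc]\xrightarrow{\ \sim\ }\ov F[\fs']^{H'}=(\ov F\otimes_F F[\fs])^H$.

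Next I would descend this isomorphism to $F$. The point is that forming $H$-invariants commutes with the flat base change $F\to\ov F$: for a linearly reductive group this is standard, but $H$ need not be linearly reductive in general characteristic — however we are in characteristic $0$, so $H$ (a form of $\GL_m$ over $E$, restricted to $F$) is reductive and hence linearly reductive, and $(\ov F\otimes_F F[\fs])^{H}=\ov F\otimes_F (F[\fs]^{H})$. Therefore the inclusion $F[\fc]=\pi^\#(F[\fc])\subseteq F[\fs]^H$ becomes an isomorphism after the faithfully flat extension $\ov F/F$, so it was already an isomorphism over $F$. This gives $F[\fc]\cong F[\fs]^H$, which is precisely the statement that $(\fc,\pi)$ is the categorical quotient of $\fs$ by $H$ over $F$ (note $\pi$ is $H$-invariant by construction, as the reduced characteristic polynomial is a conjugation invariant).

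The only genuine content beyond bookkeeping is the geometric-quotient statement over $\ov F$, namely that the invariant ring $\ov F[\fs']^{\GL_m\times\GL_m}$ is freely generated by the coefficients of $\det(\lambda I_m-AB)$; this I expect to be the main obstacle to write cleanly, and I would handle it by citing \cite[Proposition 2.1]{MR1394521} rather than reproving it, exactly as was done in the proof of Proposition \ref{normmap}. Everything else — the reductivity of $H$ in characteristic $0$, the compatibility of invariants with flat base change, and faithfully flat descent of the isomorphism — is routine. One should also remark that $\fc\cong\bfA^m$ is a variety whose $\ov F$-points separate the closed $H$-orbits in $\fs$, which combined with the universal property makes $\pi$ the categorical quotient in the strong sense used in the sequel.
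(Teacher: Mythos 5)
Your proof follows the same strategy as the paper's: reduce to the known categorical quotient after base change to $\ov F$, then descend the resulting isomorphism $F[\fc]\to F[\fs]^H$ by faithful flatness (the paper glosses over the compatibility of invariants with flat base change, which you spell out explicitly — though this holds for any affine group scheme over a field and any field extension, so linear reductivity is not actually needed here). The only noteworthy discrepancy is the citation for the $\ov F$-statement: the paper cites its companion paper \cite[Proposition 3.3]{2019arXiv190407102L}, whereas your invocation of \cite[Proposition 2.1]{MR1394521} appears imprecise (in this paper that proposition is used only for semi-simplicity of $Y^2$ and orbit separation, not for the categorical quotient), so you would need to rely on your alternative direct sketch.
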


\begin{proof}
By the proof of \cite[Proposition 3.3]{2019arXiv190407102L}, after the base change to an algebraic closure $\ov{F}$ of $F$ containing $E$, the pair $(\fc_{\ov{F}}, \pi_{\ov{F}})$ defines a categorical quotient of $\fs_{\ov{F}}$ by $H_{\ov{F}}$. That is to say, we have an isomorphism of $\ov{F}$-algebras $\ov{F}[\fc]\simeq\ov{F}[\fs]^H$ dual to $\pi_{\ov{F}}$. But this isomorphism is obtained from the base change of a morphism of $F$-algebras $F[\fc]\ra F[\fs]^H$ dual to $\pi$. By Galois descent, the latter morphism is necessarily an isomorphism of $F$-algebras. Then the pair $(\fc,\pi)$ defines a categorical quotient of $\fs$ by $H$ over $F$. 
\end{proof}

\begin{remark}
The morphism $\pi$ is surjective as a morphism of algebraic varieties (see the proof of \cite[Proposition 3.3]{2019arXiv190407102L}) but not surjective on the level of $F$-points. 
\end{remark}

We define a relation of equivalence on $\fs(F)$ using the categorical quotient $(\fc,\pi)$, where two elements are in the same class if and only if they have the same image under $\pi$. We denote by $\CO$ the set of equivalent classes for this relation. From the proof of Proposition \ref{normmap}, we see that two semi-simple elements of $\fs(F)$ belong to the same class of $\CO$ if and only if they are conjugate by $H(F)$. Denote by $\CO_{rs}$ the subset of $\CO$ formed by classes of $Y\in\fs(F)$ such that $\chi_{\fh,E}(Y^2)$ is separable and $\chi_{\fh,E}(Y^2)(0)\neq 0$, where $\chi_{\fh,E}$ denotes the reduced characteristic polynomial of an element in $\fh(F)$ (viewed as a central simple algebra over $E$). By \cite[Proposition 3.2]{2019arXiv190407102L} and the base change to an algebraic closure of $F$ containing $E$, we see that each class in $\CO_{rs}$ is a regular semi-simple $H(F)$-orbit in $\fs(F)$. 

\subsection{\textbf{Explicit description of $H\hookrightarrow G$}}\label{explicitdescription}

First of all, we would like to describe the symmetric pair $(G,H)$ in a more explicit way. By the Noether-Skolem theorem (see \cite[Theorem 2.10 of Chapter IV]{Milne} for example), the embedding $E\ra\fg(F)$ is unique up to conjugation by an element of $G(F)$. From the Wedderburn-Artin theorem, we know that $G$ is isomorphic to $GL_{n,D}$, which denotes the reductive group over $F$ whose $F$-points are $GL_n(D)$, for some central division algebra $D$ over $F$. We recall that $n$ is called the capacity of $\fg(F)$ and we denote it by $\capa(\fg(F))$. Let $d$ be the degree of $D$, i.e., $\dim_F(D)=d^2$. Since there is an embedding $E\ra\fg(F)$ as $F$-algebras, we know that $nd$ is even. 

\begin{prop}
Up to conjugation by $G(F)$, the embedding $H\hookrightarrow G$ is reduced to one of the two cases below.  

\textbf{Case I}: if there is an embedding $E\ra D$ as $F$-algebras, then the embedding $H\hookrightarrow G$ is isomorphic to $\Res_{E/F} GL_{n,D'}\hookrightarrow GL_{n,D}$ up to conjugation by $G(F)$. Here $D':=\Cent_D(E)$ denotes the centralizer of $E$ in $D$ and is a central division algebra over $E$. 

\textbf{Case II}: if there is no embedding $E\ra D$ as $F$-algebras, then the embedding $H\hookrightarrow G$ is isomorphic to $\Res_{E/F} GL_{\frac{n}{2},D\otimes_F E}\hookrightarrow GL_{n,D}$ up to conjugation by $G(F)$. Here $D\otimes_F E$ is a central division algebra over $E$. 
\end{prop}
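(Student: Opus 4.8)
The plan is to reduce the whole statement to a single centralizer computation. As already noted (Noether--Skolem), any two embeddings $E\hookrightarrow\fg(F)$ are conjugate by an element of $G(F)$, and a $G(F)$-conjugation carries the inclusion $\Cent_G(E)\hookrightarrow G$ attached to one embedding to the one attached to the conjugate embedding; hence it suffices to exhibit, in each case, one embedding $E\hookrightarrow\fg(F)$ whose centralizer in $\fg(F)$ is visibly of the asserted shape and sits inside $\fg(F)$ in the obvious way. Write $\fg\simeq M_n(D)$ by Wedderburn--Artin, so $G\simeq GL_{n,D}$, and recall that $nd$ is even. The quantity that distinguishes the two cases is the index of $D\otimes_F E$ over $E$: by an elementary divisibility argument it divides $\mathrm{ind}(D)=d$, while $d$ divides $[E:F]\cdot\mathrm{ind}(D\otimes_F E)=2\,\mathrm{ind}(D\otimes_F E)$, so it equals $d$ or $d/2$ (the latter only when $d$ is even). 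By the classical criterion for embedding a field into a central division algebra, $E$ embeds into $D$ over $F$ if and only if $d$ is even and $\mathrm{ind}(D\otimes_F E)=d/2$; therefore Case II is precisely the situation in which $\mathrm{ind}(D\otimes_F E)=d$, i.e.\ $D\otimes_F E$ is a central division algebra over $E$ of degree $d$. This already proves the last assertion in Case II.

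In Case I, put $D':=\Cent_D(E)$. By the double centralizer theorem $D'$ is central simple over $E$, and being the centralizer of a subfield inside the division ring $D$ it is closed under inverses, hence is a central division algebra over $E$; comparing $F$-dimensions through $\dim_F D'=\dim_F D/[E:F]$ gives $\deg_E D'=d/2$. Now choose the embedding to be the composite $E\hookrightarrow D\hookrightarrow M_n(D)$ in which the second map sends $x$ to the scalar matrix $\mathrm{diag}(x,\dots,x)$. A matrix commutes with the image of $E$ if and only if each of its entries centralizes $E$, so $\Cent_{M_n(D)}(E)=M_n(D')$, a central simple $E$-algebra whose group of units is $\Res_{E/F}GL_{n,D'}$, and the inclusion $M_n(D')\subseteq M_n(D)$ is the evident one. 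Hence $H\hookrightarrow G$ is isomorphic to $\Res_{E/F}GL_{n,D'}\hookrightarrow GL_{n,D}$.

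In Case II, set $\Delta:=D\otimes_F E$, a central division algebra over $E$ of degree $d$ by the first paragraph. Realize $\fg(F)=M_n(D)$ as $\End_D(D^n)$ for the standard right $D$-module $D^n$; an embedding $E\hookrightarrow\End_D(D^n)$ then equips $D^n$ with commuting $E$- and $D$-actions, that is --- once the left/right conventions are fixed, which is routine --- with the structure of a right module over $D\otimes_F E=\Delta$. Since $\Delta$ is a division ring, $D^n\cong\Delta^r$, and equating $F$-dimensions gives $nd^2=2d^2r$, whence $r=n/2$; in particular $n$ is even. Passing to centralizers, $\Cent_{\fg(F)}(E)=\End_\Delta(\Delta^{n/2})\cong M_{n/2}(\Delta)$, a central simple $E$-algebra with group of units $\Res_{E/F}GL_{\frac{n}{2},D\otimes_F E}$, included in $GL_{n,D}$ in the evident way, so $H\hookrightarrow G$ is isomorphic to $\Res_{E/F}GL_{\frac{n}{2},D\otimes_F E}\hookrightarrow GL_{n,D}$. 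The main obstacle is the embedding criterion invoked in the first paragraph --- everything else is dimension counting together with Noether--Skolem and the double centralizer theorem --- and one must also keep minor track of the left/right module conventions (and of opposite algebras), though these do not affect the isomorphism classes of the groups involved.
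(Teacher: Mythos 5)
Your proof is correct, and it takes a genuinely different route from the paper's for Case II. The paper establishes that $n$ is even and that $D\otimes_F E$ is a division algebra in Case~II by invoking the embedding theorems of \cite{MR3217641} (Theorems 1.1.2 and 1.1.3) together with a case split on the parity of $d$ (and a footnote gives a separate elementary argument that $D\otimes_F E$ is a division algebra); it then exhibits the embedding $H\hookrightarrow G$ somewhat ad hoc as the tensor of $\fg\fl_{n/2,D}$ with a fixed embedding $\Res_{E/F}\fg\fl_{1,E}\hookrightarrow\fg\fl_2$. You instead work with the index: the two divisibilities $\mathrm{ind}(D\otimes_F E)\mid d$ and $d\mid 2\,\mathrm{ind}(D\otimes_F E)$ pin it down to $d$ or $d/2$, and the classical subfield-embedding criterion (a degree-$m$ extension $L$ embeds in $D$ iff $\mathrm{ind}(D\otimes_F L)=d/m$; the necessary direction is just the double centralizer theorem applied to $\Cent_D(L)$) shows Case~II is exactly the situation $\mathrm{ind}(D\otimes_F E)=d$. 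Your module-theoretic computation in Case~II — viewing $\fg(F)=\End_D(D^n)$, turning $D^n$ into a $\Delta=D\otimes_F E$-module, counting dimensions to get $D^n\cong\Delta^{n/2}$, and reading off the centralizer as $\End_\Delta(\Delta^{n/2})\cong M_{n/2}(\Delta)$ — is cleaner than the paper's and has the pleasant feature of proving that $n$ is even and identifying the centralizer in one stroke, without the parity case split. Case~I is handled the same way in both. The one place to be slightly careful is that your argument leans on the converse direction of the subfield-embedding criterion, which the paper sources to \cite{MR3217641}; it would be worth supplying a reference for the form in which you use it.
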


\begin{proof}
\textbf{Case I}: there is an embedding $E\ra D$ as $F$-algebras. This case is a direct consequence of the Noether-Skolem theorem. By the double centralizer theorem, we know that $D'$ is a central division algebra over $E$. 

\textbf{Case II}: there is no embedding $E\ra D$ as $F$-algebras. 
By \cite[Theorem 1.1.2]{MR3217641}, when $nd$ is even, there is an embedding $E\ra\fg(F)$ as $F$-algebras if and only if $n\cdot\capa(D\otimes_F E)$ is even, where $\capa(D\otimes_F E)$ denotes the capacity of the central simple algebra $D\otimes_F E$ over $E$ (see \cite[Proposition 2.15 in Chapter IV]{Milne} for example). Additionally, from \cite[Theorem 1.1.3]{MR3217641}, we show that $\capa(D\otimes_F E)\leq [E:F]=2$. 
In this case, there are two possibilities. 
\begin{enumerate}[(1)]
\item $d$ is even. By \cite[Theorem 1.1.2]{MR3217641}, $\capa(D\otimes_F E)$ is odd, so $\capa(D\otimes_F E)=1$. Since $n\cdot\capa(D\otimes_F E)$ is even, we know that $n$ is even. 
\end{enumerate}
\begin{enumerate}[(2)]
\item $d$ is odd. Since $nd$ is even, we see that $n$ is even. Besides, from \cite[Theorem 1.1.3]{MR3217641}, we also deduce that $\capa(D\otimes_F E)=1$. 
\end{enumerate}
In sum, we have shown that $n$ is even and that $D\otimes_F E$ is a central division algebra over $E$\footnote{As suggested by the referee, there is another more elementary way to see that $D\otimes_F E$ is a central division algebra over $E$ in \textbf{Case II}. In fact, if there were a non-zero non-invertible element $x\in D_E:=D\otimes_F E$, then $x D_E$ would be a nontrivial proper right $D$-submodule of $D_E$ stable by $E$. Hence, it would be of dimension $1$ over $D$, and there would be an embedding from $E$ into $\End_D(xD_E)\simeq D$ as $F$-algebras. Contradiction! }. The tensor of $\fg\fl_{\frac{n}{2},D}$ and a fixed embedding $\Res_{E/F} \fg\fl_{1,E}\ra\fg\fl_2$ gives the indicated way to embed $\fh$ to $\fg$. By the Noether-Skolem theorem, such an embedding is unique up to conjugation by $G(F)$. 
\end{proof}

Next, we describe the correspondence of some parabolic subgroups in $H$ and $G$ in both cases. 

\textbf{Case I}: $(G,H)=(GL_{n,D},\Res_{E/F} GL_{n,D'})$, where $D':=\Cent_D(E)$.  
We denote by $M_0\simeq (\Res_{E/F} \BG_{m,D'})^n$ the subgroup of diagonal elements in $H$, which is a minimal Levi $F$-subgroup of $H$, and by $M_{\wt{0}}\simeq (\BG_{m,D})^n$ the subgroup of diagonal elements in $G$, which is a minimal Levi $F$-subgroup of $G$. We call a parabolic subgroup of $G$ semi-standard if it contains $M_{\wt{0}}$. We also fix $P_0$ the subgroup of upper triangular elements in $H$, which is a minimal parabolic $F$-subgroup of $H$. We call a parabolic subgroup of $H$ standard if it contains $P_0$. There is a bijection $P\mapsto\wt{P}$ characterized by partitions of $n$ from the set of standard parabolic subgroups $P$ of $H$ to the set of semi-standard parabolic subgroups $\wt{P}$ containing $P_0$ of $G$. More precisely, the image of an element $P$ in the source of this bijection is given by the unique element $\wt{P}$ in the target such that $\wt{P}\cap H=P$. In this case, the target is exactly the set of standard parabolic subgroups (namely containing $\wt{P_0}$ the subgroup of upper triangular elements in $G$) of $G$. We shall always write $\wt{P}$ for the image of $P$ under this bijection. Notice that we can identify $A_P$ with $A_{\wt{P}}$. 

\textbf{Case II}: $(G,H)=(GL_{n,D},\Res_{E/F} GL_{\frac{n}{2},D\otimes_F E})$. 
We denote by $M_0\simeq (\Res_{E/F} \BG_{m,D\otimes_F E})^\frac{n}{2}$ the subgroup of diagonal elements in $H$, which is a minimal Levi $F$-subgroup of $H$, and by $M_{\wt{0}}\simeq (\BG_{m,D})^n$ the subgroup of diagonal elements in $G$, which is a minimal Levi $F$-subgroup of $G$. We call a parabolic subgroup of $G$ semi-standard if it contains $M_{\wt{0}}$. We also fix $P_0$ the subgroup of upper triangular elements in $H$, which is a minimal parabolic $F$-subgroup of $H$. We call a parabolic subgroup of $H$ standard if it contains $P_0$. There is a bijection $P\mapsto\wt{P}$ characterized by partitions of $\frac{n}{2}$ from the set of standard parabolic subgroups $P$ of $H$ to the set of semi-standard parabolic subgroups $\wt{P}$ containing $P_0$ of $G$. More precisely, the image of an element $P$ in the source of this bijection is given by the unique element $\wt{P}$ in the target such that $\wt{P}\cap H=P$. In this case, the target is a subset of the set of standard parabolic subgroups (namely containing the subgroup of upper triangular elements in $G$) of $G$. We shall always write $\wt{P}$ for the image of $P$ under this bijection. Notice that we can identify $A_P$ with $A_{\wt{P}}$. 

\begin{prop}
  Let $P$ be a standard parabolic subgroup of $H$. For all $Y\in(\fm_{\wt{P}}\cap\fs)(F)$ and $U\in(\fn_{\wt{P}}\cap\fs)(F)$, we have
  $$ \pi(Y)=\pi(Y+U). $$
\end{prop}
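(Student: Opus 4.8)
The plan is to reduce the equality to the elementary fact that a conjugation-invariant polynomial on $\fg$, restricted to the Lie algebra of a parabolic subgroup, depends only on the projection to the Levi quotient. Recall from Section~\ref{inv2} that $\pi$ sends $Y\in\fs$ to the (even-degree) coefficients of the reduced characteristic polynomial $\chi_{\fg,F}(Y)$ of $Y$ viewed as an element of $\fg$; in particular $\pi(Y)=\pi(Y')$ for $Y,Y'\in\fs(F)$ as soon as $\chi_{\fg,F}(Y)=\chi_{\fg,F}(Y')$. So it suffices to prove $\chi_{\fg,F}(Y)=\chi_{\fg,F}(Y+U)$ for $Y\in(\fm_{\wt P}\cap\fs)(F)$ and $U\in(\fn_{\wt P}\cap\fs)(F)$, noting that $Y+U$ lies in the Lie algebra $\fp_{\wt P}=\fm_{\wt P}\oplus\fn_{\wt P}$ of $\wt P$.

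Next I would base change to an algebraic closure $\ov F$ of $F$ containing $E$. The reduced characteristic polynomial commutes with field extension, and over $\ov F$ the algebra $\fg$ becomes $\fg\fl_{2m}$, so $\chi_{\fg,\ov F}$ is the ordinary characteristic polynomial; moreover $\wt P$ becomes a parabolic subgroup of $GL_{2m}$ with Levi $M_{\wt P,\ov F}$ and unipotent radical $N_{\wt P,\ov F}$, and $Y\otimes 1\in\fm_{\wt P,\ov F}$, $U\otimes 1\in\fn_{\wt P,\ov F}$. Conjugating $\wt P_{\ov F}$ by a suitable element of $GL_{2m}(\ov F)$ (which changes no characteristic polynomial) I may assume $\wt P_{\ov F}$ is the standard block-upper-triangular parabolic for its partition of $2m$; then $Y$ is block-diagonal, $U$ is strictly block-upper-triangular, and $Y+U$ is block-upper-triangular with the same diagonal blocks $Y_1,\dots,Y_r$ as $Y$. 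Expanding the determinant block-wise gives $\det(\lambda I_{2m}-(Y+U))=\prod_{i=1}^{r}\det(\lambda I-Y_i)=\det(\lambda I_{2m}-Y)$, which is the equality of characteristic polynomials over $\ov F$, hence of reduced characteristic polynomials over $F$; therefore $\pi(Y)=\pi(Y+U)$. One could equally avoid base change: $\wt P$ is a standard parabolic of $G=GL_{n,D}$, hence block-triangular in $M_n(D)$, and the reduced characteristic polynomial of a block-upper-triangular element of $M_n(D)$ is the product of those of its diagonal blocks (checked after base change, or from multiplicativity of the reduced norm), so the same computation applies.

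I do not expect a genuine obstacle here. The only point needing a little care is the bookkeeping from Section~\ref{explicitdescription}: one must check, in both Case~I and Case~II, that $\wt P$ really is block-triangular with $M_{\wt P}$ and $N_{\wt P}$ the associated block-diagonal and strictly block-upper-triangular parts, and that $Y$ and $U$ land in $\fm_{\wt P,\ov F}$ and $\fn_{\wt P,\ov F}$ after base change. Both are immediate from the explicit description of the bijection $P\mapsto\wt P$; the substance of the argument is just the one-line block-determinant computation.
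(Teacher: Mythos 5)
Your proof is correct and is essentially a careful unpacking of the paper's one-line argument, which simply asserts that the reduced characteristic polynomial of $Y+U\in\fg$ equals that of $Y\in\fg$ since $Y+U$ lies in the parabolic subalgebra with Levi part $Y$; your base-change-to-$\ov F$ block-determinant computation is precisely the justification behind that assertion.
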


\begin{proof}
  It is obvious, since the reduced characteristic polynomial of $Y+U\in\fg$ is equal to that of $Y\in\fg$. 
\end{proof}

\begin{coro}\label{orbit2}
  Let $P$ be a standard parabolic subgroup of $H$ and $\fo\in\CO$. For all subsets $S_1\subseteq(\fm_{\wt{P}}\cap\fs)(F)$ and $S_2\subseteq(\fn_{\wt{P}}\cap\fs)(F)$, we have $\fo\cap(S_1\oplus S_2)=(\fo\cap S_1)\oplus S_2$.
\end{coro}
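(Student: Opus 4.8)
The plan is to read off the Corollary directly from the preceding Proposition, so the write-up will be short. First I would unwind the definition of $\fo$: by Section \ref{inv2}, a class $\fo\in\CO$ is a fibre of the invariant map $\pi\colon\fs\ra\fc$ over $F$-points, so there is $c_\fo\in\fc(F)$ with $\fo=\{Z\in\fs(F):\pi(Z)=c_\fo\}$. I would also record the elementary linear-algebra fact that, since $\fp_{\wt P}=\fm_{\wt P}\oplus\fn_{\wt P}$ as $F$-vector spaces (and both summands meet $\fs$ in complementary subspaces of $\fp_{\wt P}\cap\fs$), the sum $(\fm_{\wt P}\cap\fs)(F)+(\fn_{\wt P}\cap\fs)(F)$ is direct; hence $S_1\oplus S_2$ is an unambiguous subset of $\fs(F)$ and every element of it has a unique expression $Y+U$ with $Y\in S_1\subseteq(\fm_{\wt P}\cap\fs)(F)$ and $U\in S_2\subseteq(\fn_{\wt P}\cap\fs)(F)$.

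Then I would verify the two inclusions. For $(\fo\cap S_1)\oplus S_2\subseteq\fo\cap(S_1\oplus S_2)$: given $Y\in\fo\cap S_1$ and $U\in S_2$, the Proposition gives $\pi(Y+U)=\pi(Y)=c_\fo$, so $Y+U\in\fo$, while trivially $Y+U\in S_1\oplus S_2$. For the reverse inclusion: given $Z\in\fo\cap(S_1\oplus S_2)$, write $Z=Y+U$ uniquely with $Y\in S_1$ and $U\in S_2$; applying the Proposition again, $\pi(Y)=\pi(Y+U)=\pi(Z)=c_\fo$, so $Y\in\fo$, whence $Y\in\fo\cap S_1$ and $Z=Y+U\in(\fo\cap S_1)\oplus S_2$. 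Combining the two inclusions yields the claimed equality $\fo\cap(S_1\oplus S_2)=(\fo\cap S_1)\oplus S_2$.

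I do not expect any genuine obstacle here: all the real content sits in the Proposition (invariance of the reduced characteristic polynomial under adding an element of the nilpotent part $\fn_{\wt P}\cap\fs$), which is already proved, and the only point requiring a modicum of care is the bookkeeping that $S_1\oplus S_2$ is an honest internal direct sum inside $\fs(F)$, so that the unique-decomposition step in the second inclusion is legitimate; this is immediate from $\fm_{\wt P}\cap\fn_{\wt P}=0$.
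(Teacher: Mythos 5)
Your argument is correct and is exactly what the paper has in mind (the Corollary is stated without an explicit proof, being an immediate consequence of the Proposition). Both directions follow by identifying $\fo$ with a fibre of $\pi$ and invoking $\pi(Y)=\pi(Y+U)$, together with the uniqueness of the decomposition $Z=Y+U$ coming from $\fm_{\wt P}\cap\fn_{\wt P}=0$; there is nothing to add.
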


Let $P$ be a standard parabolic subgroup of $H$. We denote by $\Phi(A_0, \fm_{\wt{P}}\cap\fs)$ (resp. $\Phi(A_0, \fn_{\wt{P}}\cap\fs)$) the set of weights of $A_0$ in $\fm_{\wt{P}}\cap\fs$ (resp. $\fn_{\wt{P}}\cap\fs$). We also denote by $\Phi(A_0, \fm_P)$ (resp. $\Phi(A_0, \fn_P)$) the set of weights of $A_0$ in $\fm_P$ (resp. $\fn_P$). 

\begin{prop}\label{equalwt2.1}
For any standard parabolic subgroup $P$ of $H$, we have
$$ \Phi(A_0, \fm_{\wt{P}}\cap\fs)=\Phi(A_0, \fm_P) $$
and
$$ \Phi(A_0, \fn_{\wt{P}}\cap\fs)=\Phi(A_0, \fn_P). $$
Moreover, each weight of $A_0$ has the same multiplicity in $\fm_{\wt{P}}\cap\fs$ (resp. $\fn_{\wt{P}}\cap\fs$) and $\fm_P$ (resp. $\fn_P$). 
\end{prop}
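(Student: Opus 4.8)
The plan is to produce once and for all an invertible element $w$ of the associative $F$-algebra $\fg$ that lies in the $(-1)$-eigenspace $\fs$ of $\theta$ and centralizes $A_0$, and then to transport everything by the right-multiplication operator $R_w\colon X\mapsto Xw$ on $\fg$. Granting such a $w$: since $\theta$ is an automorphism of the algebra $\fg$ with $\theta(w)=-w$ (hence $\theta(w^{-1})=-w^{-1}$), the map $R_w$ is an $F$-linear automorphism of $\fg$ interchanging $\fh=\fg^\theta$ and $\fs$; and since $w$ and $w^{-1}$ commute with $A_0$, $R_w$ commutes with the adjoint action of $A_0$. Because an $A_0$-equivariant linear isomorphism matches up $A_0$-weight spaces together with their multiplicities, it then suffices to see that $R_w$ restricts to $A_0$-equivariant isomorphisms $\fm_P\xrightarrow{\ \sim\ }\fm_{\wt P}\cap\fs$ and $\fn_P\xrightarrow{\ \sim\ }\fn_{\wt P}\cap\fs$ for a standard parabolic $P$ of $H$.

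I would deduce this from two formal observations. First, conjugation by the torus $A_P=A_{\wt P}$ makes $\fg=\bigoplus_\chi\fg_\chi$ a grading of the associative algebra $\fg$ by $X^*(A_P)$, with $\fm_{\wt P}=\fg_0=\Cent_\fg(A_P)$ a graded subalgebra and $\fn_{\wt P}=\bigoplus_{\chi\in\Phi(A_{\wt P},N_{\wt P})}\fg_\chi$ a two-sided $\fg_0$-submodule (as $\Phi(A_{\wt P},N_{\wt P})$ is stable under adding $0\in X^*(A_P)$). Since $\alpha\in Z(H)\subseteq\Cent_G(A_P)=M_{\wt P}$ lies in $\fg_0$ and is invertible there, the automorphism $\theta=\Ad(\alpha)$ preserves each $\fg_\chi$, hence preserves $\fm_{\wt P}$ and $\fn_{\wt P}$; as $F$ has characteristic zero, this gives $\theta$-eigenspace decompositions $\fm_{\wt P}=(\fm_{\wt P}\cap\fh)\oplus(\fm_{\wt P}\cap\fs)$ and $\fn_{\wt P}=(\fn_{\wt P}\cap\fh)\oplus(\fn_{\wt P}\cap\fs)$, with $\fm_{\wt P}\cap\fh=\Cent_\fh(A_P)=\fm_P$ and $\fn_{\wt P}\cap\fh=\fn_P$ (the latter being clear either from $\wt P\cap H=P$ together with $A_{\wt P}=A_P$, or directly from the matrix models of Section~\ref{explicitdescription}). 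Second, $w$ and $w^{-1}$ lie in $\Cent_\fg(A_0)\subseteq\Cent_\fg(A_P)=\fm_{\wt P}=\fg_0$, so $R_w$ carries $\fm_{\wt P}$ into $\fm_{\wt P}\cdot\fg_0\subseteq\fm_{\wt P}$ and $\fn_{\wt P}$ into $\fn_{\wt P}\cdot\fg_0\subseteq\fn_{\wt P}$, and being invertible it restricts to automorphisms of both; combining this with the fact that $R_w$ swaps $\fh$ and $\fs$ and with the eigenspace decompositions above yields exactly the desired isomorphisms.

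It remains to construct $w$, and this is the one place where the case distinction of Section~\ref{explicitdescription} intervenes. In Case~I, $\alpha\notin F=Z(D)$, so the $(-1)$-eigenspace $\{x\in D:\alpha x=-x\alpha\}$ of $\Ad(\alpha)$ on $D$ is nonzero; choosing any nonzero $u$ in it --- automatically invertible, since $D$ is a division algebra --- the matrix $w:=\diag(u,\dots,u)\in GL_n(D)=G(F)$ satisfies $\theta(w)=-w$ and centralizes the diagonal torus $A_0$ because $u$ commutes with $F\subseteq Z(D)$. In Case~II, $E$ reaches $\fg$ only through the fixed embedding $\Res_{E/F}\gl_{1,E}\hookrightarrow\gl_2$, say $\alpha\mapsto\alpha_0\in\gl_2(F)$; the $(-1)$-eigenspace of $\Ad(\alpha_0)$ on $\gl_2(F)$ is two-dimensional and contains an invertible element $w_1$ (over an algebraic closure it becomes the space of off-diagonal matrices; when $\alpha_0$ is the companion matrix of $\lambda^2-\alpha^2$ one may simply take $w_1=\diag(1,-1)$, which anticommutes with $\alpha_0$), and then $w:=I_{n/2}\otimes w_1\in GL_n(D)$ satisfies $\theta(w)=-w$ and, being block-diagonal with blocks of size $2$, centralizes $A_0=\{\diag(t_1I_2,\dots,t_{n/2}I_2):t_k\in F^\times\}$. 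In both cases $w\in\Cent_\fg(A_0)$, which is what was needed above.

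The main --- and fairly mild --- obstacle is precisely this production of an invertible element of $\fs\cap\Cent_\fg(A_0)$ in Case~II, where the non-embeddability of $E$ into $D$ rules out the naive ``diagonal'' choice available in Case~I and forces one to work inside the $2\times 2$ matrix model of $\Res_{E/F}\gl_{1,E}$; everything else is formal given the explicit pictures of $(G,H)$ and of the $\wt P$ recalled in Section~\ref{explicitdescription}. Alternatively, one can bypass $w$ altogether and simply read off the two equalities of weight sets and of all multiplicities directly from those matrix pictures, case by case; the argument above is merely a uniform repackaging of that computation.
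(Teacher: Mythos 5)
Your argument is correct, and your element $w$ is precisely the $\tau$ that the paper introduces at the end of Section \ref{explicitdescription}: the conclusion you reach is exactly Proposition \ref{equalwt2.2} ($\fm_{\wt P}\cap\fs=\fm_P\tau$ and $\fn_{\wt P}\cap\fs=\fn_P\tau$), from which Proposition \ref{equalwt2.1} follows since right multiplication by $\tau\in\Cent_\fg(A_0)$ is $A_0$-equivariant. The paper leaves both propositions as ``obvious'' from the explicit matrix models of \textbf{Case I} and \textbf{Case II}, so your proof is the paper's intended route with the $A_0$-equivariance and the preservation of $\fm_{\wt P}$ and $\fn_{\wt P}$ under $R_w$ spelled out in detail rather than read off by inspection.
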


\begin{proof}
It is obvious for both of \textbf{Case I} and \textbf{Case II} described above.  
\end{proof}

For $P$ a standard parabolic subgroup of $H$, let $\rho_{P,\fs}$ (resp. $\rho_P$) denote the half of the sum of weights (with multiplicities) of $A_0$ in $\fn_{\wt{P}}\cap\fs$ (resp. $\fn_P$). 

\begin{coro}\label{corequalwt2.1}
For any standard parabolic subgroup $P$ of $H$, we have
$$ \rho_{P,\fs}=\rho_P. $$
\end{coro}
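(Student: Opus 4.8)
The plan is to deduce Corollary \ref{corequalwt2.1} directly from Proposition \ref{equalwt2.1}. By definition, $\rho_{P,\fs}$ is the half of the sum (with multiplicities) of the weights of $A_0$ appearing in $\fn_{\wt{P}}\cap\fs$, and $\rho_P$ is the half of the sum (with multiplicities) of the weights of $A_0$ appearing in $\fn_P$. So one is literally summing the same multiset of elements of $\fa_0^*$ in both cases.

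First I would recall that Proposition \ref{equalwt2.1} gives not merely an equality of the underlying sets $\Phi(A_0,\fn_{\wt{P}}\cap\fs)=\Phi(A_0,\fn_P)$, but also that each weight occurs with the same multiplicity on both sides. Hence the formal sum $\sum_{\chi}m(\chi)\,\chi$, taken over $\chi\in\Phi(A_0,\fn_{\wt{P}}\cap\fs)$ with $m(\chi)$ its multiplicity in $\fn_{\wt{P}}\cap\fs$, coincides term by term with $\sum_{\chi}m'(\chi)\,\chi$ over $\chi\in\Phi(A_0,\fn_P)$ with $m'(\chi)$ its multiplicity in $\fn_P$, because $\Phi(A_0,\fn_{\wt{P}}\cap\fs)=\Phi(A_0,\fn_P)$ and $m(\chi)=m'(\chi)$ for every such $\chi$. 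Dividing by $2$ in $\fa_0^*$ yields $\rho_{P,\fs}=\rho_P$, which is the assertion.

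There is essentially no obstacle here: the only thing to be slightly careful about is the ambient space in which the identity is asserted. Both $\rho_{P,\fs}$ and $\rho_P$ naturally live in $(\fa_P^H)^*\subseteq\fa_0^*$ (they are fixed by the Weyl group of $M_P$ since they are half-sums of $A_0$-weights on $N_{\wt P}$-type spaces normalized by $M_0$, hence by $M_P$), and the equality of multisets of weights identifies them as the same element of $\fa_0^*$. I would therefore simply write: the claim follows immediately from Proposition \ref{equalwt2.1}, since $\rho_{P,\fs}$ and $\rho_P$ are half-sums over the same multiset of weights of $A_0$.
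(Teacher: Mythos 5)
Your proof is correct and matches the paper's (implicit) reasoning: the corollary is stated as an immediate consequence of Proposition \ref{equalwt2.1}, which provides exactly the equality of weight multisets (with multiplicities) that you invoke. No further comment is needed.
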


At the end of this subsection, we point out two non-canonical $F$-linear isomorphisms between $\fh$ and $\fs$ which will be useful for some technical problems. We have chosen an element $\alpha\in E$ in Section \ref{gpandls}. 
Since $\Ad(\alpha)$ is a nontrivial involution on $D$ in \textbf{Case I} (resp. $\fg\fl_2(F)$ in \textbf{Case II}), there exists an element $\tau\in D^\times$ in \textbf{Case I} (resp. $\tau\in GL_2(D)$ in \textbf{Case II}) such that $\Ad(\alpha)(\tau)=-\tau$. Let $\tau$ be such an element. By the diagonal embedding, we can view $\BG_{m, D}$ in \textbf{Case I} (resp. $GL_{2, D}$ in \textbf{Case II}) as an algebraic subgroup of $G$. Then $\tau$ can be regarded as an element of $G(F)$. 

\begin{prop}\label{equalwt2.2}
There are two non-canonical isomorphisms induced by multiplication by $\tau$ between $\fh$ and $\fs$ as free left or right $D'$-modules in \textbf{Case I} (resp. $D\otimes_F E$-modules in \textbf{Case II}), i.e., $$ \fs=\fh\tau=\tau\fh. $$
Moreover, for any standard parabolic subgroup $P$ of $H$, we have
$$ \fm_{\wt{P}}\cap\fs=\fm_P\tau=\tau\fm_P $$
and
$$ \fn_{\wt{P}}\cap\fs=\fn_P\tau=\tau\fn_P. $$
\end{prop}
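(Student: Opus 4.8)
The plan is to exhibit $\tau$ explicitly and reduce everything to a computation after base change. First I would recall that $\alpha \in E$ satisfies $\alpha^2 \in F$ and $\Ad(\alpha)$ acts on $\fg$ as the involution $\theta$ with $\fh = \fg^{\theta}$ (the $+1$-eigenspace) and $\fs$ the $-1$-eigenspace. In \textbf{Case I} the factor $D$ carries the restriction of $\theta = \Ad(\alpha)$, which is a nontrivial involution on $D$ since $E \not\subseteq \Cent_D(E)$ unless $E = D'$... in any case $\Ad(\alpha)|_D$ is nontrivial because $\alpha \notin F = Z(D)$, so by Skolem--Noether there is $\tau \in D^{\times}$ with $\Ad(\alpha)(\tau) = -\tau$ (equivalently $\alpha \tau = -\tau \alpha$, i.e.\ $\tau \alpha \tau^{-1} = -\alpha$). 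Similarly in \textbf{Case II} one takes $\tau \in GL_2(D)$ realizing $-\alpha$ from $\alpha$ inside the $GL_2$-factor. Viewing $\tau$ diagonally as an element of $G(F)$, the key point is that $\Ad(\tau)$ conjugates $\theta$ to $-\theta$ on the relevant factor, hence $\tau \fh \tau^{-1}$ lands in the $-1$-eigenspace.

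The core step is the identity $\fs = \fh\tau = \tau\fh$. For $X \in \fh$ we have $\theta(X) = X$, and since $\theta$ is an algebra automorphism, $\theta(X\tau) = \theta(X)\theta(\tau) = X \cdot (-\tau) = -X\tau$, so $X\tau \in \fs$; thus $\fh\tau \subseteq \fs$. Since $\tau$ is invertible in $\fg$, right multiplication by $\tau$ is an $F$-linear injection $\fg \to \fg$, so $\dim_F(\fh\tau) = \dim_F(\fh)$. But $\theta$ is an involution, so $\fg = \fh \oplus \fs$ and $\dim_F \fh = \dim_F \fs$ (this equality holds because after base change to $\ov F$ containing $E$ the involution becomes conjugation by $\diag(1,\dots,1,-1,\dots,-1)$, whose $\pm 1$-eigenspaces in $\fg\fl_{2m}$ both have dimension $2m^2$). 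Hence $\fh\tau = \fs$; the argument for $\tau\fh$ is identical, using $\theta(\tau X) = -\tau X$. The $D'$-module (resp.\ $D\otimes_F E$-module) structure claims are automatic: $\fh$ is a free module over $D' = \Cent_D(E)$ (resp.\ $D\otimes_F E$) for the obvious left/right actions, $\tau \in D^{\times}$ (resp.\ $GL_2(D)$) commutes appropriately with this action — more precisely right multiplication by $\tau$ intertwines the left $D'$-action, and left multiplication by $\tau$ intertwines the right $D'$-action — so both maps are $D'$-linear (resp.\ $D\otimes_F E$-linear) isomorphisms.

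For the refined statements, I would note that in both cases $\tau$ lies in the diagonally embedded copy of $\BG_{m,D}$ (resp.\ $GL_{2,D}$), hence $\tau$ normalizes $M_0$ and commutes with $A_0$; consequently $\Ad(\tau)$ preserves the weight space decomposition of $\fg$ under $A_0$. Using $\fs = \fg^{-\theta}$ and $\fg = \fm_{\wt P} \oplus \fn_{\wt P} \oplus \bar{\fn}_{\wt P}$, together with the fact (Proposition \ref{equalwt2.1}) that $\fm_{\wt P} \cap \fs$ and $\fn_{\wt P} \cap \fs$ have the same $A_0$-weights with the same multiplicities as $\fm_P$ and $\fn_P$, one gets that $\fm_P \tau$ (resp.\ $\fn_P \tau$) is contained in $\fm_{\wt P} \cap \fs$ (resp.\ $\fn_{\wt P} \cap \fs$) — because multiplication by $\tau$ does not move $A_0$-weights, $\fm_P \tau$ sits inside $\fm_{\wt P}$, and it sits inside $\fs$ by the eigenspace computation of the previous paragraph — and then equality follows by the dimension count $\dim_F(\fm_P \tau) = \dim_F \fm_P = \dim_F(\fm_{\wt P} \cap \fs)$, again from Proposition \ref{equalwt2.1}. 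The same reasoning handles $\tau \fm_P$ and $\tau \fn_P$.

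The only real obstacle is the bookkeeping in \textbf{Case II}: there $\tau$ involves the $GL_2$-factor (the element realizing $-\alpha$ from $\alpha$ in $\fg\fl_2(F)$ under the embedding $\Res_{E/F}\fg\fl_{1,E} \to \fg\fl_2$), and one must check that it genuinely normalizes $M_0 \simeq (\Res_{E/F}\BG_{m,D\otimes_F E})^{n/2}$ and the chosen $P_0$-structure, so that the containment $\fm_P\tau \subseteq \fm_{\wt P}$ is literally true rather than only true up to conjugacy. Concretely this amounts to observing that the $\tau$ chosen inside the $2\times 2$ block (e.g.\ an off-diagonal or anti-diagonal matrix swapping the two eigenlines of $\alpha$) acts on the block-structure in a way compatible with the correspondence $P \mapsto \wt P$ built from the same embedding. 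Once that compatibility is recorded, everything else is the eigenspace computation plus a dimension count, and I would present it in that order: (1) produce $\tau$ via Skolem--Noether, (2) prove $\fs = \fh\tau = \tau\fh$ by the $\theta(X\tau) = -X\tau$ computation and dimension equality, (3) upgrade to $D'$- (resp.\ $D\otimes_F E$-) linearity, (4) restrict to $\fm_{\wt P}$ and $\fn_{\wt P}$ using invariance of $A_0$-weights and Proposition \ref{equalwt2.1}.
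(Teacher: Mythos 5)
Your proof is essentially correct and amounts to spelling out what the paper dismisses as obvious, but there is one genuinely wrong sentence you should delete, and one step you should tighten.

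The wrong sentence is: ``the key point is that $\Ad(\tau)$ conjugates $\theta$ to $-\theta$ on the relevant factor, hence $\tau\fh\tau^{-1}$ lands in the $-1$-eigenspace.'' Since $\tau\alpha\tau^{-1}=-\alpha$ and $-1$ is central, one has $\Ad(\tau)\circ\theta\circ\Ad(\tau)^{-1}=\Ad(\tau\alpha\tau^{-1})=\Ad(-\alpha)=\Ad(\alpha)=\theta$, so $\Ad(\tau)$ \emph{commutes} with $\theta$ and $\tau\fh\tau^{-1}=\fh$, not $\fs$. Moreover $-\theta$ is not an algebra automorphism, so the phrase is ill-posed. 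Fortunately the actual argument you then give is the right one and never uses this sentence: $\theta$ is an algebra automorphism with $\theta(\tau)=-\tau$, so for $X\in\fh$ one has $\theta(X\tau)=\theta(X)\theta(\tau)=-X\tau$ and $\theta(\tau X)=-\tau X$; combined with the equality of dimensions of the $\pm1$-eigenspaces, this gives $\fs=\fh\tau=\tau\fh$. So the conclusion stands, but the ``key point'' sentence should simply be removed.

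For the refined statement $\fm_P\tau\subseteq\fm_{\wt P}\cap\fs$, your weight argument is correct in spirit but leaves implicit a fact it relies on, namely that for a weight $\mu\in\Phi(A_0,\fm_{\wt P})$ the full $\mu$-weight space of $A_0$ in $\fg$ already lies inside $\fm_{\wt P}$, i.e.\ that $\Phi(A_0,\fm_{\wt P})$, $\Phi(A_0,\fn_{\wt P})$, $\Phi(A_0,\fn_{\ov{\wt{P}}})$ are pairwise disjoint. This is true in both cases (the weight $e_i-e_j$ of $A_0$ pins down the block, even in Case II where $A_0\subsetneq A_{\wt 0}$), but it needs a line. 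A shorter and cleaner route: $\fm_P\subseteq\fm_{\wt P}$, and $\tau$ viewed diagonally is an invertible element of $\fm_{\wt 0}\subseteq\fm_{\wt P}$ (in Case II this uses that the parts of the partition defining $\wt P$ are all even), so $\fm_P\tau\subseteq\fm_{\wt P}\cdot\fm_{\wt P}\subseteq\fm_{\wt P}$ because $\fm_{\wt P}$ is a subalgebra; intersecting with $\fs$ (from $\fh\tau\subseteq\fs$) and then counting dimensions via Proposition \ref{equalwt2.1} finishes the argument, and likewise for $\fn_{\wt P}\cap\fs$. The paper gives no written proof, so there is no ``paper approach'' to compare against; your filled-in argument, with the correction above, is a fair expansion of ``it is obvious.''
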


\begin{proof}
It is obvious for both of \textbf{Case I} and \textbf{Case II} described above.  
\end{proof}

\subsection{\textbf{Fourier transform}}
Fix a nontrivial unitary character $\Psi$ of $\BA/F$. Let $\langle\cdot,\cdot\rangle$ be the $H(\BA)$-invariant bilinear form on $\fs(\BA)$ defined by
\begin{equation}\label{bilinearform2}
 \forall Y_1,Y_2\in\fs(\BA), \langle Y_1,Y_2\rangle:=\Trd_{\fg,F}(Y_1 Y_2), 
\end{equation}
where $\Trd_{\fg,F}(Y_1 Y_2)$ denotes the reduced trace of $Y_1 Y_2\in\fg(\BA)$. It is non-degenerate, which can be seen after the base change to an algebraic closure of $F$. For $f\in\CS(\fs(\BA))$, its Fourier transform $\hat{f}$ is defined by
\begin{equation}\label{fouriertransform2}
 \forall \wh{Y}\in\fs(\BA), \hat{f}(\wh{Y}):=\int_{\fs(\BA)} f(Y)\Psi(\langle Y,\wh{Y}\rangle) dY. 
\end{equation}


\section{\textbf{Integrability of the modified kernel}}

Let $f\in\CS(\fs(\BA))$, $P$ be a standard parabolic subgroup of $H$ and $\fo\in\CO$. For $x\in M_P(F) N_P(\BA)\bs H(\BA)$, define
$$ k_{f,P,\fo}(x):=\sum_{Y\in\fm_{\wt{P}}(F)\cap\fo} \int_{(\fn_{\wt{P}}\cap\fs)(\BA)} f(\Ad(x^{-1})(Y+U)) dU. $$
For $T\in\fa_0$ and $x\in H(F)\bs H(\BA)$, define
\begin{equation}\label{deftrunc2.1}
 k_{f,\fo}^T(x):=\sum_{\{P: P_0\subseteq P\}} (-1)^{\dim(A_P/A_{H})} \sum_{\delta\in P(F)\bs H(F)} \wh{\tau}_P^H(H_{P}(\delta x)-T)\cdot k_{f,P,\fo}(\delta x). 
\end{equation}
By \cite[Lemma 5.1]{MR518111}, we know that the sum over $\delta\in P(F)\bs H(F)$ is finite.

\begin{lem}\label{combinatoriclemma2}
  There is a $T_+\in\fa_{P_0}^+$ such that for all standard parabolic subgroup $P$ of $H$, $T\in T_+ +\fa_{P_0}^+$ and $x\in H(\BA)$, we have
  $$ \sum_{\{P_1:P_0\subseteq P_1\subseteq P\}} \sum_{\delta_1\in P_1(F)\bs P(F)} F^{P_1}(\delta_1 x, T) \tau_{P_1}^P(H_{P_1}(\delta_1 x)-T)=1. $$
\end{lem}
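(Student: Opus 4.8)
The statement in Lemma \ref{combinatoriclemma2} is a well-known combinatorial identity due to Arthur (it appears as Lemma 6.4 in \cite{MR518111} or its Lie algebra analogues), and the plan is to reduce it to that known form.

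My plan is as follows. First I would recall the geometric content of the function $F^{P_1}(\cdot, T)$: by its definition in Section \ref{hcmap}, $F^{P_1}(x,T)$ is the characteristic function of (the projection to $P_1(F)\bs H(\BA)$ of) the truncated Siegel set $\fS_{P_0}^{P_1}(\omega_{P_0}, t_0, T)$, which, via reduction theory, detects whether the $M_{P_1}$-component of $x$ lies in a fixed fundamental domain truncated above by the condition $\varpi(H_{P_0}(\cdot)-T)\le 0$ for all $\varpi\in\wh\Delta_{P_0}^{P_1}$. The factor $\tau_{P_1}^P(H_{P_1}(\delta_1 x)-T)$ is the characteristic function of the positive chamber $\{\alpha(\cdot)>0:\alpha\in\Delta_{P_1}^P\}$ in $\fa_{P_1}^P$. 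So the left-hand side is exactly the expression whose identical-to-$1$ property Arthur established in the group setting.

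Next I would carry out the actual proof by following Arthur's argument, making the (minor) adaptations needed for the group $H$ at hand. Concretely: fix $x\in H(\BA)$. By the reduction theory statement recalled in Section \ref{hcmap}, there exist a standard parabolic $P_1\subseteq P$ and $\delta_1\in P_1(F)\bs P(F)$ such that $\delta_1 x\in\fS_{P_0}^{P}(\omega_{P_0},t_0)$, i.e. $\delta_1 x$ lies in the Siegel set relative to $P$; one then shows that among all such pairs there is a unique one for which in addition $F^{P_1}(\delta_1 x, T)=1$ and $\tau_{P_1}^P(H_{P_1}(\delta_1 x)-T)=1$ --- existence uses that $T$ is chosen deep enough in the cone $T_+ + \fa_{P_0}^+$ (this is what fixes $T_+$), and uniqueness is a consequence of the partition of $\fa_0$ furnished by the pairs $(F^{P_1}, \tau_{P_1}^P)$. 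The key input is the elementary fact (Langlands' combinatorial lemma, \cite[Lemma 6.3]{MR518111}) that for any $H\in\fa_{P_0}$ there is a unique standard $P_1$ with $\varpi(H-T)\le 0$ for $\varpi\in\wh\Delta_{P_0}^{P_1}$ and $\alpha(H-T)>0$ for $\alpha\in\Delta_{P_1}^P$; transporting $x$ into a Siegel set and applying this to $H_{P_0}$ of its torus component, together with a standard argument that the choice of $\delta_1$ is forced modulo $P_1(F)$, yields the result. Since $H$ is (a restriction of scalars of an inner form of) $GL_n$ and we have arranged in Section \ref{hcmap} that reduction theory and the compatibility of Siegel sets with Levi subgroups hold, no essential modification of Arthur's proof is needed.

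The only genuine subtlety — and the step I expect to need the most care — is the choice of $T_+$ and the verification that the "upper" truncation condition defining $F^{P_1}$ and the "lower/chamber" condition $\tau_{P_1}^P$ are genuinely complementary along the boundary walls, so that summing over $P_1$ gives exactly $1$ rather than an alternating sum or an overcount; this is where the hypothesis $T\in T_+ + \fa_{P_0}^+$ is used to push all the relevant inequalities strictly into their respective ranges, and where one invokes the fact that the representatives $\omega_s$ of the Weyl group can be taken in $H(F)\cap K$ (noted in Section \ref{hcmap}) to control the finitely many boundary contributions. I would therefore isolate this as a lemma on partitions of $\fa_0$ and cite/adapt \cite[Lemmas 6.1--6.4]{MR518111} for it, after which the claimed identity is immediate.
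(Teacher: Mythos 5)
Your proposal is correct and takes the same approach as the paper: the paper's proof of this lemma is literally a one-line citation of \cite[Lemma 6.4]{MR518111}, and your plan likewise reduces the claim to Arthur's result (after noting, as the paper does in Section \ref{hcmap}, that reduction theory, Siegel sets, and Weyl representatives in $H(F)\cap K$ are all in place for $H$). Your sketch of Arthur's argument is a reasonable gloss, though not needed for the paper's purposes; the one small inaccuracy is that what you attribute to ``Langlands' combinatorial lemma'' (unique $P_1$ with $\varpi\le 0$, $\alpha>0$) is really the partition-of-$\fa_0$ fact underlying the proof of Lemma 6.4, not the statement of \cite[Lemma 6.3]{MR518111} itself.
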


\begin{proof}
  This is \cite[Lemma 6.4]{MR518111}.
\end{proof}

We shall fix such a $T_+$. 

\begin{thm}\label{convergence2}
  For all $T\in T_+ +\fa_{P_0}^+$,
  $$ \sum_{\fo\in\CO} \int_{H(F)\bs H(\BA)^1} |k_{f,\fo}^T(x)| dx < \infty. $$
\end{thm}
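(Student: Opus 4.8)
The strategy is the one introduced by Arthur in \cite{MR518111} and adapted to the infinitesimal setting in \cite{2019arXiv190407102L}: reduce the sum over $\fo$ of the truncated integral to a finite combinatorial expression in which one can control the growth of the inner kernels via the Poisson summation formula over lattices in $(\fn_{\wt P}\cap\fs)(\BA)$, and then bound the result by a convergent integral over a Siegel set. The simplification available here, not present in \cite{2019arXiv190407102L}, is the identity $H(\BA)^1=H(\BA)\cap G(\BA)^1$ together with the bijection $P\mapsto\wt P$ of Section \ref{explicitdescription}, which lets us transport the weight combinatorics of $\fs$ to the honest root combinatorics of $\fh$ (Proposition \ref{equalwt2.1} and Corollary \ref{corequalwt2.1}).

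\textbf{Key steps.} First I would insert the partition of unity of Lemma \ref{combinatoriclemma2} into the definition (\ref{deftrunc2.1}) of $k_{f,\fo}^T(x)$ and rearrange the double sum over standard parabolics, exactly as in \cite[\S6]{MR518111}: this rewrites $\sum_\fo k_{f,\fo}^T(x)$, after integration over $H(F)\bs H(\BA)^1$ and unfolding, as a sum over pairs $P_1\subseteq P$ of standard parabolics of an integral over the truncated Siegel set $\fS_{P_0}^{P}(\omega_{P_0},t_0,T)$ (intersected with $H(\BA)^1$) of an expression involving $F^{P_1}(x,T)$, $\sigma_{P_1}^{P}(\cdot)$, and an alternating sum of the inner kernels $k_{f,P',\fo}(x)$ for $P_1\subseteq P'\subseteq P$. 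Second, for fixed $P_1$ I would invoke the Poisson summation formula on the $F$-lattice $(\fn_{\wt{P_1}}\cap\fs)(F)$ inside $(\fn_{\wt{P_1}}\cap\fs)(\BA)$: using Corollary \ref{orbit2} to decompose $\fo\cap\fm_{\wt{P'}}(F)$ compatibly with the $N$-direction, the alternating sum over $P'$ collapses (the usual cancellation of the "constant Fourier coefficient" term), leaving only contributions indexed by $\xi\in(\fn_{\wt{P_1}}\cap\fs)(F)$ with $\xi$ lying outside every smaller $\fn_{\wt{P'}}$, i.e. genuinely nontrivial along each simple root direction cut out by $\sigma_{P_1}^{P}$. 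Third, on the support of $F^{P_1}(x,T)\sigma_{P_1}^{P}(H_{P_1}(x)-T)$ these nontrivial $\xi$ force the matrix entries of $\Ad(x^{-1})(\xi)$ to have norms bounded below by a positive power of $e^{\beta(H_{P_1}(x))}$ for the relevant simple roots $\beta$, while the Schwartz decay of $f$ (and of its partial Fourier transforms, which appear after Poisson summation) gives, for every $N$, a bound $\ll e^{-N\beta(H_{P_1}(x))}$; summing a Schwartz function over the complementary lattice directions and using $\rho_{P_1,\fs}=\rho_{P_1}$ (Corollary \ref{corequalwt2.1}) to match the modulus character $e^{-2\rho_{P_1}(H_{P_1}(x))}$ coming from the Haar measure normalization of Section \ref{BSandHaar2}, one gets an integrand on the Siegel set that decays rapidly in every direction $\beta(H_{P_1}(x))\to+\infty$. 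Finally, since on $H(\BA)^1$ the coordinates of $H_{P_0}(x)$ complementary to these simple-root directions vary in a region of polynomial growth, and the number of classes $\fo\in\CO$ actually contributing for a given $x$ is itself of polynomial growth in $\|H_{P_0}(x)\|$ (only finitely many reduced characteristic polynomials have bounded-height coefficients), the integral converges and the sum over $\fo$ may be taken inside; the positivity throughout makes the interchange of sum and integral legitimate, yielding the claimed absolute convergence.

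\textbf{Main obstacle.} The technical heart, and the place where the rationality issues flagged in the introduction bite, is Step 2--3: making the Poisson-summation cancellation precise on $\fs$ rather than on $\fg\fl_n$ requires knowing that $(\fn_{\wt P}\cap\fs)(\BA)/(\fn_{\wt P}\cap\fs)(F)$ is compact with the right self-dual measure, that the bilinear form (\ref{bilinearform2}) restricts non-degenerately to each $\fn_{\wt{P_1}}^{P'}\cap\fs$ paired against its opposite, and that Proposition \ref{equalwt2.1} genuinely identifies the relevant root data over $F$ (not merely over $\ov F$) so that "nontrivial along a simple root direction" is a well-defined $F$-rational condition; the isomorphisms $\fs=\fh\tau=\tau\fh$ of Proposition \ref{equalwt2.2} are exactly what reduce these points to the corresponding, already-known, statements for $\fh$. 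Granting those identifications, the remaining estimates are a routine transcription of Arthur's argument, lighter here than in \cite{2019arXiv190407102L} because $H(\BA)^1=H(\BA)\cap G(\BA)^1$ removes the discrepancy between the two split-component conventions.
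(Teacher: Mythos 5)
Your proposal follows the same route as the paper's proof: the partition of unity from Lemma \ref{combinatoriclemma2} together with the $\sigma_{P_1}^{P_2}$-decomposition reduces to bounding integrals over truncated Siegel sets of $\chi_{P_1,P_2}^T\cdot k_{P_1,P_2,\fo}$, Poisson summation collapses the alternating sum over intermediate parabolics, and Schwartz-decay estimates balanced against the modulus character via $\rho_{P,\fs}=\rho_P$ (Corollary \ref{corequalwt2.1}) and the root identification of Proposition \ref{equalwt2.1} finish the argument, so this is essentially the paper's proof. The one imprecision is in the bookkeeping of the Poisson step: the paper assigns to each $Y\in\fm_{\wt P}(F)\cap\fo$ a canonical minimal $R$ and applies Poisson summation to the $(\fn_{\wt R}^{\wt P}\cap\fs)(F)$-variable (not to $(\fn_{\wt{P_1}}\cap\fs)(F)$), and after the collapse via \cite[Proposition 1.1]{MR518111} the surviving terms are indexed by the Fourier-dual opposite-nilradical lattice $((\ov{\fn}_{\wt R}^{\wt{P_2}})'\cap\fs)(F)$ rather than by elements of $\fn_{\wt{P_1}}\cap\fs$; moreover the sum over $\fo$ simply recombines with the sum over $\xi\in\wt{\fm}_{\wt{P_1}}^{\wt R}(F)\cap\fo$ into one sum over all rational points, so no separate polynomial count of contributing classes is needed.
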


\begin{proof}
  Let $P_1\subseteq P_2$ be a pair of standard parabolic subgroups of $H$. As in \cite[\S6]{MR518111}, for $T_1\in \fa_{P_1}$, define the characteristic function
  $$ \sigma_{P_1}^{P_2}(T_1):=\sum_{\{Q:P_2\subseteq Q\}} (-1)^{\dim(A_{P_2}/A_Q)} \tau_{P_1}^Q(T_1) \wh{\tau}_Q^H(T_1). $$
  Recall that for $P\supseteq P_1$ a standard parabolic subgroup of $H$, we have (see \cite[p. 943]{MR518111})
  $$ \tau_{P_1}^P(T_1)\wh{\tau}_P^H(T_1)=\sum_{\{P_2:P\subseteq P_2\}}\sigma_{P_1}^{P_2}(T_1). $$
  For $x\in P_1(F)\bs H(\BA)$, we write
  $$ \chi_{P_1, P_2}^T(x):=F^{P_1}(x, T)\sigma_{P_1}^{P_2}(H_{P_1}(x)-T) $$
  and
  $$ k_{P_1, P_2, \fo}(x):=\sum_{\{P: P_1\subseteq P\subseteq P_2\}} (-1)^{\dim(A_P/A_{H})} k_{f,P,\fo}(x). $$

  By Lemma \ref{combinatoriclemma2} and the left invariance of $H_P$ and $k_{f,P,\fo}$ by $P(F)$, we obtain
  $$ k_{f, \fo}^T(x)=\sum_{\{P_1,P_2: P_0\subseteq P_1\subseteq P_2\}} \sum_{\delta\in P_1(F)\bs H(F)} \chi_{P_1, P_2}^T(\delta x) k_{P_1, P_2, \fo}(\delta x). $$
  Thus
  $$ \sum_{\fo\in\CO} \int_{H(F)\bs H(\BA)^1} |k_{f,\fo}^T(x)| dx\leq \sum_{\fo\in\CO} \sum_{\{P_1,P_2: P_0\subseteq P_1\subseteq P_2\}} \int_{P_1(F)\bs H(\BA)^1} \chi_{P_1, P_2}^T(x) |k_{P_1, P_2, \fo}(x)| dx. $$
  Then we only need to show that for any pair of standard parabolic subgroups $P_1\subseteq P_2$ of $H$,
  $$ \sum_{\fo\in\CO} \int_{P_1(F)\bs H(\BA)^1} \chi_{P_1, P_2}^T(x) |k_{P_1, P_2, \fo}(x)| dx <\infty. $$
  If $P_1=P_2\neq H$, by \cite[Lemma 6.1]{MR518111}, we have $\sigma_{P_1}^{P_2}=0$ and then $\chi_{P_1, P_2}^T=0$, so the integration vanishes. If $P_1=P_2=H$, since $F^{H}(\cdot, T)$ is a characteristic function with compact support in $H(F)\bs H(\BA)^1$, the integration is convergent. Hence, we reduce ourselves to proving the following proposition.
\end{proof}

\begin{prop}\label{prop43}
  Let $f\in\CS(\fs(\BA))$ and $P_1\subsetneq P_2$ be a pair of standard parabolic subgroups of $H$. Suppose that $\epsilon_0$ and $N$ are two arbitrary but fixed positive real numbers. Then there exists a constant $C$ such that
  $$ \sum_{\fo\in\CO} \int_{P_1(F)\bs H(\BA)^1} \chi_{P_1, P_2}^T(x) |k_{P_1, P_2, \fo}(x)| dx\leq Ce^{-N\parallel T\parallel} $$
  for all $T\in T_+ +\fa_{P_0}^+$ satisfying $\alpha(T)\geq\epsilon_0\parallel T\parallel$ for any $\alpha\in\Delta_{P_0}^H$.
\end{prop}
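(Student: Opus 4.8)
The plan is to adapt Arthur's original convergence argument \cite{MR518111} to the present infinitesimal setting, following the blueprint used for the pair $(G',H')$ in \cite[\S4]{2019arXiv190407102L} and exploiting the simplifications provided by Corollary \ref{orbit2} and Propositions \ref{equalwt2.1}, \ref{equalwt2.2}. The latter identify $\fm_{\wt P}\cap\fs$ and $\fn_{\wt P}\cap\fs$ with the translates $\fm_P\tau$, $\fn_P\tau$ of the usual Lie-algebra objects attached to $H$, with the same $A_0$-weights and multiplicities, while Corollary \ref{orbit2} shows that the classes $\fo$ are compatible with the decomposition along $\wt P$; after these reductions the combinatorics is formally the same as in the split case, and the rationality issues mentioned in the introduction do not reappear here.

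First I would pass to a truncated Siegel domain. On the support of $\chi_{P_1,P_2}^T=F^{P_1}(\cdot,T)\,\sigma_{P_1}^{P_2}(H_{P_1}(\cdot)-T)$, reduction theory lets us write $x=nmak$ with $n$ in a fixed compact fundamental domain for $N_{P_1}(F)\bs N_{P_1}(\BA)$, $m$ in a compact subset of $M_{P_1}(\BA)^1$ whose volume grows at most polynomially in $\|T\|$, $k\in K$, and $a\in A_{P_1}^{H, \infty}$ with $H_{P_1}(a)-T$ in the support of $\sigma_{P_1}^{P_2}$; by \cite[Lemma 6.1]{MR518111} one then has $\tau_{P_1}^{P_2}(H_{P_1}(a)-T)=\wh\tau_{P_2}^H(H_{P_1}(a)-T)=1$. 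Unfolding the integral and inserting the Jacobian $e^{-2\rho_{P_1}(H_{P_1}(a))}$, the quantity to be bounded is majorized by $P(\|T\|)\int_K\int_{\fa_{P_1}^H}\sigma_{P_1}^{P_2}(X-T)\,\Theta(X,k)\,e^{-2\rho_{P_1}(X)}\,dX\,dk$ for a fixed polynomial $P$, where $\Theta(X,k)$ is the supremum over the compact $n,m$-ranges of $\sum_{\fo\in\CO}|k_{P_1,P_2,\fo}(nmak)|$, with $H_{P_1}(a)=X$.

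The heart of the matter is to show that $\Theta(X,k)$ decays when $X-T$ lies in the support of $\sigma_{P_1}^{P_2}$, and here I would use the Poisson-summation identity underlying Arthur's treatment of the alternating sum over parabolics \cite[\S5]{MR518111}, whose infinitesimal analogue is carried out in \cite[\S4]{2019arXiv190407102L}. It rewrites $k_{P_1,P_2,\fo}(x)=\sum_{\{P:P_1\subseteq P\subseteq P_2\}}(-1)^{\dim(A_P/A_H)}k_{f,P,\fo}(x)$ as a sum, over a lattice of nonzero ``degenerate'' vectors lying in the unipotent directions relating $P_1$ to $P_2$, of integrals in which the Fourier transform $\widehat f$ of $f$ (more precisely, partial constant terms of it, again Bruhat-Schwartz functions) takes the place of $f$; in each such term the argument of $\widehat f$ is $\Ad((nmak)^{-1})$ applied to a vector with nonzero degenerate component, and $\Ad(a^{-1})$ scales that component by $e^{\alpha(X)}$ for some simple root $\alpha\in\Delta_{P_1}^{P_2}$. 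On the support of $\sigma_{P_1}^{P_2}$ one has $\alpha(X)=\alpha(X-T)+\alpha(T)>\alpha(T)\ge\epsilon_0\|T\|$, so rapid decrease of $\widehat f$ yields, for every prescribed $N'$, a bound by $e^{-N'\|T\|}$ times a function decaying exponentially in the $\fa_{P_1}^{P_2}$-component of $X-T$, with room to spare for the sums over the degenerate lattice, over the residual lattice of semi-simple parameters, and over $\fo\in\CO$ (these last two handled by majorizing every value of $|f|$ and $|\widehat f|$ by one fixed positive Bruhat-Schwartz function and comparing with a convergent theta series), as well as for the supremum over the compact $n,m$-ranges.

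It remains to carry out the integral over $X$ and $k$. In the $\fa_{P_2}^H$-directions, the condition $\wh\tau_{P_2}^H(X-T)=1$ together with the standard positivity of $\rho_{P_1}$ on the corresponding cone (as in \cite[\S6]{MR518111}) makes $e^{-2\rho_{P_1}(X)}$ integrable there; in the $\fa_{P_1}^{P_2}$-directions, the exponential decay in $X-T$ gives integrability together with a further factor $e^{-N'\|T\|}$ (using $\alpha(T)\ge\epsilon_0\|T\|$ for $\alpha\in\Delta_{P_0}^H$, hence for the elements of $\Delta_{P_1}^{P_2}$). Since the prefactor $P(\|T\|)$ is absorbed by any of these exponential gains, the whole expression is $O(e^{-N\|T\|})$ once $N'$ is taken large enough, which is the assertion. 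The step I expect to be the real obstacle is the Poisson-summation rewriting in the third paragraph: one must pin down precisely which degenerate directions survive the alternating sum, check uniformity of the resulting estimates in the compact variables $n,m,k$ and summability over $\fo\in\CO$, and verify that each surviving term genuinely forces an exponentially large coordinate in the argument of $\widehat f$. All of this is present, mutatis mutandis, in \cite[\S4]{2019arXiv190407102L}, so the adaptation should be routine but bookkeeping-heavy.
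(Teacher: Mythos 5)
Your proposal follows essentially the same strategy as the paper's proof: decompose $k_{P_1,P_2,\fo}$ along intermediate parabolics, apply Poisson summation in the unipotent directions, and use Arthur's alternating-sum identity \cite[Proposition 1.1]{MR518111} to reduce to ``primitive'' dual vectors in $(\ov{\fn}_{\wt{R}}^{\wt{P_2}})'$, which after conjugation by $a$ in the relevant chamber have a coordinate growing like $e^{\alpha(H_{P_1}(a))}\geq e^{\alpha(T)}\geq e^{\epsilon_0\|T\|}$, so that rapid decrease of the partial Fourier transform yields the estimate; the auxiliary sums over $\xi$ and over $\fo$ are controlled by a theta-series majorization exactly as you suggest. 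The only place your sketch is cruder than the paper is the handling of the $m$-variable: the paper does not replace the Siegel set $F^{P_1}(\cdot,T)$-constrained $m$-range by a polynomial volume factor $P(\|T\|)$, but instead keeps the $A_{P_0}^{P_1,\infty}(t_0,T)$-integral explicit and absorbs it into the exponential bound coming from the choice of exponents $k_\alpha$ (borrowed from \cite[(4.10)]{MR1893921}); your version would require checking that the supremum $\Theta(X,k)$ remains controlled uniformly over the $T$-dependent $a_1$-range, which is exactly what that more careful bookkeeping achieves. Otherwise the proposal is correct in substance.
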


For $x\in H(F)\bs H(\BA)$, define
$$ k_{f,H}(x):=\sum_{\fo\in\CO} k_{f,H,\fo}(x)=\sum_{Y\in\fs(F)} f(\Ad(x^{-1})(Y)) $$
and
\begin{equation}\label{deftrunc2.2}
 k_{f}^T(x):=\sum_{\fo\in\CO}k_{f,\fo}^{T}(x). 
\end{equation}

\begin{coro}\label{comparewithnaive2}
  Let $f\in\CS(\fs(\BA))$. For two arbitrary but fixed positive real number $\epsilon_0$ and $N$, there exists a constant $C$ such that
  $$ \int_{H(F)\bs H(\BA)^1} |k_{f}^{T}(x)-F^{H}(x,T)k_{f,H}(x)| dx\leq Ce^{-N\parallel T\parallel} $$
  for all $T\in T_+ +\fa_{P_0}^+$ satisfying $\alpha(T)\geq\epsilon_0\parallel T\parallel$ for any $\alpha\in\Delta_{P_0}^H$.
\end{coro}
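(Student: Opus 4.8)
The plan is to deduce Corollary \ref{comparewithnaive2} from Theorem \ref{convergence2} and Proposition \ref{prop43} by isolating the term $P_1 = P_2 = H$ in the decomposition of $k_f^T(x)$ obtained in the proof of Theorem \ref{convergence2}. First I would recall that, summing the identity $k_{f,\fo}^T(x) = \sum_{\{P_1,P_2: P_0\subseteq P_1\subseteq P_2\}} \sum_{\delta\in P_1(F)\bs H(F)} \chi_{P_1,P_2}^T(\delta x) k_{P_1,P_2,\fo}(\delta x)$ over $\fo\in\CO$ (legitimate by the absolute convergence in Theorem \ref{convergence2}), one gets
$$ k_f^T(x) = \sum_{\{P_1,P_2: P_0\subseteq P_1\subseteq P_2\}} \sum_{\delta\in P_1(F)\bs H(F)} \chi_{P_1,P_2}^T(\delta x) k_{P_1,P_2}(\delta x), $$
where $k_{P_1,P_2}(x) := \sum_{\fo\in\CO} k_{P_1,P_2,\fo}(x)$. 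The term indexed by $P_1 = P_2 = H$ contributes exactly $\chi_{H,H}^T(x) k_{H,H}(x)$; since $\sigma_H^H \equiv 1$ on $\fa_H$ and $F^H(\cdot,T)$ is the characteristic function described in Section \ref{hcmap}, we have $\chi_{H,H}^T(x) = F^H(x,T)$ and $k_{H,H}(x) = k_{f,H}(x)$, so this term is precisely $F^H(x,T) k_{f,H}(x)$.

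Next I would estimate the difference. Subtracting this distinguished term,
$$ k_f^T(x) - F^H(x,T) k_{f,H}(x) = \sum_{\substack{\{P_1,P_2: P_0\subseteq P_1\subseteq P_2\} \\ (P_1,P_2)\neq(H,H)}} \sum_{\delta\in P_1(F)\bs H(F)} \chi_{P_1,P_2}^T(\delta x) k_{P_1,P_2}(\delta x), $$
so that
$$ \int_{H(F)\bs H(\BA)^1} |k_f^T(x) - F^H(x,T) k_{f,H}(x)| dx \leq \sum_{\substack{\{P_1\subseteq P_2\} \\ (P_1,P_2)\neq(H,H)}} \int_{P_1(F)\bs H(\BA)^1} \chi_{P_1,P_2}^T(x) |k_{P_1,P_2}(x)| dx. $$
For the pairs with $P_1 = P_2 \neq H$, as in the proof of Theorem \ref{convergence2} one has $\sigma_{P_1}^{P_2} = 0$ by \cite[Lemma 6.1]{MR518111}, hence $\chi_{P_1,P_2}^T = 0$ and these integrals vanish. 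The remaining pairs all satisfy $P_1 \subsetneq P_2$, and for each of them Proposition \ref{prop43} (together with $|k_{P_1,P_2}| \leq \sum_{\fo} |k_{P_1,P_2,\fo}|$) provides a constant $C_{P_1,P_2}$ with $\int_{P_1(F)\bs H(\BA)^1} \chi_{P_1,P_2}^T(x) |k_{P_1,P_2}(x)| dx \leq C_{P_1,P_2} e^{-N\|T\|}$ for all $T\in T_+ + \fa_{P_0}^+$ with $\alpha(T)\geq \epsilon_0\|T\|$ for every $\alpha\in\Delta_{P_0}^H$. Summing over the finitely many pairs and setting $C := \sum_{P_1\subsetneq P_2} C_{P_1,P_2}$ yields the claimed bound.

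There is essentially no hard step here once Theorem \ref{convergence2} and Proposition \ref{prop43} are granted; the corollary is a bookkeeping consequence. The only point deserving a word of care is the interchange of the sum over $\fo$ with the other operations (the geometric expansion and the integral), which is justified precisely because Theorem \ref{convergence2} gives absolute convergence of $\sum_\fo \int |k_{f,\fo}^T|$, allowing us to freely rearrange and to replace $\sum_\fo |k_{P_1,P_2,\fo}|$ by $|k_{P_1,P_2}|$ where convenient and vice versa. Thus the proof is: expand $k_f^T$, extract the $(H,H)$-term as $F^H(\cdot,T)k_{f,H}$, kill the diagonal terms with $P_1 = P_2 \neq H$ via $\sigma_{P_1}^{P_2} = 0$, and bound what is left by Proposition \ref{prop43}.
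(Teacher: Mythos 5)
Your proposal is correct and is exactly the argument the paper intends: the corollary is an immediate bookkeeping consequence of the decomposition $k_{f,\fo}^T(x)=\sum_{P_1\subseteq P_2}\sum_{\delta}\chi_{P_1,P_2}^T(\delta x)\,k_{P_1,P_2,\fo}(\delta x)$ established in the proof of Theorem \ref{convergence2}, the vanishing $\sigma_{P_1}^{P_2}=0$ for $P_1=P_2\neq H$, the identification of the $(H,H)$ term with $F^H(\cdot,T)k_{f,H}$ (using $\sigma_H^H\equiv 1$), and the exponential bound of Proposition \ref{prop43} for the remaining pairs $P_1\subsetneq P_2$. The paper leaves this proof implicit, and you have supplied it correctly.
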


\begin{proof}[Proof of Proposition \ref{prop43}]
  Let $P$ be any standard parabolic subgroup of $H$ such that $P_1\subseteq P\subseteq P_2$. For any $Y\in\fm_{\wt{P}}(F)\cap\fo$, there is a unique standard parabolic subgroup $R$ of $H$ such that $P_1\subseteq R\subseteq P$ and $Y\in (\fm_{\wt{P}}(F)\cap\wt{\fr}(F)\cap\fo)-\left(\bigcup\limits_{P_1\subseteq Q\subsetneq R} \fm_{\wt{P}}(F)\cap\wt{\fq}(F)\cap\fo\right)$. We denote
  $$ \wt{\fm}_{\wt{P_1}}^{\wt{R}}:=\fm_{\wt{R}}-\left(\bigcup_{\{Q:P_1 \subseteq Q \subsetneq R\}} \fm_{\wt{R}}\cap\wt{\fq}\right) $$
  and
  $$ \fn_{\wt{R}}^{\wt{P}}:=\fn_{\wt{R}}\cap\fm_{\wt{P}}. $$
  From Corollary \ref{orbit2}, we get
  $$ (\fm_{\wt{P}}(F)\cap\wt{\fr}(F)\cap\fo)-\left(\bigcup\limits_{P_1\subseteq Q\subsetneq R} \fm_{\wt{P}}(F)\cap\wt{\fq}(F)\cap\fo\right)=(\wt{\fm}_{\wt{P_1}}^{\wt{R}}(F)\cap\fo)\oplus((\fn_{\wt{R}}^{\wt{P}}\cap\fs)(F)). $$
  Thus
  \[\begin{split}
   k_{f,P,\fo}(x)&=\sum_{Y\in\fm_{\wt{P}}(F)\cap\fo} \int_{(\fn_{\wt{P}}\cap\fs)(\BA)} f(\Ad(x^{-1})(Y+U)) dU \\
   &=\sum_{\{R:P_1\subseteq R\subseteq P\}} \sum_{\xi\in\wt{\fm}_{\wt{P_1}}^{\wt{R}}(F)\cap\fo} \sum_{Y\in(\fn_{\wt{R}}^{\wt{P}}\cap\fs)(F)} \int_{(\fn_{\wt{P}}\cap\fs)(\BA)} f(\Ad(x^{-1})(\xi+Y+U)) dU. 
  \end{split}\]

  We write $\ov{\wt{P}}$ for the semi-standard parabolic subgroup of $G$ opposite to $\wt{P}$ and
  $$ \ov{\fn}_{\wt{R}}^{\wt{P}}:=\fn_{\ov{\wt{R}}}\cap\fm_{\wt{P}}. $$
  Notice that the restriction of $\langle\cdot,\cdot\rangle$ (see (\ref{bilinearform2})) to $((\fn_{\wt{R}}^{\wt{P}}\cap\fs)(\BA))\times((\ov{\fn}_{\wt{R}}^{\wt{P}}\cap\fs)(\BA))$ is also non-degenerate. For any $\xi\in(\fm_{\wt{R}}\cap\fs)(\BA)$, applying the Poisson summation formula to the Bruhat-Schwartz function $\int_{(\fn_{\wt{P}}\cap\fs)(\BA)} f(\Ad(x^{-1})(\xi+\cdot+U)) dU$, we have
  $$ \sum_{Y\in(\fn_{\wt{R}}^{\wt{P}}\cap\fs)(F)} \int_{(\fn_{\wt{P}}\cap\fs)(\BA)} f(\Ad(x^{-1})(\xi+Y+U)) dU = \sum_{\wh{Y}\in(\ov{\fn}_{\wt{R}}^{\wt{P}}\cap\fs)(F)} \Phi_\xi^{x,R}(\wh{Y}), $$
  where the partial Fourier transform $\Phi_\xi^{x,R}$ of $\int_{(\fn_{\wt{P}}\cap\fs)(\BA)} f(\Ad(x^{-1})(\xi+\cdot+U)) dU$ is defined by
  $$ \forall \wh{Y}\in (\ov{\fn}_{\wt{R}}^{\wt{P}}\cap\fs)(\BA), \Phi_\xi^{x,R}(\wh{Y}):=\int_{(\fn_{\wt{R}}^{\wt{P}}\cap\fs)(\BA)} \left(\int_{(\fn_{\wt{P}}\cap\fs)(\BA)} f(\Ad(x^{-1})(\xi+Y+U)) dU\right) \Psi(\langle Y,\wh{Y}\rangle) dY. $$
  Since $\langle U,\wh{Y}\rangle=0$ for $U\in (\fn_{\wt{P}}\cap\fs)(\BA)$ and $\wh{Y}\in(\ov{\fn}_{\wt{R}}^{\wt{P}}\cap\fs)(\BA)$, as well as $\fn_{\wt{R}}\cap\fs=(\fn_{\wt{P}}\cap\fs)\oplus(\fn_{\wt{R}}^{\wt{P}}\cap\fs)$, we have
  $$ \forall \wh{Y}\in (\ov{\fn}_{\wt{R}}^{\wt{P}}\cap\fs)(\BA), \Phi_\xi^{x,R}(\wh{Y})=\int_{(\fn_{\wt{R}}\cap\fs)(\BA)} f(\Ad(x^{-1})(\xi+U))\Psi(\langle U,\wh{Y}\rangle) dU, $$
  which is actually independent of $P$.

  In sum,
  $$ k_{f,P,\fo}(x)=\sum_{\{R:P_1\subseteq R\subseteq P\}} \sum_{\xi\in\wt{\fm}_{\wt{P_1}}^{\wt{R}}(F)\cap\fo} \sum_{\wh{Y}\in(\ov{\fn}_{\wt{R}}^{\wt{P}}\cap\fs)(F)} \Phi_\xi^{x,R}(\wh{Y}). $$
  Then we have
  \[\begin{split}
    k_{P_1, P_2, \fo}(x)&=\sum_{\{P: P_1\subseteq P\subseteq P_2\}} (-1)^{\dim(A_P/A_{H})} k_{f,P,\fo}(x) \\
    &=\sum_{\{P: P_1\subseteq P\subseteq P_2\}} (-1)^{\dim(A_P/A_{H})} \left(\sum_{\{R:P_1\subseteq R\subseteq P\}} \sum_{\xi\in\wt{\fm}_{\wt{P_1}}^{\wt{R}}(F)\cap\fo} \sum_{\wh{Y}\in(\ov{\fn}_{\wt{R}}^{\wt{P}}\cap\fs)(F)} \Phi_\xi^{x,R}(\wh{Y})\right) \\
    &=\sum_{\{R:P_1\subseteq R\subseteq P_2\}} \sum_{\xi\in\wt{\fm}_{\wt{P_1}}^{\wt{R}}(F)\cap\fo} \left(\sum_{\{P:R\subseteq P\subseteq P_2\}} (-1)^{\dim(A_P/A_{H})} \sum_{\wh{Y}\in(\ov{\fn}_{\wt{R}}^{\wt{P}}\cap\fs)(F)} \Phi_\xi^{x,R}(\wh{Y})\right).
  \end{split}\]
  For $P_3$ a standard parabolic subgroup of $H$ containing $R$, denote
  $$ (\ov{\fn}_{\wt{R}}^{\wt{P_3}})':=\ov{\fn}_{\wt{R}}^{\wt{P_3}}-\left(\bigcup_{\{Q:R\subseteq Q\subsetneq P_3\}}\ov{\fn}_{\wt{R}}^{\wt{Q}}\right). $$
  We write
\[\begin{split}  
 &\sum_{\{P:R\subseteq P\subseteq P_2\}} (-1)^{\dim(A_P/A_{H})} \sum_{\wh{Y}\in(\ov{\fn}_{\wt{R}}^{\wt{P}}\cap\fs)(F)} \Phi_\xi^{x,R}(\wh{Y}) \\
             =&\sum_{\{P:R\subseteq P\subseteq P_2\}} (-1)^{\dim(A_P/A_{H})} \sum_{\{P_3:R\subseteq P_3\subseteq P\}} \sum_{\wh{Y}\in((\ov{\fn}_{\wt{R}}^{\wt{P_3}})'\cap\fs)(F)} \Phi_\xi^{x,R}(\wh{Y})  \\
             =&(-1)^{\dim(A_{P_2}/A_{H})} \sum_{\{P_3:R\subseteq P_3\subseteq P_2\}} \left(\sum_{\{P:P_3\subseteq P\subseteq P_2\}} (-1)^{\dim(A_{P}/A_{P_2})}\right) \sum_{\wh{Y}\in((\ov{\fn}_{\wt{R}}^{\wt{P_3}})'\cap\fs)(F)} \Phi_\xi^{x,R}(\wh{Y}) \\
             =&(-1)^{\dim(A_{P_2}/A_{H})} \sum_{\wh{Y}\in((\ov{\fn}_{\wt{R}}^{\wt{P_2}})'\cap\fs)(F)} \Phi_\xi^{x,R}(\wh{Y}), 
\end{split}\]
where we have used \cite[Proposition 1.1]{MR518111} in the last equality. 
  Then
  $$ k_{P_1, P_2, \fo}(x)=(-1)^{\dim(A_{P_2}/A_{H})} \sum_{\{R:P_1\subseteq R\subseteq P_2\}} \sum_{\xi\in\wt{\fm}_{\wt{P_1}}^{\wt{R}}(F)\cap\fo} \sum_{\wh{Y}\in((\ov{\fn}_{\wt{R}}^{\wt{P_2}})'\cap\fs)(F)} \Phi_\xi^{x,R}(\wh{Y}).  $$

  Now we get
  \[\begin{split}
    &\sum_{\fo\in\CO} \int_{P_1(F)\bs H(\BA)^1} \chi_{P_1, P_2}^T(x) |k_{P_1, P_2, \fo}(x)| dx \\
    \leq &\sum_{\fo\in\CO} \int_{P_1(F)\bs H(\BA)^1} \chi_{P_1, P_2}^T(x) \left( \sum_{\{R:P_1\subseteq R\subseteq P_2\}} \sum_{\xi\in\wt{\fm}_{\wt{P_1}}^{\wt{R}}(F)\cap\fo} \sum_{\wh{Y}\in((\ov{\fn}_{\wt{R}}^{\wt{P_2}})'\cap\fs)(F)} |\Phi_\xi^{x,R}(\wh{Y})| \right) dx \\
    = &\sum_{\{R:P_1\subseteq R\subseteq P_2\}} \int_{P_1(F)\bs H(\BA)^1} \chi_{P_1, P_2}^T(x) \sum_{\xi\in(\wt{\fm}_{\wt{P_1}}^{\wt{R}}\cap\fs)(F)} \sum_{\wh{Y}\in((\ov{\fn}_{\wt{R}}^{\wt{P_2}})'\cap\fs)(F)} |\Phi_\xi^{x,R}(\wh{Y})| dx. 
  \end{split}\]
  Thus it suffices to bound
  \begin{equation}\label{equation2.1}
    \int_{P_1(F)\bs H(\BA)^1} \chi_{P_1, P_2}^T(x) \sum_{\xi\in(\wt{\fm}_{\wt{P_1}}^{\wt{R}}\cap\fs)(F)} \sum_{\wh{Y}\in((\ov{\fn}_{\wt{R}}^{\wt{P_2}})'\cap\fs)(F)} |\Phi_\xi^{x,R}(\wh{Y})| dx
  \end{equation}
  for any fixed standard parabolic subgroup $R$ of $H$ such that $P_1\subseteq R\subseteq P_2$. 

By Iwasawa decomposition and our choice of measures, the integral over $x\in P_1(F)\bs H(\BA)^1$ can be decomposed as integrals over
$$ (n_1, m_1, a_1, k)\in N_{P_1}(F)\bs N_{P_1}(\BA)\times M_{P_1}(F)\bs M_{P_1}(\BA)^1\times A_{P_1}^{H,\infty}\times K, $$
and we have
  \[\begin{split}
    &\int_{P_1(F)\bs H(\BA)^1} \chi_{P_1, P_2}^T(x) \sum_{\xi\in(\wt{\fm}_{\wt{P_1}}^{\wt{R}}\cap\fs)(F)} \sum_{\wh{Y}\in((\ov{\fn}_{\wt{R}}^{\wt{P_2}})'\cap\fs)(F)} |\Phi_\xi^{x,R}(\wh{Y})| dx \\
    =&\int_K \int_{A_{P_1}^{H,\infty}} \int_{M_{P_1}(F)\bs M_{P_1}(\BA)^1} \int_{N_{P_1}(F)\bs N_{P_1}(\BA)} F^{P_1}(m_1,T) \sigma_{P_1}^{P_2}(H_{P_1}(a_1)-T) \\
    &\cdot \sum_{\xi\in(\wt{\fm}_{\wt{P_1}}^{\wt{R}}\cap\fs)(F)} \sum_{\wh{Y}\in((\ov{\fn}_{\wt{R}}^{\wt{P_2}})'\cap\fs)(F)} |\Phi_\xi^{n_1 m_1 a_1 k,R}(\wh{Y})| e^{-2\rho_{P_1}(H_{P_0}(a_1))} dn_1 dm_1 da_1 dk.
  \end{split}\]
  Because only those $m_1$ satisfying $F^{P_1}(m_1,T)\neq 0$ contribute to the integral, we can restrict the integration over those having representatives in $(N_{P_0}(\BA)M_{P_0}(\BA)^1 A_{P_0}^{P_1, \infty}(t_0,T) K)\cap M_{P_1}(\BA)^1$, where $A_{P_0}^{P_1, \infty}(t_0,T):=A_{P_0}^{\infty}(P_1,t_0,T)\cap M_{P_1}(\BA)^1$. Then
  \[\begin{split}
    &\int_{P_1(F)\bs H(\BA)^1} \chi_{P_1, P_2}^T(x) \sum_{\xi\in(\wt{\fm}_{\wt{P_1}}^{\wt{R}}\cap\fs)(F)} \sum_{\wh{Y}\in((\ov{\fn}_{\wt{R}}^{\wt{P_2}})'\cap\fs)(F)} |\Phi_\xi^{x,R}(\wh{Y})| dx \\
    \leq &c_1 \int_K \int_{[cpt\subseteq M_{P_0}(\BA)^1]} \int_{A_{P_0}^{P_1,\infty}(t_0,T)} \int_{A_{P_1}^{H,\infty}} \int_{[cpt\subseteq N_{P_0}^{P_2}(\BA)]} \int_{[cpt\subseteq N_{P_2}(\BA)]} \sigma_{P_1}^{P_2}(H_{P_1}(a_1)-T) \\ &\cdot \sum_{\xi\in(\wt{\fm}_{\wt{P_1}}^{\wt{R}}\cap\fs)(F)} \sum_{\wh{Y}\in((\ov{\fn}_{\wt{R}}^{\wt{P_2}})'\cap\fs)(F)} |\Phi_\xi^{n_2 n a_1 a m k,R}(\wh{Y})| e^{-2\rho_{P_0}(H_{P_0}(a_1 a))} dn_2 dn da_1 da dm dk,
  \end{split}\]
  where $c_1:=\vol(K\cap M_{P_1}(\BA)^1)$ is a constant independent of $T$. Here we use the notation $[cpt\subseteq *]$ for denoting a compact subset in $*$ independent of $T$. 

  We claim that for $n_2\in N_{P_2}(\BA)$,
  $$ \Phi_{\xi}^{n_2 x,R}(\wh{Y})=\Phi_{\xi}^{x,R}(\wh{Y}). $$
  In fact, let $U_2:=\Ad(n_2^{-1})(\xi)-\xi.$ Then
  \[\begin{split}
    \Phi_{\xi}^{n_2 x,R}(\wh{Y})&=\int_{(\fn_{\wt{R}}\cap\fs)(\BA)} f(\Ad (n_2 x)^{-1}(\xi+U))\Psi(\langle U,\wh{Y}\rangle) dU \\
    &=\int_{(\fn_{\wt{R}}\cap\fs)(\BA)} f(\Ad(x^{-1})(\xi+U_2+\Ad(n_2^{-1})(U)))\Psi(\langle U,\wh{Y}\rangle) dU.
  \end{split}\]
  As both of $U_2$ and $\Ad(n_2^{-1})(U)-U$ belong to $(\fn_{\wt{P_2}}\cap\fs)(\BA)$, we get
  $$ \langle U_2+\Ad(n_2^{-1})(U)-U,\wh{Y}\rangle=0. $$
  Hence
  $$ \Phi_{\xi}^{n_2 x,R}(\wh{Y})=\int_{(\fn_{\wt{R}}\cap\fs)(\BA)} f(\Ad(x^{-1})(\xi+U_2+\Ad(n_2^{-1})(U)))\Psi(\langle U_2+\Ad(n_2^{-1})(U),\wh{Y}\rangle) dU. $$
  Since the change of variables $U_2+\Ad(n_2^{-1})(U)\mapsto U$ does not change the Haar measure, we proved our claim.

  By this claim, we have
$$    \Phi_\xi^{n_2 n a_1 a m k,R}(\wh{Y})=\Phi_\xi^{n a_1 a m k,R}(\wh{Y}) =\Phi_\xi^{(a_1 a)(a_1 a)^{-1}n (a_1 a) m k,R}(\wh{Y}). $$
  Applying change of variables $\Ad(a_1 a)^{-1}(U)\mapsto U$ and the fact that
  $$ \langle U,\wh{Y}\rangle=\langle \Ad(a_1 a)^{-1}(U), \Ad(a_1 a)^{-1}(\wh{Y})\rangle, $$
  we deduce that
  $$ \Phi_\xi^{n_2 n a_1 a m k,R}(\wh{Y})=e^{2\rho_{R,\fs}(H_{P_0}(a_1 a))} \Phi_{\Ad(a_1 a)^{-1}(\xi)}^{(a_1 a)^{-1}n (a_1 a) m k,R}(\Ad(a_1 a)^{-1}(\wh{Y})). $$
Recall that $\rho_{R,\fs}=\rho_R$ by Corollary \ref{corequalwt2.1}. 
  From the reduction theory (see \cite[p. 944]{MR518111}), for $a_1$ satisfying $\sigma_{P_1}^{P_2}(H_{P_1}(a_1)-T)\neq 0$, we know that $\Ad(a_1 a)^{-1}(n)$ belongs to a compact subset independent of $T$. To sum up,
  \[\begin{split}
    &\int_{P_1(F)\bs H(\BA)^1} \chi_{P_1, P_2}^T(x) \sum_{\xi\in(\wt{\fm}_{\wt{P_1}}^{\wt{R}}\cap\fs)(F)} \sum_{\wh{Y}\in((\ov{\fn}_{\wt{R}}^{\wt{P_2}})'\cap\fs)(F)} |\Phi_\xi^{x,R}(\wh{Y})| dx \\
    \leq &c_2 \sup_{y\in \Gamma} \int_{A_{P_0}^{P_1,\infty}(t_0,T)} \int_{A_{P_1}^{H,\infty}} e^{(2\rho_R-2\rho_{P_0})(H_{P_0}(a_1 a))} \sigma_{P_1}^{P_2}(H_{P_1}(a_1)-T) \\
    & \cdot \sum_{\xi\in(\wt{\fm}_{\wt{P_1}}^{\wt{R}}\cap\fs)(F)} \sum_{\wh{Y}\in((\ov{\fn}_{\wt{R}}^{\wt{P_2}})'\cap\fs)(F)} |\Phi_{\Ad(a_1 a)^{-1}(\xi)}^{y,R}(\Ad(a_1 a)^{-1}(\wh{Y}))| da_1 da,
  \end{split}\]
  where $c_2$ is a constant independent of $T$, and $\Gamma$ is a compact subset independent of $T$.

  Let $\CO_F$ denote the ring of integers of $F$. We fix an $F$-basis for each weight space for the action of $A_0(F)$ on $\fs(F)$. Since the function $f\in\CS(\fs(\BA))$ is compactly supported at finite places, there exists an $\CO_F$-scheme structure on such weight spaces independent of $T$ such that the sums over $\xi\in(\wt{\fm}_{\wt{P_1}}^{\wt{R}}\cap\fs)(F)$ and $\wh{Y}\in((\ov{\fn}_{\wt{R}}^{\wt{P_2}})'\cap\fs)(F)$ can be restricted to $\wt{\fm}_{\wt{P_1}}^{\wt{R}}(F)\cap\fs(\CO_F)$ and $(\ov{\fn}_{\wt{R}}^{\wt{P_2}})'(F)\cap\fs(\CO_F)$ 
respectively. To see this, one may consult \cite[\S1.9 and p. 363]{MR1893921} for details, and one needs to replace $\fm_R$ and $\fn_R$ in {\it{loc. cit.}} by $\fm_{\wt{R}}\cap\fs$ and $\fn_{\wt{R}}\cap\fs$ respectively. 

Fix an $\BR$-basis $\{e_1, \cdots, e_\ell\}$ of the $\BR$-linear space $\fs(F\otimes_{\BQ} \BR)$, whose dimension is denoted by $\ell$, consisting of eigenvectors for the action of $A_{P_0}^\infty$. Let $\|\cdot\|$ be the standard Euclidean norm with respect to this basis. 
Consider a sufficiently large positive integer $k$ to be made precise later. Thanks to Proposition \ref{equalwt2.1}, as in \cite[(4.10) in p. 372]{MR1893921} (whose proof is included in \cite[p. 946-947]{MR518111}), there exists an even integer $m\geq 0$, a real number $k_\alpha\geq 0$ for each $\alpha\in\Delta_{P_0}^{P_2}$, and a real number $c_3>0$ satisfying the following conditions: 
\begin{enumerate}[\indent (1)]
\item if $R=P_2$, $m=0$; 
\end{enumerate}
\begin{enumerate}[\indent (2)]
\item for all $\alpha\in\Delta_{P_0}^{P_2}-\Delta_{P_0}^{R}, k_\alpha\geq k$; 
\end{enumerate}
\begin{enumerate}[\indent (3)]
\item for all $a_0\in A_{P_0}^\infty(P_2,t_0)$, 
\begin{equation}\label{chau4.10}
 \sum_{\wh{Y}\in (\ov{\fn}_{\wt{R}}^{\wt{P_2}})'(F)\cap\fs(\CO_F)} \|\Ad(a_0^{-1})(\wh{Y})\|^{-m}\leq c_3 \prod_{\alpha\in\Delta_{P_0}^{P_2}} e^{-k_\alpha \alpha(H_{P_0}(a_0))}. 
\end{equation}
\end{enumerate}
We fix such data. 

Any differential operator $\partial$ on $\fs(F\otimes_\BQ \BR)$ can be extended to $\fs(\BA)$ by $\partial(f_\infty\otimes\chi^\infty):=(\partial f_\infty)\otimes\chi^\infty$, where we use the notation in Section \ref{BSandHaar2}. Denote
  $$ \Phi_\xi^{x,R,\partial}(\wh{Y}):=\int_{(\fn_{\wt{R}}\cap\fs)(\BA)} (\partial f)(\Ad(x^{-1})(\xi+U)) \Psi(\langle U,\wh{Y}\rangle) dU. $$
For a multi-index $\overrightarrow{i}=(i_1,\cdots, i_\ell)\in\BZ_{\geq0}^\ell$, denote by $\partial^{\overrightarrow{i}}:=(\frac{\partial}{\partial e_1})^{i_1}\cdots(\frac{\partial}{\partial e_\ell})^{i_\ell}$ the corresponding differential operator on $\fs(F\otimes_\BQ \BR)$. 
  Using integration by parts, we deduce that there exists a differential operator $\partial^{(m)}$ on $\fs(F\otimes_\BQ \BR)$ satisfying the following two conditions. 
\begin{enumerate}[\indent (1)]
\item $\partial^{(m)}$ is a finite $\BZ$-linear combination of $\partial^{\overrightarrow{i}}$'s with the properties: 
\begin{itemize}
	\item the sum of components of $\overrightarrow{i}$ is $m$; 
	\item all components of $\overrightarrow{i}$ are even integers;  
	\item all non-zero components of $\overrightarrow{i}$ correspond to eigenvectors lying in $(\fn_{\wt{R}}^{\wt{P_2}}\cap\fs)(F\otimes_\BQ\BR)$. 
\end{itemize}
\end{enumerate}
\begin{enumerate}[\indent (2)]
\item For $\wh{Y}\neq 0$, we have 
  $$ |\Phi_{\xi}^{y,R}(\wh{Y})|=\|\wh{Y}\|^{-m} |\Phi_{\xi}^{y,R,\Ad(y^{-1})\partial^{(m)}}(\wh{Y})|. $$
\end{enumerate}
We fix such a $\partial^{(m)}$. Suppose that $\partial^{(m)}=\sum\limits_{\overrightarrow{i}\in\CI}r_{\overrightarrow{i}} \partial^{\overrightarrow{i}}$, where $\CI$ is a finite set of multi-indices and  $r_{\overrightarrow{i}}\in\BZ$. Then there exists a continuous function $c_4(y)>0$ of $y$ such that for $\wh{Y}\neq 0$, we have
	$$ |\Phi_{\Ad(a_1 a)^{-1}(\xi)}^{y,R}(\Ad(a_1 a)^{-1}(\wh{Y}))|\leq c_4(y)\|\Ad(a_1 a)^{-1}(\wh{Y})\|^{-m} \sum_{\overrightarrow{i}\in\CI} |\Phi_{\Ad(a_1 a)^{-1}(\xi)}^{y,R,\partial^{\overrightarrow{i}}}(\Ad(a_1 a)^{-1}(\wh{Y}))|. $$

  For $\mu\in\Phi(A_0, \fm_{\wt{R}}\cap\fs)$ (refer to Section \ref{explicitdescription} for the notation), denote by $(\fm_{\wt{R}}\cap\fs)_\mu$ the corresponding weight space. From \cite[\S41]{MR0165033}, there exists a function $\phi_\mu\in\CS((\fm_{\wt{R}}\cap\fs)_\mu(\BA))$ for each $\mu\in\Phi(A_0, \fm_{\wt{R}}\cap\fs)$ and a function $\phi_{\fn_{\wt{R}}\cap\fs}\in\CS((\fn_{\wt{R}}\cap\fs)(\BA))$ such that for all $\xi+U\in(\fm_{\wt{R}}\cap\fs)(\BA)\oplus(\fn_{\wt{R}}\cap\fs)(\BA)$ and $y\in\Gamma$,
  $$ \sum_{\overrightarrow{i}\in\CI} |(\partial^{\overrightarrow{i}} f)(\Ad(y^{-1})(\xi+U))|\leq \left(\prod_{\mu\in\Phi(A_0, \fm_{\wt{R}}\cap\fs)}\phi_\mu(\xi_\mu)\right) \phi_{\fn_{\wt{R}}\cap\fs}(U), $$
where $\xi_\mu$ denotes the projection to $(\fm_{\wt{R}}\cap\fs)_\mu(\BA)$ of $\xi$. 

 From \cite[p. 375]{MR1893921}, when $\sigma_{P_1}^{P_2}(H_{P_1}(a_1)-T)\neq 0$, we have $\alpha(H_{P_0}(a_1a))>t_0$ for all $\alpha\in\Delta_{P_0}^{P_2}$. Hence, 
  \[\begin{split}
    &\sum_{\xi\in(\wt{\fm}_{\wt{P_1}}^{\wt{R}}\cap\fs)(F)} \sum_{\wh{Y}\in((\ov{\fn}_{\wt{R}}^{\wt{P_2}})'\cap\fs)(F)} |\Phi_{\Ad(a_1 a)^{-1}(\xi)}^{y,R}(\Ad(a_1 a)^{-1}(\wh{Y}))| \\
    = &\sum_{\xi\in\wt{\fm}_{\wt{P_1}}^{\wt{R}}(F)\cap\fs(\CO_F)} \sum_{\wh{Y}\in(\ov{\fn}_{\wt{R}}^{\wt{P_2}})'(F)\cap\fs(\CO_F)} |\Phi_{\Ad(a_1 a)^{-1}(\xi)}^{y,R}(\Ad(a_1 a)^{-1}(\wh{Y}))| \\
    \leq &\sum_{\xi\in\wt{\fm}_{\wt{P_1}}^{\wt{R}}(F)\cap\fs(\CO_F)} \sum_{\wh{Y}\in(\ov{\fn}_{\wt{R}}^{\wt{P_2}})'(F)\cap\fs(\CO_F)} c_4(y) \|\Ad(a_1 a)^{-1}(\wh{Y})\|^{-m} \sum_{\overrightarrow{i}\in\CI} |\Phi_{\Ad(a_1 a)^{-1}(\xi)}^{y,R,\partial^{\overrightarrow{i}}}(\Ad(a_1 a)^{-1}(\wh{Y}))| \\
    \leq &c_5 \sum_{\xi\in\wt{\fm}_{\wt{P_1}}^{\wt{R}}(F)\cap\fs(\CO_F)} \left(\prod_{\mu\in\Phi(A_0, \fm_{\wt{R}}\cap\fs)}\phi_\mu(\mu(a_1 a)^{-1}\xi_\mu)\right) \cdot \sum_{\wh{Y}\in(\ov{\fn}_{\wt{R}}^{\wt{P_2}})'(F)\cap\fs(\CO_F)} \|\Ad(a_1 a)^{-1}(\wh{Y})\|^{-m} \\
    \leq &c_5 c_3 \sum_{\xi\in\wt{\fm}_{\wt{P_1}}^{\wt{R}}(F)\cap\fs(\CO_F)} \left(\prod_{\mu\in\Phi(A_0, \fm_{\wt{R}}\cap\fs)}\phi_\mu(\mu(a_1 a)^{-1}\xi_\mu)\right) \cdot \prod_{\alpha\in\Delta_{P_0}^{P_2}}e^{-k_\alpha \alpha(H_{P_0}(a_1 a))},
  \end{split}\]
  where $c_5:=\sup\limits_{y\in\Gamma}c_4(y) \int_{(\fn_{\wt{R}}\cap\fs)(\BA)}\phi_{\fn_{\wt{R}}\cap\fs}(U)dU$; in the last inequality, we have applied 
(\ref{chau4.10}) to $a_0=a_1 a$. 
We deduce that 
  \[\begin{split}
    &\int_{P_1(F)\bs H(\BA)^1} \chi_{P_1, P_2}^T(x) \sum_{\xi\in(\wt{\fm}_{\wt{P_1}}^{\wt{R}}\cap\fs)(F)} \sum_{\wh{Y}\in((\ov{\fn}_{\wt{R}}^{\wt{P_2}})'\cap\fs)(F)} |\Phi_\xi^{x,R}(\wh{Y})| dx \\
    \leq &c_2 c_5 c_3 \int_{A_{P_0}^{P_1,\infty}(t_0,T)} \int_{A_{P_1}^{H,\infty}} e^{(2\rho_R-2\rho_{P_0})(H_{P_0}(a_1 a))} \sigma_{P_1}^{P_2}(H_{P_1}(a_1)-T) \\
    & \cdot \sum_{\xi\in\wt{\fm}_{\wt{P_1}}^{\wt{R}}(F)\cap\fs(\CO_F)} \left(\prod_{\mu\in\Phi(A_0, \fm_{\wt{R}}\cap\fs)}\phi_\mu(\mu(a_1 a)^{-1}\xi_\mu)\right) \cdot \prod_{\alpha\in\Delta_{P_0}^{P_2}}e^{-k_\alpha \alpha(H_{P_0}(a_1 a))} da_1 da.
  \end{split}\]

Denote by $\Sigma_{P_0}^{\fm_{\wt{R}}\cap\fs}$ the positive weights of $\fm_{\wt{R}}\cap\fs$ under the action of $A_0$. Consider the subsets $S$ of $\Sigma_{P_0}^{\fm_{\wt{R}}\cap\fs}$ with the following property: for all $\alpha\in\Delta_{P_0}^R-\Delta_{P_0}^{P_1}$, there exists $\mu\in S$ such that its $\alpha$-coordinate is $>0$. Then
  \[\begin{split}
    &\sum_{\xi\in\wt{\fm}_{\wt{P_1}}^{\wt{R}}(F)\cap\fs(\CO_F)} \left(\prod_{\mu\in\Phi(A_0, \fm_{\wt{R}}\cap\fs)}\phi_\mu(\mu(a_1 a)^{-1}\xi_\mu)\right) \\
    \leq &\sum_{S}\left[\prod_{\mu\in S} \left(\sum_{\xi_{-}\in(\fm_{\wt{R}}\cap\fs)_{-\mu}(\CO_F)-\{0\}} \phi_{-\mu}(\mu(a_1a)\xi_{-})\right)\right] \left[\prod_{\mu\in\Sigma_{P_0}^{\fm_{\wt{R}}\cap\fs}} \left(\sum_{\xi_{+}\in(\fm_{\wt{R}}\cap\fs)_{\mu}(\CO_F)} \phi_{\mu}(\mu(a_1 a)^{-1}\xi_{+})\right)\right] \\
    &\cdot \left[\sum_{\xi_{0}\in(\fm_{\wt{R}}\cap\fs)_1
(\CO_F)} \phi_{0}(\xi_{0})\right].
  \end{split}\]
Denote by $\Sigma_{P_0}^{\fm_R}$ the positive weights of $\fm_R$ under the action of $A_0$. From Proposition \ref{equalwt2.1}, we know that $\Sigma_{P_0}^{\fm_{\wt{R}}\cap\fs}=\Sigma_{P_0}^{\fm_R}$ and that each weight has the same multiplicity in $\fm_{\wt{R}}\cap\fs$ and $\fm_R$. From now on, we are in exactly the same situation as in \cite[p. 373]{MR1893921} and able to borrow the rest of its proof to conclude.
\end{proof}


\section{\textbf{Polynomial distributions}}

Let $T\in T_+ +\fa_{P_0}^+$ and $\fo\in\CO$. For $f\in\CS(\fs(\BA))$, define
\begin{equation}\label{Jo2}
 J_\fo^{H,T}(f):=\int_{H(F)\bs H(\BA)^1} k_{f,\fo}^T(x) dx 
\end{equation}
and
\begin{equation}\label{J2}
 J^{H,T}(f):=\int_{H(F)\bs H(\BA)^1} k_{f}^T(x) dx, 
\end{equation}
where $k_{f,\fo}^T(x)$ and $k_{f}^T(x)$ are defined by (\ref{deftrunc2.1}) and (\ref{deftrunc2.2}) respectively. From Theorem \ref{convergence2}, we know that $J_\fo^{H,T}$ and $J^{H,T}$ are well-defined distributions on $\CS(\fs(\BA))$. We also have
$$ J^{H,T}(f)=\sum_{\fo\in\CO}J_\fo^{H,T}(f), $$
which is an analogue of the geometric side of Arthur's trace formula. In this section, we shall prove that the functions $T\mapsto J_\fo^{H,T}(f)$ and $T\mapsto J^{H,T}(f)$ can be extended to polynomials in $T\in\fa_{P_0}$ (see Corollary \ref{polynomial2} below), whose constant terms will be denoted by $J_\fo^H(f)$ and $J^H(f)$ respectively. 

Let us begin with a generalization of our results in last section. Let $Q$ be a standard parabolic subgroup of $H$. Recall the two cases studied in Section \ref{explicitdescription}. In \textbf{Case I}, we have
$$ M_Q\simeq \Res_{E/F}GL_{n_1,D'}\times\cdot\cdot\cdot\times \Res_{E/F}GL_{n_l,D'} $$
and 
$$ M_{\wt{Q}}\simeq GL_{n_1,D}\times\cdot\cdot\cdot\times GL_{n_l,D},  $$
where $\sum\limits_{i=1}^l n_i=n$. In \textbf{Case II}, we have
$$ M_Q\simeq \Res_{E/F}GL_{\frac{n_1}{2},D\otimes_F E}\times\cdot\cdot\cdot\times \Res_{E/F}GL_{\frac{n_l}{2},D\otimes_F E} $$
and 
$$ M_{\wt{Q}}\simeq GL_{n_1,D}\times\cdot\cdot\cdot\times GL_{n_l,D},  $$
where $n_i$ is even for all $1\leq i\leq l$ and $\sum\limits_{i=1}^l n_i=n$. In either case of the two, the tangent space of $M_{\wt{Q}}\cap S$ at the neutral element is $\fm_{\wt{Q}}\cap\fs$, on which $M_Q$ acts by conjugation. We remark that our results in last section can be generalized to the product setting here, whose proofs are similar and will be omitted. 
Define a relation of equivalence on $(\fm_{\wt{Q}}\cap\fs)(F)$ which is similar to that on $\fs(F)$ on each component. We denote by $\CO^{\fm_{\wt{Q}}\cap\fs}$ the set of equivalent classes for this relation. For $\fo\in\CO$, the intersection $\fo\cap\fm_{\wt{Q}}(F)$ is a finite (perhaps empty) union of classes $\fo_1,\cdot\cdot\cdot,\fo_t\in\CO^{\fm_{\wt{Q}}\cap\fs}$. Notice that there exists a bijection between the set of standard parabolic subgroups $P$ of $H$ contained in $Q$ and the set of standard parabolic subgroups $P^\ast$ of $M_Q$ (namely $P_0\cap M_Q\subseteq P^\ast$) given by $P\mapsto P\cap M_Q$, whose inverse is given by $P^\ast\mapsto P^\ast N_Q$. Let $f^\ast\in\CS((\fm_{\wt{Q}}\cap\fs)(\BA))$, $P^\ast$ be a standard parabolic subgroup of $M_Q$ and $1\leq j\leq t$. For $x\in M_{P^\ast}(F)N_{P^\ast}(\BA)\bs M_Q(\BA)$, define
\begin{equation}\label{LevikfPo2}
k_{f^\ast, P^\ast, \fo_j}^{M_Q}(x):=\sum_{Y\in\fm_{\wt{P^\ast}}(F)\cap\fo_j} \int_{(\fn_{\wt{P^\ast}}\cap\fs)(\BA)} f^\ast(\Ad(x^{-1})(Y+U)) dU, 
\end{equation}
where $\wt{P^\ast}$ denotes the standard parabolic subgroup of $G$ corresponding to $P^\ast$ (see Section \ref{explicitdescription}). 
For $T\in\fa_0$ and $x\in M_{Q}(F)\bs M_{Q}(\BA)$, define
$$ k_{f^\ast,\fo_j}^{M_Q,T}(x):=\sum_{\{P^\ast:P_0\cap M_Q\subseteq P^\ast\}} (-1)^{\dim(A_{P^\ast}/A_{M_Q})} \sum_{\delta\in P^\ast(F)\bs M_{Q}(F)} \wh{\tau}_{P^\ast}^{M_Q}(H_{P^\ast}(\delta x)-T)\cdot k^{M_Q}_{f^\ast,P^\ast,\fo_j}(\delta x). $$
For $T\in T_+ +\fa_{P_0}^+$, define
$$ J_{\fo_j}^{M_Q,T}(f^\ast):=\int_{M_{Q}(F)\bs M_Q(\BA)^1} k_{f^\ast,\fo_j}^{M_Q,T}(x) dx. $$
Then we obtain a well-defined distribution $J_{\fo_j}^{M_Q,T}$ on $\CS((\fm_{\wt{Q}}\cap\fs)(\BA))$. 
Now we define
\begin{equation}\label{LeviJo2}
 J_\fo^{M_Q,T}:=\sum_{j=1}^{t} J_{\fo_j}^{M_Q,T}
\end{equation}
and
\begin{equation}\label{LeviJ2}
 J^{M_Q,T}:=\sum_{\fo\in\CO} J_\fo^{M_Q,T}. 
\end{equation}
For $f\in\CS(\fs(\BA))$, define $f_Q\in\CS((\fm_{\wt{Q}}\cap\fs)(\BA))$ by
\begin{equation}\label{equation2.2}
 \forall Y\in(\fm_{\wt{Q}}\cap\fs)(\BA), f_Q(Y):=\int_K\int_{(\fn_{\wt{Q}}\cap\fs)(\BA)} f(\Ad(k^{-1})(Y+V)) dVdk. 
\end{equation}

Let $T_1,T_2\in\fa_{P_0}$. As in \cite[\S2]{MR625344}, we define $\Gamma_P(T_1,T_2)\in\BR$ inductively on $\dim(A_P/A_{H})$ by setting
\begin{equation}\label{defGammaP2}
 \wh{\tau}_P^H(T_1-T_2)=\sum_{\{Q:P\subseteq Q\}} (-1)^{\dim(A_Q/A_{H})} \wh{\tau}_P^Q(T_1) \Gamma_Q(T_1,T_2) 
\end{equation}
for any standard parabolic subgroup $P$ of $H$. This definition can be explicitly given by \cite[(2.1) in p. 13]{MR625344} and only depends on the projections of $T_1,T_2$ onto $\fa_P^{H}$.

\begin{lem}
  Let $T_2\in\fa_{P_0}$ and Q be a standard parabolic subgroup of $H$. The function $T_1\mapsto\Gamma_Q(T_1,T_2)$ is compactly supported on $\fa_Q^{H}$. Moreover, the function $T_2\mapsto\int_{\fa_Q^{H}} \Gamma_Q(T_1,T_2) dT_1$ is a homogeneous polynomial in $T_2$ of degree $\dim(A_Q/A_{H})$.
\end{lem}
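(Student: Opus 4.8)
The approach is the one used by Arthur in \cite[\S2]{MR625344}, and we indicate how to run it here. Since, as already noted, $\Gamma_Q(T_1,T_2)$ depends only on the projections of $T_1$ and $T_2$ onto $\fa_Q^{H}$, we may and do restrict to $T_1,T_2\in\fa_Q^{H}$. The first step is to invert the recursion (\ref{defGammaP2}) over the finite poset of standard parabolic subgroups $R$ of $H$ with $Q\subseteq R\subseteq H$: using the combinatorial lemmas of \cite[\S1]{MR518111} (Langlands' lemma) relating the families $\{\tau_{R_1}^{R_2}\}$ and $\{\wh\tau_{R_1}^{R_2}\}$, Möbius inversion produces the explicit formula \cite[(2.1) in p. 13]{MR625344}, which exhibits $T_1\mapsto\Gamma_Q(T_1,T_2)$ as a finite $\BZ$-linear combination of products of characteristic functions of the (translated) cones in $\fa_Q^{H}$ cut out by the $\tau$'s and the $\wh\tau$'s. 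In the prototypical case $\dim(A_Q/A_{H})=1$ only $R=Q$ and $R=H$ occur and the recursion collapses to $\Gamma_Q(T_1,T_2)=\wh\tau_Q^{H}(T_1)-\wh\tau_Q^{H}(T_1-T_2)$, which equals, up to sign, the characteristic function of the segment joining $0$ and $T_2$ in $\fa_Q^{H}$; this one-dimensional picture already displays both assertions, its integral being a multiple of $\varpi(T_2)$ for the generator $\varpi$ of $\wh\Delta_Q^{H}$.

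For the compact support, fix $T_2$ and rewrite (\ref{defGammaP2}) as
\[
\Gamma_Q(T_1,T_2)=(-1)^{\dim(A_Q/A_{H})}\Bigl(\wh\tau_Q^{H}(T_1-T_2)-\sum_{\{R:\,Q\subsetneq R\subseteq H\}}(-1)^{\dim(A_R/A_{H})}\,\wh\tau_Q^{R}(T_1)\,\Gamma_R(T_1,T_2)\Bigr).
\]
The term $\wh\tau_Q^{H}(T_1-T_2)$ forces $\varpi(T_1)>\varpi(T_2)$ for every $\varpi\in\wh\Delta_Q^{H}$, and since $\wh\Delta_Q^{H}$ is a basis of $(\fa_Q^{H})^*$ this already bounds $T_1$ from below in all directions. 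To bound it from above one argues by induction on $\dim(A_Q/A_{H})$: for $R\supsetneq Q$ the inductive hypothesis yields compact support of $\Gamma_R(\cdot,T_2)$ in the $\fa_R^{H}$-variable, and outside a large ball the contributions of the various $R$, constrained by the cone conditions $\wh\tau_Q^{R}(T_1)$, telescope against $\wh\tau_Q^{H}(T_1-T_2)$ via the identities for $\tau$ and $\wh\tau$; this is the geometric core of \cite[\S2]{MR625344}. Hence $T_1\mapsto\Gamma_Q(T_1,T_2)$ is compactly supported on $\fa_Q^{H}$, and $p(T_2):=\int_{\fa_Q^{H}}\Gamma_Q(T_1,T_2)\,dT_1$ converges.

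Homogeneity is then immediate: each $\tau_{R_1}^{R_2}$ and $\wh\tau_{R_1}^{R_2}$ is the characteristic function of a cone, hence scale-invariant, so the explicit formula gives $\Gamma_Q(sT_1,sT_2)=\Gamma_Q(T_1,T_2)$ for $s>0$, and the change of variables $T_1\mapsto sT_1$ yields $p(sT_2)=s^{\dim(A_Q/A_{H})}p(T_2)$. That $p$ is in fact a polynomial is the last point; one proves it by induction on $\dim(A_Q/A_{H})$, feeding the induction with (\ref{defGammaP2}): multiplying that identity by $e^{\langle\lambda,T_1\rangle}$ for $\lambda\in(\fa_Q^{H})^*$ in a suitable cone, integrating over $\fa_Q^{H}$ and using $\fa_Q^{H}=\fa_Q^{R}\oplus\fa_R^{H}$ expresses the Laplace transform of $\Gamma_Q(\cdot,T_2)$ — entire in $\lambda$ thanks to compact support — through elementary cone integrals and the corresponding transforms for the $\Gamma_R$, $R\supsetneq Q$; the a priori poles at $\lambda=0$ cancel, and the value at $\lambda=0$ is visibly a polynomial in $T_2$, necessarily homogeneous of degree $\dim(A_Q/A_{H})$ by the scaling relation (and not identically zero, since $\Gamma_Q(\cdot,T_2)$ is a nonnegative nonzero function for $T_2$ deep in $\fa_Q^{+}$). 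This is exactly \cite[\S2]{MR625344} in the present product-free setting.

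I expect the compact-support step — together with the attendant pole cancellations in the polynomiality argument — to be the main obstacle: it is the only genuinely combinatorial–geometric input, and making the telescoping of the alternating sum outside a bounded region precise requires the full strength of the Langlands identities for $\tau$ and $\wh\tau$. The inversion producing the explicit formula is formal, and once compact support and the pole cancellations are in place, homogeneity and the exact polynomial degree fall out as above.
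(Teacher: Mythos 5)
The paper itself gives no proof here — it simply cites Arthur's Lemmas 2.1 and 2.2 of \cite{MR625344} — so you are in effect reconstructing the argument of that reference, and the overall skeleton you describe (inverting the recursion over the poset of standard parabolics containing $Q$ via Langlands' combinatorial lemma to get an explicit alternating sum of cone characteristic functions, establishing compact support by cancellation, then using scale invariance and a Laplace-transform argument for polynomiality) is indeed Arthur's route. Your one-dimensional computation $\Gamma_Q(T_1,T_2)=\wh\tau_Q^{H}(T_1)-\wh\tau_Q^{H}(T_1-T_2)$ is correct, and the homogeneity-from-scaling and Laplace-transform observations are sound in outline.

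However, your compact-support paragraph contains a concrete error. You claim that ``the term $\wh\tau_Q^{H}(T_1-T_2)$ forces $\varpi(T_1)>\varpi(T_2)$'' on the support of $\Gamma_Q(\cdot,T_2)$. This is backwards, and in fact it contradicts your own one-dimensional example: there, for $\varpi(T_2)>0$, the support of $\Gamma_Q(\cdot,T_2)$ is exactly $\{0<\varpi(T_1)\leq\varpi(T_2)\}$, i.e.\ precisely the region where $\wh\tau_Q^{H}(T_1-T_2)$ \emph{vanishes}. The support of an alternating sum is only contained in the union of the supports of the individual terms, not controlled by any one of them. The lower bound $\varpi(T_1)>0$ for every $\varpi\in\wh\Delta_Q^{H}$ actually comes from the $R=H$ term $\wh\tau_Q^{H}(T_1)$, not from $\wh\tau_Q^{H}(T_1-T_2)$, and the upper bound (hence compact support) is precisely the delicate cancellation that Langlands' lemma furnishes. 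You acknowledge this honestly in your closing paragraph, so the gap is a known one, but as written the intermediate paragraph does not establish compact support and its stated mechanism is false; for a self-contained proof you would need to carry out the actual telescoping argument, or derive and use Arthur's closed-form expression $\Gamma_Q(T_1,T_2)=\sum_{\{R\supseteq Q\}}(-1)^{\dim(A_R/A_H)}\wh\tau_Q^{R}(T_1)\tau_R(T_1-T_2)$ and apply Langlands' lemma to that.

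Likewise, the polynomiality step (pole cancellation of the Laplace transform at $\lambda=0$ and the identification of $p(T_2)$ as the value there) is only gestured at; as with compact support, the claim is true and the strategy is Arthur's, but the actual combinatorial inputs are not supplied.
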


\begin{proof}
  This is \cite[Lemmas 2.1 and 2.2]{MR625344}.
\end{proof}

\begin{thm}\label{pol2}
  Let $T'\in T_+ +\fa_{P_0}^+$, $\fo\in\CO$ and $f\in\CS(\fs(\BA))$. Then for all $T\in T_+ +\fa_{P_0}^+$,
  $$ J_\fo^{H,T}(f)=\sum_{\{Q:P_0\subseteq Q\}} J_\fo^{M_Q,T'}(f_Q) \int_{\fa_Q^{H}} \Gamma_Q(T_1,T-T') dT_1, $$
  where $J_\fo^{H,T}, J_\fo^{M_Q,T'}$ and $f_Q$ are defined by the formulae (\ref{Jo2}), (\ref{LeviJo2}) and (\ref{equation2.2}) respectively. 
\end{thm}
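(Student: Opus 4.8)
The plan is to follow Arthur's original descent argument from \cite[\S2]{MR625344}, adapted to the present symmetric-space setting, using the product-case distributions $J_{\fo}^{M_Q,T'}$ introduced above. First I would fix $T'\in T_+ +\fa_{P_0}^+$ and, starting from the definition (\ref{deftrunc2.1}) of $k_{f,\fo}^T(x)$, split the characteristic function $\wh{\tau}_P^H(H_P(\delta x)-T)$ via the identity (\ref{defGammaP2}) applied with $T_1=H_P(\delta x)-T'$ and $T_2=T-T'$, namely
$$ \wh{\tau}_P^H(H_P(\delta x)-T)=\sum_{\{Q:P\subseteq Q\}}(-1)^{\dim(A_Q/A_H)}\wh{\tau}_P^Q(H_P(\delta x)-T')\,\Gamma_Q(H_P(\delta x)-T',\,T-T'). $$
Substituting this into $k_{f,\fo}^T$ and interchanging the (finite) sums over $P$ and $Q$, I would reorganize the expression so that, for each fixed $Q$, the inner sum over $\{P:P_0\subseteq P\subseteq Q\}$ together with the sum over $\delta\in P(F)\bs H(F)$ is recognized — after writing $\delta=\delta_1\delta_2$ with $\delta_2\in Q(F)\bs H(F)$ and $\delta_1\in (P\cap M_Q)(F)\bs M_Q(F)$ and using the bijection $P\leftrightarrow P^\ast=P\cap M_Q$ from Section \ref{explicitdescription} — as the truncated kernel $k_{f^\ast,\fo_j}^{M_Q,T'}$ for the Levi $M_Q$ acting on $\fm_{\wt Q}\cap\fs$. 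This is where Corollary \ref{orbit2} and the factorization of $\fs$-integrals along $\fn_{\wt P}\cap\fs=(\fn_{\wt Q}\cap\fs)\oplus(\fn_{\wt P}^{\wt Q}\cap\fs)$ enter: the integral over $(\fn_{\wt P}\cap\fs)(\BA)$ defining $k_{f,P,\fo}$ splits, the outer piece over $(\fn_{\wt Q}\cap\fs)(\BA)$ combines with the $K$-integral to produce $f_Q$ (cf.\ (\ref{equation2.2})), and the inner piece reproduces $k^{M_Q}_{f_Q,P^\ast,\fo_j}$ as in (\ref{LevikfPo2}).

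Next I would carry out the integration over $x\in H(F)\bs H(\BA)^1$. Using the Iwasawa-type decomposition along $Q$ relative to $K$, write $x=n m a k$ with $n\in N_Q(\BA)$, $m\in M_Q(\BA)^1$, $a\in A_Q^{H,\infty}$, $k\in K$, and use that $\Gamma_Q(H_Q(\delta_2 x)-T',T-T')$ depends only on the projection of $H_Q(\delta_2 x)$ onto $\fa_Q^H$, i.e.\ on the variable $a$; since $\Gamma_Q(\cdot,T-T')$ is compactly supported on $\fa_Q^H$ by the lemma above, the $a$-integral decouples and contributes exactly the factor $\int_{\fa_Q^H}\Gamma_Q(T_1,T-T')\,dT_1$. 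The remaining integral over $N_Q(F)\bs N_Q(\BA)$, $M_Q(F)\bs M_Q(\BA)^1$ and $K$ collapses, by construction of $f_Q$ and the definition of the Levi truncation, to $J_{\fo_j}^{M_Q,T'}(f_Q)$ summed over the classes $\fo_1,\dots,\fo_t$ making up $\fo\cap\fm_{\wt Q}(F)$, which is precisely $J_\fo^{M_Q,T'}(f_Q)$ by (\ref{LeviJo2}). Interchanging sum and integral is justified throughout by the absolute convergence furnished by Theorem \ref{convergence2} (applied to $H$ and to each Levi $M_Q$), so that all the manipulations above are legitimate.

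The main obstacle I anticipate is not any single estimate but the bookkeeping in the combinatorial reindexing: one must check carefully that the sign $(-1)^{\dim(A_P/A_H)}$ appearing in (\ref{deftrunc2.1}), the sign $(-1)^{\dim(A_Q/A_H)}$ from (\ref{defGammaP2}), and the sign $(-1)^{\dim(A_{P^\ast}/A_{M_Q})}$ in the Levi truncation combine correctly via $\dim(A_P/A_H)=\dim(A_{P^\ast}/A_{M_Q})+\dim(A_Q/A_H)$, and that the decomposition $P(F)\bs H(F)\simeq\bigsqcup_{\delta_2}(P\cap M_Q)(F)\bs M_Q(F)$ indexed by $\delta_2\in Q(F)\bs H(F)$ is compatible with the left-$P(F)$-invariance used to define everything. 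A secondary technical point is the rationality issue flagged in the introduction — one needs the bijection between standard parabolics of $H$ and the relevant semi-standard parabolics of $G$, together with Proposition \ref{equalwt2.1} and Corollary \ref{corequalwt2.1} ($\rho_{P,\fs}=\rho_P$), to ensure that the modular factors $e^{-2\rho_Q(H_Q(a))}$ coming from the Haar measure normalization match those implicit in the Levi distributions, so that the product structure on $\fm_{\wt Q}\cap\fs$ really does yield the $M_Q$-analogue of the construction and the formula closes up exactly as stated. Once this accounting is done, the identity follows, and specializing appropriately (or invoking the lemma's polynomiality statement) gives the polynomial behavior of $T\mapsto J_\fo^{H,T}(f)$ recorded in Corollary \ref{polynomial2}.
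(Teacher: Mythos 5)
Your proposal follows the paper's argument essentially step for step: apply the identity (\ref{defGammaP2}) with $T_1=H_P(\delta x)-T'$ and $T_2=T-T'$, interchange the sums over $P$ and $Q$, factor the sum over $P(F)\backslash H(F)$ through $Q(F)\backslash H(F)$, pass to the Iwasawa decomposition along $Q$, observe that $\Gamma_Q$ depends only on the $A_Q^{H,\infty}$-variable while the remaining data reassembles into $J_\fo^{M_Q,T'}(f_Q)$, and use Corollary \ref{corequalwt2.1} so that the modular factor $e^{-2\rho_Q}$ from the Haar measure cancels the $e^{2\rho_{Q,\fs}}$ produced when moving $a$ past the unipotent integral. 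The technical points you flag at the end — the sign bookkeeping via $\dim(A_P/A_H)=\dim(A_{P\cap M_Q}/A_{M_Q})+\dim(A_Q/A_H)$, the left-$P(F)$-invariance, and the matching of modular factors — are exactly the ones the paper handles, so this is the same proof.
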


\begin{coro}\label{polynomial2}
  Let $\fo\in\CO$ and $f\in\CS(\fs(\BA))$. Then the functions $T\mapsto J_\fo^{H,T}(f)$ and $T\mapsto J^{H,T}(f)$ (defined by (\ref{J2})) are the restriction of polynomials in $T$ of degree $\leq n-1$. 
Thus we can extend them to all $T\in\fa_{P_0}$.
\end{coro}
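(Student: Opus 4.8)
The plan is to read everything off Theorem \ref{pol2} and the lemma preceding it. Fix once and for all a point $T'\in T_++\fa_{P_0}^+$. For $T\in T_++\fa_{P_0}^+$ Theorem \ref{pol2} gives
$$ J_\fo^{H,T}(f)=\sum_{\{Q:P_0\subseteq Q\}} J_\fo^{M_Q,T'}(f_Q)\int_{\fa_Q^{H}}\Gamma_Q(T_1,T-T')\,dT_1, $$
where the coefficients $J_\fo^{M_Q,T'}(f_Q)$ are constants not depending on $T$. By the lemma above on the functions $\Gamma_Q$, the map $T_2\mapsto\int_{\fa_Q^{H}}\Gamma_Q(T_1,T_2)\,dT_1$ is a homogeneous polynomial in $T_2$ of degree $\dim(A_Q/A_H)$; substituting $T_2=T-T'$ makes each summand a (no longer homogeneous) polynomial in $T$ of degree $\dim(A_Q/A_H)$, and the finite sum over $Q$ is thus a polynomial in $T$. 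So $T\mapsto J_\fo^{H,T}(f)$ is a polynomial on the open cone $T_++\fa_{P_0}^+$; since a polynomial on the vector space $\fa_{P_0}$ is determined by its restriction to any nonempty open subset, it extends uniquely to all of $\fa_{P_0}$.

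For the degree bound I would note that, among standard parabolic subgroups $Q\supseteq P_0$, the quantity $\dim(A_Q/A_H)$ is maximised at $Q=P_0$ (since $A_H\subseteq A_Q\subseteq A_{P_0}$), where it equals $\dim\fa_{P_0}-\dim\fa_H$. Using the identification $\fa_{P_0}\simeq\fa_{\wt{P_0}}$ from Section \ref{explicitdescription}, with $\wt{P_0}$ a semi-standard parabolic of $G=GL_{n,D}$ (so $\dim\fa_{P_0}\leq n$, with equality in \textbf{Case I}), together with $\dim\fa_H=1$, one gets $\dim(A_{P_0}/A_H)\leq n-1$. Hence the polynomial above has degree $\leq n-1$.

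It remains to handle $J^{H,T}(f)=\sum_{\fo\in\CO}J_\fo^{H,T}(f)$, where the sum over $\fo$ is a priori infinite; here Theorem \ref{convergence2} is what makes everything legitimate, since it provides absolute convergence of this sum for every $T\in T_++\fa_{P_0}^+$. I would fix finitely many points $T_1,\dots,T_r$ in the (nonempty open, hence Zariski dense) cone $T_++\fa_{P_0}^+$ such that evaluation at $T_1,\dots,T_r$ is injective on the space of polynomials of degree $\leq n-1$ on $\fa_{P_0}$, with associated Lagrange polynomials $L_1,\dots,L_r$ of degree $\leq n-1$. The polynomiality already established yields $J_\fo^{H,T}(f)=\sum_{i=1}^{r}J_\fo^{H,T_i}(f)L_i(T)$ for each $\fo$, and summing over $\fo$ (permitted by absolute convergence) gives $J^{H,T}(f)=\sum_{i=1}^{r}J^{H,T_i}(f)L_i(T)$, a polynomial of degree $\leq n-1$, which again extends to all of $\fa_{P_0}$. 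Equivalently, once the product-setting analogue of Theorem \ref{convergence2} for the $M_Q$-action on $\fm_{\wt{Q}}\cap\fs$ is available (the generalisation mentioned at the beginning of this section), one may simply sum Theorem \ref{pol2} over $\fo$ to obtain $J^{H,T}(f)=\sum_{\{Q:P_0\subseteq Q\}}J^{M_Q,T'}(f_Q)\int_{\fa_Q^{H}}\Gamma_Q(T_1,T-T')\,dT_1$.

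Because Theorem \ref{pol2} and the lemma on $\Gamma_Q$ carry essentially all the weight, I do not expect a genuine obstacle. The one point that demands a little care is the interchange of the (infinite) sum over $\fo$ with the polynomial structure in $T$, which is precisely where Theorem \ref{convergence2} is used; this is cleanly dispatched either by the interpolation argument above or by the auxiliary convergence statement for the Levi subgroups $M_Q$.
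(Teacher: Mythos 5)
Your argument is correct and coincides with the route the paper intends: the corollary is stated immediately after Theorem \ref{pol2} and the lemma on $\Gamma_Q$, with no written proof, precisely because it is the composite of those two facts as you describe. The degree bound $\dim(A_{P_0}/A_H)\le n-1$ is derived correctly in both \textbf{Case I} ($\dim\fa_{P_0}=n$) and \textbf{Case II} ($\dim\fa_{P_0}=n/2$), with $\dim\fa_H=1$ in each. Your Lagrange-interpolation device for summing over $\fo$ is a mildly more self-contained variant of what the paper implicitly does (namely, summing Theorem \ref{pol2} over $\fo$ and invoking the product-setting analogue of Theorem \ref{convergence2} for the $M_Q$, which the paper states without proof at the start of Section 5); both routes are valid, and you correctly identify that one of the two is needed to justify the interchange.
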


For $\fo\in\CO$ and $f\in\CS(\fs(\BA))$, we denote by $J_\fo^{H}(f)$ and $J^{H}(f)$ the constant terms of $J_\fo^{H,T}(f)$ and $J^{H,T}(f)$ respectively. 

\begin{remark}
  We fix $M_0$ and $M_{\wt{0}}$ which are minimal Levi subgroups of $H$ and $G$ respectively. The distributions $J_\fo^{H}(f)$ and $J^{H}(f)$ are independent of the choice of the minimal parabolic subgroup $P_0\supseteq M_0$ of $H$. In fact, the argument of \cite[Proposition 4.6]{MR1893921} after some minor modifications applies here because elements in $\Omega^H$ have representatives in $H(F)\cap K$ in our cases.
\end{remark}

\begin{proof}[Proof of Theorem \ref{pol2}]
  Let $P$ be any standard parabolic subgroup of $H$, $\delta\in P(F)\bs H(F)$ and $x\in H(\BA)^1$. By substituting $T_1=H_P(\delta x)-T'$ and $T_2=T-T'$ in (\ref{defGammaP2}), we have
  $$ \wh{\tau}_P^H(H_P(\delta x)-T)=\sum_{\{Q:P\subseteq Q\}} (-1)^{\dim(A_Q/A_{H})} \wh{\tau}_P^Q(H_P(\delta x)-T') \Gamma_Q(H_P(\delta x)-T',T-T'). $$
  Then
  \[\begin{split}
    J_\fo^{H,T}(f)=&\int_{H(F)\bs H(\BA)^1} \left(\sum_{\{P: P_0\subseteq P\}} (-1)^{\dim(A_P/A_{H})} \sum_{\delta\in P(F)\bs H(F)} \wh{\tau}_P^H(H_{P}(\delta x)-T)\cdot k_{f,P,\fo}(\delta x)\right) dx \\
    =&\int_{H(F)\bs H(\BA)^1} \sum_{\{P: P_0\subseteq P\}} (-1)^{\dim(A_P/A_{H})} \sum_{\delta\in P(F)\bs H(F)} \\ &\left(\sum_{\{Q:P\subseteq Q\}} (-1)^{\dim(A_Q/A_{H})} \wh{\tau}_P^Q(H_P(\delta x)-T') \Gamma_Q(H_P(\delta x)-T',T-T')\right) k_{f,P,\fo}(\delta x) dx.
  \end{split}\]
  By exchanging the order of two sums over $P$ and $Q$ and decomposing the sum over $P(F)\bs H(F)$ into two sums over $P(F)\bs Q(F)$ and $Q(F)\bs H(F)$, we obtain
  \[\begin{split}
    J_\fo^{H,T}(f)=&\sum_{\{Q:P_0\subseteq Q\}} \int_{H(F)\bs H(\BA)^1} \sum_{\{P:P_0\subseteq P\subseteq Q\}} (-1)^{\dim(A_P/A_{Q})} \sum_{\delta'\in Q(F)\bs H(F)} \sum_{\delta\in P(F)\bs Q(F)} \\
    &\wh{\tau}_P^Q(H_P(\delta\delta' x)-T') \Gamma_Q(H_P(\delta\delta' x)-T',T-T') k_{f,P,\fo}(\delta\delta' x) dx.
  \end{split}\]
  Combining the integral over $H(F)\bs H(\BA)^1$ and the sum over $Q(F)\bs H(F)$ into an integral over $Q(F)\bs H(\BA)^1$, and noticing that
  $$ P(F)\bs Q(F)\simeq (P(F)\cap M_Q(F))\bs M_Q(F), $$
  we have
  \[\begin{split}
    J_\fo^{H,T}(f)=&\sum_{\{Q:P_0\subseteq Q\}} \int_{Q(F)\bs H(\BA)^1} \sum_{\{P:P_0\subseteq P\subseteq Q\}} (-1)^{\dim(A_P/A_{Q})} \sum_{\delta\in (P(F)\cap M_Q(F))\bs M_Q(F)} \\
    &\wh{\tau}_P^Q(H_P(\delta x)-T') \Gamma_Q(H_P(\delta x)-T',T-T') k_{f,P,\fo}(\delta x) dx.
  \end{split}\]

By Iwasawa decomposition and our choice of measures, the integral over $x\in Q(F)\bs H(\BA)^1$ can be decomposed as integrals over 
$$ (n,a,m,k)\in N_Q(F)\bs N_Q(\BA)\times A_Q^{H,\infty}\times M_Q(F)\bs M_Q(\BA)^1\times K, $$
and we get 
\begin{equation}\label{equintegrand}
  \begin{split}
    J_\fo^{H,T}(f)=&\sum_{\{Q:P_0\subseteq Q\}} \int_K\int_{M_Q(F)\bs M_Q(\BA)^1}\int_{A_Q^{H,\infty}}\int_{N_Q(F)\bs N_Q(\BA)} \sum_{\{P:P_0\subseteq P\subseteq Q\}} (-1)^{\dim(A_P/A_{Q})} \\
    &\sum_{\delta\in (P(F)\cap M_Q(F))\bs M_Q(F)} \wh{\tau}_P^Q(H_P(\delta namk)-T') \Gamma_Q(H_P(\delta namk)-T',T-T') \\
    &\cdot k_{f,P,\fo}(\delta namk) e^{-2\rho_Q(H_{P_0}(a))}dndadmdk.
  \end{split}
\end{equation}
  We notice that
  $$ \wh{\tau}_P^Q(H_P(\delta namk)-T')=\wh{\tau}_P^Q(H_P(\delta m)+H_P(a)-T')=\wh{\tau}_P^Q(H_P(\delta m)-T') $$
  and that
  $$ \Gamma_Q(H_P(\delta namk)-T',T-T')=\Gamma_Q(H_Q(\delta namk)-T',T-T')=\Gamma_Q(H_Q(a)-T',T-T'). $$
  Additionally, using $A_Q=A_{\wt{Q}}$ and change of variables, we see that
  \[\begin{split}
    k_{f,P,\fo}(\delta namk)&=\sum_{Y\in\fm_{\wt{P}}(F)\cap\fo} \int_{(\fn_{\wt{P}}\cap\fs)(\BA)} f(\Ad(\delta namk)^{-1}(Y+U)) dU \\
    &=\sum_{Y\in\fm_{\wt{P}}(F)\cap\fo} \int_{(\fn_{\wt{P}}\cap\fs)(\BA)} f(\Ad(\delta a^{-1}namk)^{-1}(Y+a^{-1}Ua)) dU \\
    &=\sum_{Y\in\fm_{\wt{P}}(F)\cap\fo} \int_{(\fn_{\wt{P}}\cap\fs)(\BA)} f(\Ad(\delta a^{-1}namk)^{-1}(Y+U)) e^{2\rho_{Q,\fs}(H_{P_0}(a))} dU \\
    &=e^{2\rho_{Q,\fs}(H_{P_0}(a))}k_{f,P,\fo}(\delta a^{-1}namk),
  \end{split}\]
where $\rho_{Q,\fs}$ is defined in Section \ref{explicitdescription}. 
  Since $\delta a^{-1}na\delta^{-1}\in N_Q(\BA)\subseteq N_P(\BA)$ and $k_{f,P,\fo}$ is left invariant by $N_P(\BA)$, we deduce that
  $$ k_{f,P,\fo}(\delta namk)=e^{2\rho_{Q,\fs}(H_{P_0}(a))}k_{f,P,\fo}(\delta mk). $$
  To sum up, the integrand for the term indexed by $Q$ in \eqref{equintegrand} is independent of $n\in N_Q(F)\bs N_Q(\BA)$. Recall that we choose the Haar measure such that $\vol(N_Q(F)\bs N_Q(\BA))=1$. By Corollary \ref{corequalwt2.1}, the factors $e^{-2\rho_Q(H_{P_0}(a))}$ and $e^{2\rho_{Q,\fs}(H_{P_0}(a))}$ cancel each other, and then
  \[\begin{split}
    J_\fo^{H,T}(f)=&\sum_{\{Q:P_0\subseteq Q\}} \left(\int_{A_Q^{H,\infty}} \Gamma_Q(H_Q(a)-T',T-T') da\right) \int_{M_Q(F)\bs M_Q(\BA)^1} \sum_{\{P:P_0\subseteq P\subseteq Q\}} \\
    &(-1)^{\dim(A_P/A_{Q})} \sum_{\delta\in (P(F)\cap M_Q(F))\bs M_Q(F)} \wh{\tau}_P^Q(H_P(\delta m)-T') \left(\int_K k_{f,P,\fo}(\delta mk) dk\right)dm.
  \end{split}\]

  From the definition of the Haar measure on $A_Q^{H,\infty}$, we have
  \[\begin{split}
    \int_{A_Q^{H,\infty}} \Gamma_Q(H_Q(a)-T',T-T') da&:=\int_{\fa_Q^{H}} \Gamma_Q(T_1-T',T-T') dT_1 \\
    &=\int_{\fa_Q^{H}} \Gamma_Q(T_1,T-T') dT_1.
  \end{split}\]
Since $\fn_{\wt{P}}=\fn_{\wt{P}}^{\wt{Q}}\oplus\fn_{\wt{Q}}$, by change of variables, we deduce that
  \[\begin{split}
    k_{f,P,\fo}(\delta mk)&=\sum_{Y\in\fm_{\wt{P}}(F)\cap\fo} \int_{(\fn_{\wt{P}}^{\wt{Q}}\cap\fs)(\BA)} dU \int_{(\fn_{\wt{Q}}\cap\fs)(\BA)} f(\Ad(\delta mk)^{-1}(Y+U+V)) dV \\
    &=\sum_{Y\in\fm_{\wt{P}}(F)\cap\fo} \int_{(\fn_{\wt{P}}^{\wt{Q}}\cap\fs)(\BA)} dU \int_{(\fn_{\wt{Q}}\cap\fs)(\BA)} f(\Ad(k^{-1})(\Ad(\delta m)^{-1}(Y+U)+V)) dV,
  \end{split}\]
  where we need to verify that the change of variables $V\mapsto\Ad(\delta m)(V)$ does not change the Haar measure. 
In fact, the action of $M_Q$ on $\fn_{\wt{Q}}\cap\fs$ is algebraic and the Jacobian, which is the adelic absolute value of the determinant, is trivial on $M_Q(\BA)^1$. 
Then we can write
  \[\begin{split}
    \int_K k_{f,P,\fo}(\delta mk) dk&=\sum_{Y\in\fm_{\wt{P}}(F)\cap\fo} \int_{(\fn_{\wt{P}}^{\wt{Q}}\cap\fs)(\BA)} f_Q(\Ad(\delta m)^{-1}(Y+U)) dU \\
    &=\sum_{j=1}^{t} k_{f_Q,P\cap M_Q,\fo_j}^{M_Q} (\delta m)
  \end{split}\]
  by (\ref{LevikfPo2}). Now we can conclude by noting that
  \[\begin{split}
    J_{\fo}^{M_Q,T'}(f_Q)=&\sum_{j=1}^{t} \int_{M_Q(F)\bs M_Q(\BA)^1} \sum_{\{P:P_0\subseteq P\subseteq Q\}} (-1)^{\dim(A_{P\cap M_Q}/A_{M_Q})} \sum_{\delta\in (P(F)\cap M_Q(F))\bs M_Q(F)} \\
    &\wh{\tau}_{P\cap M_Q}^{M_Q}(H_{P\cap M_Q}(\delta m)-T') k_{f_Q,P\cap M_Q,\fo_j}^{M_Q}(\delta m) dm \\
    =&\int_{M_Q(F)\bs M_Q(\BA)^1} \sum_{\{P:P_0\subseteq P\subseteq Q\}} (-1)^{\dim(A_P/A_{Q})} \sum_{\delta\in (P(F)\cap M_Q(F))\bs M_Q(F)} \\
    &\wh{\tau}_P^Q(H_P(\delta m)-T') \left(\sum_{j=1}^{t} k_{f_Q,P\cap M_Q,\fo_j}^{M_Q}(\delta m)\right) dm.
  \end{split}\]
\end{proof}


\section{\textbf{Noninvariance}}

Let $Q$ be a standard parabolic subgroup of $H$ and $y\in H(\BA)^1$. For $f\in\CS(\fs(\BA))$, define $f_{Q,y}\in\CS((\fm_{\wt{Q}}\cap\fs)(\BA))$ by
\begin{equation}
 \forall Y\in(\fm_{\wt{Q}}\cap\fs)(\BA), f_{Q,y}(Y):=\int_{K} \int_{(\fn_{\wt{Q}}\cap\fs)(\BA)} f(\Ad(k^{-1})(Y+V)) p_{Q}(-H_Q(ky)) dVdk, 
\end{equation}
where for $T\in\fa_{P_0}$, we write
$$ p_Q(T):=\int_{\fa_Q^H} \Gamma_Q(T_1,T) dT_1. $$
We can also extend our results in last section to the product setting by the same argument. Let $\fo\in\CO$ and $f^\ast\in\CS((\fm_{\wt{Q}}\cap\fs)(\BA))$. For $T\in T_+ +\fa_{P_0}^+$, define $J_\fo^{M_Q,T}(f^\ast)$ and $J^{M_Q,T}(f^\ast)$ by (\ref{LeviJo2}) and (\ref{LeviJ2}) respectively. Then the functions $T\mapsto J_\fo^{M_Q,T}(f^\ast)$ and $T\mapsto J^{M_Q,T}(f^\ast)$ are the restriction of polynomials in $T$ and we can extend them to all $T\in\fa_{P_0}$. Denote by $J_\fo^{M_Q}(f^\ast)$ the constant term of $J_\fo^{M_Q,T}(f^\ast)$. 

\begin{prop}\label{noninvariance2}
For $f\in\CS(\fs(\BA))$ and $y\in H(\BA)^1$, we denote $f^y(x):=f(\Ad(y)(x))$. For $\fo\in\CO$, we have
$$ J_\fo^{H}(f^y)=\sum_{\{Q:P_0\subseteq Q\}} J_\fo^{M_Q}(f_{Q,y}). $$
\end{prop}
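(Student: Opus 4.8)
The plan is to compute the polynomial $T\mapsto J_\fo^{H,T}(f^y)$ explicitly, following closely the proof of Theorem \ref{pol2}, and to identify it with $T\mapsto\sum_{\{Q:P_0\subseteq Q\}}J_\fo^{M_Q,T}(f_{Q,y})$; the proposition will then follow by comparing constant terms, using Corollary \ref{polynomial2} and its analogue for the $M_Q$'s. First I would note that $f^y\in\CS(\fs(\BA))$, so $J_\fo^{H,T}(f^y)$ is well defined by Theorem \ref{convergence2}, and that $\Ad(y)\circ\Ad(x^{-1})=\Ad((xy^{-1})^{-1})$ gives $k_{f^y,P,\fo}(x)=k_{f,P,\fo}(xy^{-1})$ for every standard parabolic $P$ and every $\fo\in\CO$. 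Substituting this into \eqref{deftrunc2.1}--\eqref{Jo2} and performing the measure-preserving substitution $x\mapsto xy$ on $H(F)\bs H(\BA)^1$ (legitimate since $y\in H(\BA)^1$), one gets
$$J_\fo^{H,T}(f^y)=\int_{H(F)\bs H(\BA)^1}\sum_{\{P:P_0\subseteq P\}}(-1)^{\dim(A_P/A_H)}\sum_{\delta\in P(F)\bs H(F)}\wh\tau_P^H(H_P(\delta xy)-T)\,k_{f,P,\fo}(\delta x)\,dx.$$

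Next I would apply \eqref{defGammaP2} with $T_1=H_P(\delta x)-T$ and $T_2=H_P(\delta x)-H_P(\delta xy)$, so that $T_1-T_2=H_P(\delta xy)-T$ and
$$\wh\tau_P^H(H_P(\delta xy)-T)=\sum_{\{Q:P\subseteq Q\}}(-1)^{\dim(A_Q/A_H)}\wh\tau_P^Q(H_P(\delta x)-T)\,\Gamma_Q\bigl(H_P(\delta x)-T,\;H_P(\delta x)-H_P(\delta xy)\bigr).$$
After interchanging the sums over $P$ and $Q$ (with $P_0\subseteq P\subseteq Q$ now), writing $P(F)\bs H(F)=\bigl(P(F)\cap M_Q(F)\bigr)\bs M_Q(F)\times Q(F)\bs H(F)$, folding the integral and the sum over $Q(F)\bs H(F)$ into an integral over $Q(F)\bs H(\BA)^1$, and using the Iwasawa decomposition $x=namk$ with $(n,a,m,k)\in N_Q(F)\bs N_Q(\BA)\times A_Q^{H,\infty}\times M_Q(F)\bs M_Q(\BA)^1\times K$, the integrand simplifies exactly as in the proof of Theorem \ref{pol2}: one has $\wh\tau_P^Q(H_P(\delta x)-T)=\wh\tau_P^Q(H_P(\delta m)-T)$, the kernel satisfies $k_{f,P,\fo}(\delta x)=e^{2\rho_{Q,\fs}(H_{P_0}(a))}k_{f,P,\fo}(\delta mk)$ whose exponential cancels the Jacobian factor $e^{-2\rho_Q(H_{P_0}(a))}$ by Corollary \ref{corequalwt2.1}, and the whole integrand is independent of $n$, so the $N_Q$-integral disappears.

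The one genuinely new ingredient is the treatment of $\Gamma_Q$. Since $\Gamma_Q(\cdot,\cdot)$ depends only on the projections of its two arguments onto $\fa_Q^H$, and the projection of $H_P(g)$ onto $\fa_Q$ equals $H_Q(g)$ for $P\subseteq Q$, the first argument of $\Gamma_Q$ becomes $H_Q(a)-T$ (the contributions of $\delta$, $n$ and $m$ all vanishing), while the second becomes
$$H_Q(\delta x)-H_Q(\delta xy)=H_Q(a)-\bigl(H_Q(a)+H_Q(ky)\bigr)=-H_Q(ky),$$
where I used that $k$ is the $Q$-Iwasawa $K$-component of $x=namk$, together with $H_Q(gk')=H_Q(g)$ for $k'\in K$ and $H_Q(g_1g_2)=H_Q(g_1)+H_Q(\kappa_Q(g_1)g_2)$. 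Thus $\Gamma_Q$ depends on $a$ only through $H_Q(a)$ and on $k$ only through $H_Q(ky)$, and since the measure on $A_Q^{H,\infty}$ is transported from $\fa_Q^H$ by $H_Q$,
$$\int_{A_Q^{H,\infty}}\Gamma_Q\bigl(H_Q(a)-T,\,-H_Q(ky)\bigr)\,da=\int_{\fa_Q^H}\Gamma_Q\bigl(T_1,\,-H_Q(ky)\bigr)\,dT_1=p_Q\bigl(-H_Q(ky)\bigr).$$

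Finally, decomposing $\fn_{\wt P}=\fn_{\wt P}^{\wt Q}\oplus\fn_{\wt Q}$ and absorbing the integral over $(\fn_{\wt Q}\cap\fs)(\BA)$, the integral over $K$ and the weight $p_Q(-H_Q(ky))$ into the defining formula for $f_{Q,y}$, one recognizes
$$\int_K p_Q\bigl(-H_Q(ky)\bigr)\,k_{f,P,\fo}(\delta mk)\,dk=\sum_{j=1}^{t}k^{M_Q}_{f_{Q,y},\,P\cap M_Q,\,\fo_j}(\delta m)$$
in the notation of \eqref{LevikfPo2}, where $\fo_1,\dots,\fo_t$ are the classes in $\CO^{\fm_{\wt Q}\cap\fs}$ partitioning $\fo\cap\fm_{\wt Q}(F)$. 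Using $\dim(A_P/A_H)+\dim(A_Q/A_H)\equiv\dim(A_P/A_Q)=\dim(A_{P\cap M_Q}/A_{M_Q})\pmod 2$ and $\wh\tau_P^Q=\wh\tau_{P\cap M_Q}^{M_Q}$, the $Q$-indexed term collapses to $\sum_{j=1}^{t}J_{\fo_j}^{M_Q,T}(f_{Q,y})=J_\fo^{M_Q,T}(f_{Q,y})$ by \eqref{LeviJo2}. This gives $J_\fo^{H,T}(f^y)=\sum_{\{Q:P_0\subseteq Q\}}J_\fo^{M_Q,T}(f_{Q,y})$ for all $T\in T_++\fa_{P_0}^+$; both sides being restrictions of polynomials in $T$, comparing constant terms yields the statement. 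The main obstacle is precisely the third paragraph above: tracking the Harish–Chandra maps and their $\fa_Q^H$-projections carefully enough to see that the combinatorial weight produced by \eqref{defGammaP2} is exactly the weight $p_Q(-H_Q(ky))$ built into the definition of $f_{Q,y}$; everything else is a routine variant of the proof of Theorem \ref{pol2}, and the absolute convergence needed to justify all interchanges is furnished by Theorem \ref{convergence2} as there.
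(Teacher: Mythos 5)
Your proof is correct and follows essentially the same route as the paper's: the change of variables $x\mapsto xy$, the substitution into \eqref{defGammaP2}, the exchange of sums $P\leftrightarrow Q$, the Iwasawa decomposition yielding the weight $p_Q(-H_Q(ky))$, and the identification with $J_\fo^{M_Q,T}(f_{Q,y})$ are all the same steps in the same order. The only cosmetic difference is that you express the second argument of $\Gamma_Q$ as $H_P(\delta x)-H_P(\delta xy)$ and project, whereas the paper writes $-H_P(k_P(\delta x)y)$ using a fixed $K$-section $k_P$ of the Iwasawa decomposition; these are equal, and both reduce to $-H_Q(ky)$ after projection to $\fa_Q^H$.
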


\begin{proof}
Let $T\in T_+ +\fa_{P_0}^+$. By definition, 
$$  J_\fo^{H,T}(f^y)=\int_{H(F)\bs H(\BA)^1} \left(\sum_{\{P: P_0\subseteq P\}} (-1)^{\dim(A_P/A_{H})} \sum_{\delta\in P(F)\bs H(F)} \wh{\tau}_P^H(H_{P}(\delta x)-T) \cdot k_{f^y,P,\fo}(\delta x)\right) dx, $$
where
$$ k_{f^y,P,\fo}(\delta x)=\sum_{Y\in\fm_{\wt{P}}(F)\cap\fo}\int_{(\fn_{\wt{P}}\cap\fs)(\BA)} f(\Ad(y)\Ad(\delta x)^{-1}(Y+U)) dU=k_{f,P,\fo}(\delta xy^{-1}). $$
By a change of variables, we get
$$ J_\fo^{H,T}(f^y)=\int_{H(F)\bs H(\BA)^1} \left(\sum_{\{P: P_0\subseteq P\}} (-1)^{\dim(A_P/A_{H})} \sum_{\delta\in P(F)\bs H(F)} \wh{\tau}_P^H(H_{P}(\delta xy)-T) \cdot k_{f,P,\fo}(\delta x)\right) dx. $$

For $x\in H(\BA)$ and $P$ a standard parabolic subgroup of $H$, let $k_P(x)$ be an element in $K$ satisfying $xk_P(x)^{-1}\in P(\BA)$. Then
$$ \wh{\tau}_P^H(H_{P}(\delta xy)-T)=\wh{\tau}_P^H(H_{P}(\delta x)-T+H_P(k_P(\delta x)y)). $$
By substituting $T_1=H_{P}(\delta x)-T$ and $T_2=-H_P(k_P(\delta x)y)$ in (\ref{defGammaP2}), we get
  $$ \wh{\tau}_P^H(H_P(\delta xy)-T)=\sum_{\{Q:P\subseteq Q\}} (-1)^{\dim(A_Q/A_{H})} \wh{\tau}_P^Q(H_{P}(\delta x)-T) \Gamma_Q(H_{P}(\delta x)-T,-H_P(k_P(\delta x)y)). $$
Then
\[\begin{split}
    J_\fo^{H,T}(f^y)=&\int_{H(F)\bs H(\BA)^1} \sum_{\{P: P_0\subseteq P\}} (-1)^{\dim(A_P/A_{H})} \sum_{\delta\in P(F)\bs H(F)} \\ & \left(\sum_{\{Q:P\subseteq Q\}} (-1)^{\dim(A_Q/A_{H})} \wh{\tau}_P^Q(H_{P}(\delta x)-T) \Gamma_Q(H_{P}(\delta x)-T,-H_P(k_P(\delta x)y))\right) \cdot k_{f,P,\fo}(\delta x) dx. 
  \end{split}\]
As in the proof of Theorem \ref{pol2}, by exchanging the order of sums, we deduce that 
  \[\begin{split}
    J_\fo^{H,T}(f^y)=&\sum_{\{Q:P_0\subseteq Q\}} \int_{Q(F)\bs H(\BA)^1} \sum_{\{P:P_0\subseteq P\subseteq Q\}} (-1)^{\dim(A_P/A_{Q})} \sum_{\delta\in (P(F)\cap M_{Q}(F))\bs M_{Q}(F)} \\
    &\wh{\tau}_P^Q(H_P(\delta x)-T) \Gamma_Q(H_P(\delta x)-T,-H_P(k_P(\delta x)y)) k_{f,P,\fo}(\delta x) dx.
  \end{split}\]

By Iwasawa decomposition and our choice of measures, the integral over $x\in Q(F)\bs H(\BA)^1$ can be decomposed as integrals over 
$$ (n,a,m,k)\in N_Q(F)\bs N_Q(\BA)\times A_Q^{H,\infty}\times M_Q(F)\bs M_Q(\BA)^1\times K, $$
and we obtain 
\begin{equation}\label{equintegrand(2)}
  \begin{split}
    J_\fo^{H,T}(f^y)=&\sum_{\{Q:P_0\subseteq Q\}} \int_{K}\int_{M_{Q}(F)\bs M_{Q}(\BA)^1}\int_{A_Q^{H,\infty}}\int_{N_{Q}(F)\bs N_{Q}(\BA)} \sum_{\{P:P_0\subseteq P\subseteq Q\}} (-1)^{\dim(A_P/A_{Q})} \\
    &\sum_{\delta\in (P(F)\cap M_{Q}(F))\bs M_{Q}(F)} \wh{\tau}_P^Q(H_P(\delta namk)-T) \Gamma_Q(H_P(\delta namk)-T,-H_P(k_P(\delta namk)y)) \\
    &\cdot k_{f,P,\fo}(\delta namk) e^{-2\rho_{Q}(H_{P_0}(a))}dndadmdk.
  \end{split}
\end{equation}
As in the proof of Theorem \ref{pol2}, we see that
$$ \wh{\tau}_P^Q(H_P(\delta namk)-T)=\wh{\tau}_P^Q(H_P(\delta m)-T), $$
and that
$$ k_{f,P,\fo}(\delta namk)=e^{2\rho_{Q}(H_{P_0}(a))}k_{f,P,\fo}(\delta mk). $$
Additionally, 
\[\begin{split}
\Gamma_Q(H_P(\delta namk)-T,-H_P(k_P(\delta namk)y))&=\Gamma_Q(H_Q(\delta namk)-T,-H_Q(k_P(\delta namk)y)) \\
&=\Gamma_Q(H_Q(a)-T,-H_Q(k_Q(\delta namk)y)) \\
&=\Gamma_Q(H_Q(a)-T,-H_Q(ky)). 
\end{split}\]
In sum, the integrand for the term indexed by $Q$ in \eqref{equintegrand(2)} is independent of $n\in N_{Q}(F)\bs N_{Q}(\BA)$. Recall that we choose the Haar measure such that $\vol(N_{Q}(F)\bs N_{Q}(\BA))=1$. Then
  \[\begin{split}
    J_\fo^{H,T}(f^y)=&\sum_{\{Q:P_0\subseteq Q\}} \int_{K}\int_{M_{Q}(F)\bs M_{Q}(\BA)^1}\int_{A_Q^{H,\infty}}\sum_{\{P:P_0\subseteq P\subseteq Q\}} (-1)^{\dim(A_P/A_{Q})} \sum_{\delta\in (P(F)\cap M_{Q}(F))\bs M_{Q}(F)} \\
    &\wh{\tau}_P^Q(H_P(\delta m)-T) \Gamma_Q(H_Q(a)-T,-H_Q(ky)) k_{f,P,\fo}(\delta mk)dadmdk.
  \end{split}\]
First, let us consider the integral on $A_Q^{H,\infty}$, which is
  \[\begin{split}
    \int_{A_Q^{H,\infty}} \Gamma_Q(H_Q(a)-T,-H_Q(ky)) da :=&\int_{\fa_Q^{H}} \Gamma_Q(T_1-T,-H_Q(ky)) dT_1 \\
    =&\int_{\fa_Q^{H}} \Gamma_Q(T_1,-H_Q(ky)) dT_1 \\
    =&p_{Q}(-H_Q(ky)).
  \end{split}\]
Next, we compute the integral on $K$, which is
$$ \int_{K} k_{f,P,\fo}(\delta mk) p_{Q}(-H_Q(ky)) dk. $$
As in the proof of Theorem \ref{pol2}, we see that
$$ k_{f,P,\fo}(\delta mk)=\sum_{Y\in\fm_{\wt{P}}(F)\cap\fo} \int_{(\fn_{\wt{P}}^{\wt{Q}}\cap\fs)(\BA)} dU \int_{(\fn_{\wt{Q}}\cap\fs)(\BA)} f(\Ad(k^{-1})(\Ad(\delta m)^{-1}(Y+U)+V)) dV, $$
so we can write
  \[\begin{split}
    \int_{K} k_{f,P,\fo}(\delta mk) p_{Q}(-H_Q(ky)) dk=&\sum_{Y\in\fm_{\wt{P}}(F)\cap\fo} \int_{(\fn_{\wt{P}}^{\wt{Q}}\cap\fs)(\BA)} f_{Q,y}(\Ad(\delta m)^{-1}(Y+U)) dU \\
    =&\sum_{j=1}^{t} k_{f_{Q,y},P\cap M_Q,\fo_j}^{M_Q} (\delta m)
  \end{split}\]
by (\ref{LevikfPo2}). 
Therefore, we obtain
  \[\begin{split}
    J_\fo^{H,T}(f^y)=&\sum_{\{Q:P_0\subseteq Q\}} \int_{M_{Q}(F)\bs M_{Q}(\BA)^1} \sum_{\{P:P_0\subseteq P\subseteq Q\}} (-1)^{\dim(A_P/A_{Q})} \sum_{\delta\in (P(F)\cap M_{Q}(F))\bs M_{Q}(F)} \\ 
    &\wh{\tau}_P^Q(H_P(\delta m)-T)  \left(\sum_{j=1}^{t} k_{f_{Q,y},P\cap M_Q,\fo_j}^{M_Q} (\delta m)\right) dm.
  \end{split}\]
As in the proof of Theorem \ref{pol2}, we notice that
  \[\begin{split}
    J_{\fo}^{M_Q,T}(f_{Q,y})=&\int_{M_Q(F)\bs M_Q(\BA)^1} \sum_{\{P:P_0\subseteq P\subseteq Q\}} (-1)^{\dim(A_P/A_{Q})} \sum_{\delta\in (P(F)\cap M_Q(F))\bs M_Q(F)} \\
    &\wh{\tau}_P^Q(H_P(\delta m)-T) \left(\sum_{j=1}^{t} k_{f_{Q,y},P\cap M_Q,\fo_j}^{M_Q}(\delta m)\right) dm. 
  \end{split}\]
Thus we deduce that
$$ J_\fo^{H,T}(f^y)=\sum_{\{Q:P_0\subseteq Q\}} J_\fo^{M_Q,T}(f_{Q,y}). $$
We may conclude by taking the constant terms of both sides. 
\end{proof}


\section{\textbf{An infinitesimal trace formula for $\fs//H$}}

Recall that for $f\in\CS(\fs(\BA))$, we have defined its Fourier transform $\hat{f}\in\CS(\fs(\BA))$ by (\ref{fouriertransform2}) and denoted the constant term of $J_\fo^{H,T}(f)$ by $J_\fo^{H}(f)$. 

\begin{thm}\label{infitf2}
For $f\in\CS(\fs(\BA))$, we have the equality, 
$$ \sum_{\fo\in\CO}J_\fo^{H}(f)=\sum_{\fo\in\CO}J_\fo^{H}(\hat{f}). $$
\end{thm}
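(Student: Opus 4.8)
The plan is to deduce the identity from the Poisson summation formula applied to the naive kernel $k_{f,H}$, using the exponential comparison estimate of Corollary~\ref{comparewithnaive2} to transfer the information to the truncated kernels, and then the polynomiality of Corollary~\ref{polynomial2} to eliminate the truncation parameter.

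First I would prove that $k_{f,H}(x)=k_{\hat f,H}(x)$ for every $x\in H(\BA)^1$. For such an $x$, the automorphism $\Ad(x)$ of $\fs(\BA)$ preserves the chosen Haar measure: it scales the measure by $|\chi(x)|_{\BA}$, where $\chi$ is the $F$-rational character of $H$ given by the determinant of the adjoint action on $\fs$, and $|\chi(x)|_{\BA}=1$ because $x\in H(\BA)^1=\ker H_H$. Moreover, since the form~(\ref{bilinearform2}) is $H(\BA)$-invariant, one has $\langle\Ad(x)Y_1,Y_2\rangle=\langle Y_1,\Ad(x^{-1})Y_2\rangle$; a change of variables then identifies the Fourier transform of $Y\mapsto f(\Ad(x^{-1})(Y))$ with $\wh Y\mapsto\hat f(\Ad(x^{-1})(\wh Y))$. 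Because~(\ref{bilinearform2}) restricts to a non-degenerate $F$-bilinear form on $\fs$ and $\Psi$ is a nontrivial character of $\BA/F$, the lattice $\fs(F)$ is its own annihilator in $\fs(\BA)$ for the pairing $(Y,\wh Y)\mapsto\Psi(\langle Y,\wh Y\rangle)$; together with $\vol(\fs(F)\bs\fs(\BA))=1$, this lets me apply the Poisson summation formula to obtain
$$ k_{f,H}(x)=\sum_{Y\in\fs(F)}f(\Ad(x^{-1})(Y))=\sum_{\wh Y\in\fs(F)}\hat f(\Ad(x^{-1})(\wh Y))=k_{\hat f,H}(x). $$

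Next I would use that $F^{H}(\cdot,T)$ is compactly supported on $H(F)\bs H(\BA)^1$ (as recalled in the proof of Theorem~\ref{convergence2}), so that $F^{H}(\cdot,T)k_{f,H}=F^{H}(\cdot,T)k_{\hat f,H}$ identically. Applying Corollary~\ref{comparewithnaive2} to both $f$ and $\hat f$ and combining with the triangle inequality, I obtain, for every $N>0$ and every fixed $\epsilon_0>0$, a constant $C$ such that
$$ |J^{H,T}(f)-J^{H,T}(\hat f)|\le\int_{H(F)\bs H(\BA)^1}|k_{f}^{T}(x)-k_{\hat f}^{T}(x)|\,dx\le Ce^{-N\parallel T\parallel} $$
for all $T\in T_+ +\fa_{P_0}^+$ satisfying $\alpha(T)\ge\epsilon_0\parallel T\parallel$ for every $\alpha\in\Delta_{P_0}^H$, where the first inequality uses the definition~(\ref{J2}) of $J^{H,T}$. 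By Corollary~\ref{polynomial2}, the function $p(T):=J^{H,T}(f)-J^{H,T}(\hat f)$ is the restriction of a polynomial on $\fa_{P_0}$. I would then fix $\epsilon_0$ small enough that the cone $\{T\in T_+ +\fa_{P_0}^+:\alpha(T)\ge\epsilon_0\parallel T\parallel\ \forall\,\alpha\}$ has nonempty interior, pick a point $T_0$ in that interior --- for which $sT_0$ remains in the cone for all $s\ge 1$, since $T_+\in\fa_{P_0}^+$ --- and note that the one-variable polynomial $s\mapsto p(sT_0)$ tends to $0$ as $s\to\infty$, hence vanishes identically; letting $T_0$ range over an open set forces $p\equiv 0$. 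Taking constant terms --- which are compatible with the identity $J^{H,T}=\sum_{\fo\in\CO}J_\fo^{H,T}$ --- yields $\sum_{\fo\in\CO}J_\fo^{H}(f)=\sum_{\fo\in\CO}J_\fo^{H}(\hat f)$.

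Since the genuine analytic work is already contained in Theorem~\ref{convergence2}/Corollary~\ref{comparewithnaive2} and in Corollary~\ref{polynomial2}, what remains is essentially bookkeeping. The step that needs the most care is the Poisson argument: one must verify that $\Ad(x)$ is measure-preserving and ``self-adjoint up to inverse'' for the pairing, which works out cleanly precisely because the integration is over $H(\BA)^1$ --- where the character $\chi$ above is trivial --- rather than over all of $H(\BA)$. A secondary point is to ensure that the cone on which the exponential estimate is valid is fat enough to run the polynomial-vanishing argument; neither of these is a real obstacle.
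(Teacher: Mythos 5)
Your proof is correct, and it is precisely the approach that the paper mentions in the sentence preceding Lemma~\ref{lemref}: using the Poisson summation formula on $\fs(\BA)$ together with Corollary~\ref{comparewithnaive2} and Corollary~\ref{polynomial2}, following the argument of Chaudouard's Th\'eor\`eme~4.5 in~\cite{MR1893921}. The paper itself, however, presents a different proof (attributed to the referee) via Lemma~\ref{lemref}, which establishes the stronger \emph{pointwise} identity $k_{f,P}(x)=k_{\hat f,P}(x)$ for \emph{every} standard parabolic $P$ and every $x\in H(\BA)$, not only the identity $k_{f,H}=k_{\hat f,H}$ for $P=H$ that you use. That lemma is proved by a partial Fourier transform computation on $(\fm_{\wt P}\cap\fs)(\BA)$ together with the Fourier inversion formula along $\fn_{\wt P}\cap\fs$ and its opposite, followed by Poisson summation on the Levi piece. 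Once all the $k_{f,P}$ agree with $k_{\hat f,P}$, one has $k_f^T(x)=k_{\hat f}^T(x)$ on the nose, hence $J^{H,T}(f)=J^{H,T}(\hat f)$ for every $T$ directly, bypassing both the exponential estimate of Corollary~\ref{comparewithnaive2} and the polynomial-vanishing argument. The trade-off is clear: your route reuses the hard analytic content already built (Theorem~\ref{convergence2}, Corollary~\ref{comparewithnaive2}) and requires only Poisson on the whole space, at the cost of the asymptotic-plus-polynomiality step; the paper's route requires the extra algebraic Lemma~\ref{lemref} but yields the theorem with almost no analysis beyond the absolute convergence of the truncated integrals. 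Both are sound, and your treatment of the measure invariance and self-adjointness of $\Ad(x)$ under the pairing, as well as the fat-cone polynomial-vanishing argument, are handled correctly.
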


	We can prove this equality using the argument in \cite[Th\'{e}or\`{e}me 4.5]{MR1893921} with the Poisson summation formula on $\fs(\BA)$, Corollary \ref{comparewithnaive2} and Corollary \ref{polynomial2}. The following alternative proof is suggested by the referee and does not involve the asymptotic estimation of $J^{H,T}(f)$. 

\begin{lem}\label{lemref}
	Let $f\in\CS(\fs(\BA))$, $P$ be a standard parabolic subgroup of $H$ and $x\in H(\BA)$. We have the equality 
	$$ k_{f,P}(x)=k_{\hat{f},P}(x), $$
	where 
	$$ k_{f,P}(x):=\sum_{\fo\in\CO} k_{f,P,\fo}(x)=\sum_{Y\in(\fm_{\wt{P}}\cap\fs)(F)}\int_{(\fn_{\wt{P}}\cap\fs)(\BA)} f(\Ad(x^{-1})(Y+U)) dU. $$
\end{lem}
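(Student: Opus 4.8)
The plan is to apply the Poisson summation formula on the $F$-rational points of the linear subspace $(\fn_{\wt{P}}\cap\fs)(\BA)$, using the non-degeneracy of the restriction of $\langle\cdot,\cdot\rangle$ to a suitable pairing between $\fn_{\wt{P}}\cap\fs$ and its ``opposite''. First I would fix, for the standard parabolic $P$ of $H$, the decomposition $\fs=(\ov{\fn}_{\wt{P}}\cap\fs)\oplus(\fm_{\wt{P}}\cap\fs)\oplus(\fn_{\wt{P}}\cap\fs)$ coming from the semi-standard parabolic $\wt{P}$ of $G$ and its opposite $\ov{\wt{P}}$. As already used in the proof of Proposition \ref{prop43}, the bilinear form $\langle\cdot,\cdot\rangle$ restricts to a non-degenerate pairing between $(\fn_{\wt{P}}\cap\fs)(\BA)$ and $(\ov{\fn}_{\wt{P}}\cap\fs)(\BA)$, and it identifies the Pontryagin dual of $(\fn_{\wt{P}}\cap\fs)(\BA)/(\fn_{\wt{P}}\cap\fs)(F)$ with $(\ov{\fn}_{\wt{P}}\cap\fs)(F)$. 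Moreover, since $\langle U, V\rangle=0$ whenever $U\in(\fn_{\wt{P}}\cap\fs)(\BA)$ and $V\in(\fn_{\wt{P}}\cap\fs)(\BA)$ (both lie in the nilradical, so their product is nilpotent in $\fg$ and has reduced trace $0$), the partial Fourier transform in the $\fn_{\wt{P}}$-direction kills the inner integral over $U$.

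Concretely, I would write, for $x\in H(\BA)$ and $Z\in(\fm_{\wt{P}}\cap\fs)(\BA)$,
$$ g_x(Z):=\int_{(\fn_{\wt{P}}\cap\fs)(\BA)} f(\Ad(x^{-1})(Z+U))\, dU, $$
so that $k_{f,P}(x)=\sum_{Y\in(\fm_{\wt{P}}\cap\fs)(F)} g_x(Y)$, but this is not yet of Poisson-summable shape since the sum is only over $(\fm_{\wt{P}}\cap\fs)(F)$, not over a full lattice in $\fs(\BA)$. The fix is to reintroduce the $\ov{\fn}_{\wt{P}}$-direction: consider the Bruhat–Schwartz function $\Phi^x$ on $(\fn_{\wt{P}}\cap\fs)(\BA)$ defined, for fixed $Y\in(\fm_{\wt{P}}\cap\fs)(F)$, by $\Phi^x_Y(U):=f(\Ad(x^{-1})(Y+U))$, and apply Poisson summation: $\sum_{U\in(\fn_{\wt{P}}\cap\fs)(F)}\Phi^x_Y(U)=\sum_{\wh{U}\in(\ov{\fn}_{\wt{P}}\cap\fs)(F)}\wh{\Phi^x_Y}(\wh{U})$. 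The left side summed over $Y$ is $\sum_{Y\in(\fm_{\wt{P}}\cap\fs)(F)}\sum_{U\in(\fn_{\wt{P}}\cap\fs)(F)} f(\Ad(x^{-1})(Y+U))$; combining $Y$ and $U$ and noting $(\fm_{\wt{P}}\cap\fs)(F)\oplus(\fn_{\wt{P}}\cap\fs)(F)=(\fp_{\wt{P}}\cap\fs)(F)$, I would then relate both $k_{f,P}$ and $k_{\hat f,P}$ to one and the same expression built from the full Poisson summation over $\fs(F)$, exploiting the self-duality of $\fs(\BA)$ under $\langle\cdot,\cdot\rangle$ and the compatibility of the global measure normalizations $\vol(\fv(F)\bs\fv(\BA))=1$ chosen in Section \ref{BSandHaar2}. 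The key identity is that the Fourier transform on $\fs(\BA)$, restricted along the decomposition $\fs=(\ov{\fn}_{\wt{P}}\cap\fs)\oplus(\fm_{\wt{P}}\cap\fs)\oplus(\fn_{\wt{P}}\cap\fs)$, interchanges the roles of $\fn_{\wt{P}}\cap\fs$ and $\ov{\fn}_{\wt{P}}\cap\fs$ while being an isometry on $\fm_{\wt{P}}\cap\fs$, so that integrating out $\fn_{\wt{P}}\cap\fs$ before transforming equals transforming and then summing $\ov{\fn}_{\wt{P}}\cap\fs$ over its $F$-points.

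The cleanest route, which I would follow, is: (i) start from $k_{\hat f,P}(x)=\sum_{Y\in(\fm_{\wt{P}}\cap\fs)(F)}\int_{(\fn_{\wt{P}}\cap\fs)(\BA)} \hat f(\Ad(x^{-1})(Y+U))\,dU$; (ii) use $H(\BA)$-invariance of $\langle\cdot,\cdot\rangle$ to move $\Ad(x^{-1})$ across, i.e. $\hat f\circ\Ad(x^{-1})=\widehat{f\circ\Ad(x)}$ up to the (trivial, on $H(\BA)^1$; in general a Jacobian that cancels) modulus; (iii) expand $\hat f$ as an integral over $\fs(\BA)$ and perform the $U$-integral, which by the vanishing $\langle U,\cdot\rangle=0$ on the $\fn_{\wt{P}}$-part produces a delta-type restriction forcing the dual variable into $(\ov{\fn}_{\wt{P}}\cap\fs)\oplus(\fm_{\wt{P}}\cap\fs)$; (iv) apply Poisson summation on $(\fm_{\wt{P}}\cap\fs)(F)$ to convert the sum over $Y$ into a sum over $(\fm_{\wt{P}}\cap\fs)(F)$ of $f$ itself (self-dual, with unit covolume), and recognize the result as $k_{f,P}(x)$. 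I expect the main obstacle to be purely bookkeeping: correctly tracking the three-way orthogonal decomposition of $\fs$ under $\langle\cdot,\cdot\rangle$, verifying that $(\fm_{\wt{P}}\cap\fs)$ is its own orthogonal complement inside $(\fm_{\wt{P}}\cap\fs)\oplus\big((\fn_{\wt{P}}\cap\fs)\oplus(\ov{\fn}_{\wt{P}}\cap\fs)\big)$ so that the partial Fourier transform in the nilpotent directions is a genuine involution swapping $\fn_{\wt{P}}\cap\fs$ and $\ov{\fn}_{\wt{P}}\cap\fs$, and checking that all measure normalizations are the self-dual ones so no spurious constants appear. Once the decomposition and the vanishing $\langle(\fn_{\wt{P}}\cap\fs),(\fn_{\wt{P}}\cap\fs)\rangle=0$ are in hand, the computation is a direct iteration of the Poisson summation formula, entirely parallel to the partial-Fourier-transform manipulations already carried out in the proof of Proposition \ref{prop43}.
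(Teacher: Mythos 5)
Your ``cleanest route'' (steps (i)--(iv)) is essentially the paper's proof: set $g:=f\circ\Ad(x^{-1})$, use the decomposition $\fs=(\fn_{\ov{\wt{P}}}\cap\fs)\oplus(\fm_{\wt{P}}\cap\fs)\oplus(\fn_{\wt{P}}\cap\fs)$ and the orthogonality of $\langle\cdot,\cdot\rangle$ to show by Fourier inversion that the $\fn_{\wt{P}}\cap\fs$-constant term of $\hat g$ is the Fourier transform on $(\fm_{\wt{P}}\cap\fs)(\BA)$ of the $\fn_{\wt{P}}\cap\fs$-constant term of $g$, and then apply Poisson summation on $(\fm_{\wt{P}}\cap\fs)(F)$. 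However, your second paragraph rests on a misconception: $\sum_{Y\in(\fm_{\wt{P}}\cap\fs)(F)}g_x(Y)$ \emph{is} already of Poisson-summable shape on the self-dual group $(\fm_{\wt{P}}\cap\fs)(\BA)$ with its cocompact lattice $(\fm_{\wt{P}}\cap\fs)(F)$, and the ``fix'' of summing $U$ over $(\fn_{\wt{P}}\cap\fs)(F)$ computes a different object than $k_{f,P}(x)$ --- whose inner integral is over $(\fn_{\wt{P}}\cap\fs)(\BA)$, not an $F$-rational sum --- so that detour, and the plan to reduce everything to a full Poisson sum over $\fs(F)$, should simply be dropped. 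Two smaller slips in the main route: in (ii) the correct commutation is $\hat f\circ\Ad(x^{-1})=\widehat{f\circ\Ad(x^{-1})}$ (and the Jacobian of $\Ad(x)$ on $\fs(\BA)$ is identically $1$ for all $x\in H(\BA)$, not merely on $H(\BA)^1$, since $\det(\Ad(x)|_\fg)=\det(\Ad(x)|_\fh)=1$ forces $\det(\Ad(x)|_\fs)=1$); and in (iii) integrating over $U\in(\fn_{\wt{P}}\cap\fs)(\BA)$ produces a delta that kills the $\fn_{\ov{\wt{P}}}\cap\fs$-component, so the dual variable is forced into $(\fm_{\wt{P}}\cap\fs)\oplus(\fn_{\wt{P}}\cap\fs)$, not into $(\fn_{\ov{\wt{P}}}\cap\fs)\oplus(\fm_{\wt{P}}\cap\fs)$.
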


\begin{proof}
	Define $g\in\CS(\fs(\BA))$ by $g(Y):=f(\Ad(x^{-1})(Y))$ for all $Y\in\fs(\BA)$. Then we have 
	$$ k_{f,P}(x)=\sum_{Y\in(\fm_{\wt{P}}\cap\fs)(F)}h(Y), $$
	where $h(Y):=\int_{(\fn_{\wt{P}}\cap\fs)(\BA)} g(Y+U) dU$, and 
	$$ k_{\hat{f},P}(x)=\sum_{Y\in(\fm_{\wt{P}}\cap\fs)(F)}i(Y), $$
	where $i(Y):=\int_{(\fn_{\wt{P}}\cap\fs)(\BA)} \hat{g}(Y+U) dU$. Note that $h,i\in\CS((\fm_{\wt{P}}\cap\fs)(\BA))$. 

	Denote by $\hat{h}$ the Fourier transform of $h$ on $(\fm_{\wt{P}}\cap\fs)(\BA)$. We claim that 
	$$ i=\hat{h}. $$
	The proof below of this statement is close to that of its local analogue (cf. \cite[Lemma 1.4]{MR1702257}). Denote by $\ov{\wt{P}}$ the semi-standard parabolic subgroup of $G$ opposite to $\wt{P}$. For $X_j\in\fs(\BA)$ with $j\in\{1,2\}$, we write $X_j=\ov{U}_j+Y_j+U_j$ where $\ov{U}_j\in(\fn_{\ov{\wt{P}}}\cap\fs)(\BA)$, $Y_j\in(\fm_{\wt{P}}\cap\fs)(\BA)$ and $U_j\in(\fn_{\wt{P}}\cap\fs)(\BA)$. Then 
	$$ \langle X_1,X_2\rangle=\langle\ov{U}_1,U_2\rangle+\langle Y_1,Y_2\rangle+\langle U_1,\ov{U}_2\rangle. $$
Hence 
	\[\begin{split}
	 \hat{g}(Y_1+U_1)&=\int_{\fs(\BA)} g(X_2)\Psi(\langle X_2,Y_1+U_1\rangle) dX_2 \\
					  &=\int_{(\fn_{\ov{\wt{P}}}\cap\fs)(\BA)\oplus(\fm_{\wt{P}}\cap\fs)(\BA)\oplus(\fn_{\wt{P}}\cap\fs)(\BA)} g(\ov{U}_2+Y_2+U_2)\Psi(\langle Y_1,Y_2\rangle+\langle U_1,\ov{U}_2 \rangle) dU_2dY_2d\ov{U}_2 \\
					  &=\int_{(\fn_{\ov{\wt{P}}}\cap\fs)(\BA)}\Psi(\langle U_1,\ov{U}_2 \rangle) d\ov{U}_2 \int_{(\fm_{\wt{P}}\cap\fs)(\BA)\oplus(\fn_{\wt{P}}\cap\fs)(\BA)} g(\ov{U}_2+Y_2+U_2)\Psi(\langle Y_1,Y_2\rangle) dU_2dY_2. \\
	\end{split}\]
Since $d\ov{U}_j$ and $dU_j$ are dual to each other with respect to $\langle\ov{U}_j,U_j\rangle$, by the Fourier inversion formula, we have 
	$$ i(Y_1)=\int_{(\fn_{\wt{P}}\cap\fs)(\BA)} \hat{g}(Y_1+U_1) dU_1=\int_{(\fm_{\wt{P}}\cap\fs)(\BA)\oplus(\fn_{\wt{P}}\cap\fs)(\BA)} g(Y_2+U_2)\Psi(\langle Y_1,Y_2\rangle) dU_2dY_2. $$
But 
	\[\begin{split}
	 \hat{h}(Y_1)&=\int_{(\fm_{\wt{P}}\cap\fs)(\BA)} h(Y_2)\Psi(\langle Y_2,Y_1\rangle) dY_2 \\
				&=\int_{(\fm_{\wt{P}}\cap\fs)(\BA)} \Psi(\langle Y_1,Y_2\rangle) dY_2 \int_{(\fn_{\wt{P}}\cap\fs)(\BA)} g(Y_2+U_2) dU_2. \\
	\end{split}\]
	Therefore, we have proved our claim $i=\hat{h}$. 

	To finish the proof of the lemma, it suffices to use the above claim and apply the Poisson summation formula on $(\fm_{\wt{P}}\cap\fs)(\BA)$. 
\end{proof}

\begin{proof}[Proof of Theorem \ref{infitf2}]
	By Lemma \ref{lemref}, we see that 
	$$ k_f^T(x)=k_{\hat{f}}^T(x) $$	
	for all $T\in T_+ +\fa_{P_0}^+$. Thus we have 
	$$ J^{H,T}(f)=J^{H,T}(\hat{f}). $$
  From
  $$ J^{H,T}(f)=\sum_{\fo\in\CO}J_\fo^{H,T}(f) $$
  and
  $$ J^{H,T}(\hat{f})=\sum_{\fo\in\CO}J_\fo^{H,T}(\hat{f}), $$
  we obtain
$$ \sum_{\fo\in\CO}J_\fo^{H,T}(f)=\sum_{\fo\in\CO}J_\fo^{H,T}(\hat{f}). $$
We can draw the conclusion by taking the constant terms of both sides. 
\end{proof}


\section{\textbf{The second modified kernel}}

Let $f\in\CS(\fs(\BA))$, $P$ be a standard parabolic subgroup of $H$ and $\fo\in\CO_{rs}$ (see Section \ref{inv2}). For $x\in P(F)\bs H(\BA)$, define
$$ j_{f,P,\fo}(x):=\sum_{Y\in\fm_{\wt{P}}(F)\cap\fo} \sum_{n\in N_P(F)} f(\Ad(nx)^{-1}(Y)). $$
For $T\in\fa_0$ and $x\in H(F)\bs H(\BA)$, define
$$ j_{f,\fo}^T(x):=\sum_{\{P: P_0\subseteq P\}} (-1)^{\dim(A_P/A_{H})} \sum_{\delta\in P(F)\bs H(F)} \wh{\tau}_P^H(H_{P}(\delta x)-T)\cdot j_{f,P,\fo}(\delta x). $$
By \cite[Lemma 5.1]{MR518111}, we know that the sum over $\delta\in P(F)\bs H(F)$ is finite. 

\begin{lem}\label{regsslemma2}
  Let $P$ be a standard parabolic subgroup of $H$ and $\fo\in\CO_{rs}$. For $Y\in\fm_{\wt{P}}(F)\cap\fo$, the map
  $$ N_P\ra\fn_{\wt{P}}\cap\fs, n\mapsto \Ad(n^{-1})(Y)-Y $$
  is an $F$-isomorphism of algebraic varieties and preserves the Haar measures on $\BA$-points. 
\end{lem}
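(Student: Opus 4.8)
The plan is to recognise the map of the statement as the $\theta$-fixed part of the analogous, classical map on the larger unipotent group $N_{\wt P}$, and then to deduce the measure assertion from the product formula.

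First I would record that $Y$ is regular semi-simple as an element of $\fg$, not merely of $\fs$. Indeed, for $\fo\in\CO_{rs}$ the polynomial $\chi_{\fh,E}(Y^2)$ is separable with non-zero constant term (Section~\ref{inv2}), and the identity $\chi_{\fg,F}(Y)(\lambda)=\chi_{\fh,E}(Y^2)(\lambda^2)$ from the proof of Proposition~\ref{normmap} then forces $\chi_{\fg,F}(Y)$ to be separable; hence $Y$ is regular semi-simple in the central simple algebra $\fg$ and its centraliser $\fg_Y$ in $\fg$ is a maximal commutative \'etale $F$-subalgebra. Since $Y\in\fm_{\wt P}$, the split centre $A_{\wt P}$ of $M_{\wt P}$ centralises $Y$, so $\fg_Y\subseteq\Cent_\fg(A_{\wt P})=\fm_{\wt P}$; as $\fg_Y$ consists of semi-simple and $\fn_{\wt P}$ of nilpotent elements, $\fg_Y\cap\fn_{\wt P}=0$, and therefore $\ad(Y)$, which stabilises $\fn_{\wt P}$, is invertible on it. I would then use the classical consequence that
$$\phi_{\wt P}\colon N_{\wt P}\longrightarrow\fn_{\wt P},\qquad n\longmapsto\Ad(n^{-1})(Y)-Y,$$
is an isomorphism of $F$-varieties: it is filtered for the descending central series of $N_{\wt P}$ with associated graded maps the linear isomorphisms induced by $\ad(Y)$, so a polynomial inverse can be built inductively along the filtration (this is the infinitesimal analogue of a step in \cite{MR518111}; compare \cite[\S1]{MR1893921} and \cite{2019arXiv190407102L}).

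Next I would descend to the symmetric situation. Because $\alpha\in E^{\times}\subseteq Z(H)(F)\subseteq M_P(F)\subseteq M_{\wt P}(F)$ (using $Z(\fh)=E$ and $M_P=M_{\wt P}\cap H$), the element $\alpha$ normalises $\wt P$, so $N_{\wt P}$ and $\fn_{\wt P}$ are stable under $\theta=\Ad(\alpha)$; in characteristic $0$ one has $(N_{\wt P})^{\theta}=N_{\wt P}\cap G^{\theta}=N_{\wt P}\cap H=N_P$ (the last equality by the compatibility of parabolics recalled in Section~\ref{explicitdescription}), while, since $\fg^{\theta}=\fh$, the $(-\theta)$-fixed subspace of $\fn_{\wt P}$ is $\{X\in\fn_{\wt P}:\theta(X)=-X\}=\fn_{\wt P}\cap\fs$. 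A one-line computation, using only that $\theta$ is an automorphism of $G$ and of $\fg$ and that $\theta(Y)=-Y$, gives
$$\phi_{\wt P}(\theta(n))=\theta(n^{-1})\,Y\,\theta(n)-Y=-\theta\bigl(n^{-1}Yn-Y\bigr)=-\theta\bigl(\phi_{\wt P}(n)\bigr),$$
i.e.\ $\phi_{\wt P}$ intertwines the involution $\theta$ of $N_{\wt P}$ with the involution $-\theta$ of $\fn_{\wt P}$. Since an isomorphism of $F$-varieties intertwining two involutions restricts to an isomorphism of their fixed-point subschemes, $\phi_{\wt P}$ restricts to an $F$-isomorphism of varieties $N_P=(N_{\wt P})^{\theta}\xrightarrow{\ \sim\ }(\fn_{\wt P})^{-\theta}=\fn_{\wt P}\cap\fs$, which is precisely $n\mapsto\Ad(n^{-1})(Y)-Y$. (In particular the two sides have equal dimension, in accordance with Proposition~\ref{equalwt2.2}; this is also visible directly, as $\ad(Y)$ anticommutes with $\theta$ and is bijective on $\fn_{\wt P}$, hence interchanges its two $\theta$-eigenspaces.)

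For the measures, I would argue as follows. Both $N_P$ and $\fn_{\wt P}\cap\fs$ are $F$-isomorphic to an affine space $\bfA^N$, so in $F$-rational coordinates the Jacobian determinant of the polynomial automorphism $\phi$ is a nowhere-vanishing regular function on $\bfA^N$, hence a constant $c\in F^{\times}$ (its reciprocal is the Jacobian of $\phi^{-1}$). In such coordinates the Haar measures fixed in Section~\ref{BSandHaar2} on $N_P(\BA)$ and on $(\fn_{\wt P}\cap\fs)(\BA)$ are products of local Tamagawa measures, under which $\phi$ scales the local measure at a place $v$ by $|c|_v$; as $\prod_v|c|_v=1$ by the product formula, $\phi$ preserves the global measures. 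The only non-formal input is the isomorphism $\phi_{\wt P}$ of the first paragraph — equivalently, the invertibility of $\ad(Y)$ on $\fn_{\wt P}$ for regular semi-simple $Y$ together with the filtered-inversion argument; the passage to $\phi$ via $\theta$, and the measure statement, are then purely formal.
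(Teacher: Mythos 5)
Your proof is correct, and it takes a genuinely different route from the paper's. The paper works directly inside $H$, writes $Y$, $n$ and $Yn-nY$ in explicit block form separately for \textbf{Case I} and \textbf{Case II}, proves injectivity of each block map $n_{ij}\mapsto Y_i n_{ij}-n_{ij}Y_j$ via a Sylvester-equation argument (using that $Y_i^2$ and $Y_j^2$ share no eigenvalue because $Y^2$ is regular semi-simple in $\fh$), and then passes from $n\mapsto Yn-nY$ to $n\mapsto\Ad(n^{-1})(Y)-Y$ by inspecting the unitriangular structure of the correction terms. You instead work upstairs in $G$: you first show $Y$ is regular semi-simple in $\fg$ (via $\chi_{\fg,F}(Y)(\lambda)=\chi_{\fh,E}(Y^2)(\lambda^2)$), so $\ad(Y)$ is invertible on $\fn_{\wt P}$ and the standard filtered-inversion argument makes $\phi_{\wt P}\colon N_{\wt P}\to\fn_{\wt P}$ an $F$-isomorphism; you then observe $\phi_{\wt P}\circ\theta=(-\theta)\circ\phi_{\wt P}$ and restrict to the fixed loci $N_P=(N_{\wt P})^\theta$ and $\fn_{\wt P}\cap\fs=(\fn_{\wt P})^{-\theta}$. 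This avoids both the case analysis and the matrix bookkeeping, and explains structurally why the two sides have the same dimension (as opposed to reading it off from Proposition~\ref{equalwt2.2}). Your treatment of the measure statement (constant Jacobian plus the product formula) also spells out what the paper leaves implicit. The paper's proof is more self-contained at the level of concrete matrices; yours is shorter, uniform in the two cases, and isolates exactly the symmetry being used.
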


\begin{proof}
Recall that there are two cases considered in Section \ref{explicitdescription}. First let us focus on \textbf{Case I}. In this case, we can suppose
$$ P=
\left( \begin{array}{cccc}
\Res_{E/F}GL_{n_1,D'} & \Res_{E/F}Mat_{n_1\times n_2,D'} & \cdots & \Res_{E/F}Mat_{n_1\times n_l,D'} \\
                      & \Res_{E/F}GL_{n_2,D'}            & \cdots & \Res_{E/F}Mat_{n_2\times n_l,D'} \\
                      &                                & \ddots & \vdots                         \\
                      &                                &        & \Res_{E/F}GL_{n_l,D'}            \\
\end{array} \right). $$
Then
$$ \wt{P}=
\left( \begin{array}{cccc}
GL_{n_1,D} & Mat_{n_1\times n_2,D} & \cdots & Mat_{n_1\times n_l,D} \\
                      & GL_{n_2,D}            & \cdots &Mat_{n_2\times n_l,D} \\
                      &                                & \ddots & \vdots                         \\
                      &                                &        &GL_{n_l,D}            \\
\end{array} \right). $$
We have chosen an element $\tau\in D^\times$ in Section \ref{explicitdescription}. Recall also Proposition \ref{equalwt2.2}. 

Let
$$ Y=
\left( \begin{array}{ccc}
  Y_1 &            &   \\
        &  \ddots &   \\
        &            & Y_l   \\
\end{array} \right)\in\fm_{\wt{P}}(F)\cap\fo, $$
where $Y_i\in GL_{n_i}(D')\tau$ for $1\leq i\leq l$, and
$$ n=
\left( \begin{array}{cccc}
 1 & n_{12} & \cdots & n_{1l} \\
    & 1         & \cdots & n_{2l} \\
    &            & \ddots & \vdots \\
    &            &           & 1        \\
\end{array} \right)\in N_{P}, $$
where $n_{ij}\in\Res_{E/F}Mat_{n_i\times n_j,D'}$ for $1\leq i<j\leq l$. Then
$$ Yn-nY=
\left( \begin{array}{cccc}
 0 & Y_1 n_{12}-n_{12} Y_2 & \cdots & Y_1 n_{1l}-n_{1l} Y_l \\
    & 0                                 & \cdots & Y_2 n_{2l}-n_{2l} Y_l \\
    &                                   & \ddots & \vdots \\
    &                                   &           & 0        \\
\end{array} \right)\in\fn_{\wt{P}}\cap\fs. $$

Now we claim that the morphism of $F$-affine spaces
\[\begin{split}
\Res_{E/F}Mat_{n_i\times n_j,D'} &\ra (\Res_{E/F}Mat_{n_i\times n_j,D'})\tau \\
n_{ij} &\mapsto Y_i n_{ij}-n_{ij} Y_j \\
\end{split}\]
induces an $F$-linear isomorphism on $F$-points. In fact, since it gives an $F$-linear map between finite dimensional linear spaces of the same dimension, we only need to prove that this map is injective. If $Y_i n_{ij}-n_{ij} Y_j=0$, then $Y_i^2 n_{ij}=Y_i n_{ij} Y_j=n_{ij} Y_j^2$. We view this as an equation of matrices with entries in $D'$ or its base change to an algebraic closure of $E$. Since $Y$ is regular semi-simple, $Y^2$ is regular semi-simple in $\fh(F)$ (viewed as a central simple algebra over $E$), so $Y_i^2$ and $Y_j^2$ have no common eigenvalue. By the classical theory of Sylvester equation, we know that $n_{ij}=0$ and conclude. 

Using this claim, we see that the map
$$ N_P\ra\fn_{\wt{P}}\cap\fs, n\mapsto Yn-nY $$
is an $F$-isomorphism of algebraic varieties and preserves the Haar measure on $\BA$-points. Notice that 
$$ \Ad(n^{-1})(Y)-Y=n^{-1}(Yn-nY)=
\left( \begin{array}{cccc}
 0 & Z_{12} & \cdots & Z_{1l} \\
    & 0                                 & \cdots & Z_{2l} \\
    &                                   & \ddots & \vdots \\
    &                                   &           & 0        \\
\end{array} \right)\in\fn_{\wt{P}}\cap\fs, $$
where $Z_{ij}\in(\Res_{E/F}Mat_{n_i\times n_j,D'})\tau$ is of the form 
$$ Y_i n_{ij}-n_{ij} Y_j+\sum\limits_{k>j}(\text{a polynomial of } n_{i'j'}, i'>i, j'\leq j \text{ or } i'\geq i, j'<j)\cdot(Y_k n_{kj}-n_{kj} Y_j) $$
for $1\leq i<j\leq l$. Therefore, the map
$$ N_P\ra\fn_{\wt{P}}\cap\fs, n\mapsto n\mapsto \Ad(n^{-1})(Y)-Y $$
is also an $F$-isomorphism of algebraic varieties and preserves the Haar measure on $\BA$-points. 

Next let us turn to \textbf{Case II} whose proof is close to the first one. In this case, we may suppose
$$ P=
\left( \begin{array}{cccc}
\Res_{E/F}GL_{\frac{n_1}{2},D\otimes_F E} & \Res_{E/F}Mat_{\frac{n_1}{2}\times\frac{n_2}{2},D\otimes_F E} & \cdots & \Res_{E/F}Mat_{\frac{n_1}{2}\times\frac{n_l}{2},D\otimes_F E} \\
                      & \Res_{E/F}GL_{\frac{n_2}{2},D\otimes_F E}            & \cdots & \Res_{E/F}Mat_{\frac{n_2}{2}\times\frac{n_l}{2},D\otimes_F E} \\
                      &                                & \ddots & \vdots                         \\
                      &                                &        & \Res_{E/F}GL_{\frac{n_l}{2},D\otimes_F E}            \\
\end{array} \right). $$
Then
$$ \wt{P}=
\left( \begin{array}{cccc}
GL_{n_1,D} & Mat_{n_1\times n_2,D} & \cdots & Mat_{n_1\times n_l,D} \\
                      & GL_{n_2,D}            & \cdots &Mat_{n_2\times n_l,D} \\
                      &                                & \ddots & \vdots                         \\
                      &                                &        &GL_{n_l,D}            \\
\end{array} \right). $$
We have chosen an element $\tau\in GL_2(D)$ in Section \ref{explicitdescription}. Recall again Proposition \ref{equalwt2.2}. 

Let
$$ Y=
\left( \begin{array}{ccc}
  Y_1 &            &   \\
        &  \ddots &   \\
        &            & Y_l   \\
\end{array} \right)\in\fm_{\wt{P}}(F)\cap\fo, $$
where $Y_i\in GL_{\frac{n_i}{2}}(D\otimes_F E)\tau$ for $1\leq i\leq l$, and
$$ n=
\left( \begin{array}{cccc}
 1 & n_{12} & \cdots & n_{1l} \\
    & 1         & \cdots & n_{2l} \\
    &            & \ddots & \vdots \\
    &            &           & 1        \\
\end{array} \right)\in N_{P}, $$
where $n_{ij}\in\Res_{E/F}Mat_{\frac{n_i}{2}\times \frac{n_j}{2},D\otimes_F E}$ for $1\leq i<j\leq l$. Then
$$ Yn-nY=
\left( \begin{array}{cccc}
 0 & Y_1 n_{12}-n_{12} Y_2 & \cdots & Y_1 n_{1l}-n_{1l} Y_l \\
    & 0                                 & \cdots & Y_2 n_{2l}-n_{2l} Y_l \\
    &                                   & \ddots & \vdots \\
    &                                   &           & 0        \\
\end{array} \right)\in\fn_{\wt{P}}\cap\fs. $$

As in the proof of the first case, we show that the morphism of $F$-affine spaces
\[\begin{split}
\Res_{E/F}Mat_{\frac{n_i}{2}\times \frac{n_j}{2},D\otimes_F E} &\ra (\Res_{E/F}Mat_{\frac{n_i}{2}\times \frac{n_j}{2},D\otimes_F E})\tau \\
n_{ij} &\mapsto Y_i n_{ij}-n_{ij} Y_j \\
\end{split}\]
induces an $F$-linear isomorphism on $F$-points. This implies that the map
$$ N_P\ra\fn_{\wt{P}}\cap\fs, n\mapsto Yn-nY $$
is an $F$-isomorphism of algebraic varieties and preserves the Haar measure on $\BA$-points. By an argument similar to that in the first case, we deduce that an analogous assertion is still true for the map $n\mapsto \Ad(n^{-1})(Y)-Y$. 
\end{proof}

\begin{thm}\label{secondkernel2}
  For all $T\in T_+ +\fa_{P_0}^+$ and $\fo\in\CO_{rs}$, we have
  $$ \int_{H(F)\bs H(\BA)^1} |j_{f,\fo}^T(x)| dx < \infty $$
  and
  $$ J_\fo^{H,T}(f)=\int_{H(F)\bs H(\BA)^1} j_{f,\fo}^T(x) dx. $$
\end{thm}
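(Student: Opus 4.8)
The plan is to deduce Theorem~\ref{secondkernel2} from Theorem~\ref{convergence2} together with a formal identification of $\int_{H(F)\bs H(\BA)^1} j_{f,\fo}^T$ with $\int_{H(F)\bs H(\BA)^1} k_{f,\fo}^T=J_\fo^{H,T}(f)$. The bridge between the two kernels is Lemma~\ref{regsslemma2}: for $\fo\in\CO_{rs}$ it identifies, for $Y\in\fm_{\wt P}(F)\cap\fo$, the $N_P$-translates $\Ad(n^{-1})(Y)$ with a measure-preserving linear parametrization of $Y+(\fn_{\wt P}\cap\fs)$. The first thing I would prove is the pointwise relation, valid for every standard parabolic $P$ of $H$ and every $x\in H(\BA)$,
$$ k_{f,P,\fo}(x)=\int_{N_P(F)\bs N_P(\BA)} j_{f,P,\fo}(nx)\,dn, $$
the right-hand side being absolutely convergent: unfolding the sum over $N_P(F)$ inside $j_{f,P,\fo}$ against $\int_{N_P(F)\bs N_P(\BA)}$ turns it into $\sum_{Y\in\fm_{\wt P}(F)\cap\fo}\int_{N_P(\BA)} f(\Ad((nx)^{-1})(Y))\,dn$, and by Lemma~\ref{regsslemma2} the substitution $U=\Ad(n^{-1})(Y)-Y$ identifies each inner integral with $\int_{(\fn_{\wt P}\cap\fs)(\BA)} f(\Ad(x^{-1})(Y+U))\,dU$; summing over $Y$ recovers $k_{f,P,\fo}(x)$. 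Absolute convergence is checked by running the same computation with $f$ replaced by a non-negative Bruhat--Schwartz majorant of $|f|$, the resulting quantity being $k_{|f|,P,\fo}(x)<\infty$. I would also record here, via Corollary~\ref{orbit2}, the compact description $j_{f,P,\fo}(x)=\sum_{Z\in\fo\cap\wt{\fp}(F)} f(\Ad(x^{-1})(Z))$ (with $\wt{\fp}=\Lie(\wt P)$) and the left $P(F)$-invariance of $j_{f,P,\fo}$.

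Next I would substitute the definition of $j_{f,\fo}^T$ into $\int_{H(F)\bs H(\BA)^1} j_{f,\fo}^T(x)\,dx$, unfold the sum over $\delta\in P(F)\bs H(F)$ (legitimate since $j_{f,P,\fo}$ and $H_P$ are left $P(F)$-invariant), integrate over the fibre $N_P(F)\bs N_P(\BA)$ using that $\wh\tau_P^H(H_P(\cdot)-T)$ is left $N_P(\BA)$-invariant, and apply the relation above. Folding the sum over $\delta$ back gives
$$ \int_{H(F)\bs H(\BA)^1} j_{f,\fo}^T(x)\,dx=\int_{H(F)\bs H(\BA)^1} k_{f,\fo}^T(x)\,dx=J_\fo^{H,T}(f), $$
which is the desired formula, once the left-hand integral is known to converge absolutely.

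The main obstacle is to establish $\int_{H(F)\bs H(\BA)^1}|j_{f,\fo}^T(x)|\,dx<\infty$, and for this I would repeat the argument of Theorem~\ref{convergence2} and Proposition~\ref{prop43} with $k_{f,P,\fo}$ replaced by $j_{f,P,\fo}$. By Lemma~\ref{combinatoriclemma2} one writes $j_{f,\fo}^T(x)=\sum_{\{P_1,P_2:P_0\subseteq P_1\subseteq P_2\}}\sum_{\delta\in P_1(F)\bs H(F)}\chi_{P_1,P_2}^T(\delta x)\,j_{P_1,P_2,\fo}(\delta x)$ with $j_{P_1,P_2,\fo}:=\sum_{\{P:P_1\subseteq P\subseteq P_2\}}(-1)^{\dim(A_P/A_H)}j_{f,P,\fo}$, whence
$$ \int_{H(F)\bs H(\BA)^1}|j_{f,\fo}^T(x)|\,dx\le\sum_{\{P_1,P_2:P_0\subseteq P_1\subseteq P_2\}}\int_{P_1(F)\bs H(\BA)^1}\chi_{P_1,P_2}^T(x)\,|j_{P_1,P_2,\fo}(x)|\,dx. $$
The case $P_1=P_2=H$ is finite since $F^H(\cdot,T)$ has compact support, and $P_1=P_2\neq H$ contributes $0$ because $\sigma_{P_1}^{P_2}=0$. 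For $P_1\subsetneq P_2$ the role of the Poisson summation formula in Proposition~\ref{prop43} is played here by a purely combinatorial collapse: decomposing each $Z\in\fo\cap\wt{\fp}(F)$ by Corollary~\ref{orbit2} according to the unique standard $R$ with $P_1\subseteq R\subseteq P$ exactly as in the proof of Proposition~\ref{prop43} gives $j_{f,P,\fo}(x)=\sum_{\{R:P_1\subseteq R\subseteq P\}}\sum_{\xi\in\wt{\fm}_{\wt{P_1}}^{\wt R}(F)\cap\fo}\sum_{W\in(\fn_{\wt R}\cap\fs)(F)} f(\Ad(x^{-1})(\xi+W))$; since the inner data no longer depends on $P$, the alternating sum over $P$ together with \cite[Proposition~1.1]{MR518111} leaves only the contribution $R=P_2$, namely
$$ j_{P_1,P_2,\fo}(x)=(-1)^{\dim(A_{P_2}/A_H)}\sum_{\xi\in\wt{\fm}_{\wt{P_1}}^{\wt{P_2}}(F)\cap\fo}\ \sum_{W\in(\fn_{\wt{P_2}}\cap\fs)(F)} f(\Ad(x^{-1})(\xi+W)). $$

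From this explicit form the same reduction-theory estimate as in the proof of Proposition~\ref{prop43} applies, with the same inputs: Iwasawa decomposition, restriction to a Siegel set (where $\chi_{P_1,P_2}^T(x)\neq 0$ forces $\Ad(x^{-1})$ to contract $\fn_{\wt{P_2}}\cap\fs$ and $\xi$ to lie in the non-degenerate part $\wt{\fm}_{\wt{P_1}}^{\wt{P_2}}$), bounding the sum over the lattice $(\fn_{\wt{P_2}}\cap\fs)(F)$ by the corresponding integral (which produces a factor $e^{2\rho_{P_2}(H_{P_0}(x))}$ that cancels the Jacobian $e^{-2\rho_{P_2}(H_{P_0}(x))}$ from Iwasawa decomposition), Proposition~\ref{equalwt2.1}, the majorizations by Bruhat--Schwartz functions on the weight spaces, and the estimate (\ref{chau4.10}); one is thereby reduced to precisely the situation treated in \cite[p.~373]{MR1893921}, yielding a bound $\le C e^{-N\|T\|}$ and in particular the required convergence. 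The three steps then combine to give both assertions of the theorem. I expect this last estimate to be the only real work, but it is a close and slightly simplified replica of the estimate already carried out for $k_{f,\fo}^T$, the simplification being that $j_{f,P,\fo}$ is already a sum — not an adelic integral — over the nilpotent radical, so no Fourier analysis intervenes.
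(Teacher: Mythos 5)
Your overall strategy matches the paper's—prove absolute convergence via Lemma \ref{combinatoriclemma2} and the alternating collapse from \cite[Proposition 1.1]{MR518111}, then derive the equality by comparing the two kernels—and your pointwise relation $k_{f,P,\fo}(x)=\int_{N_P(F)\bs N_P(\BA)} j_{f,P,\fo}(nx)\,dn$ is exactly the mechanism the paper uses, merely embedded in the proof rather than isolated as a lemma.

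There is, however, a genuine gap in your convergence argument. You propose to ``bound the sum over the lattice $(\fn_{\wt{P_2}}\cap\fs)(F)$ by the corresponding integral,'' yielding a Jacobian that cancels the Iwasawa density. That is not a valid pointwise comparison: a sum of a non-negative Schwartz function over an $F$-lattice in $\fs(\BA)$ is neither dominated by nor dominates the adelic integral, and one cannot change variables in a discrete sum (if $a$ contracts the lattice the sum behaves like $e^{2\rho}\!\int$, but if $a$ expands it the sum collapses to the value at $0$). What actually makes the estimate go through is precisely your pointwise relation, used \emph{inside} the bound: after the Iwasawa decomposition one integrates over $n_1\in N_{P_1}(F)\bs N_{P_1}(\BA)$, inserts $\int_{N_{P_2}(F)\bs N_{P_2}(\BA)}$ (volume $1$), unfolds the $N_{P_2}(F)$-sum into an $N_{P_2}(\BA)$-integral, and converts the latter into an integral over $(\fn_{\wt{P_2}}\cap\fs)(\BA)$ via Lemma \ref{regsslemma2}. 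This is an identity, not an inequality, and it reduces the bound to the $R=P_2$ term of \eqref{equation2.1}, which Proposition \ref{prop43} has already controlled. You should not need to re-run the reduction-theory estimate from scratch—and as written you would not have a correct starting point for it.

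A smaller point concerns the equality: you unfold the integral directly by $P$, which implicitly assumes each $\int_{P(F)\bs H(\BA)^1}\wh\tau_P^H(\cdot)\,j_{f,P,\fo}$ converges absolutely. Neither the first statement nor Theorem \ref{convergence2} gives this term-by-term (already the naive $P=H$ term is an integral of a full orbital sum over a non-compact quotient). The paper sidesteps this by staying with the $(P_1,P_2)$-decomposition in both halves of the proof: each $\int_{P_1(F)\bs H(\BA)^1}\chi_{P_1,P_2}^T\,j_{P_1,P_2,\fo}$ converges by the first statement and is shown, by the same unipotent unfolding, to equal $\int_{P_1(F)\bs H(\BA)^1}\chi_{P_1,P_2}^T\,k_{P_1,P_2,\fo}$. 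Aligning your second step with this decomposition closes the gap.
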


\begin{proof}
  As in the proof of Theorem \ref{convergence2}, by the left invariance of $j_{f,P,\fo}$ by $P(F)$, we reduce the first statement to
  $$ \int_{P_1(F)\bs H(\BA)^1} \chi_{P_1, P_2}^T(x) |j_{P_1, P_2, \fo}(x)| dx < \infty, $$
  where $P_1\subsetneq P_2$ is a pair of standard parabolic subgroups of $H$ and for $x\in P_1(F)\bs H(\BA)$, we put
  $$ j_{P_1, P_2, \fo}(x):=\sum_{\{P:P_1\subseteq P\subseteq P_2\}} (-1)^{\dim(A_P/A_{H})} j_{f,P,\fo}(x). $$
  Additionally,
  $$ j_{f,P,\fo}(x)=\sum_{\{R:P_1\subseteq R\subseteq P\}} \sum_{\xi\in\wt{\fm}_{\wt{P_1}}^{\wt{R}}(F)\cap\fo} \sum_{Y\in(\fn_{\wt{R}}^{\wt{P}}\cap\fs)(F)} \sum_{n\in N_P(F)} f(\Ad(nx)^{-1}(\xi+Y)), $$
where we use the notations $\wt{\fm}_{\wt{P_1}}^{\wt{R}}$ and $\fn_{\wt{R}}^{\wt{P}}$ in the proof of Proposition \ref{prop43}. 

  From Lemma \ref{regsslemma2}, we have
  \[\begin{split}
    j_{f,P,\fo}(x)&=\sum_{\{R:P_1\subseteq R\subseteq P\}} \sum_{\xi\in\wt{\fm}_{\wt{P_1}}^{\wt{R}}(F)\cap\fo} \sum_{Y\in(\fn_{\wt{R}}^{\wt{P}}\cap\fs)(F)} \sum_{u\in(\fn_{\wt{P}}\cap\fs)(F)} f(\Ad(x^{-1})(\xi+Y+u)) \\
    &=\sum_{\{R:P_1\subseteq R\subseteq P\}} \sum_{\xi\in\wt{\fm}_{\wt{P_1}}^{\wt{R}}(F)\cap\fo} \sum_{Y\in(\fn_{\wt{R}}\cap\fs)(F)} f(\Ad(x^{-1})(\xi+Y)).
  \end{split}\]
  Hence
  \[\begin{split}
    j_{P_1, P_2, \fo}(x)&=\sum_{\{P:P_1\subseteq P\subseteq P_2\}} (-1)^{\dim(A_P/A_{H})} \left(\sum_{\{R:P_1\subseteq R\subseteq P\}} \sum_{\xi\in\wt{\fm}_{\wt{P_1}}^{\wt{R}}(F)\cap\fo} \sum_{Y\in(\fn_{\wt{R}}\cap\fs)(F)} f(\Ad(x^{-1})(\xi+Y))\right) \\
    &=\sum_{\{R:P_1\subseteq R\subseteq P_2\}} \sum_{\xi\in\wt{\fm}_{\wt{P_1}}^{\wt{R}}(F)\cap\fo} \left(\sum_{\{P:R\subseteq P\subseteq P_2\}} (-1)^{\dim(A_P/A_{H})}\right) \sum_{Y\in(\fn_{\wt{R}}\cap\fs)(F)} f(\Ad(x^{-1})(\xi+Y)).
  \end{split}\]
  Using \cite[Proposition 1.1]{MR518111}, we get
  $$ j_{P_1, P_2, \fo}(x)=(-1)^{\dim(A_{P_2}/A_{H})} \sum_{\xi\in\wt{\fm}_{\wt{P_1}}^{\wt{P_2}}(F)\cap\fo} \sum_{Y\in(\fn_{\wt{P_2}}\cap\fs)(F)} f(\Ad(x^{-1})(\xi+Y)). $$
  Applying Lemma \ref{regsslemma2} again, we obtain
  $$ j_{P_1, P_2, \fo}(x)=(-1)^{\dim(A_{P_2}/A_{H})} \sum_{\xi\in\wt{\fm}_{\wt{P_1}}^{\wt{P_2}}(F)\cap\fo} \sum_{n_2\in N_{P_2}(F)} f(\Ad(n_2 x)^{-1}(\xi)). $$

  Now we decompose the integral over $x\in P_1(F)\bs H(\BA)^1$ into double integrals $n_1\in N_{P_1}(F)\bs N_{P_1}(\BA)$ and $y\in M_{P_1}(F)N_{P_1}(\BA)\bs H(\BA)^1$ and use the fact that $\chi_{P_1, P_2}^T(x)$ is left invariant under $N_{P_1}(\BA)$. We have
  \[\begin{split}
    &\int_{P_1(F)\bs H(\BA)^1} \chi_{P_1, P_2}^T(x) |j_{P_1, P_2, \fo}(x)| dx \\
    =&\int_{M_{P_1}(F)N_{P_1}(\BA)\bs H(\BA)^1} \int_{N_{P_1}(F)\bs N_{P_1}(\BA)} \chi_{P_1, P_2}^T(n_1 y) \left|\sum_{\xi\in\wt{\fm}_{\wt{P_1}}^{\wt{P_2}}(F)\cap\fo} \sum_{n_2\in N_{P_2}(F)} f(\Ad(n_2 n_1 y)^{-1}(\xi))\right| dn_1 dy \\
    \leq&\int_{M_{P_1}(F)N_{P_1}(\BA)\bs H(\BA)^1} \chi_{P_1, P_2}^T(y) \sum_{\xi\in\wt{\fm}_{\wt{P_1}}^{\wt{P_2}}(F)\cap\fo} \left(\int_{N_{P_1}(F)\bs N_{P_1}(\BA)} \sum_{n_2\in N_{P_2}(F)} |f(\Ad(n_2 n_1 y)^{-1}(\xi))| dn_1\right) dy.
  \end{split}\]
  Since $P_1\subseteq P_2$ and $\vol(N_{P_2}(F)\bs N_{P_2}(\BA))=1$, we see that
  \[\begin{split}
    &\int_{N_{P_1}(F)\bs N_{P_1}(\BA)} \sum_{n_2\in N_{P_2}(F)} |f(\Ad(n_2 n_1 y)^{-1}(\xi))| dn_1 \\
    =&\int_{N_{P_1}(F)\bs N_{P_1}(\BA)} \int_{N_{P_2}(F)\bs N_{P_2}(\BA)} \sum_{n_2\in N_{P_2}(F)} |f(\Ad(n_2 n n_1 y)^{-1}(\xi))| dn dn_1 \\
    =&\int_{N_{P_1}(F)\bs N_{P_1}(\BA)} \int_{N_{P_2}(\BA)} |f(\Ad(n n_1 y)^{-1}(\xi))| dn dn_1 \\
    =&\int_{N_{P_1}(F)\bs N_{P_1}(\BA)} \int_{(\fn_{\wt{P_2}}\cap\fs)(\BA)} |f(\Ad(n_1 y)^{-1}(\xi+U))| dU dn_1,
  \end{split}\]
  where we have applied Lemma \ref{regsslemma2} in the last equality.
  Hence
  \[\begin{split}
    &\int_{P_1(F)\bs H(\BA)^1} \chi_{P_1, P_2}^T(x) |j_{P_1, P_2, \fo}(x)| dx \\
    \leq&\int_{M_{P_1}(F)N_{P_1}(\BA)\bs H(\BA)^1} \chi_{P_1, P_2}^T(y) \sum_{\xi\in\wt{\fm}_{\wt{P_1}}^{\wt{P_2}}(F)\cap\fo} \left(\int_{N_{P_1}(F)\bs N_{P_1}(\BA)} \int_{(\fn_{\wt{P_2}}\cap\fs)(\BA)} |f(\Ad(n_1 y)^{-1}(\xi+U))| dU dn_1\right) dy \\
    =&\int_{P_1(F)\bs H(\BA)^1} \chi_{P_1, P_2}^T(x) \sum_{\xi\in\wt{\fm}_{\wt{P_1}}^{\wt{P_2}}(F)\cap\fo} \int_{(\fn_{\wt{P_2}}\cap\fs)(\BA)} |f(\Ad(x^{-1})(\xi+U))| dUdx,
  \end{split}\]
  whose convergence comes from that of the formula \eqref{equation2.1} when $R=P_2$.

  Next we begin to prove the second statement. From the first statement, now we are authorized to write
  $$ \int_{H(F)\bs H(\BA)^1} j_{f,\fo}^T(x) dx=\sum_{\{P_1,P_2:P_0\subseteq P_1\subseteq P_2\}} \int_{P_1(F)\bs H(\BA)^1} \chi_{P_1, P_2}^T(x) j_{P_1,P_2,\fo}(x) dx, $$
  where
  \[\begin{split}
    j_{P_1,P_2,\fo}(x)&=\sum_{\{P:P_1\subseteq P\subseteq P_2\}} (-1)^{\dim(A_P/A_{H})} j_{f,P,\fo}(x) \\
    &=\sum_{\{P:P_1\subseteq P\subseteq P_2\}} (-1)^{\dim(A_P/A_{H})} \left(\sum_{Y\in\fm_{\wt{P}}(F)\cap\fo} \sum_{n\in N_P(F)} f(\Ad(nx)^{-1}(Y))\right).
  \end{split}\]
  Decompose the integral over $x\in P_1(F)\bs H(\BA)^1$ into double integrals over $n_1\in N_{P_1}(F)\bs N_{P_1}(\BA)$ and $y\in M_{P_1}(F)N_{P_1}(\BA)\bs H(\BA)^1$. Since $N_{P_1}(F)\bs N_{P_1}(\BA)$ is compact, from Lemma \ref{regsslemma2} and \cite[\S41]{MR0165033}, we know that
  $$ \sum_{Y\in\fm_{\wt{P}}(F)\cap\fo} \sum_{n\in N_P(F)} |f(\Ad(n n_1 y)^{-1}(Y))|=\sum_{Y\in\fm_{\wt{P}}(F)\cap\fo} \sum_{u\in(\fn_{\wt{P}}\cap\fs)(F)} |f(\Ad(n_1 y)^{-1}(Y+u))| $$
  is bounded independently of $n_1\in N_{P_1}(F)\bs N_{P_1}(\BA)$. Then using the fact that $\chi_{P_1, P_2}^T(x)$ is left invariant under $N_{P_1}(\BA)$, we have
  \[\begin{split}
    \int_{H(F)\bs H(\BA)^1} j_{f,\fo}^T(x) dx=&\sum_{\{P_1,P_2:P_0\subseteq P_1\subseteq P_2\}} \int_{M_{P_1}(F)N_{P_1}(\BA)\bs H(\BA)^1} \chi_{P_1, P_2}^T(y) \sum_{\{P:P_1\subseteq P\subseteq P_2\}} (-1)^{\dim(A_P/A_{H})} \\
    &\sum_{Y\in\fm_{\wt{P}}(F)\cap\fo} \left(\int_{N_{P_1}(F)\bs N_{P_1}(\BA)} \sum_{n\in N_P(F)} f(\Ad(nn_1 y)^{-1}(Y)) dn_1\right) dy.
  \end{split}\]
  Since $P_1\subseteq P$ and $\vol(N_{P}(F)\bs N_{P}(\BA))=1$, we see that
  \[\begin{split}
    &\int_{N_{P_1}(F)\bs N_{P_1}(\BA)} \sum_{n\in N_{P}(F)} f(\Ad(n n_1 y)^{-1}(Y)) dn_1 \\
    =&\int_{N_{P_1}(F)\bs N_{P_1}(\BA)} \int_{N_{P}(F)\bs N_{P}(\BA)} \sum_{n\in N_{P}(F)} f(\Ad(n n_2 n_1 y)^{-1}(Y)) dn_2 dn_1 \\
    =&\int_{N_{P_1}(F)\bs N_{P_1}(\BA)} \int_{N_{P}(\BA)} f(\Ad(n n_1 y)^{-1}(Y)) dn dn_1 \\
    =&\int_{N_{P_1}(F)\bs N_{P_1}(\BA)} \int_{(\fn_{\wt{P}}\cap\fs)(\BA)} f(\Ad(n_1 y)^{-1}(Y+U)) dU dn_1,
  \end{split}\]
  where we have applied Lemma \ref{regsslemma2} in the last equality.
  Therefore, we have
  \[\begin{split}
    \int_{H(F)\bs H(\BA)^1} j_{f,\fo}^T(x) dx=&\sum_{\{P_1,P_2:P_0\subseteq P_1\subseteq P_2\}} \int_{M_{P_1}(F)N_{P_1}(\BA)\bs H(\BA)^1} \chi_{P_1, P_2}^T(y) \sum_{\{P:P_1\subseteq P\subseteq P_2\}} (-1)^{\dim(A_P/A_{H})} \\
    &\sum_{Y\in\fm_{\wt{P}}(F)\cap\fo} \left(\int_{N_{P_1}(F)\bs N_{P_1}(\BA)} \int_{(\fn_{\wt{P}}\cap\fs)(\BA)} f(\Ad(n_1 y)^{-1}(Y+U)) dU dn_1\right) dy \\
    =&\sum_{\{P_1,P_2:P_0\subseteq P_1\subseteq P_2\}} \int_{P_1(F)\bs H(\BA)^1} \chi_{P_1, P_2}^T(x) \sum_{\{P:P_1\subseteq P\subseteq P_2\}} (-1)^{\dim(A_P/A_{H})} \\
    &\cdot \left(\sum_{Y\in\fm_{\wt{P}}(F)\cap\fo} \int_{(\fn_{\wt{P}}\cap\fs)(\BA)} f(\Ad(x^{-1})(Y+U)) dU\right) dx \\
    =&\sum_{\{P_1,P_2:P_0\subseteq P_1\subseteq P_2\}} \int_{P_1(F)\bs H'(\BA)^1} \chi_{P_1, P_2}^T(x) k_{P_1,P_2,\fo}(x) dx.
  \end{split}\]
  By the proof of Theorem \ref{convergence2}, we are able to write
  $$ J_\fo^{H,T}(f)=\int_{H(F)\bs H(\BA)^1} k_{f,\fo}^T(x) dx=\sum_{\{P_1,P_2:P_0\subseteq P_1\subseteq P_2\}} \int_{P_1(F)\bs H(\BA)^1} \chi_{P_1, P_2}^T(x) k_{P_1,P_2,\fo}(x) dx, $$
  which completes our proof.
\end{proof}


\section{\textbf{Weighted orbital integrals}}\label{wtorbint}

Let $\fo\in\CO_{rs}$ (see Section \ref{inv2}). There is an element $Y_1\in\fo$ and a standard parabolic subgroup $P_1$ of $H$ such that $Y_1\in\fm_{\wt{P_1}}(F)$ but $Y_1$ cannot be $M_{P_1}(F)$-conjugate to an element in $\wt{\fr}$ (or equivalently in $\fm_{\wt{R}}$ by Proposition \ref{normmap}) for any standard parabolic subgroup $R\subsetneq P_1$. We call such $Y_1$ an elliptic element in $(\fm_{\wt{P_1}}\cap\fs)(F)$. For $P_1$ and $P_2$ a pair of standard parabolic subgroups of $H$, denote by $\Omega^{H}(\fa_{P_1},\fa_{P_2})$ the (perhaps empty) set of distinct isomorphisms from $\fa_{P_1}$ to $\fa_{P_2}$ obtained by restriction of elements in $\Omega^{H}$. 

\begin{lem}\label{lem91}
Let $P$ be a standard parabolic subgroup of $H$. Let $Y\in (\fm_{\wt{P}}\cap\fs)(F)$ be a regular semi-simple element in $\fs$. Then $Y$ is an elliptic element in $(\fm_{\wt{P}}\cap\fs)(F)$ if and only if the maximal $F$-split torus in the torus $H_Y$ is $A_P$. 
\end{lem}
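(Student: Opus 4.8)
The proof will establish the two implications separately, after two preliminary observations. First, since $Y$ is regular semi-simple, its centralizer $H_Y$ is the group of units of the associative $F$-subalgebra $\{h\in\fh:hY=Yh\}$ of $\fh$; this algebra is commutative, as one sees after base change to an algebraic closure of $F$ containing $E$ using the split model $\fh'=\fg\fl_m\oplus\fg\fl_m$, $\fs'=\{\text{off-diagonal blocks}\}$ of the proof of Proposition~\ref{normmap}, where it becomes a Cartan subalgebra of $\fg\fl_m$. Hence $H_Y$ is a connected commutative $F$-group which is a maximal torus over $\ov F$, i.e.\ an $F$-torus, and therefore possesses a \emph{unique} maximal $F$-split subtorus, which I denote $S_Y$. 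Secondly, for any standard parabolic subgroup $R$ of $H$ the torus $A_R$ acts trivially on $\fm_{\wt R}\cap\fs$: by Proposition~\ref{equalwt2.2} one has $\fm_{\wt R}\cap\fs=\fm_R\tau$, and $A_R$ is central in $M_R$ and, by the explicit description of Section~\ref{explicitdescription}, commutes with $\tau$, so $\Ad(a)(m\tau)=(ama^{-1})(a\tau a^{-1})=m\tau$ for $a\in A_R$ and $m\in\fm_R$. Taking $R=P$ shows $A_P\subseteq H_Y$, hence $A_P\subseteq S_Y$; and since $H_Y$ is commutative and contains $A_P$, it lies in $\Cent_H(A_P)=M_P$, so $H_Y=(M_P)_Y$.

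Assume first that $S_Y=A_P$, and suppose for contradiction that $Y$ is $M_P(F)$-conjugate to an element of $\wt\fr(F)$ for some standard parabolic $R\subsetneq P$ of $H$; by Proposition~\ref{normmap} we may arrange the conjugate to lie in $(\fm_{\wt R}\cap\fs)(F)$, say $\Ad(g)(Y)\in(\fm_{\wt R}\cap\fs)(F)$ with $g\in M_P(F)$. By the second observation $A_R\subseteq H_{\Ad(g)(Y)}=\Ad(g)(H_Y)$, so $\Ad(g^{-1})(A_R)$ is an $F$-split subtorus of $H_Y$, hence of $S_Y=A_P$; but $R\subsetneq P$ forces $\dim A_R>\dim A_P$, a contradiction. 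Thus $Y$ is elliptic in $(\fm_{\wt P}\cap\fs)(F)$.

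Conversely, assume $S_Y\supsetneq A_P$; I will show that $Y$ is $M_P(F)$-conjugate into $(\fm_{\wt R}\cap\fs)(F)$ for some standard parabolic $R\subsetneq P$ of $H$, so that $Y$ is not elliptic. Since $A_0$ is a maximal $F$-split torus of $M_P$ — $M_0$ being a product of groups of the form $\Res_{E/F}GL_{r,D'}$ (resp.\ $\Res_{E/F}GL_{r,D\otimes_FE}$), whose maximal $F$-split torus is the split part of the centre because $D'$ (resp.\ $D\otimes_FE$) is a division algebra — and $S_Y\subseteq M_P$, we may choose $g_1\in M_P(F)$ with $S':=\Ad(g_1)(S_Y)\subseteq A_0$. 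Put $Y_1:=\Ad(g_1)(Y)$; this lies in $(\fm_{\wt P}\cap\fs)(F)$ and is centralized by $S'$, so $Y_1\in\Lie\!\big(\Cent_{M_{\wt P}}(S')\big)\cap\fs$. Now $\Cent_{M_{\wt P}}(S')$ is a semi-standard Levi subgroup of $M_{\wt P}$, and here is the crux: running through \textbf{Case~I} and \textbf{Case~II} of Section~\ref{explicitdescription}, and using in \textbf{Case~II} that $A_0$ sits inside the diagonal torus of $M_{\wt 0}$ with its entries grouped in pairs, so that the block sizes of $\Cent_{M_{\wt P}}(S')$ are even, one checks that $\Cent_{M_{\wt P}}(S')$ is conjugate, by a representative in $M_P(F)\cap K$ of an element of the Weyl group of $M_P$, to $M_{\wt R}$ for some standard parabolic $R$ of $H$ with $R\subseteq P$. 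Moreover $R\subsetneq P$: otherwise $\Cent_{M_{\wt P}}(S')=M_{\wt P}$, forcing $S'\subseteq A_{\wt P}=A_P$, impossible since $\dim S'=\dim S_Y>\dim A_P$. Conjugating $Y_1$ by this representative $\omega\in M_P(F)\cap K$ — which preserves $\fs$ because $\omega\in H(F)$ — yields $\Ad(\omega g_1)(Y)\in\fm_{\wt R}\cap\fs$ with $\omega g_1\in M_P(F)$, as required.

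The one genuinely nontrivial step is the structural claim in the last paragraph: that the centralizer in $M_{\wt P}$ of an $F$-split subtorus of $A_0$ is, up to conjugacy by the Weyl group of $M_P$, a standard Levi subgroup of the form $M_{\wt R}$ with $R$ a standard parabolic of $H$ contained in $P$. This amounts to verifying that the correspondence $P\mapsto\wt P$ of Section~\ref{explicitdescription} is compatible with the formation of centralizers of $F$-split subtori of the maximal split torus, the only subtlety being the evenness of block sizes in \textbf{Case~II}; everything else is routine bookkeeping with the explicit model.
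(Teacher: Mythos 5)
Your proof is correct, and its logical skeleton (the preliminary reductions $A_P\subseteq S_Y$, $H_Y\subseteq M_P$, followed by the two contrapositive implications) coincides with the paper's. The difference lies in how you handle the nontrivial direction: where you conjugate $S_Y$ into $A_0$ and then directly analyze the centralizer $\Cent_{M_{\wt P}}(S')$ inside $G$, checking by hand that it is Weyl-conjugate (in $M_P$) to some $M_{\wt R}$ with $R\subsetneq P$ — a step you rightly flag as the one requiring the case-by-case bookkeeping, especially the evenness of block sizes in \textbf{Case II} — the paper instead stays inside $H$: it conjugates $\Cent_{M_P}(A_*)$ to a standard Levi $M_{R_*}$ of $M_P$ by the textbook fact about centralizers of split tori, defines $R$ by $R\cap M_P=R_*$, and then computes the fixed-point space of the conjugated torus in $\fm_{\wt P}\cap\fs=\fm_P\tau$ by observing that $\tau$ commutes with $A_0$ (Proposition \ref{equalwt2.2}), getting $\fm_R\tau=\fm_{\wt R}\cap\fs$ in one stroke. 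The two routes are mathematically equivalent — the $\tau$-trick is precisely what your direct computation of $\Cent_{M_{\wt P}}(S')$ verifies — but the paper's version avoids the explicit two-case verification and converts the crux into standard structure theory plus the already-established Proposition \ref{equalwt2.2}, so it is slightly cleaner. (Minor remark: your two directions appear in the opposite order from the paper's, and you use the maximal split torus $S_Y$ where the paper allows any intermediate $A_*$ with $A_P\subsetneq A_*\subseteq H_Y$, but these are immaterial differences.)
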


\begin{proof}
It is evident that $H_Y\subseteq H_{Y^2}$. From $Y\in (\fm_{\wt{P}}\cap\fs)(F)$, one knows that $A_P\subseteq H_Y$. Since $Y$ is regular semi-simple in $\fs$, by \cite[Propositions 2.1 and 2.2]{MR1394521}, one deduces that $H_Y$ is a torus and that $Y^2\in\fm_P(F)$ is regular semi-simple in $\fh(F)$ (viewed as a central simple algebra over $E$). Thus $H_{Y^2}\subseteq M_P$. 

On the one hand, suppose that $Y$ is an elliptic element in $(\fm_{\wt{P}}\cap\fs)(F)$. If the maximal $F$-split torus in the torus $H_Y$ is not $A_P$, then there exists an $F$-split torus $A_\ast$ such that $A_P\subsetneq A_\ast\subseteq H_Y$. For $A_\ast\subseteq H_Y\subseteq H_{Y^2}\subseteq M_P$, the centralizer $\Cent_{M_P}(A_\ast)$ of $A_\ast$ in $M_P$ is a Levi subgroup of $M_P$. There exists $m\in M_P(F)$ such that $\Ad(m)(\Cent_{M_P}(A_\ast))=M_{R_\ast}$ for some standard parabolic subgroup $R_\ast$ of $M_P$. Then $\Ad(m)(A_\ast)\subseteq A_{R_\ast}$ and the centralizer of $\Ad(m)(A_\ast)$ in $M_P$ is $M_{R_\ast}$. Let $R$ be the unique standard parabolic subgroup of $H$ such that $R\subseteq P$ and that $R\cap M_P=R_\ast$. Then $A_R=A_{R_\ast}$ and $M_R=M_{R_\ast}$. Since $\tau$ (see Section \ref{explicitdescription}) commutes with $A_0$, by Proposition \ref{equalwt2.2}, the subspace of $\fm_{\wt{P}}\cap\fs$ formed by fixed points under conjugation by $\Ad(m)(A_\ast)$ is $\fm_{\wt{R}}\cap\fs$.  
From $A_\ast\subseteq H_Y$, one obtains $\Ad(m)(A_\ast)\subseteq H_{\Ad(m)(Y)}$. Since $\Ad(m)(Y)\in(\fm_{\wt{P}}\cap\fs)(F)$, we deduce that $\Ad(m)(Y)\in(\fm_{\wt{R}}\cap\fs)(F)$. Because $A_P\subsetneq A_\ast$ and $\Ad(m)(A_\ast)\subseteq A_R$, we have $A_P\subsetneq A_R$ and thus $R\subsetneq P$. That is to say, $Y$ is not an elliptic element in $(\fm_{\wt{P}}\cap\fs)(F)$. It is a contradiction. This proves one direction. 

On the other hand, suppose that the maximal $F$-split torus in the torus $H_Y$ is $A_P$. If $Y$ is not an elliptic element in $(\fm_{\wt{P}}\cap\fs)(F)$, there exists $m\in M_P(F)$ such that $\Ad(m)(Y)\in (\fm_{\wt{R}}\cap\fs)(F)$ for some standard parabolic subgroup $R\subsetneq P$. Then $A_R\subseteq H_{\Ad(m)(Y)}$, i.e., $\Ad(m^{-1})(A_R)\subseteq H_Y$. For $R\subsetneq P$, one sees that $A_P\subsetneq \Ad(m^{-1})(A_R)$. That is to say, $\Ad(m^{-1})(A_R)$ is a strictly larger split torus than $A_P$ in $H_Y$. It contradicts our hypothesis. This proves the other direction. 
\end{proof}

\begin{thm}\label{woi2}
  Let $\fo\in\CO_{rs}$, $P_1$ be a standard parabolic subgroup of $H$ and $Y_1\in\fo$ be an elliptic element in $(\fm_{\wt{P_1}}\cap\fs)(F)$. For $f\in\CS(\fs(\BA))$, we have
$$ J_\fo^{H}(f)=\vol(A_{P_1}^\infty H_{Y_1}(F)\bs H_{Y_1}(\BA))\cdot \int_{H_{Y_1}(\BA)\bs H(\BA)} f(\Ad(x^{-1})(Y_1)) v_{P_1}(x) dx, $$
where $v_{P_1}(x)$ is left-invariant under $H_{Y_1}(\BA)$ and is equal to the volume of the projection onto $\fa_{P_1}^H$ of the convex hull of $\{-H_Q(x)\}$, where $Q$ runs over all semi-standard parabolic subgroups of $H$ with $M_Q=M_{P_1}$. 
\end{thm}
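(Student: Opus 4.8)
The plan is to compute $J_\fo^H(f)$ starting from the second modified kernel $j_{f,\fo}^T$ of Theorem~\ref{secondkernel2}, which for $\fo\in\CO_{rs}$ gives $J_\fo^{H,T}(f)=\int_{H(F)\bs H(\BA)^1} j_{f,\fo}^T(x)\,dx$, and then taking the constant term in $T$ (legitimate by Corollary~\ref{polynomial2}). The first move is to rewrite each inner term: combining Lemma~\ref{regsslemma2} (so that $n\mapsto\Ad(n^{-1})(Y)-Y$ is a measure-preserving $F$-isomorphism $N_P\simeq\fn_{\wt P}\cap\fs$ for $Y\in\fm_{\wt P}(F)\cap\fo$) with Corollary~\ref{orbit2}, one gets
$$ j_{f,P,\fo}(x)=\sum_{Z\in\fo\cap(\wt\fp\cap\fs)(F)} f(\Ad(x^{-1})(Z)), $$
where $\wt\fp\cap\fs:=(\fm_{\wt P}\cap\fs)\oplus(\fn_{\wt P}\cap\fs)$. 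Both $\wh\tau_P^H(H_P(\cdot)-T)$ and this $Z$-sum are left $P(F)$-invariant, so unfolding the integral over $H(F)\bs H(\BA)^1$ against $\sum_{\delta\in P(F)\bs H(F)}$ yields
$$ J_\fo^{H,T}(f)=\sum_{\{P:P_0\subseteq P\}}(-1)^{\dim(A_P/A_H)}\int_{P(F)\bs H(\BA)^1}\wh\tau_P^H(H_P(x)-T)\sum_{Z\in\fo\cap(\wt\fp\cap\fs)(F)} f(\Ad(x^{-1})(Z))\,dx. $$

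Next I would collapse the $Z$-sum. Since every class in $\CO_{rs}$ is a single $H(F)$-orbit (the remark after Proposition~\ref{normmap}, ultimately $H^1(F,H_{Y_1})=1$), fix the elliptic representative $Y_1\in(\fm_{\wt{P_1}}\cap\fs)(F)$; by Lemma~\ref{lem91} the torus $H_{Y_1}$ has $A_{P_1}$ as its maximal $F$-split subtorus. Decomposing $\fo\cap(\wt\fp\cap\fs)(F)$ into $P(F)$-orbits, parametrizing each orbit by a coset of $H_{Y_1}(F)$, and unfolding one further step, the expression reduces to an orbital integral over $H_{Y_1}(\BA)\bs H(\BA)$. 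The crucial combinatorial point is that ellipticity of $Y_1$ forces every $H(F)$-conjugate of $Y_1$ meeting a standard $\fm_{\wt P}(F)$ to be elliptic in a Levi $G(F)$-conjugate to $M_{P_1}$; running the alternating sum over $P$ therefore telescopes so that, after regrouping the $P$-part with the conjugating elements into semi-standard parabolic subgroups of $H$, only those $Q$ with $M_Q=M_{P_1}$ survive. The output is
$$ J_\fo^{H,T}(f)=\vol\big(A_{P_1}^\infty H_{Y_1}(F)\bs H_{Y_1}(\BA)\big)\int_{H_{Y_1}(\BA)\bs H(\BA)} f(\Ad(x^{-1})(Y_1))\,u_{P_1}^T(x)\,dx, $$
where $u_{P_1}^T(x)$ is the explicit alternating sum, over the finitely many $Q$ with $M_Q=M_{P_1}$, of characteristic functions of cones built from $H_Q(x)-T$. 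Throughout this step one uses Proposition~\ref{equalwt2.2} (the identifications $\fm_{\wt R}\cap\fs=\fm_R\tau$, $\fn_{\wt R}\cap\fs=\fn_R\tau$) and Corollary~\ref{corequalwt2.1} ($\rho_{R,\fs}=\rho_R$) so that all root-, weight- and measure-theoretic bookkeeping on the $\fs$-side is literally the one on $\fh$, and Arthur's manipulations of \cite[\S6]{MR518111} apply unchanged.

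It then remains to identify $u_{P_1}^T$ and pass to the limit. By the standard argument with the functions $\Gamma_Q$ of \cite{MR625344} (equivalently, the $(G,M)$-family $\{e^{\lambda(H_Q(x))}\}_{M_Q=M_{P_1}}$), $u_{P_1}^T(x)$ extends to a polynomial in $T$ whose constant term is $v_{P_1}(x)$, the volume of the projection onto $\fa_{P_1}^H$ of the convex hull of $\{-H_Q(x):M_Q=M_{P_1}\}$; this matches the polynomiality of $T\mapsto J_\fo^{H,T}(f)$ from Corollary~\ref{polynomial2}, so taking constant terms gives the asserted formula. One checks that $v_{P_1}$ is left $H_{Y_1}(\BA)$-invariant — the set $\{-H_Q(x)\}$ is merely translated by an element of $\fa_{P_1}$ under $x\mapsto hx$ with $h\in H_{Y_1}(\BA)\subseteq M_{P_1}(\BA)$, and volume is translation-invariant — so the integral over $H_{Y_1}(\BA)\bs H(\BA)$ is well defined; convergence is immediate since $f\in\CS(\fs(\BA))$, $v_{P_1}$ has polynomial growth, and $H_{Y_1}$ is a torus. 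Finally, as usual, the three measures (on $A_{P_1}^\infty H_{Y_1}(F)\bs H_{Y_1}(\BA)$, on $H_{Y_1}(\BA)\bs H(\BA)$, and on the convex hull defining $v_{P_1}$) enter only through a product pinned down by the fixed global Haar measure on $H(\BA)$, so the right-hand side is independent of the choices.

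The main obstacle is the combinatorial reorganization in the second step: identifying precisely which $H(F)$-conjugates of $Y_1$ meet each standard $\fm_{\wt P}(F)$ — a genuinely rational question, and the reason one needs to know $\fo$ is a single $H(F)$-orbit — then proving the alternating sum over $P$ annihilates all parabolic subgroups except those with Levi a conjugate of $M_{P_1}$, and recognizing the surviving sum as Arthur's weight whose constant term is the convex-hull volume. This is where the bulk of \cite[\S6--8]{MR518111}, \cite{MR625344} and their Lie algebra adaptation \cite[\S4--5]{MR1893921} is imported; the only genuinely new ingredients are Lemmas~\ref{regsslemma2} and \ref{lem91}, Corollary~\ref{orbit2} and Proposition~\ref{equalwt2.2}, which guarantee that, root- and measure-theoretically, the tangent space $\fs$ behaves exactly like the Lie algebra $\fh$.
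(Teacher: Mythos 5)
Your proposal follows the same route as the paper's proof: start from Theorem \ref{secondkernel2}, collapse the $\fn_{\wt P}$-integral via Lemma \ref{regsslemma2} and Corollary \ref{orbit2}, use Lemma \ref{lem91} to parametrize the $H(F)$-conjugates of $Y_1$ meeting each $\fm_{\wt P}(F)$, and recognize the alternating sum as Arthur's weight before taking the constant term in $T$. The only place you misdescribe the mechanism is in the middle step: nothing telescopes or is annihilated. What the paper does (via Lemma \ref{lem91}) is show that each $Y\in\fm_{\wt P}(F)\cap\fo$ equals $\Ad(m\omega_s)(Y_1)$ for a \emph{unique} $s\in\Omega^H(\fa_{P_1};P)$ modulo the centralizer, with $m\in M_P(F)$; unfolding the $\delta$-sum then gives
$$ j_{f,\fo}^T(x)=\sum_{\delta\in H_{Y_1}(F)\bs H(F)} f(\Ad(\delta x)^{-1}(Y_1))\,\chi_T(\delta x),\qquad \chi_T(y)=\sum_{P\supseteq P_0}(-1)^{\dim(A_P/A_H)}\sum_{s\in\Omega^H(\fa_{P_1};P)}\wh\tau_P^H(H_P(\omega_s y)-T), $$
in which \emph{every} standard $P$ contributes; the pairs $(P,s)$ are what reindex as semi-standard $Q\in\CP(M_{P_1})$, and $v_{P_1}(x,T)=\int_{A_H^\infty\bs A_{P_1}^\infty}\chi_T(ax)\,da$ is identified with the convex-hull volume by importing \cite[p.~951]{MR518111} unchanged, which is legitimate here for exactly the root- and measure-theoretic reasons you cite (Corollary \ref{corequalwt2.1}, Proposition \ref{equalwt2.2}). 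Also, ``a Levi $G(F)$-conjugate to $M_{P_1}$'' should read $H(F)$-conjugate. With those corrections your outline matches the paper's argument.
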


\begin{remark}\label{rmkwoi2}
The weights that we obtain for regular semi-simple orbits are the same as Arthur's in \cite[p. 951]{MR518111}. These weights are also the same as those (see \cite[p. 131]{MR3026269}) appearing in the twisted trace formula for $H\rtimes\sigma$, where $\sigma$ acts on $H$ by $\Ad(\tau)$ (see Section \ref{explicitdescription} for the choice of $\tau$). Notice that the action of $\sigma$ stabilizes $P_0$ and $M_0$. All standard parabolic subgroups $P$ of $H$ are $\sigma$-stable and $\sigma$ fixes $\fa_P=\fa_{\wt{P}}$. 
\end{remark}

\begin{proof}[Proof of Theorem \ref{woi2}]
  Let $P$ be any standard parabolic subgroup of $H$ and $Y\in \fm_{\wt{P}}(F)\cap\fo$. There exists a standard parabolic subgroup $P_2\subseteq P$ and $Y_2$ an elliptic element in $(\fm_{\wt{P_2}}\cap\fs)(F)$ such that $Y_2$ is $M_{P}(F)$-conjugate to $Y$. By Lemma \ref{lem91}, the maximal $F$-split torus in $H_{Y_2}$ is $A_{P_2}$. Any element in $H(F)$ which conjugates $Y_1$ and $Y_2$ will conjugate $A_{P_1}$ and $A_{P_2}$. It follows that there exists $s\in\Omega^{H}(\fa_{P_1},\fa_{P_2})$ and $m\in M_{P}(F)$ such that
  $$ Y=\Ad(m\omega_{s}) (Y_1). $$
  Suppose that $P_3\subseteq P$ is another standard parabolic subgroup, $s'\in\Omega^{H}(\fa_{P_1},\fa_{P_3})$ and $m'\in M_{P}(F)$ such that
  $$ Y=\Ad(m'\omega_{s'}) (Y_1). $$
  Then there is $\zeta\in H_Y(F)$ such that
  $$ m'\omega_{s'}=\zeta m\omega_{s}. $$
  From $H_Y\subseteq M_{P}$, we see that
  $$ \omega_{s'}=\xi\omega_{s} $$
  for some $\xi\in M_{P}(F)$. Denote by $\Omega^{H}(\fa_{P_1};P)$ the set of $s\in \bigcup\limits_{\fa_{P_2}} \Omega^{H}(\fa_{P_1},\fa_{P_2})$ satisfying $\fa_{P}\subseteq s\fa_{P_1}$ and $s^{-1}\alpha>0$ for each $\alpha\in\Delta_{P_2}^{P}$. In sum, for any given $P$ a standard parabolic subgroup of $H$ and $Y\in \fm_{\wt{P}}(F)\cap\fo$, there is a unique $s\in \Omega^{H}(\fa_{P_1};P)$ such that $Y=\Ad(m\omega_{s}) (Y_1)$ for some $m\in M_{P}(F)$.

  For $x\in P(F)\bs H(\BA)$, we have
  \[\begin{split}
    j_{f,P,\fo}(x)&=\sum_{Y\in\fm_{\wt{P}}(F)\cap\fo} \sum_{n\in N_P(F)} f(\Ad(nx)^{-1}(Y)) \\
    &=\sum_{s\in\Omega^{H}(\fa_{P_1};P)} \sum_{m\in M_{P,\Ad(\omega_{s})(Y_1)}(F)\big\backslash M_{P}(F)} \sum_{n\in N_P(F)} f(\Ad(m nx)^{-1}\Ad(\omega_s) (Y_1)) \\
    &=\sum_{s\in\Omega^{H}(\fa_{P_1};P)} \sum_{m\in M_{P,\Ad(\omega_{s})(Y_1)}(F)\big\backslash P(F)} f(\Ad(m x)^{-1}\Ad(\omega_s) (Y_1)),
  \end{split}\]
  where $M_{P,\Ad(\omega_{s})(Y_1)}$ denotes the centralizer of $\Ad(\omega_{s})(Y_1)$ in $M_P$. Then for $T\in\fa_{0}$ and $x\in H(F)\bs H(\BA)$, we deduce that
  \[\begin{split}
    j_{f,\fo}^T(x)=&\sum_{\{P: P_0\subseteq P\}} (-1)^{\dim(A_P/A_{H})} \sum_{\delta\in P(F)\bs H(F)} \wh{\tau}_P^H(H_{P}(\delta x)-T)\cdot j_{f,P,\fo}(\delta x) \\
    =&\sum_{\{P: P_0\subseteq P\}} (-1)^{\dim(A_P/A_{H})} \sum_{\delta\in P(F)\bs H(F)} \wh{\tau}_P^H(H_{P}(\delta x)-T) \\
    &\cdot \left(\sum_{s\in\Omega^{H}(\fa_{P_1};P)} \sum_{m\in M_{P,\Ad(\omega_{s})(Y_1)}(F)\big\backslash P(F)} f(\Ad(m \delta x)^{-1}\Ad(\omega_s) (Y_1))\right) \\
    =&\sum_{\{P: P_0\subseteq P\}} (-1)^{\dim(A_P/A_{H})} \sum_{s\in\Omega^{H}(\fa_{P_1};P)} \sum_{\delta\in M_{P,\Ad(\omega_{s})(Y_1)}(F)\big\backslash H(F)} \wh{\tau}_P^H(H_{P}(\delta x)-T) \\
    &\cdot f(\Ad(\delta x)^{-1}\Ad(\omega_s) (Y_1)).
  \end{split}\]
  Notice that the centralizer of $\Ad(\omega_s)(Y_1)$ in $H$ is actually contained in $M_{P}$, so
  \[\begin{split}
    j_{f,\fo}^T(x)=&\sum_{\{P: P_0\subseteq P\}} (-1)^{\dim(A_P/A_{H})} \sum_{s\in\Omega^{H}(\fa_{P_1};P)} \sum_{\delta\in H_{\omega_{s}Y_1\omega_{s}^{-1}}(F)\big\backslash H(F)} \wh{\tau}_P^H(H_{P}(\delta x)-T) \cdot f(\Ad(\delta x)^{-1}\Ad(\omega_s) (Y_1)) \\
    =&\sum_{\{P: P_0\subseteq P\}} (-1)^{\dim(A_P/A_{H})} \sum_{s\in\Omega^{H}(\fa_{P_1};P)} \sum_{\delta\in H_{Y_1}(F)\backslash H(F)} \wh{\tau}_P^H(H_{P}(\omega_s \delta x)-T) \cdot f(\Ad(\delta x)^{-1}(Y_1)).
  \end{split}\]
  For $y\in H(\BA)$, we write
  $$ \chi_T(y):=\sum_{\{P: P_0\subseteq P\}} (-1)^{\dim(A_P/A_{H})} \sum_{s\in\Omega^{H}(\fa_{P_1};P)} \wh{\tau}_P^H(H_{P}(\omega_s y)-T). $$
  Then
  $$ j_{f,\fo}^T(x)=\sum_{\delta\in H_{Y_1}(F)\backslash H(F)} f(\Ad(\delta x)^{-1}(Y_1))\cdot \chi_T(\delta x). $$

  For $T\in T_+ +\fa_{P_0}^+$, applying Theorem \ref{secondkernel2} and the fact that $j_{f,\fo}^T(x)$ is left invariant by $A_{H}^\infty$, we have
  \[\begin{split}
    J_\fo^{H,T}(f)&=\int_{H(F)\bs H(\BA)^1} j_{f,\fo}^T(x) dx \\
    &=\int_{A_{H}^\infty H(F)\bs H(\BA)} \left(\sum_{\delta\in H_{Y_1}(F)\backslash H(F)} f(\Ad(\delta x)^{-1}(Y_1))\cdot \chi_T(\delta x)\right) dx. 
  \end{split}\]
Then we obtain
\begin{equation}\label{lastequ2}
  J_\fo^{H,T}(f)=\vol(A_{P_1}^\infty H_{Y_1}(F)\bs H_{Y_1}(\BA))\cdot \int_{H_{Y_1}(\BA)\bs H(\BA)} f(\Ad(x^{-1})(Y_1)) v_{P_1}(x,T) dx, 
\end{equation}
  where
  $$ v_{P_1}(x,T):=\int_{A_{H}^\infty\bs A_{P_1}^\infty} \chi_T(ax) da. $$
  Here we have used the fact that $v_{P_1}(x,T)$ is well-defined and left-invariant under $H_{Y_1}(\BA)\subseteq M_P(\BA)$. Moreover, $v_{P_1}(x,T)$ is equal to the volume of the projection onto $\fa_{P_1}^H$ of the convex hull of $\{T_Q-H_Q(x)\}$, where $T_Q$ denotes the projection of $sT$ in $\fa_Q$ for any $s\in\Omega^H$ satisfying $sP_0\subseteq Q$, and $Q$ takes over all semi-standard parabolic subgroups of $H$ with $M_Q=M_{P_1}$. These properties follow from \cite[p. 951]{MR518111}. We have also assumed the finiteness of $\vol(A_{P_1}^\infty H_{Y_1}(F)\bs H_{Y_1}(\BA))$, which results from Lemma \ref{lem91}. 

In the end, we may conclude by taking contant terms of both sides of (\ref{lastequ2}). 
\end{proof}


\section{\textbf{The weighted fundamental lemma}}

In this section, we turn to the local setting and change the notation by letting $F$ be a non-archimedean local field of characteristic $0$. Denote by $|\cdot|_F$ the normalized absolute value on $F$ and by $\val_F(\cdot)$ the normalized valuation on $F$. 

\subsection{\textbf{$(H,M)$-families associated to local weighted orbital integrals}}

Suppose that $H$ is a reductive group defined over $F$. Fix a maximal compact subgroup $K$ of $H(F)$ which is admissible relative to $M_0$ in the sense of \cite[p. 9]{MR625344}. 
In this paper, when $H(F)=GL_n(D)$, where $D$ is a central division algebra over a finite field extension $E$ of $F$, we choose $K:=GL_n(\CO_D)$, where $\CO_D$ is the ring of integers of $D$. 
For a parabolic subgroup $P$ of $H$ and $x\in H(F)$, we have $H(F)=P(F)K$ by Iwasawa decomposition and define $H_P(x)$ as in Section \ref{hcmap} by replacing $|\cdot|_\BA$ with $|\cdot|_F$. 
Suppose that $M$ is a Levi subgroup of $H$ containing $M_0$. Let $\CP(M)$ be the set of parabolic subgroups of $H$ with Levi component $M$. According to \cite[p. 40-41]{MR625344}, 
$$ v_P(\lambda,x):=e^{-\lambda(H_P(x))}, \forall \lambda\in i\fa_M^\ast, P\in\CP(M), $$
is an $(H,M)$-family in the sense of \cite[p. 36]{MR625344}. Let $P\in\CP(M)$ and $Q$ be a parabolic subgroup of $H$ containing $P$. Define
$$ \theta_P^Q(\lambda):=\vol(\fa_P^Q/\BZ(\Delta_P^Q)^\vee)^{-1}\prod_{\alpha^\vee\in(\Delta_P^Q)^\vee} \lambda(\alpha^\vee), $$
where $\BZ(\Delta_P^Q)^\vee$ denotes the lattice in $\fa_P^Q$ generated by $(\Delta_P^Q)^\vee$. Then we obtain a function
$$ v_M^Q(x):=\lim_{\lambda\ra 0} \sum_{\{P\in\CP(M): P\subseteq Q\}} v_P(\lambda,x) \theta_P^Q(\lambda)^{-1}, \forall x\in H(F). $$

\subsection{\textbf{Matching of orbits}}

Assume that $F$ has odd residue characteristic and that $E$ is an unramified quadratic extension over $F$. Denote by $\CO_F$ (resp. $\CO_E$) the ring of integers of $F$ (resp. $E$). Since $E/F$ is an unramified quadratic extension, there exists $\alpha\in\CO_E^\times$ such that $\alpha^2\in F$ and that $E=F(\alpha)$. Let $\alpha$ be such an element. Because $F$ has odd residue characteristic, we deduce that $\CO_E=\CO_F\oplus\CO_F\alpha$. 

Let $G:=GL_{2n}$ and $H:=\Res_{E/F}GL_{n,E}$ be the centralizer of $E^\times$ in $G$. Both of them are regarded as group schemes over $\CO_F$. More precisely, the $\CO_F$-basis $\{1, \alpha\}$ of $\CO_E$ induces an inclusion $\CO_E^\times\subseteq GL_2(\CO_F)$ or more generally $GL_n(\CO_E)\subseteq GL_{2n}(\CO_F)$. Define $S:=\{g\in G: \Ad(\alpha)(g^{-1})=g\}$, which is a scheme over $\CO_F$. Let $\fs$ be the tangent space at the neutral element of $S$, which is viewed as a subspace of $\fg$. Let $\tau$ be the $\CO_F$-linear automorphism on $\CO_E$ given by the conjugation $\tau(\alpha)=-\alpha$. We can show that $\fg\fl_2(\CO_F)=\CO_E\oplus\CO_E\tau$, where $\CO_E$ (resp. $\CO_E\tau$) is the eigenspace of eigenvalue $+1$ (resp. $-1$) under the action $\Ad(\alpha)$. Then $\fs=\fh\tau$, where $\tau$ is viewed as an element of $\fg(\CO_F)$ by the diagonal embedding. In the rest of this paper, we shall simply set $\fs:=\fh$ equipped with the twisted conjugation of $H$, i.e., $x\cdot Y=xY\ov{x}^{-1}$, where $\ov{x}:=\Ad(\tau)(x)$ is the nontrivial Galois conjugate of $x\in H$. 

Let $H':=GL_{n}\times GL_{n}$ be the subgroup of $G$ by diagonal embedding. It is regarded as a group scheme over $\CO_F$. Define $S':=\{g\in G: \Ad(\epsilon)(g^{-1})=g\}$ with $\epsilon:=\mat(1_n,,,-1_n)$, which is a scheme over $\CO_F$. Let $\fs'$ be the tangent space at the neutral element of $S'$, which is viewed as a subspace of $\fg$. Then $\fs'=\bigg\{
\left( \begin{array}{cc}
0 & A \\
B & 0 \\
\end{array} \right): A,B\in \fg\fl_n\bigg\} \simeq \fg\fl_n\oplus \fg\fl_n$ on which $H'$ acts by conjugation, i.e., $(x_{1},x_{2})\cdot(A,B)=(x_{1}Ax_{2}^{-1},x_{2}Bx_{1}^{-1})$. 

Recall \cite[Lemma 1.1 of Chapter 1]{MR1007299} that the norm map $Y\mapsto Y\ov{Y}$ induces an injection from the set of twisted conjugacy classes in $GL_n(E)$ to the set of conjugacy classes in $GL_n(F)$, whose image is denoted by $N(GL_n(E))$; in particular, we write $NE^\times$ for $N(GL_1(E))$. As before, we have the notion of regular semi-simple elements in $\fs$ (resp. $\fs'$) with respect to the action of $H$ (resp. $H'$) whose set is denoted by $\fs_{rs}$ (resp. $\fs'_{rs}$). Moreover, we have the following explicit description. 

\begin{prop}\label{matorb}
1) An element $Y$ of $\fs(F)$ is regular semi-simple if and only if $Y\ov{Y}$ belongs to $GL_n(E)$ and is regular semi-simple. The map $Y\mapsto Y\ov{Y}$ from $\fs(F)$ to $GL_n(E)$ induces an injection from the set of twisted $H(F)$-conjugacy classes of regular semi-simple elements in $\fs(F)$ into the set of regular semi-simple conjugacy classes in $GL_n(F)$.

2) An element $X$ of $\fs'(F)$ is regular semi-simple if and only if it is $H'(F)$-conjugate to an element of the form
$$ X(A):=
\left( \begin{array}{cc}
0 & 1_n \\
A & 0 \\
\end{array} \right) $$
with $A\in GL_{n}(F)$ being regular semi-simple. The map $\mat(0,A,B,0)\mapsto AB$ from $\fs'(F)$ to $GL_n(F)$ induces a bijection between the set of $H'(F)$-conjugacy classes of regular semi-simple elements in $\fs'(F)$ and the set of regular semi-simple conjugacy classes in $GL_{n}(F)$. 
\end{prop}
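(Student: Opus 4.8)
The plan is to prove both statements by reducing everything to the classical correspondence between twisted (resp.\ ordinary) conjugacy classes and ordinary conjugacy classes in $GL_n$, using the determinantal identity already exploited in the proof of Proposition~\ref{normmap}. For part~2) I would first observe that an element $\mat(0,A,B,0)\in\fs'(F)$ is regular semi-simple for the $H'$-action if and only if its centralizer in $H'$ has minimal dimension; a direct computation shows the centralizer of $\mat(0,A,B,0)$ consists of pairs $(x_1,x_2)$ with $x_1 A = A x_2$ and $x_2 B = B x_1$, hence $x_1(AB)=(AB)x_1$ and $x_2(BA)=(BA)x_2$. Thus regular semi-simplicity of $X$ is equivalent to $AB$ (equivalently $BA$) being regular semi-simple in $GL_n$, and in particular invertible, which forces both $A$ and $B$ invertible; conjugating by $(1_n, B)\in H'(F)$ (note $B\in GL_n(F)$) turns $\mat(0,A,B,0)$ into $\mat(0,AB,1_n,0)=X(AB)$, giving the normal form. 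Two normal forms $X(A)$ and $X(A')$ are $H'(F)$-conjugate iff $A$ and $A'$ are $GL_n(F)$-conjugate — this is a short calculation with the explicit centralizer description — and since every regular semi-simple class in $GL_n(F)$ arises as some $AB$, the induced map on classes is a bijection.

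For part~1), the key point is the Galois-descent argument. After base change to $\ov F$ (containing $E$), the pair $(H,\fs)$ becomes $(H',\fs')$ via the isomorphism in the proof of Proposition~\ref{normmap}, and the twisted conjugation $x\cdot Y = xY\ov x^{-1}$ corresponds, over $\ov F$, to the $H'$-action $(x_1,x_2)\cdot(A,B)=(x_1Ax_2^{-1},x_2Bx_1^{-1})$ under the identification $Y\leftrightarrow(A,B)$ with $B$ the ``$\tau$-twist'' of $A$; concretely the element $\tau$ identifies $\fs=\fh$ and one checks $Y\ov Y$ corresponds to $AB$. Since regular semi-simplicity is a geometric condition, $Y\in\fs(F)$ is regular semi-simple iff the corresponding pair over $\ov F$ is, iff $AB = Y\ov Y$ is regular semi-simple in $GL_n(\ov F)$; but $Y\ov Y\in\fh(F)$, hence lies in $GL_n(E)$, and being regular semi-simple over $\ov F$ is the same as being regular semi-simple in $GL_n(E)$. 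For the injectivity of the induced map on twisted $H(F)$-classes: if $Y_1\ov{Y_1}$ and $Y_2\ov{Y_2}$ are $GL_n(F)$-conjugate (hence $GL_n(E)$-conjugate), then by Proposition~\ref{normmap} applied in this explicit case (or directly by the double-centralizer / vanishing-$H^1$ argument there) $Y_1$ and $Y_2$ lie in the same $H$-orbit over $\ov F$; the obstruction to being in the same $H(F)$-orbit lies in $\ker[H^1(F,H_{Y_1})\to H^1(F,H)]$, and since $H_{Y_1}$ is the group of units of an associative $F$-algebra (the centralizer algebra of $Y_1$), this $H^1$ vanishes exactly as in the cited Exercise of Serre's book. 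Hence the map is injective, which is all that is claimed (surjectivity can genuinely fail here because not every regular semi-simple class in $GL_n(F)$ is a norm).

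Alternatively, part~1) can be done entirely over $F$ without descent: recall from \cite[Lemma~1.1 of Chapter~1]{MR1007299} that $Y\mapsto Y\ov Y$ already gives an injection from twisted conjugacy classes in $GL_n(E)$ into conjugacy classes in $GL_n(F)$ with image $N(GL_n(E))$; then one only needs that $Y\in\fs(F)$ is regular semi-simple for the $H$-action iff $Y\ov Y$ is regular semi-simple, which again follows from the explicit centralizer computation (the centralizer of $Y$ in $H$ maps isomorphically to the centralizer of $Y\ov Y$ in $GL_{n,E}$, a fact one verifies by writing out the twisted-conjugation fixed-point equations). I expect the main obstacle to be purely bookkeeping: getting the identification of the $F$-structures and the $\tau$-twist precise enough that ``$Y\ov Y$ corresponds to $AB$ under base change'' is genuinely a computation rather than an assertion, and being careful that regular semi-simplicity is checked for the correct group action on each side (the $H$-action is twisted conjugation, not ordinary conjugation). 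Everything else is an invocation of Proposition~\ref{normmap}, the double-centralizer theorem, and the vanishing of $H^1$ of a unit group, all already available in the excerpt.
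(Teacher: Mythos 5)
The paper's proof is nothing but two citations: part 1) to \cite[Lemma 2.1]{MR1487565} and part 2) to \cite[Proposition 2.1 and Lemma 2.1]{MR1394521}. You instead reconstruct the argument from first principles, and in substance the reconstruction is correct; what each approach buys is clear --- the paper's version is economical, while yours exhibits the underlying mechanism (explicit twisted-centralizer computation, norm map injectivity, vanishing of $H^1$ of a unit group). Two small cautions. In part 2), the element you conjugate by is off: with the action $(x_1,x_2)\cdot(A,B)=(x_1Ax_2^{-1},x_2Bx_1^{-1})$, conjugating $\mat(0,A,B,0)$ by $(1_n,B)$ gives $\mat(0,AB^{-1},B^2,0)$, not the claimed normal form; the correct choice is $(1_n,A)$, which yields $\mat(0,1_n,AB,0)=X(AB)$ (or $(1_n,B^{-1})$ giving $\mat(0,AB,1_n,0)$, which is $H'$-conjugate to $X(AB)$). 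Also, the step ``$X$ regular semi-simple $\Rightarrow AB$ invertible'' is really a semi-simplicity (closed-orbit) statement, not a regularity one: for $n=1$, the element $\mat(0,0,1,0)$ has centralizer of minimal dimension but a non-closed orbit, so one must use the closed-orbit condition (equivalently, as you note, separability and nonvanishing of the characteristic polynomial at $0$). For part 1), your ``alternative'' route via \cite[Lemma 1.1 of Chapter 1]{MR1007299} together with the centralizer comparison is the cleanest and is closest in spirit to the cited Guo lemma; the Galois-descent/$H^1$ sketch would also go through but needs the identification of the twisted $F$-form of $(H',\fs')$ with $(H,\fs)$ to be written out carefully, along the lines of Proposition \ref{normmap}. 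Nothing here is wrong in essence; the proposal simply does the work that the paper delegates to its references.
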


\begin{proof}
1) is contained in \cite[Lemma 2.1]{MR1487565}, while 2) is proved in \cite[Proposition 2.1 and Lemma 2.1]{MR1394521}. 
\end{proof}

To sum up, the composition of the map in 1) and the inverse of the map in 2) above induces an injection from the set of $H(F)$-orbits in $\fs_{rs}(F)$ into the set of $H'(F)$-orbits in $\fs'_{rs}(F)$. We shall say that $Y\in\fs_{rs}(F)$ and $X\in\fs'_{rs}(F)$ have matching orbits if their orbits are matched under this injection. Alternatively, this can be canonically characterized by an identification of categorical quotients $\fs//H\simeq\fs'//H'$ (see Proposition \ref{propcatquot2} and \cite[Proposition 3.3]{2019arXiv190407102L}). With our identification $\fs=\fh$, we see that $Y\in\fs_{rs}(F)$ and $X=\mat(0,A,B,0)\in\fs'_{rs}(F)$ have matching orbits if and only if the characteristic polynomial of $Y\ov{Y}\in GL_n(E)$ equals that of $AB\in GL_n(F)$. 

\subsection{\textbf{Matching of Levi subgroups involved}}

We recall some terminology in \cite[\S3.4 and \S5.2]{2019arXiv190407102L}. The subgroup of diagonal matrices in $G$ is a common minimal Levi subgroup of $G$ and $H'$. We also fix a minimal semi-standard parabolic subgroup of $H'$ to be the group of products of upper triangular matrices. We say that a semi-standard parabolic subgroup of $G$ is ``relatively standard'' if its intersection with $H'$ is a standard parabolic subgroup of $H'$. Let $\omega:=\mat(0,1_n,1_n,0)$. We say that a semi-standard parabolic subgroup $P$ of $G$ is ``$\omega$-stable'' if $\omega\in P$. 
Recall that if the Lie algebra of a relatively standard parabolic subgroup $P$ of $G$ has non-empty intersection with $\fs'_{rs}$, then $P$ must be $\omega$-stable (see \cite[Proposition 5.4]{2019arXiv190407102L}). 

We shall say that a semi-standard Levi subgroup $M'$ of $G$ is ``$\omega$-stable'' if $\omega\in M'$. Notice that $M'$ is $\omega$-stable if and only if $M'=M_{P'}$ for some $\omega$-stable parabolic subgroup $P'$ of $G$. 
Here $\omega$-stable Levi subgroups of $G$ play the role of semi-standard Levi subsets of $(GL_n\times GL_n)\rtimes \sigma'$ in the sense of \cite[\S I.1]{MR1339717}, where $\sigma'$ exchanges two copies of $GL_n$. For any linear subspace $\fv$ of $\fg$, we denote by $\fv^\times$ the intersection of $\fv$ and $G$ in $\fg$. Notice that there is a bijection between the set of semi-standard Levi subgroups of $GL_{n}$ and the set of semi-standard Levi subgroups of $H$ (resp. the set of $\omega$-stable Levi subgroups of $G$) induced by $M_n\mapsto M=\Res_{E/F} M_{n,E}$ (resp. $M_n\mapsto M'=\mat(\fm_n,\fm_n,\fm_n,\fm_n)^\times$); here $\fm_n$ denotes the Lie algebra of $M_n$. We shall use the notations $M_n, M, M'$ to denote the corresponding semi-standard or $\omega$-stable Levi subgroups of different groups under these bijections after fixing one of the three. We also have bijections among semi-standard or $\omega$-stable parabolic subgroups (denoted by $Q_n, Q, Q'$) of different groups containing these Levi subgroups. 

Let $M'$ be an $\omega$-stable Levi subgroup of $G$. We shall say that $Y\in\fm(F)\cap\fs_{rs}(F)$ and $X\in\fm'(F)\cap\fs'_{rs}(F)$ have $M'$-matching orbits if in each pair of blocks of $\fm$ and $\fm'$, their components have matching orbits. 

\subsection{\textbf{Transfer factor}}

Let $\eta$ be the quadratic character of $F^\times/{NE^\times}$ attached to the quadratic field extension $E/F$. For $X=\mat(0,A,B,0)\in\fs'_{rs}(F)$, define a transfer factor (see \cite[Definition 5.8]{MR3414387})
$$ \kappa(X):=\eta(\det(A)), $$
which satisfies $\kappa(\Ad(x^{-1})(X))=\eta(\det(x))\kappa(X)$ for any $x\in H'(F)$. 

\subsection{\textbf{Transfer of weighted orbital integrals}}

Fix the Haar measures on $H(F)$ and $H'(F)$ such that $\vol(H(\CO_F))=\vol(H'(\CO_F))=1$. For a locally compact and totally disconnected space $X$, denote by $\CC_c^\infty(X)$ the $\BC$-linear space of locally constant and compactly supported functions on $X$. 

\begin{defn}\label{deflocwoi}
1) Let $M$ be a semi-standard Levi subgroup of $H$ and $Q$ a parabolic subgroup of $H$ containing $M$. For $Y\in\fm(F)\cap\fs_{rs}(F)$ and $f\in\CC_c^\infty(\fs(F))$, we define the weighted orbital integral of $f$ at $Y$ by
$$ J_M^Q(Y, f):=\int_{H_Y(F)\bs H(F)} f(\Ad(x^{-1})(Y))v_M^Q(x) dx. $$

2) Let $M'$ be an $\omega$-stable Levi subgroup of $G$ and $Q'$ a parabolic subgroup of $G$ containing $M'$ (thus $Q'$ is $\omega$-stable). For $X\in\fm'(F)\cap\fs'_{rs}(F)$ and $f'\in\CC_c^\infty(\fs'(F))$, we define the weighted $\eta$-orbital integral of $f'$ at $X$ by
$$ J_{M'}^{\eta,Q'}(X, f'):=\int_{H'_X(F)\bs H'(F)} f'(\Ad(x^{-1})(X))\eta(\det(x))v_{M'}^{Q'}(x) dx. $$
\end{defn}

\begin{remark}\label{twwt}
1) $v_M^Q$ is a local analogue of the weight that we got in Theorem \ref{woi2}. By Remark \ref{rmkwoi2}, it is the same as $v_{(\Res_{E/F} M_{n,E})\rtimes\sigma}^{(\Res_{E/F} Q_{n,E})\rtimes\sigma}$ in \cite[\S I.3]{MR1339717}, where $\sigma$ is the nontrivial Galois conjugation. 

2) $v_{M'}^{Q'}$ is a local analogue of the weight that we got in \cite[Theorem 9.2]{2019arXiv190407102L}. By \cite[Remark 9.3]{2019arXiv190407102L}, it is the same as $v_{(M_n\times M_n)\rtimes\sigma'}^{(Q_n\times Q_n)\rtimes\sigma'}$ in \cite[\S I.3]{MR1339717}, where $\sigma'$ exchanges two copies. 
\end{remark}

If $Y\in\fs_{rs}(F)$ and $X\in\fs'_{rs}(F)$ have matching orbits, their centralizers $H_Y$ and $H'_X$ are isomorphic because they are tori and inner forms of each other (see \cite[Remark 5.6]{MR3414387}). We shall fix compatible Haar measures on them. 

\begin{defn}\label{defstrass}
For $f\in\CC_c^\infty(\fs(F))$ and $f'\in\CC_c^\infty(\fs'(F))$, we say that $f$ and $f'$ are strongly associated if for all $\omega$-stable Levi subgroup $M'$ of $G$ and all parabolic subgroup $Q'$ of $G$ containing $M'$ (thus $Q'$ is $\omega$-stable), we have
\begin{enumerate}[\indent (1)]
\item if $Y\in\fm(F)\cap\fs_{rs}(F)$ and $X\in\fm'(F)\cap\fs'_{rs}(F)$ have $M'$-matching orbits, then
$$ \kappa(X) J_{M'}^{\eta,Q'}(X, f')=J_M^Q(Y, f); $$
\end{enumerate}
\begin{enumerate}[\indent (2)]
\item if $X=\mat(0,A,B,0)\in\fm'(F)\cap\fs'_{rs}(F)$ satisfies $\xi(AB)\notin NE^\times$ for some $\xi\in X(M_{Q_n})_F$, then
$$ J_{M'}^{\eta,Q'}(X, f')=0. $$
\end{enumerate}
\end{defn}

We remark that this definition is inspired by \cite[Definition III.3.2]{MR1339717} on the base change for $GL_n$. The following result (cf. \cite[Remark III.3.2.(i)]{MR1339717}) shows that to check the vanishing statement (2) in the above definition, it suffices to check it for all $\omega$-stable Levi subgroup $M'$ of $G$ such that $X$ is an elliptic element in $\fm'(F)\cap\fs'_{rs}(F)$ (i.e. $A_{M'}$ is the maximal $F$-split torus in $H'_X$). 

\begin{prop}\label{equivvancond}
Let $f'\in\CC_c^\infty(\fs'(F))$. The following two conditions are equivalent: 
\begin{enumerate}[\indent 1)]
\item for all $\omega$-stable Levi subgroup $M'$ of $G$ and all parabolic subgroup $Q'$ of $G$ containing $M'$, if $X=\mat(0,A,B,0)\in\fm'(F)\cap\fs'_{rs}(F)$ satisfies $\xi(AB)\notin NE^\times$ for some $\xi\in X(M_{Q_n})_F$, then
$$ J_{M'}^{\eta,Q'}(X, f')=0; $$
\end{enumerate}
\begin{enumerate}[\indent 2)]
\item for all $\omega$-stable Levi subgroup $M'$ of $G$ and all parabolic subgroup $Q'$ of $G$ containing $M'$, if $X=\mat(0,A,B,0)$ is an elliptic element in $\fm'(F)\cap\fs'_{rs}(F)$ and satisfies $\xi(AB)\notin NE^\times$ for some $\xi\in X(M_{Q_n})_F$, then
$$ J_{M'}^{\eta,Q'}(X, f')=0. $$
\end{enumerate}
\end{prop}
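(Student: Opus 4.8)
The implication $1) \Rightarrow 2)$ is trivial, since elliptic elements are a special case. The content is the converse $2) \Rightarrow 1)$, and the natural strategy is a descent argument: a weighted $\eta$-orbital integral $J_{M'}^{\eta,Q'}(X,f')$ attached to a non-elliptic $X$ should be expressed in terms of weighted $\eta$-orbital integrals attached to elliptic elements in smaller Levi subgroups, to which the hypothesis $2)$ applies. First I would fix an $\omega$-stable Levi $M'$ and $X = \mat(0,A,B,0) \in \fm'(F) \cap \fs'_{rs}(F)$ with $\xi(AB) \notin NE^\times$ for some $\xi \in X(M_{Q_n})_F$, and let $L'$ be the $\omega$-stable Levi subgroup of $G$ for which $X$ is elliptic in $\fl'(F) \cap \fs'_{rs}(F)$ (so that $A_{L'}$ is the maximal $F$-split torus in $H'_X$; this $L'$ is contained in $M'$, and $L'$ corresponds to a Levi $L_n$ of $GL_n$ with $X(M_{Q_n})_F \subseteq X(M_{L_n})_F$). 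One then still has $\xi(AB) \notin NE^\times$ for the same $\xi$, now viewed in $X(M_{L_n})_F$.

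The key step is the descent formula for weighted orbital integrals: the $(H',M')$-family underlying $v_{M'}^{Q'}$ restricts along $L' \subseteq M'$, and by Arthur's splitting/descent formalism (as in \cite[\S I.3]{MR1339717} and the twisted analogues invoked in Remark \ref{twwt}), for $X$ elliptic in $\fl'$ one can write
$$ J_{M'}^{\eta,Q'}(X, f') = \sum_{L''} d_{M'}^{G}(L', L'') \, J_{L'}^{\eta, Q_{L''}}(X, f'_{Q''}) $$
or, more to the point, express $J_{M'}^{\eta,Q'}(X,f')$ as a finite linear combination of terms $J_{L'}^{\eta, R'}(X, {}^{*}f')$ where $R'$ ranges over $\omega$-stable parabolic subgroups of $G$ containing $L'$ and ${}^{*}f'$ denotes suitable constant-term / restriction operations on $f'$ built from Iwasawa decomposition over $K$ (these operations preserve $\CC_c^\infty$ and, crucially, preserve the property that the defining condition $\xi(AB) \notin NE^\times$ still holds, since the categorical-quotient invariants of $X$ are unchanged by passing to constant terms, cf. the Propositions in Section \ref{explicitdescription}). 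Since $X$ is elliptic in $\fl'(F) \cap \fs'_{rs}(F)$ and still satisfies the norm-obstruction for $\xi \in X(M_{R_n})_F$ (as $X(M_{Q_n})_F \subseteq X(M_{R_n})_F \subseteq X(M_{L_n})_F$), hypothesis $2)$ — applied with the Levi $L'$ and the parabolic $R'$ — forces each $J_{L'}^{\eta, R'}(X, {}^{*}f') = 0$, hence $J_{M'}^{\eta,Q'}(X,f') = 0$.

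The main obstacle, and the part that needs care rather than being purely formal, is verifying that the descent/splitting identity for the weighted $\eta$-orbital integrals genuinely holds in the present symmetric-space setting and that the auxiliary functions appearing on the right-hand side still satisfy the norm obstruction. The first point is handled by the matching of weights with the twisted-trace-formula weights $v_{(M_n \times M_n) \rtimes \sigma'}^{(Q_n \times Q_n) \rtimes \sigma'}$ recorded in Remark \ref{twwt}.2) together with \cite[Remark III.3.2.(i)]{MR1339717}, from which the descent formula is inherited; the $\eta$-twist is carried along unchanged because $\eta(\det(\cdot))$ is a character of $H'(F)$ trivial on $H'_X(F)$ and on the relevant unipotent and compact factors. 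The second point reduces to the observation that the invariant $AB \in GL_n(F)$ (up to conjugacy), and in particular each $\xi(AB)$, depends only on the $H'(F)$-conjugacy class of $X$ via the categorical quotient $\fs'//H' \simeq \fs//H$, and is therefore unaffected by the constant-term operations that intervene. Once these two points are in place the argument closes as above.
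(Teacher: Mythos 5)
Your outline gets the broad strategy right (reduce to the elliptic case via descent and then invoke hypothesis 2)), but the mechanism you propose is the wrong one and, as written, the argument does not close.

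First, a smaller point: you fix $L'$ as ``the $\omega$-stable Levi subgroup of $G$ for which $X$ is elliptic in $\fl'(F)\cap\fs'_{rs}(F)$.'' For a general $X\in\fm'(F)\cap\fs'_{rs}(F)$ there need not exist such a \emph{semi-standard} ($\omega$-stable) Levi: the maximal $F$-split torus in $H'_X$ sits inside $M'\cap H'$ but is not standard. One must first conjugate, as the paper does: pick $y\in M'(F)\cap H'(F)$ and an $\omega$-stable $M'_\ast\subseteq M'$ with $X_\ast:=\Ad(y)(X)$ elliptic in $\fm'_\ast(F)\cap\fs'_{rs}(F)$, and record $J_{M'}^{\eta,Q'}(X_\ast,f')=\eta(\det(y))J_{M'}^{\eta,Q'}(X,f')$.

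The more serious issue is your displayed descent formula, which introduces constant terms $f'_{Q''}$ (or $^{\ast}f'$). That is the shape of Arthur's parabolic descent for weighted orbital integrals, and it is not what is needed. The proposition is a statement about one fixed $f'$: hypothesis 2) asserts vanishing of $J_{M'}^{\eta,Q'}(\cdot,f')$ for \emph{that} $f'$, and gives you nothing about $J_{L'}^{\eta,R'}(\cdot,{}^{\ast}f')$ for constant terms ${}^{\ast}f'$. So the final invocation of 2) in your plan is illegitimate. What is actually used is the descent (splitting) formula for $(G,M')$-families on the level of weight functions only, \cite[Lemma I.1.2]{MR1339717}:
$$ v_{M'}^{Q'}(x)=\sum_{L'\in\msl^{Q'}(M'_\ast)} d_{M'_\ast}^{Q'}(M',L')\, v_{M'_\ast}^{Q'_{L'}}(x), $$
which, substituted under the integral sign, yields
$$ J_{M'}^{\eta,Q'}(X_\ast, f')=\sum_{L'\in\msl^{Q'}(M'_\ast)} d_{M'_\ast}^{Q'}(M',L')\, J_{M'_\ast}^{\eta,Q'_{L'}}(X_\ast, f') $$
with the \emph{same} test function $f'$ and the \emph{same} integration domain $H'_{X_\ast}(F)\backslash H'(F)$ on both sides; only the weight changes. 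Each summand now has the elliptic datum $(M'_\ast,Q'_{L'},X_\ast)$, and the restricted character $\xi_L\in X(L_n)_F$ still satisfies $\xi_L(A_\ast B_\ast)\notin NE^\times$, so hypothesis 2) applies verbatim and kills every term. If you replace your descent step with the weight-only $(G,M')$-family identity and insert the preliminary conjugation, your argument coincides with the paper's.
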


\begin{proof}
The direction 1)$\Rightarrow$2) is trivial. Now we assume 2) and prove 1). 

Let $X=\mat(0,A,B,0)\in\fm'(F)\cap\fs'_{rs}(F)$ satisfy $\xi(AB)\notin NE^\times$ for some $\xi\in X(M_{Q_n})_F$. There is an $\omega$-stable Levi subgroup $M'_\ast$ of $G$ contained in $M'$ and an element $y\in M'(F)\cap H'(F)$ such that $X_\ast:=\Ad(y)(X)$ is an elliptic element in $\fm'_\ast(F)\cap\fs'_{rs}(F)$. We have
\begin{equation}\label{infigj2des1}
 J_{M'}^{\eta,Q'}(X_\ast, f')=\eta(\det(y))J_{M'}^{\eta,Q'}(X, f'). 
\end{equation}
Suppose that $X_\ast=\mat(0,A_\ast,B_\ast,0)$. Then $\xi(A_\ast B_\ast)\notin NE^\times$ for the above $\xi\in X(M_{Q_n})_F$. 

By the descent formula for $(G,M')$-families (see \cite[Lemma I.1.2]{MR1339717}), we have
$$ v_{M'}^{Q'}=\sum_{L'\in\msl^{Q'}(M'_\ast)} d_{M'_\ast}^{Q'}(M',L') v_{M'_\ast}^{Q'_{L'}}, $$
where $\msl^{Q'}(M'_\ast)$ denotes the set of Levi subgroups of $G$ contained in $Q'$ and containing $M'_\ast$ (thus $L'$ is $\omega$-stable), $Q'_{L'}$ is some parabolic subgroup of $G$ with Levi factor $L'$ (thus $Q'_{L'}$ is $\omega$-stable), and $d_{M'_\ast}^{Q'}(M',L')\in\BR_{\geq 0}$ is defined in \cite[p. 356]{MR928262}. Thus
\begin{equation}\label{infigj2des2}
 J_{M'}^{\eta,Q'}(X_\ast, f')=\sum_{L'\in\msl^{Q'}(M'_\ast)} d_{M'_\ast}^{Q'}(M',L') J_{M'_\ast}^{\eta,Q'_{L'}}(X_\ast, f'). 
\end{equation}
For all $L'\in\msl^{Q'}(M'_\ast)$, let $\xi_L\in X(L_n)_F$ be the image of $\xi$ under the restriction $X(M_{Q_n})_F\hookrightarrow X(L_n)_F$. Then $\xi_{L}(A_\ast B_\ast)\notin NE^\times$. By our assumption 2), we have
$$ J_{M'_\ast}^{\eta,Q'_{L'}}(X_\ast, f')=0. $$
Then by (\ref{infigj2des1}) and (\ref{infigj2des2}), we obtain
$$ J_{M'}^{\eta,Q'}(X, f')=\eta(\det(y))^{-1} J_{M'}^{\eta,Q'}(X_\ast, f')=0, $$
which shows 1). 
\end{proof}

The proposition below (cf. \cite[Lemma III.3.3]{MR1339717}) shows that strongly associated functions are smooth transfers of each other in the sense of \cite[Definition 5.10.(ii)]{MR3414387}. 

\begin{prop}\label{infigj2strassimpliessmtransfer}
If $f'\in\CC_c^\infty(\fs'(F))$ satisfies the conditions in Proposition \ref{equivvancond}, then for $X\in\fs'_{rs}(F)$ with no matching orbit in $\fs_{rs}(F)$, we have
$$ J_{G}^{\eta,G}(X,f')=0. $$
\end{prop}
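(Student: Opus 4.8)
The plan is to argue by contraposition combined with a reduction to the elliptic case, exactly in the spirit of \cite[Lemma III.3.3]{MR1339717}. Fix $X=\mat(0,A,B,0)\in\fs'_{rs}(F)$ and suppose that $J_G^{\eta,G}(X,f')\neq 0$; I want to produce a matching orbit in $\fs_{rs}(F)$. First I would observe that the element $X$ determines a regular semi-simple conjugacy class $\{AB\}$ in $GL_n(F)$ via Proposition \ref{matorb}.2), and that by Proposition \ref{matorb}.1) together with \cite[Lemma 1.1 of Chapter 1]{MR1007299}, the existence of a matching orbit $Y\in\fs_{rs}(F)$ is equivalent to $\{AB\}$ lying in $N(GL_n(E))$, i.e.\ to the characteristic polynomial of $AB$ being the norm of a twisted conjugacy class in $GL_n(E)$. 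Concretely one has the standard criterion (as in \cite[Lemma 2.1]{MR1487565} and \cite[\S5]{MR3414387}): $\{AB\}$ is a norm if and only if every elementary divisor of $AB$ has norm-one determinant, equivalently $\det$ of each invariant factor lies in $NE^\times$. So the claim reduces to: if $X$ has no matching orbit, there is some $\xi\in X(M_{Q_n})_F$ for a suitable $\omega$-stable Levi $M'$ with $X$ elliptic in $\fm'(F)\cap\fs'_{rs}(F)$ and $\xi(AB')\notin NE^\times$, where $X$ is $H'(F)$-conjugate into $\fm'(F)$.

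Second, I would carry out the descent to the elliptic datum. Since $X$ is regular semi-simple, its centralizer $H'_X$ is a torus; let $M'$ be the $\omega$-stable Levi subgroup of $G$ with $A_{M'}$ equal to the maximal $F$-split torus of $H'_X$, so that after conjugating by a suitable $y\in H'(F)$ we may assume $X\in\fm'(F)\cap\fs'_{rs}(F)$ is elliptic there. Decomposing $\fm'$ into its $\omega$-stable blocks, ellipticity means each block of $AB$ (up to conjugacy, $A_\ast B_\ast$ in the notation of Proposition \ref{equivvancond}) is an elliptic regular semi-simple element of the corresponding $GL_{n_i}(F)$, hence has irreducible characteristic polynomial, cutting out a field $F_i$. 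The ``no matching orbit'' hypothesis on $X$ forces at least one block to fail the norm condition: for that block, $N_{F_i/F}(\det A_\ast B_\ast)$-type reasoning shows that the value of the determinant character $\xi$ on that block is not in $NE^\times$. This is precisely the hypothesis of condition 2) in Proposition \ref{equivvancond}, so $J_{M'}^{\eta,Q'}(X,f')=0$ for that $M'$ and for all $\omega$-stable $Q'\supseteq M'$.

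Third, I would climb back up from the elliptic vanishing to the vanishing of $J_G^{\eta,G}(X,f')$ using the descent formula for $(G,M')$-families, \cite[Lemma I.1.2]{MR1339717}, just as in the proof of Proposition \ref{equivvancond}: writing $v_{G}^{G}=\sum_{L'\in\msl^{G}(M')} d_{M'}^{G}(G,L') v_{M'}^{G_{L'}}$ and noting that $\xi$ restricts to a character $\xi_L\in X(L_n)_F$ with $\xi_L$ still not in $NE^\times$ on the relevant block, each term $J_{M'}^{\eta,G_{L'}}(X,f')$ vanishes by Proposition \ref{equivvancond}.1) (which we have just verified holds for $f'$). Translating back through the conjugation by $y$ costs only the nonzero factor $\eta(\det y)$, so $J_G^{\eta,G}(X,f')=0$, contradicting our assumption. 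Hence $X$ must have a matching orbit, which is the contrapositive of the statement.

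The main obstacle I anticipate is the bookkeeping in the second step: making precise the equivalence ``no matching orbit for $X$'' $\iff$ ``some determinant character takes a non-norm value on an elliptic block,'' i.e.\ correctly translating the norm-map criterion of \cite[Lemma 1.1 of Chapter 1]{MR1007299} through the block decomposition of the elliptic Levi $M'$ and identifying the right character $\xi\in X(M_{Q_n})_F$ (equivalently $X(L_n)_F$) whose value detects the failure. Once that dictionary is set up correctly, the rest is a formal application of the descent formula and of the hypotheses already packaged in Proposition \ref{equivvancond}.
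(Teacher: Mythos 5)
Your proposal is correct and follows essentially the same strategy as the paper's proof: reduce to $X$ elliptic in $\fm'(F)\cap\fs'_{rs}(F)$ for an appropriate $\omega$-stable Levi $M'$, translate ``no matching orbit'' into the existence of $\xi\in X(M_n)_F$ with $\xi(AB)\notin NE^\times$ (this is exactly the content of the paper's Lemmas~\ref{infigj2basicfact1} and~\ref{infigj2basicfact2}), and then invoke condition (2) of Proposition~\ref{equivvancond}. The only inessential difference is in the final step: invoking the full descent formula for $(G,M')$-families is unnecessary here, since choosing $Q'\in\CP(M')$ already gives $v_{M'}^{Q'}\equiv 1\equiv v_G^G$ and hence $J_G^{\eta,G}(X,f')=J_{M'}^{\eta,Q'}(X,f')$ directly (equivalently, in your descent sum the only nonzero coefficient $d_{M'}^G(G,L')$ occurs for $L'=M'$, so the formula collapses to this same identity).
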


To prove this proposition, we recall two basic facts. 

\begin{lem}\label{infigj2basicfact1}
Suppose that $\sum\limits_{j=1}^{l} n_j=n$. Let $A=(A_1,...,A_l)\in GL_{n_1}(F)\times\cdot\cdot\cdot\times GL_{n_l}(F)$ be a regular semi-simple element in $GL_n(F)$. Then $A\in N(GL_n(E))$ if and only if $A_j\in N(GL_{n_j}(E))$ for all $1\leq j\leq l$. 
\end{lem}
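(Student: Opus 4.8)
The statement is a purely local-field assertion about norms from $GL_m(E)$ for the unramified quadratic extension $E/F$, so the strategy is to reduce everything to the case of a single block and then to a statement about $\eta \circ \det$. The plan is as follows. First I would record the characterization of the norm map image in terms of determinants: by \cite[Lemma 1.1 of Chapter 1]{MR1007299} (together with the fact that $E/F$ is a field extension, so $GL_m(E)$ has connected Galois cohomology), a regular semi-simple conjugacy class in $GL_m(F)$, say with characteristic polynomial $\chi$, lies in $N(GL_m(E))$ if and only if the associated étale $F$-algebra $F[\lambda]/(\chi)$ together with $\det$ satisfies the norm condition; concretely, a regular semi-simple $A \in GL_m(F)$ lies in $N(GL_m(E))$ if and only if $\det(A) \in NE^\times$. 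This last equivalence is the key reduction: in the regular semi-simple case the norm condition on the whole matrix collapses to the norm condition on the determinant, because the centralizer is a maximal torus $T$ and $N(GL_m(E))$ meets that torus exactly in the image of the corresponding norm map $T(E) \to T(F)$, whose cokernel is detected by $\eta \circ \det$ on $T(F)$.

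Granting that, the proof of the lemma becomes immediate. Write $A = (A_1,\dots,A_l)$ with $A_j \in GL_{n_j}(F)$ regular semi-simple; then $\det(A) = \prod_{j=1}^l \det(A_j)$. Since $\eta$ is a character of $F^\times/NE^\times$, we have $\det(A) \in NE^\times$ if and only if $\eta(\det(A)) = 1$, i.e. $\prod_j \eta(\det(A_j)) = 1$. This is not yet what we want — a product of $\pm 1$'s being $1$ does not force each factor to be $1$. So the reduction via the determinant alone is insufficient, and here is where the regular semi-simplicity of the \emph{ambient} element $A$ in $GL_n(F)$ is used: it forces the characteristic polynomials of the $A_j$ to be pairwise coprime, hence $F[A] \simeq \prod_j F[A_j]$ as étale $F$-algebras, and the torus $T = Z_{GL_n}(A)$ decomposes as $T = \prod_j T_j$ with $T_j = Z_{GL_{n_j}}(A_j)$. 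The norm map $T(E) \to T(F)$ is then the product of the norm maps $T_j(E) \to T_j(F)$, so $A \in N(GL_n(E))$ — equivalently $A \in \mathrm{image}(T(E)\to T(F))$ — holds if and only if each $A_j \in \mathrm{image}(T_j(E)\to T_j(F))$, i.e. $A_j \in N(GL_{n_j}(E))$ for every $j$. The direction "each $A_j$ a norm $\Rightarrow$ $A$ a norm" is trivial from this product decomposition; the converse is exactly the product decomposition of the torus norm map.

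The main obstacle is making precise the identification "$A \in N(GL_m(E))$ $\iff$ $A \in \mathrm{image}\bigl(T(E)\to T(F)\bigr)$" for a regular semi-simple $A$ with centralizer torus $T$, and checking that this image decomposes along the product $T = \prod_j T_j$. The first point follows from the description of twisted conjugacy classes: two elements of $T(E)$ are twisted-$GL_m(E)$-conjugate and land in the $GL_m(F)$-class of $A$ precisely when related by the $\sigma$-action on $T$, and Hilbert 90 / the connectedness of $H^1(F,T)$-type arguments (or directly \cite[Lemma 1.1 of Chapter 1]{MR1007299} applied block-by-block) give that $A$ is a norm iff it is in the image of $T$'s norm map. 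The second point is formal once one knows $T = \prod_j T_j$, which is where pairwise coprimality of characteristic polynomials — a consequence of $A$ being regular semi-simple in $GL_n(F)$ — enters. I would cite the relevant statements in \cite{MR1007299} and \cite{MR1487565} rather than reprove the structure of the norm map, keeping the argument to the product decomposition and the elementary observation about coprime characteristic polynomials.
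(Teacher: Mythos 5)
Your final argument—pairwise coprimality of the characteristic polynomials of the blocks (forced by regular semi-simplicity of $A$ in $GL_n$) gives $T=Z_{GL_n}(A)=\prod_j T_j$, and $A\in N(GL_n(E))$ is equivalent to $A$ lying in the image of the torus norm $T(E)\to T(F)$, which decomposes as a product—is correct and does prove the lemma. At its core it is the same idea as the paper's proof, but the paper avoids any appeal to torus cohomology or cokernels: it observes that if $A=B\ov B$ with $A\in GL_n(F)$, then $\ov B B=B\ov B$, hence $AB=BA$, so $B$ lies in $Z_{GL_n(E)}(A)$; regular semi-simplicity of the block-diagonal $A$ forces this centralizer into $GL_{n_1}(E)\times\cdots\times GL_{n_l}(E)$, and then $A_j=B_j\ov{B_j}$ is read off directly. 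The paper's version buys a self-contained two-line argument (cf.\ Kottwitz \cite[Lemma 8.8]{MR564478}); your version buys a cleaner conceptual framing that would generalize, at the cost of having to justify the identification ``$A$ is a norm iff $A$ lies in the image of $T(E)\to T(F)$''—which you justify somewhat indirectly via Hilbert 90, whereas the commutation argument gives it in one stroke.

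One thing worth flagging: the intermediate claim you put forward as a ``key reduction'' is actually false, not merely insufficient. The equivalence ``regular semi-simple $A\in GL_m(F)$ is a norm iff $\det(A)\in NE^\times$'' holds only when $A$ is \emph{elliptic} regular—that is precisely the content of Lemma \ref{infigj2basicfact2} in the paper, and ellipticity is essential. For $A=\diag(a,b)$ with $a\ne b$ and $a,b\notin NE^\times$ but $ab\in NE^\times$, one has $\det(A)\in NE^\times$, yet $A\notin N(GL_2(E))$: any $B$ with $A=B\ov B$ would have to be diagonal (by the commutation argument), and then each diagonal entry of $A$ would be a norm from $E^\times$. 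For a non-elliptic maximal torus $T$ the cokernel of $T(E)\to T(F)$ is a product of several $\BZ/2\BZ$'s, and $\eta\circ\det$ sees only their sum. Your own observation that a product of $\pm1$'s being $1$ does not force each factor to be $1$ is exactly this obstruction; the right conclusion is that the determinant criterion fails outside the elliptic case, not just that it fails to give the lemma. Since you abandon this line and your actual proof rests only on the product decomposition of $T$, the final argument stands, but be careful not to use the elliptic-only criterion where ellipticity is absent.
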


\begin{proof}
This is known, but we include its proof here for completeness (cf. \cite[Lemma 8.8]{MR564478}). For $A\in N(GL_n(E))$, there exists $B\in GL_n(E)$ such that $A=B\ov{B}$. Since $A\in GL_n(F)$, we have $B\ov{B}=\ov{B}B$, which implies that $AB=BA$. But $A$ is regular semi-simple in $GL_n(E)$. 
Thus $B\in GL_{n_1}(E)\times\cdot\cdot\cdot\times GL_{n_l}(E)$. We write $B=(B_1,...,B_l)$ with $B_j\in GL_{n_j}(E)$ for all $1\leq j\leq l$. Then we obtain $A_j=B_j\ov{B_j}\in N(GL_{n_j}(E))$ for all $1\leq j\leq l$. This shows one direction. The other direction is trivial. 
\end{proof}

\begin{lem}\label{infigj2basicfact2}
Let $A\in GL_n(F)$ be an elliptic regular element. Then $A\in N(GL_n(E))$ if and only if $\det(A)\in NE^\times$. 
\end{lem}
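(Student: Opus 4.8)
The plan is to reduce the statement to a norm computation on the elliptic torus attached to $A$, and then to feed in local class field theory for $E/F$. Since $A$ is elliptic regular, its characteristic polynomial is irreducible over $F$, so $L:=F[A]$ is a field extension of $F$ of degree $n$; embedding $L$ into $M_n(F)$ by the regular representation identifies $A$ with an element of $L^\times$ for which $\det(A)=N_{L/F}(A)$, and the torus $T:=Z_{GL_n}(A)^\circ=\Res_{L/F}\mathbb{G}_m$ satisfies $T(F)=L^\times$ and $T(E)=L_E^\times$, where $L_E:=L\otimes_F E$. First I would show that $A\in N(GL_n(E))$ if and only if $A$ lies in the image of the norm map $\mathrm{Nm}\colon L_E^\times\to L^\times$, $B\mapsto B\,\sigma(B)$, with $\sigma:=\mathrm{id}\otimes\sigma_{E/F}$. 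Indeed, if $Y\in GL_n(E)$ is such that $Y\overline{Y}$ is $GL_n(F)$-conjugate to $A$, then after conjugating $Y$ one may arrange $Y\overline{Y}=A$ exactly (for regular semisimple elements of $GL_n$, conjugacy and stable conjugacy coincide, since $H^1(F,T)=0$ by Hilbert 90 and Shapiro); applying $\sigma_{E/F}$ and using $A\in GL_n(F)$ gives $\overline{Y}Y=A$ as well, whence $YA=Y(\overline{Y}Y)=(Y\overline{Y})Y=AY$, so $Y$ commutes with the regular semisimple element $A$ and therefore $Y\in Z_{GL_n}(A)(E)=L_E^\times$, with $A=\mathrm{Nm}(Y)$. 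The converse is immediate, so $A\in N(GL_n(E))\iff A\in\mathrm{Nm}(L_E^\times)$.

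Then I would compute $\mathrm{Nm}(L_E^\times)$ by distinguishing two cases according to the $F$-algebra structure of $L_E$. If there is no $F$-embedding $E\hookrightarrow L$, then $L_E$ is a field, quadratic over $L$, $\sigma$ generates $\mathrm{Gal}(L_E/L)$, and $\mathrm{Nm}=N_{L_E/L}$. If on the contrary some $F$-embedding $E\hookrightarrow L$ exists, then $L_E\cong L\times L$ with $\sigma$ interchanging the two factors, so $\mathrm{Nm}((x,y))=(xy,xy)$ and $\mathrm{Nm}$ is surjective; moreover in this case $N_{L/F}=N_{E/F}\circ N_{L/E}$ already forces $\det(A)=N_{L/F}(A)\in NE^\times$, so the two conditions of the lemma hold simultaneously and there is nothing more to check.

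The step I expect to require the most care is the field case, whose only non-formal ingredient is local class field theory: $N_{L_E/L}(L_E^\times)$ has index $2$ in $L^\times$ and is the kernel of the quadratic character $\eta_{L_E/L}$ of $L^\times$ attached to $L_E/L$, and since $L_E/L$ is the base change to $L$ of the extension $E/F$, functoriality of the reciprocity map yields $\eta_{L_E/L}=\eta\circ N_{L/F}$. Hence $A\in\mathrm{Nm}(L_E^\times)\iff\eta(N_{L/F}(A))=1\iff\det(A)=N_{L/F}(A)\in NE^\times$, which is the assertion. (Alternatively, the field case is classical and can be quoted directly from the base change literature for $GL_n$, in the spirit of \cite[Lemma 8.8]{MR564478}.) Everything outside this class field theory input is bookkeeping about the elliptic torus $T$ and the Galois action on $L\otimes_F E$.
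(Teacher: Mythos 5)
Your proof is correct. Both directions of the reduction to the torus $T=Z_{GL_n}(A)^\circ=\Res_{L/F}\BG_m$ are sound (in fact the parenthetical about $H^1(F,T)=0$ is not even needed: if $gY\overline{Y}g^{-1}=A$ with $g\in GL_n(F)$ then already $(gYg^{-1})\overline{(gYg^{-1})}=A$ since $g=\overline{g}$), the dichotomy on $L\otimes_F E$ is handled correctly, and the class field theory step $\eta_{L_E/L}=\eta\circ N_{L/F}$ together with $\det(A)=N_{L/F}(A)$ gives exactly the assertion.

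The paper does not give an argument at all: it simply records that the lemma is a special case of \cite[Lemma 1.4 in Chapter 1]{MR1007299}. That cited lemma of Arthur--Clozel is precisely the torus-level norm criterion you are re-deriving, and its proof there is the same class-field-theoretic computation (Hilbert 90 to reduce to the torus, then functoriality of local reciprocity for $E/F$ under $N_{L/F}$). So your route is not conceptually different from the one the paper is invoking; rather, it unfolds the citation into a self-contained argument. What the paper's approach buys is brevity and delegation to a standard reference; what yours buys is transparency, and in particular it makes visible the two structurally different cases ($L\otimes_F E$ a field vs.\ $L\times L$) and the fact that in the split case both conditions of the lemma hold vacuously. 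One small stylistic remark: stating the first step simply as ``conjugating $Y$ by $g\in GL_n(F)$ replaces $Y\overline{Y}$ by $gY\overline{Y}g^{-1}$'' avoids any appeal to conjugacy versus stable conjugacy, which here is a non-issue.
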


\begin{proof}
This is a special case of \cite[Lemma 1.4 in Chapter 1]{MR1007299}. 
\end{proof}

\begin{proof}[Proof of Proposition \ref{infigj2strassimpliessmtransfer}]
Up to conjugation by $H(F)$, it suffices to consider $X=\mat(0,1_n,A,0)$ with $A$ an elliptic regular element in $M_n(F)$ for some semi-standard Levi subgroup $M_n$ of $GL_n$. Then by Lemmas \ref{infigj2basicfact1} and \ref{infigj2basicfact2}, $X$ has no matching orbit in $\fs_{rs}(F)$ if and only if $\xi(A)\notin NE^\times$ for some $\xi\in X(M_n)_F$. Let $Q'$ be a parabolic subgroup of $G$ with Levi factor $M'$. We have
$$ J_{G}^{\eta,G}(X,f')=J_{M'}^{\eta, Q'}(X,f'). $$
But $J_{M'}^{\eta, Q'}(X,f')$ vanishes for $f'\in\CC_c^\infty(\fs'(F))$ satisfying the conditions in Proposition \ref{equivvancond}. Then we finish the proof. 
\end{proof}

\subsection{\textbf{The weighted fundamental lemma}}

Let $f_0\in\CC_c^\infty(\fs(F))$ (resp. $f'_0\in\CC_c^\infty(\fs'(F))$) be the characteristic function of $\fs(\CO_F)=\fg\fl_n(\CO_E)$ (resp. of $\fs'(\CO_F)=\bigg\{
\left( \begin{array}{cc}
0 & A \\
B & 0 \\
\end{array} \right): A,B\in \fg\fl_n(\CO_F)\bigg\}$). 

\begin{thm}\label{wfl}
The functions $f_0$ and $f'_0$ are strongly associated. 
\end{thm}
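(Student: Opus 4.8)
The plan is to reduce the weighted identity to the base change weighted fundamental lemma for $GL_n$, following Labesse and the philosophy of Waldspurger's treatment of endoscopy. The key dictionary is already in place in the preceding sections: by Remark \ref{twwt}, the weight $v_M^Q$ on the $H$-side coincides with the twisted weight $v_{(\Res_{E/F}M_{n,E})\rtimes\sigma}^{(\Res_{E/F}Q_{n,E})\rtimes\sigma}$ attached to $(\Res_{E/F}GL_{n,E})\rtimes\sigma$ (base change), and $v_{M'}^{Q'}$ coincides with the twisted weight $v_{(M_n\times M_n)\rtimes\sigma'}^{(Q_n\times Q_n)\rtimes\sigma'}$ attached to $(GL_n\times GL_n)\rtimes\sigma'$. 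On the spaces $\fs(F)=\fg\fl_n(E)$ and $\fs'(F)=\fg\fl_n(F)\oplus\fg\fl_n(F)$, the twisted conjugation actions are exactly those appearing in the base change relative trace formula for $GL_n$: twisted conjugation of $GL_n(E)$ on itself, and the action of $GL_n(F)\times GL_n(F)$ on pairs $(A,B)\mapsto(x_1Ax_2^{-1},x_2Bx_1^{-1})$, which after the identification $(A,B)\leftrightarrow AB^{-1}$-type bookkeeping is the usual $\sigma'$-twisted conjugation on $GL_n\times GL_n$. So the weighted orbital integrals $J_M^Q(Y,f_0)$ and $J_{M'}^{\eta,Q'}(X,f'_0)$ of the two unit functions are, up to this dictionary, precisely the twisted weighted orbital integrals that occur in Labesse's base change weighted fundamental lemma.

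First I would make the translation precise: identify the matching of regular semi-simple orbits in Proposition \ref{matorb} with the norm correspondence of \cite[Lemma 1.1 of Chapter 1]{MR1007299}, and identify the $M'$-matching of Levi subgroups (\S "Matching of Levi subgroups involved") with the correspondence between semi-standard Levi subgroups of $GL_n$ and $\omega$-stable Levi subgroups of $G$, i.e. with the matching of twisted Levi subsets $(M_n\times M_n)\rtimes\sigma'$ and $(\Res_{E/F}M_{n,E})\rtimes\sigma$. Next I would record that the transfer factor $\kappa(X)=\eta(\det A)$ is exactly the base change transfer factor on the $\fs'$-side, and that $\eta$ is the sign character of $F^\times/NE^\times$ used by Labesse. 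Then statement (1) of Definition \ref{defstrass} — the identity $\kappa(X)J_{M'}^{\eta,Q'}(X,f'_0)=J_M^Q(Y,f_0)$ for $M'$-matching orbits — becomes the assertion that for the units of the (full) spherical Hecke algebras, the twisted weighted orbital integrals on matching orbits agree blockwise, which is precisely (the Lie-algebra / tangent-space incarnation of) Labesse's base change weighted fundamental lemma for the full spherical Hecke algebra of $GL_n$; at unramified places it follows from Kottwitz \cite[Lemma 8.8]{MR564478}, Arthur–Clozel \cite[Theorem 4.5 in Chapter 1]{MR1007299} for the unweighted (cuspidal/semisimple) case, upgraded to the weighted context by Labesse. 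For statement (2) — vanishing of $J_{M'}^{\eta,Q'}(X,f'_0)$ when $\xi(AB)\notin NE^\times$ for some $\xi\in X(M_{Q_n})_F$ — by Proposition \ref{equivvancond} it suffices to treat the elliptic case, and there $X$ being elliptic in $\fm'(F)\cap\fs'_{rs}(F)$ means (via Lemmas \ref{infigj2basicfact1} and \ref{infigj2basicfact2}) that $AB$, or some block, has determinant not a norm while being elliptic regular; then $AB$ is not a norm from $E$, the orbit of $X$ does not match any orbit in $\fs_{rs}(F)$, and the corresponding twisted weighted orbital integral of the unit function vanishes — this is again built into Labesse's weighted fundamental lemma (the "zero on non-norms" half of the base change matching), whose unweighted prototype is the vanishing statement in \cite[Lemma 8.8]{MR564478}.

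Concretely the steps, in order, are: (i) spell out the identification $\fs(F)\cong\fg\fl_n(E)$ with $\sigma$-twisted conjugation and $\fs'(F)\cong\fg\fl_n(F)^2$ with the stated $H'$-action, and match these with the group–space pairs in the base change relative trace formula, so that $f_0,f'_0$ become the characteristic functions of the integral points, i.e. the units of the spherical Hecke algebras under the Satake/base-change normalization; (ii) quote Remark \ref{twwt} to equate our weights $v_M^Q$, $v_{M'}^{Q'}$ with Labesse's twisted weights $v_{\cdot\rtimes\sigma}^{\cdot\rtimes\sigma}$, $v_{\cdot\rtimes\sigma'}^{\cdot\rtimes\sigma'}$ — crucially these are the \emph{same} weights, so no comparison of weight functions is needed; (iii) match the orbit correspondences and the transfer factor $\kappa$ with the base change norm map and factor; (iv) invoke Labesse's base change weighted fundamental lemma for the full spherical Hecke algebra of $GL_n$, applied to the unit element, to get identity (1) of Definition \ref{defstrass} blockwise over the Levi datum $M'$; (v) invoke the vanishing part of the same result, reduced to the elliptic case by Proposition \ref{equivvancond} and Lemmas \ref{infigj2basicfact1}–\ref{infigj2basicfact2}, to get identity (2); (vi) conclude that $f_0$ and $f'_0$ are strongly associated for almost all unramified places.

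The main obstacle I expect is step (ii)–(iii): verifying that the tangent-space (Lie algebra) weighted orbital integrals attached to $\fs$ and $\fs'$ really coincide — not merely are analogous — with the twisted group-level weighted orbital integrals for $(\Res_{E/F}GL_{n,E})\rtimes\sigma$ and $(GL_n\times GL_n)\rtimes\sigma'$ that Labesse treats. This requires pinning down that the $\sigma$-twisted conjugation on $\fg\fl_n(E)$ has the same orbit structure, the same centralizers (tori, inner forms of each other — see \cite[Remark 5.6]{MR3414387}), the same measures ($\vol(H(\CO_F))=\vol(H'(\CO_F))=1$ matching the normalizations of \cite[Chapter 1]{MR1007299}), and above all the same weight functions $v_M^Q$, so that one may literally transport Labesse's statement from the group/twisted setting to the Lie-algebra setting without any new analytic input. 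This is exactly the point of the careful identifications in Sections \ref{explicitdescription} and \ref{wtorbint} and of Remarks \ref{rmkwoi2} and \ref{twwt}; once these are in hand, the proof of Theorem \ref{wfl} is a citation of Labesse's work.
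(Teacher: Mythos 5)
Your high-level strategy — equate the weights with the twisted weights via Remark \ref{twwt}, match orbits and Levi data via the norm maps, and invoke Labesse's base change weighted fundamental lemma for $GL_n$ — is the right starting point and is indeed how the paper frames the problem. But the plan as written has a genuine gap: there is no single "Labesse base change weighted fundamental lemma" that directly compares the twisted group $(\Res_{E/F}GL_{n,E})\rtimes\sigma$ with the twisted group $(GL_n\times GL_n)\rtimes\sigma'$. Labesse's Theorem IV.5.2 transfers twisted weighted orbital integrals from a twisted group to $GL_n(F)$; the paper's proof has to apply it \emph{twice} — once for the unramified base change $E/F$ (on the $\fs$-side) and once for the split base change $F\times F/F$ (on the $\fs'$-side) — and \emph{meet in the middle} at ordinary weighted orbital integrals $J_{M_n}^{Q_n}(A,\Psi_v)$ on $GL_n(F)$. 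Your outline does not identify this intermediary or the need for two separate base changes, and without it the argument does not close: you are implicitly positing a weighted transfer statement between the two twisted groups that is not in the literature.

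A second gap is in the translation from the Lie algebra to the group level, which you yourself flag as "the main obstacle" but then assert follows from the earlier sections. It does not. The characteristic function $f'_0$ of $\fg\fl_n(\CO_F)\oplus\fg\fl_n(\CO_F)$ is not the unit of a spherical Hecke algebra; the group-level functions entering Labesse's theorem are the stratified pieces $\Theta_v$ (resp.\ $\Xi_w$) supported on $\{|x_i|\le 1,\ \val_F(\det(x_1x_2))=v\}$ (resp.\ $\{|x|\le 1,\ \val_F(\det(x\bar x))=w\}$), and one must check the bridging identities $\kappa(X)J_{M'}^{\eta,Q'}(X,f'_0)=J_{(M_n\times M_n)\rtimes\sigma'}^{(Q_n\times Q_n)\rtimes\sigma'}((1_n,A),\Theta_v)$ and $J_M^Q(Y,f_0)=J_{(\Res_{E/F}M_{n,E})\rtimes\sigma}^{(\Res_{E/F}Q_{n,E})\rtimes\sigma}(B,\Xi_w)$, as well as the base change compatibilities $\Psi_v=bc_{F\times F/F}(\Theta_v)$ and $\Psi_w=bc_{E/F}(\Xi_w)$; the latter is nontrivial and rests on Arthur–Clozel and Guo's explicit orbital integral computations, not on a formal diagram chase. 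Finally, for the vanishing statement (2), it is not enough to wave at "the zero-on-non-norms half" of base change: the paper needs the specific parity vanishing $\Psi_v=0$ for $v$ odd (Lemma \ref{eoplem}, proved by a transpose symmetry) combined with a local reduction formula (Proposition \ref{redfor}) to pass from the elliptic case to general $Q'$. None of these steps is a citation; each is a short but necessary argument your plan omits.
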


The rest of the section is devoted to the proof of this theorem with the help of (split and unramified) base changes for $GL_n$. Suppose that $M'$ is an $\omega$-stable Levi subgroup of $G$ and that $Q'$ is a parabolic subgroup of $G$ containing $M'$ (thus $Q'$ is $\omega$-stable). For $x=(x_{i,j})\in\fg\fl_n(E)$, let $|x|:=\max_{i,j}|x_{i,j}\ov{x_{i,j}}|_F^{1/2}$. 

\subsubsection{Split base change}

Let $A\in GL_n(F)$ be regular semi-simple and denote $v:=\val_F(\det(A))$. 
We shall define $\Phi_v\in\CC_c^\infty(GL_n(F)\times GL_n(F))$ and $\Psi_v\in\CC_c^\infty(GL_n(F))$ as in the proof of \cite[Lemma 5.18]{MR3414387}. Let $\Phi_v$ be the characteristic function of the subset of $(x_1,x_2)\in GL_n(F)\times GL_n(F)$ satisfying $|x_1|\leq1$, $|x_2|\leq1$ and $\val_F(\det (x_1 x_2))=v$. 
Let $\Psi_v$ be the function on $GL_n(F)$ defined by
$$ \Psi_v(g):=\int_{GL_n(F)} \Phi_v(x_2,x_2^{-1}g)\eta(\det(x_2)) dx_2. $$
We also define $\Theta_v(x_1,x_2):=\Phi_v(x_1,x_2)\eta(\det(x_1))\in\CC_c^\infty(GL_n(F)\times GL_n(F))$. 

We shall consider the involution $\sigma'$ of $GL_n\times GL_n$ which exchanges two copies. Denote by $(GL_n\times GL_n)_{(1_n,A),\sigma'}$ the twisted (by $\sigma'$) centralizer in $GL_n\times GL_n$ of $(1_n,A)$ and by $GL_n(F)_A$ the centralizer in $GL_n(F)$ of $A$. 
Recall the (split) base change homomorphism (see \cite[\S5 of Chapter 1]{MR1007299} for example)
$$ bc_{F\times F/F}: \CH(GL_n(F)\times GL_n(F), GL_n(\CO_F)\times GL_n(\CO_F))\ra\CH(GL_n(F), GL_n(\CO_F)) $$
defined by the convolution product, where $\CH(GL_n(F)\times GL_n(F), GL_n(\CO_F)\times GL_n(\CO_F))$ and $\CH(GL_n(F), \newline GL_n(\CO_F))$ denote the corresponding spherical Hecke algebras. Notice that $\Phi_v, \Theta_v\in\CH(GL_n(F)\times GL_n(F), GL_n(\CO_F)\times GL_n(\CO_F))$ and that $\Psi_v\in \CH(GL_n(F), GL_n(\CO_F))$. 

\begin{lem}\label{bclemcas1}
We have
$$ \Psi_v=bc_{F\times F/F}(\Theta_v). $$
\end{lem}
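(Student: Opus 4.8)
The plan for Lemma \ref{bclemcas1} is to reduce the identity to a short explicit computation by slicing $\Phi_v$ according to the valuation of the determinant and then exploiting that $E/F$ is unramified. For an integer $a\ge 0$, write $\mathbf{1}_a\in\CH(GL_n(F),GL_n(\CO_F))$ for the characteristic function of $\{x\in M_n(\CO_F)\cap GL_n(F):\val_F(\det x)=a\}$; this is a bi-$GL_n(\CO_F)$-invariant, compactly supported function, hence an element of the spherical Hecke algebra, because $GL_n(\CO_F)$ preserves both $M_n(\CO_F)$ and the determinant valuation. Since the Galois conjugation is trivial on $F$, for $x\in GL_n(F)$ the condition $|x|\le 1$ simply says $x\in M_n(\CO_F)$; hence $\val_F(\det x)\ge 0$, and the defining condition $\val_F(\det(x_1x_2))=v$ of $\Phi_v$ becomes $\val_F(\det x_1)+\val_F(\det x_2)=v$. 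This gives the decomposition
$$\Phi_v=\sum_{\substack{a,b\ge 0\\ a+b=v}}\mathbf{1}_a\otimes\mathbf{1}_b,$$
both sides being zero when $v<0$.

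The next step is to bring in the hypothesis that $E/F$ is an unramified quadratic extension: the quadratic character $\eta$ of $F^\times/NE^\times$ is then the unramified one, trivial on $\CO_F^\times$ and taking the value $-1$ on a uniformizer, so $\eta(u)=(-1)^{\val_F u}$ for every $u\in F^\times$. Consequently $\eta(\det x)=(-1)^a$ for $x$ in the support of $\mathbf{1}_a$, and therefore $\Theta_v(x_1,x_2)=\Phi_v(x_1,x_2)\eta(\det x_1)$ decomposes as
$$\Theta_v=\sum_{\substack{a,b\ge 0\\ a+b=v}}(-1)^a\,\mathbf{1}_a\otimes\mathbf{1}_b.$$
Since $bc_{F\times F/F}$ is defined on elementary tensors by the convolution product and extended by linearity, this yields $bc_{F\times F/F}(\Theta_v)=\sum_{a+b=v}(-1)^a\,\mathbf{1}_a*\mathbf{1}_b$, where $(\phi*\psi)(g)=\int_{GL_n(F)}\phi(x)\psi(x^{-1}g)\,dx$.

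On the other side I would unfold the definition of $\Psi_v$ using the normalization $\vol(GL_n(\CO_F))=1$: substituting the decomposition of $\Phi_v$ and again replacing $\eta(\det x_2)$ by $(-1)^a$ on the support of $\mathbf{1}_a$ gives
$$\Psi_v(g)=\sum_{\substack{a,b\ge 0\\ a+b=v}}(-1)^a\int_{GL_n(F)}\mathbf{1}_a(x_2)\,\mathbf{1}_b(x_2^{-1}g)\,dx_2=\sum_{\substack{a,b\ge 0\\ a+b=v}}(-1)^a\,(\mathbf{1}_a*\mathbf{1}_b)(g).$$
Comparing this with the formula for $bc_{F\times F/F}(\Theta_v)$ proves $\Psi_v=bc_{F\times F/F}(\Theta_v)$; along the way the bi-$GL_n(\CO_F)$-invariance of the $\mathbf{1}_a$ and of $\eta\circ\det$ confirms the assertion, made just before the lemma, that $\Theta_v$ and $\Psi_v$ really lie in the relevant spherical Hecke algebras.

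I do not expect a serious obstacle here: the argument is essentially bookkeeping and reruns the split-base-change computation already present in the proof of \cite[Lemma 5.18]{MR3414387} (and, for unit elements, \cite[Lemma 8.8]{MR564478}). The points that need care are reading $|x_i|\le 1$ as integrality (legitimate since the conjugation is trivial on $F$), the unramifiedness of $E/F$---which is exactly what makes $\eta\circ\det$ equal to the locally constant sign $x\mapsto(-1)^{\val_F(\det x)}$, compatible with the valuation strata of $\Phi_v$---and keeping the convolution convention and the Haar measure normalization consistent between the definition of $\Psi_v$ and that of $bc_{F\times F/F}$.
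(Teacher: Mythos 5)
Your proof is correct, and the underlying idea is the same as the paper's one-sentence argument: $bc_{F\times F/F}$ sends a Hecke algebra element $\Theta$ to the convolution $g\mapsto\int_{GL_n(F)}\Theta(x,x^{-1}g)\,dx$, and plugging in $\Theta_v(x_1,x_2)=\Phi_v(x_1,x_2)\eta(\det x_1)$ literally reproduces the defining integral for $\Psi_v$, so the identity is a tautology. Your extra bookkeeping (the decomposition $\Phi_v=\sum_{a+b=v}\mathbf{1}_a\otimes\mathbf{1}_b$ and the unramified formula $\eta(u)=(-1)^{\val_F u}$) is not strictly needed for the equality once one accepts that $\Theta_v$ and $\Psi_v$ lie in the relevant spherical Hecke algebras; its real value is that it actually verifies that membership, which the paper asserts just before the lemma without proof.
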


\begin{proof}
This is evident by definition since $\Psi_v$ is obtained from $\Theta_v$ by the convolution product. 
\end{proof}

Suppose additionally that $A$ belongs to the Levi subgroup $M_n(F)$. 
The twisted (by $\sigma'$) weighted orbital integral of $\Theta_v\in\CC_c^\infty(GL_n(F)\times GL_n(F))$ at $(1_n,A)$ is defined by
$$ J_{(M_n\times M_n)\rtimes\sigma'}^{(Q_n\times Q_n)\rtimes\sigma'}((1_n,A),\Theta_v):=\int_{(GL_n\times GL_n)_{(1,A),\sigma'}(F)\bs GL_n(F)\times GL_n(F)} \Theta_v(x^{-1}(1_n,A)\sigma'(x)) v_{(M_n\times M_n)\rtimes\sigma'}^{(Q_n\times Q_n)\rtimes\sigma'}(x) dx. $$
The weighted orbital integral of $\Psi_v\in\CC_c^\infty(GL_n(F))$ at $A$ is defined by
$$ J_{M_n}^{Q_n}(A,\Psi_v):=\int_{GL_n(F)_A\bs GL_n(F)} \Psi_v(\Ad(x^{-1})(A)) v_{M_n}^{Q_n}(x) dx. $$

\begin{coro}\label{corbclemcas1}
For $A\in M_n(F)$ which is regular semi-simple in $GL_n(F)$, we have
$$ J_{(M_n\times M_n)\rtimes\sigma'}^{(Q_n\times Q_n)\rtimes\sigma'}((1_n,A),\Theta_v)=J_{M_n}^{Q_n}(A,\Psi_v). $$
\end{coro}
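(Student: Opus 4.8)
The plan is to deduce Corollary~\ref{corbclemcas1} from Lemma~\ref{bclemcas1} by unfolding the $\sigma'$-twisted weighted orbital integral on the left and recognizing the result as the weighted orbital integral of $\Psi_v$ on the right; the split base change being nothing but convolution, this is essentially the orbital-integral shadow of the relation $\Psi_v = bc_{F\times F/F}(\Theta_v)$. (One may alternatively cite the general compatibility of split base change with twisted weighted orbital integrals from \cite{MR1007299}.)

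\textbf{Step 1: unfolding.} For $x=(x_1,x_2)\in GL_n(F)\times GL_n(F)$ one computes $x^{-1}(1_n,A)\sigma'(x)=(x_1^{-1}x_2,\,x_2^{-1}Ax_1)\sigma'$, whose norm $(x_1^{-1}x_2)(x_2^{-1}Ax_1)=\Ad(x_1^{-1})(A)$ is an ordinary $GL_n(F)$-conjugate of $A$; the $\sigma'$-twisted centralizer $(GL_n\times GL_n)_{(1_n,A),\sigma'}$ is the diagonally embedded torus $GL_n(F)_A$. I would then perform the substitution $(x_1,x_2)\mapsto(y,u)$ with $y=x_1$, $u=x_1^{-1}x_2$, which is a Haar-measure-preserving bijection of $GL_n(F)^2$ since $GL_n$ is unimodular. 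Under it the domain $(GL_n\times GL_n)_{(1_n,A),\sigma'}(F)\backslash GL_n(F)^2$ becomes $\bigl(GL_n(F)_A\backslash GL_n(F)\bigr)\times GL_n(F)$ with $GL_n(F)_A$ acting only on the $y$-factor, and $\Theta_v(x^{-1}(1_n,A)\sigma'(x))=\Phi_v(u,\,u^{-1}\Ad(y^{-1})(A))\,\eta(\det u)$.

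\textbf{Step 2: the inner integral.} By the very definition $\Psi_v(g)=\int_{GL_n(F)}\Phi_v(x_2,x_2^{-1}g)\eta(\det x_2)\,dx_2$, the integral over $u\in GL_n(F)$ of the $\Theta_v$-factor equals $\Psi_v(\Ad(y^{-1})(A))$; this is Lemma~\ref{bclemcas1} read at the level of orbital integrals. It then remains to check that the weight factor $v_{(M_n\times M_n)\rtimes\sigma'}^{(Q_n\times Q_n)\rtimes\sigma'}\bigl((y,yu)\bigr)$ may, inside this $u$-integration, be replaced by $v_{M_n}^{Q_n}(y)$. Here I would invoke the explicit description of the $(G,M)$-family for the split base change twisted group recorded in Remark~\ref{twwt} (following \cite[\S I.3]{MR1339717}): the family is built from the $\sigma'$-stable parabolics $P_n\times P_n$, its volume depends on $(x_1,x_2)$ only through the differences of the $H_{P_n}(x_1)+H_{P_n}(x_2)$, hence is left-invariant under $M_n(F)\supseteq GL_n(F)_A$ and reduces to the $(GL_n,M_n)$-family defining $v_{M_n}^{Q_n}$; combined with the sphericality (bi-$K$-invariance) of $\Phi_v$ and the fact that $\CO_F^\times\subseteq NE^\times$, so that $\eta\circ\det$ is right-$K$-invariant, this yields the required identity. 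Assembling Steps~1--2 gives $\int_{GL_n(F)_A\backslash GL_n(F)}\Psi_v(\Ad(y^{-1})(A))\,v_{M_n}^{Q_n}(y)\,dy=J_{M_n}^{Q_n}(A,\Psi_v)$.

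\textbf{Main obstacle.} The delicate point is Step~2, namely reconciling the twisted weight on $(GL_n\times GL_n)\rtimes\sigma'$ with the ordinary weight on $GL_n$: one must show the discrepancy $v_{(M_n\times M_n)\rtimes\sigma'}^{(Q_n\times Q_n)\rtimes\sigma'}\bigl((y,yu)\bigr)-v_{M_n}^{Q_n}(y)$ integrates to zero against $\Phi_v(u,u^{-1}\Ad(y^{-1})(A))\,\eta(\det u)\,du$ (the quadratic twist being essential here). This is exactly where the split base change for $GL_n$, in its weighted form as in \cite{MR1007299} and \cite{MR1339717}, enters; its unweighted prototype is Kottwitz's lemma \cite[Lemma~8.8]{MR564478}, already used in Lemma~\ref{infigj2basicfact1}, and the whole construction of $\Phi_v,\Psi_v$ here mirrors that in the proof of \cite[Lemma~5.18]{MR3414387}.
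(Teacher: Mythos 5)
Your proposal ultimately leans on the same key ingredient as the paper --- the weighted fundamental lemma for the split base change $F\times F/F$ --- and you cite the correct references (\cite{MR1339717}, \cite{MR1007299}); the paper's own proof is simply Lemma~\ref{bclemcas1} combined with \cite[Theorem IV.5.2]{MR1339717}. Your Step~1 (the identification of the twisted centralizer, the change of variables $(x_1,x_2)=(y,yu)$, and the observation that the inner $u$-integral of the $\Theta_v$-factor produces $\Psi_v(\Ad(y^{-1})(A))$) is a correct unfolding of the definitions.

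The gap is in Step~2. You claim the twisted weight $v_{(M_n\times M_n)\rtimes\sigma'}^{(Q_n\times Q_n)\rtimes\sigma'}\bigl((y,yu)\bigr)$ ``reduces to the $(GL_n,M_n)$-family defining $v_{M_n}^{Q_n}$,'' citing the bi-$K$-invariance of $\Phi_v$ and the right-$K$-invariance of $\eta\circ\det$. These facts are true but do not establish the reduction. The twisted $(G,M)$-family at $(y,yu)$ has vertices $-\bigl(H_{P_n}(y)+H_{P_n}(yu)\bigr)$ (up to normalization), and the $u$-dependence of the weight enters through $H_{P_n}(yu)$, which is controlled by the Iwasawa decomposition of $yu$, not merely by its Cartan double coset in $K\backslash GL_n(F)/K$. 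Neither the sphericality of $\Phi_v$ nor the $K$-invariance of $\eta\circ\det$ eliminates this dependence, so the replacement of $v_{(M_n\times M_n)\rtimes\sigma'}^{(Q_n\times Q_n)\rtimes\sigma'}((y,yu))$ by $v_{M_n}^{Q_n}(y)$ inside the $u$-integral is exactly the nontrivial content, not a cosmetic simplification. As you concede in your ``Main obstacle'' paragraph, this is precisely the statement of the weighted split base change identity, which must be invoked (as the paper does) rather than re-derived by the Step~2 argument as written.
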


\begin{proof}
It results from Lemma \ref{bclemcas1} and \cite[Theorem IV.5.2]{MR1339717} for the (split) base change $F\times F/F$. 
\end{proof}

Let $X=\mat(0,1_n,A,0)\in\fm'(F)\cap\fs'_{rs}(F)$. Then $\kappa(X)=1$. 
By Remark \ref{twwt}.1), since $\eta(\det(x_1 x_2))=\eta(\det(x_1^{-1}x_2))$ for $(x_1,x_2)\in GL_n(F)\times GL_n(F)$, we see that
\begin{equation}\label{equationwoi1}
\kappa(X)J_{M'}^{\eta,Q'}(X, f'_0)=J_{(M_n\times M_n)\rtimes\sigma'}^{(Q_n\times Q_n)\rtimes\sigma'}((1_n,A),\Theta_v). 
\end{equation}

\subsubsection{Unramified base change}

Let $B\in GL_n(E)$ be such that $B\ov{B}$ is a regular semi-simple element in $GL_n(F)$ and denote $w:=\val_F(\det(B\ov{B}))$. We shall define $\Xi_w\in\CC_c^\infty(GL_n(E))$ as in the proof of \cite[Lemma 5.18]{MR3414387}. Let $\Xi_w$ be the characteristic function of the subset of $x\in GL_n(E)$ satisfying $|x|\leq1$ and $\val_F(\det(x\ov{x}))=w$. 

We shall consider the nontrivial Galois conjugation $\sigma$ on $\Res_{E/F} GL_{n,E}$. Denote by $GL_n(E)_{B,\sigma}$ the twisted (by $\sigma$) centralizer in $GL_n(E)$ of $B$. Recall the (unramified) base change homomorphism (see \cite[\S4.2 of Chapter 1]{MR1007299} for example)
$$ bc_{E/F}: \CH(GL_n(E), GL_n(\CO_E))\ra\CH(GL_n(F), GL_n(\CO_F)) $$
described via the Satake transform by $f(z)\mapsto f(z^2)$, where $\CH(GL_n(E), GL_n(\CO_E))$
denotes the corresponding spherical Hecke algebra. Note that $\Xi_w\in\CH(GL_n(E), GL_n(\CO_E))$. 

\begin{lem}\label{bclemcas2}
 We have
$$ \Psi_w=bc_{E/F}(\Xi_w). $$
\end{lem}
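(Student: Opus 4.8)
I would prove Lemma \ref{bclemcas2} in exactly the same spirit as Lemma \ref{bclemcas1}: identify $\Xi_w$ and $\Psi_w$ as members of the appropriate spherical Hecke algebras, compute the Satake transform of $\Xi_w$, push it through the unramified base change map $bc_{E/F}$ (which on Satake parameters is $z \mapsto z^2$, or equivalently the map induced by the norm/restriction of the dual groups), and check that the result agrees with the Satake transform of $\Psi_w$. The key point is that both $\Psi_w$ and $\Xi_w$ were defined in a parallel way as ``characteristic function of matrices with entries of size $\le 1$ and with the valuation of the reduced norm of the appropriate $F$-determinant equal to $w$'', so the identity should reduce to a statement about how these simple $K$-bi-invariant functions transform.

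First I would make explicit that $\Psi_w$ lies in $\CH(GL_n(F), GL_n(\CO_F))$ and $\Xi_w$ lies in $\CH(GL_n(E), GL_n(\CO_E))$; this is immediate from their definitions since $|\cdot| \le 1$ cuts out $\fg\fl_n$ with integral entries intersected with $GL_n$, and the condition $\val_F(\det(x\ov x)) = w$ (resp. $\val_F(\det(x_1 x_2)) = w$) is bi-invariant under the respective maximal compact subgroups. Then, rather than redoing the full Hecke-algebra computation, the cleanest route is to invoke Lemma \ref{bclemcas1}: we already know $\Psi_w = bc_{F\times F/F}(\Theta_w)$. So it suffices to show $bc_{E/F}(\Xi_w) = bc_{F\times F/F}(\Theta_w)$, i.e. that $\Xi_w$ and $\Theta_w$ have the same image under their respective base change maps. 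Both base change maps are characterized via the Satake isomorphism (by $z \mapsto z^2$ for $E/F$ unramified; by $(z_1, z_2) \mapsto z_1 z_2$ for $F\times F/F$), so this becomes an identity of symmetric functions in Satake parameters. The most economical formulation: decompose $\Xi_w$ and $\Theta_w$ into sums of characteristic functions of single $K$-double cosets $K_E \mu(\varpi) K_E$ (resp. $(K_F \times K_F)(\mu_1(\varpi),\mu_2(\varpi))(K_F\times K_F)$) indexed by dominant cocharacters $\mu$ (resp. pairs $(\mu_1,\mu_2)$) with $\mu \ge 0$ coordinatewise and $|\mu| = w$ (resp. $\mu_1, \mu_2 \ge 0$, $|\mu_1| + |\mu_2| = w$); then compare the images, using that $bc_{E/F}$ on such a basic function is controlled by the known unramified base change of truncated Hecke operators (e.g. \cite[Lemma 8.8]{MR564478} or the relevant statements in \cite[\S4 of Chapter 1]{MR1007299}).

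Alternatively — and this may be what the author intends, matching the proof of \cite[Lemma 5.18]{MR3414387} — one can observe that $\Xi_w$ is, up to the choice of truncation by the valuation $w$, the restriction of a product of the standard generators $\phi_{E, (1^j, 0^{n-j})}$ of $\CH(GL_n(E), GL_n(\CO_E))$ whose base change is computed in \cite{MR1007299}, and likewise $\Psi_w$ on the $F$-side; tracking the valuation constraint $w$ through shows the two sides coincide. In either presentation the argument is formal once the right normalization of measures and the convention for the base change map are pinned down.

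\textbf{Main obstacle.} The genuinely delicate point is bookkeeping: making sure the normalization of the base change homomorphism $bc_{E/F}$ (the factor appearing in $f(z) \mapsto f(z^2)$ and the attendant half-sum-of-roots twist) is exactly the one under which \cite[Theorem IV.5.2]{MR1339717} applies in the next corollary, and that the measure normalizations $\vol(GL_n(\CO_F)) = \vol(GL_n(\CO_E)) = 1$ are compatible with those used in \cite{MR564478} and \cite{MR1007299}. The valuation condition ``$\val_F(\det(x\ov x)) = w$'' must be shown to be preserved in the sense that it selects, on the Satake side, precisely the degree-$w$ part, and that this part maps to the degree-$w$ part on the $F$-side under $z \mapsto z^2$; this is where a sign or a factor of $2$ could hide. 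But once the conventions are fixed (as they are in the cited references), the identity $\Psi_w = bc_{E/F}(\Xi_w)$ follows, just as $\Psi_v = bc_{F\times F/F}(\Theta_v)$ did, essentially from the definitions together with the classical unramified base change for $GL_n$.
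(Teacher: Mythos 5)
Your approach is valid in spirit but genuinely different from the paper's, and there are two points worth flagging.

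The paper does \emph{not} compute Satake transforms or decompose into Hecke operators. It uses the Satake isomorphism only to reduce the equality $\Psi_w = bc_{E/F}(\Xi_w)$ to an equality of ordinary orbital integrals on regular elements of the diagonal torus $A_n(F)$, then applies \cite[Theorem 4.5 in Chapter 1]{MR1007299} (the full base change fundamental lemma) to convert the orbital integrals of $bc_{E/F}(\Xi_w)$ into twisted orbital integrals of $\Xi_w$, and finally cites Guo \cite[p. 137, 139]{MR1382478} for explicit formulas for both sides. This is concrete and completes the verification with no residual Satake bookkeeping; your route, staying on the Hecke-algebra side, is in principle equally legitimate but pushes the entire burden onto the uncarried-out computation of the Satake transforms of $\Xi_w$ and $\Psi_w$, which is not trivial (in particular $\Psi_w$ is defined by an $\eta$-twisted convolution, so its Satake transform carries an $\eta(\varpi)=-1$ twist in one variable that must be tracked).

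Two specific caveats. First, your ``cleanest route'' of invoking Lemma \ref{bclemcas1} and then proving $bc_{E/F}(\Xi_w) = bc_{F\times F/F}(\Theta_w)$ is not a reduction: since $bc_{F\times F/F}(\Theta_w) = \Psi_w$ holds by definition, the proposed identity is literally a restatement of Lemma \ref{bclemcas2}, so no work has been saved. Second, when you decompose $\Xi_w$ into $K_E$-double cosets indexed by dominant cocharacters $\mu$, the constraint is $|\mu| = w/2$, not $|\mu| = w$: the defining condition $\val_F(\det(x\ov x)) = w$ means $\val_E(\det x) = w/2$ because $E/F$ is unramified (so in particular $\Xi_w = 0$ for $w$ odd, which is exactly what makes Lemma \ref{eoplem} and the vanishing condition in Definition \ref{defstrass} consistent). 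You flag that ``a factor of $2$ could hide'' somewhere, and this is precisely where; getting it wrong would break the comparison with the split side, whose cocharacter pairs satisfy $|\mu_1|+|\mu_2| = w$.
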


\begin{proof}
This is essentially included in \cite[Corollary 3.7]{MR1382478}. 
Via the Satake isomorphism, it suffices to prove that $bc_{E/F}(\Xi_w)$ and $\Psi_w$ have the same orbital integrals at any regular element in the diagonal torus $A_n(F)$ of $GL_n(F)$. From \cite[Theorem 4.5 in Chapter 1]{MR1007299}, we reduce ourselves to comparing the twisted (by $\sigma$) orbital integral of $\Xi_w$ at $\beta\in A_n(E)$ such that $\beta\ov{\beta}$ belongs to $A_n(F)$ and is regular with the orbital integral of $\Psi_w$ at regular elements in $A_n(F)$. The former is computed in \cite[the first case in p. 139]{MR1382478}, while the latter is computed in \cite[the first case in p. 137]{MR1382478}. 
\end{proof}

Suppose additionally that $B$ belongs to the Levi subgroup $M_n(E)$. The twisted (by $\sigma$) weighted orbital integral of $\Xi_w\in\CC_c^\infty(GL_n(E))$ at $B$ is defined by
$$ J_{(\Res_{E/F} M_{n,E})\rtimes\sigma}^{(\Res_{E/F} Q_{n,E})\rtimes\sigma}(B,\Xi_w):=\int_{GL_n(E)_{B,\sigma}\bs GL_n(E)} \Xi_w(x^{-1}B\sigma(x)) v_{(\Res_{E/F} M_{n,E})\rtimes\sigma}^{(\Res_{E/F} Q_{n,E})\rtimes\sigma}(x) dx. $$

\begin{coro}\label{corbclemcas2}
For $B\in M_n(E)$ such that $A=B\ov{B}$ belongs to $M_n(F)$ and is regular semi-simple in $GL_n(F)$, 
we have 
$$ J_{(\Res_{E/F} M_{n,E})\rtimes\sigma}^{(\Res_{E/F} Q_{n,E})\rtimes\sigma}(B,\Xi_w)=J_{M_n}^{Q_n}(A,\Psi_w). $$
\end{coro}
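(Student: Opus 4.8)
The plan is to deduce this exactly as Corollary \ref{corbclemcas1} was deduced from Lemma \ref{bclemcas1}, but now invoking Labesse's twisted weighted fundamental lemma for the quadratic unramified base change $E/F$ instead of the split one. First I would record the input: by Lemma \ref{bclemcas2} we have $\Psi_w=bc_{E/F}(\Xi_w)$, where $bc_{E/F}:\CH(GL_n(E),GL_n(\CO_E))\to\CH(GL_n(F),GL_n(\CO_F))$ is the unramified base change homomorphism, and $\Xi_w$ lies in the source Hecke algebra. Next I would check that $B\in M_n(E)$ is ``norm-regular'' in the sense needed to apply the base change comparison: by hypothesis $A=B\ov B$ lies in $M_n(F)$ and is regular semi-simple in $GL_n(F)$, so the (twisted, by $\sigma$) conjugacy class of $B$ has a norm, namely the conjugacy class of $A$, and the twisted centralizer $GL_n(E)_{B,\sigma}$ is naturally isomorphic to the centralizer torus $GL_n(F)_A$ (this is the abelian case of the general norm correspondence, and is where the regular-semisimplicity is used).

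With these in place, I would apply \cite[Theorem IV.5.2]{MR1339717} to the cyclic extension $E/F$ of degree $2$, the Levi subgroup $M_n$ and its parabolic $Q_n$, and the spherical function $\Xi_w$. That theorem asserts precisely that the $\sigma$-twisted weighted orbital integral of $\Xi_w$ at a norm-regular element equals the weighted orbital integral of $bc_{E/F}(\Xi_w)$ at the corresponding norm; combined with Lemma \ref{bclemcas2} this yields
$$ J_{(\Res_{E/F} M_{n,E})\rtimes\sigma}^{(\Res_{E/F} Q_{n,E})\rtimes\sigma}(B,\Xi_w)=J_{M_n}^{Q_n}(A,\Psi_w), $$
which is the claim. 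Here I would explicitly invoke Remark \ref{twwt}.1) to identify the weight $v_{(\Res_{E/F} M_{n,E})\rtimes\sigma}^{(\Res_{E/F} Q_{n,E})\rtimes\sigma}$ appearing in our definition of the twisted weighted orbital integral with the $(G,M)$-family weight in Labesse's framework, and likewise $v_{M_n}^{Q_n}$ with the untwisted one on the $GL_n(F)$-side, so that the two notions of weighted orbital integral literally coincide.

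The one point that requires care — and which I expect to be the main (though routine) obstacle — is the bookkeeping of Haar measure normalizations: one must verify that the measures fixed here (on $GL_n(E)$ and $GL_n(F)$ via $\vol(GL_n(\CO_E))=\vol(GL_n(\CO_F))=1$, together with compatible measures on the isomorphic centralizer tori) are exactly the ones for which \cite[Theorem IV.5.2]{MR1339717} is stated, including the normalization of the transfer of measures across the norm map and the normalization of the measure on $\fa_{M_n}^{Q_n}$ entering the $(G,M)$-family. Once the conventions are matched, no further computation is needed; this is purely a citation of the unramified base-change weighted fundamental lemma, exactly parallel to the proof of Corollary \ref{corbclemcas1}.
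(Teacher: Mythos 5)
Your proposal is correct and follows the paper's own (very brief) argument: the paper proves this corollary precisely by combining Lemma \ref{bclemcas2} with Labesse's base change transfer of weighted orbital integrals \cite[Theorem IV.5.2]{MR1339717} applied to the unramified extension $E/F$, exactly in parallel with Corollary \ref{corbclemcas1}. The extra remarks on norm-regularity, weight identification and measure normalizations are sensible bookkeeping but do not change the route.
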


\begin{proof}
It results from Lemma \ref{bclemcas2} and \cite[Theorem IV.5.2]{MR1339717} for the (unramified) base change $E/F$. 
\end{proof}

Let $Y=B\in\fm(F)\cap\fs_{rs}(F)$. By Remark \ref{twwt}.2), we have
\begin{equation}\label{equationwoi2}
J_M^Q(Y, f_0)=J_{(\Res_{E/F} M_{n,E})\rtimes\sigma}^{(\Res_{E/F} Q_{n,E})\rtimes\sigma}(B,\Xi_w). 
\end{equation}

\subsubsection{A reduction formula}

We fix Haar measures on $M_{Q'}(F)\cap H'(F)$ and $N_{Q'}(F)\cap H'(F)$ such that $\vol(M_{Q'}(F)\cap H'(\CO_F))=\vol(N_{Q'}(F)\cap H'(\CO_F))=1$. Then for $f^{H'}\in\CC_c^\infty(H'(F))$, we have (see \cite[\S4.1]{MR546593})
$$ \int_{H'(F)} f^{H'}(x) dx=\int_{M_{Q'}(F)\cap H'(F)} \int_{N_{Q'}(F)\cap H'(F)} \int_{H'(\CO_F)} f^{H'}(mnk) dkdndm. $$
We choose the Haar measure on $\fn_{Q'}(F)\cap\fh'(F)$ compatible with that on $N_{Q'}(F)\cap H'(F)$ under the exponential map. We choose the same Haar measure on four copies of $\fn_{Q_n}(F)$ in $\fn_{Q'}(F)=\mat(\fn_{Q_n}(F),\fn_{Q_n}(F),\fn_{Q_n}(F),{\fn_{Q_n}(F)})$, and equip $\fn_{Q'}(F)\cap\fs'(F)$ with the product measure. Then $\vol(\fn_{Q'}(F)\cap\fs'(\CO_F))=1$. 

Let $X\in\fm'(F)\cap\fs'_{rs}(F)$. We may define a distribution $J_{M'}^{\eta, M_{Q'}}(X, \cdot)$ on $\CC_c^\infty(\fm_{Q'}(F)\cap\fs'(F))$ as in Definition \ref{deflocwoi}.2). It appears as a product of distributions of the form of $J_{M'}^{\eta, G'}(X, \cdot)$ in lower ranks. 
As in \cite[\S3.2]{MR3414387}, we define the Weyl discriminant factor by
$$ |D^{\fm_{Q'}\cap\fs'}(X)|_F:=|\det(\ad(X)|_{\fm_{Q'}/\fm_{Q', X}})|_F^{1/2}>0, $$
where $\fm_{Q', X}$ denotes the centralizer of $X$ in $\fm_{Q'}$. 
For $f'\in\CC_c^\infty(\fs'(F))$ which is invariant under $\Ad(H'(\CO_F))$, we define its constant term $f'_{Q'}\in\CC_c^\infty(\fm_{Q'}(F)\cap\fs'(F))$ by
$$ f'_{Q'}(Z):=\int_{\fn_{Q'}(F)\cap\fs'(F)} f(Z+U) dU, \forall Z\in\fm_{Q'}(F)\cap\fs'(F). $$
Let $f_0^{M_{Q'}}\in\CC_c^\infty(\fm_{Q'}(F)\cap\fs'(F))$ be the characteristic function of $\fm_{Q'}(F)\cap\fs'(\CO_F)$. Then $(f'_0)_{Q'}=f_0^{M_{Q'}}$. 

\begin{prop}\label{redfor}
Let $X\in\fm'(F)\cap\fs'_{rs}(F)$. For all $f'\in\CC_c^\infty(\fs'(F))$ which is invariant under $\Ad(H'(\CO_F))$, we have
$$ J_{M'}^{\eta, Q'}(X,f')=|D^{\fs'}(X)|_F^{-1/2}|D^{\fm_{Q'}\cap\fs'}(X)|_F^{1/2} J_{M'}^{\eta, M_{Q'}}(X, f'_{Q'}). $$
\end{prop}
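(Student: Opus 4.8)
The statement is a descent formula expressing a weighted $\eta$-orbital integral on $\fs'(F)$ against the parabolic subgroup $Q'$ in terms of a weighted $\eta$-orbital integral on the Levi factor $M_{Q'}\cap\fs'$ against the constant term $f'_{Q'}$. The approach is entirely parallel to the classical descent of weighted orbital integrals along a parabolic (cf.\ \cite[\S I.4]{MR1339717} and Zhang's \cite[\S3.2]{MR3414387}): integrate over $H'(F)$ using the Iwasawa-type decomposition $x=mnk$ with $m\in M_{Q'}(F)\cap H'(F)$, $n\in N_{Q'}(F)\cap H'(F)$, $k\in H'(\CO_F)$ that we fixed just above the proposition, carry out the integral over $n$ and $k$ explicitly, and recognize the remaining integral over $m$ as the weighted $\eta$-orbital integral on the Levi. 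The two Weyl-discriminant factors appear as the Jacobians of the relevant changes of variables.

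First I would write
$$ J_{M'}^{\eta,Q'}(X,f')=\int_{H'_X(F)\bs H'(F)} f'(\Ad(x^{-1})(X))\,\eta(\det(x))\,v_{M'}^{Q'}(x)\,dx $$
and substitute $x=mnk$. The function $v_{M'}^{Q'}$ is built from the $(G,M')$-family $v_{P'}(\lambda,x)=e^{-\lambda(H_{P'}(x))}$; since $H'(\CO_F)\subseteq K$ and $H_{P'}$ is left $N_{P'}(\BA)$- and $M_{P'}$-equivariant in the appropriate way, one checks that $v_{M'}^{Q'}(mnk)=v_{M'}^{M_{Q'}}(m)$ for $n\in N_{Q'}(F)\cap H'(F)$, $k\in H'(\CO_F)$, exactly as in \cite[Lemma I.4.?]{MR1339717}: the weight for $Q'$ reduces to the weight computed inside the Levi $M_{Q'}$ once we are in $M_{Q'}(F)\cap H'(F)$ and the remaining data lie in the "compact" directions. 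The character $\eta(\det(\cdot))$ is trivial on $N_{Q'}(F)\cap H'(F)$ and on $H'(\CO_F)$ (determinants of unipotents are $1$, determinants of integral units are units, hence norms — here using $E/F$ unramified), so $\eta(\det(mnk))=\eta(\det(m))$. Next, the orbit integral: writing $\Ad(x^{-1})(X)=\Ad(k^{-1})\Ad(n^{-1})\Ad(m^{-1})(X)$, and using that $\Ad(m^{-1})(X)\in(\fm_{Q'}\cap\fs')(F)$ while $\Ad(n^{-1})(Z)-Z\in(\fn_{Q'}\cap\fs')(F)$ for $Z\in(\fm_{Q'}\cap\fs')(F)$ (an analogue of Lemma \ref{regsslemma2}, valid here because $X$ — hence $\Ad(m^{-1})(X)$ — is regular semi-simple), the map $n\mapsto\Ad(n^{-1})\Ad(m^{-1})(X)-\Ad(m^{-1})(X)$ is an isomorphism of $N_{Q'}\cap H'$ onto $\fn_{Q'}\cap\fs'$ whose Jacobian at the adelic/local level is a power of $|D^{\fs'}(X)|_F/|D^{\fm_{Q'}\cap\fs'}(X)|_F$; together with the $\Ad(H'(\CO_F))$-invariance of $f'$ (so the $k$-integral is trivial, of total mass $1$), the $n$-integral over $N_{Q'}(F)\cap H'(F)\simeq\fn_{Q'}(F)\cap\fh'(F)$ collapses $f'$ to its constant term $f'_{Q'}$ evaluated at $\Ad(m^{-1})(X)$, up to the discriminant Jacobian factor. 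Finally, the volume of $H'_X(F)\bs(M_{Q'}(F)\cap H'(F))$ against $H'_X(F)\bs H'_X(F)$ is compatible because $H'_X\subseteq M_{Q'}$ (as $X$ is elliptic in $M'$ and $M'\subseteq M_{Q'}$), so the $m$-integral is exactly $J_{M'}^{\eta,M_{Q'}}(X,f'_{Q'})$. Assembling the constants gives the claimed factor $|D^{\fs'}(X)|_F^{-1/2}|D^{\fm_{Q'}\cap\fs'}(X)|_F^{1/2}$.

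The main obstacle I expect is bookkeeping rather than conceptual: pinning down precisely the Jacobian of $n\mapsto\Ad(n^{-1})(Z)-Z$ and verifying it equals $|D^{\fs'}(X)|_F/|D^{\fm_{Q'}\cap\fs'}(X)|_F$ (up to the square-root convention built into the definition of $|D|_F$), and checking the normalizations of Haar measures on $N_{Q'}(F)\cap H'(F)$, on $\fn_{Q'}(F)\cap\fs'(F)$, and on the tori $H'_X$ are mutually compatible so that no stray volume constant survives — this is where one must use the explicit block form $\fs'=\{\mat(0,A,B,0)\}$ and the measure conventions fixed in the paragraph preceding Proposition \ref{redfor}. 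A second, more minor point is confirming $v_{M'}^{Q'}(mnk)=v_{M'}^{M_{Q'}}(m)$; this is standard $(G,M)$-family manipulation (the descent of the weight along $Q'\supseteq M_{Q'}\supseteq M'$), and I would either cite \cite[\S I.4]{MR1339717} directly or reproduce the one-line argument using $H_{P'}(mnk)=H_{P'\cap M_{Q'}}(m)+(\text{terms killed by }\theta_{P'}^{Q'})$. Everything else — the vanishing of $\eta$ on unipotents and integral units, the invariance of $f'$ under $\Ad(H'(\CO_F))$, the containment $H'_X\subseteq M_{Q'}$ — is immediate from what has already been set up.
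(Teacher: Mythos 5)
Your proof follows essentially the same route as the paper's: Iwasawa decomposition $x=mnk$, the weight descent $v_{M'}^{Q'}(mnk)=v_{M'}^{M_{Q'}}(m)$, triviality of $\eta\circ\det$ on $N_{Q'}(F)\cap H'(F)$ and $H'(\CO_F)$ by unramifiedness, and the analytic isomorphism $n\mapsto\Ad(n^{-1})(Z)-Z$ (the paper cites Lemma 8.1 of the companion paper arXiv:1904.07102 for this and Zhang's \cite[Proposition 6.3.(ii)]{MR3414387} for the Jacobian $|D^{\fs'}(X)|_F^{1/2}|D^{\fm_{Q'}\cap\fs'}(X)|_F^{-1/2}$). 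One small slip: the containment $H'_X\subseteq M_{Q'}(F)\cap H'(F)$ does not require $X$ to be elliptic in $\fm'$ — it follows already from $X\in\fm'(F)\cap\fs'_{rs}(F)$ being regular semi-simple, which forces $H'_X$ (a connected commutative group) into $M'\cap H'$; ellipticity is neither assumed in the statement nor needed.
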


\begin{proof}
We apply the change of variables $x=mnk$ to $x\in H'(F)$, where $m\in M_{Q'}(F)\cap H'(F)$, $n\in N_{Q'}(F)\cap H'(F)$ and $k\in H'(\CO_F)$. Notice that $v_{M'}^{Q'}(x)=v_{M'}^{M_{Q'}}(m)$. Since $E/F$ is an unramified extension, the restriction of $\eta(\det(\cdot))$ to $H'(\CO_F)$ is trivial. Recall that $\vol(H'(\CO_F))=1$ and that $H'_X\subseteq M'\cap H'$ for $X\in\fm'(F)\cap\fs'_{rs}(F)$. We deduce that
$$ J_{M'}^{\eta, Q'}(X,f')=\int_{H'_X(F)\bs M_{Q'}(F)\cap H'(F)} \int_{N_{Q'}(F)\cap H'(F)} f'(\Ad(mn)^{-1}(X))\eta(\det(m)) v_{M'}^{M_{Q'}}(m) dndm. $$

By \cite[Lemma 8.1]{2019arXiv190407102L}, for $Z:=\Ad(m^{-1})(X)\in\fm_{Q'}(F)\cap\fs'_{rs}(F)$, the map
$$ N_{Q'}(F)\cap H'(F)\ra\fn_{Q'}(F)\cap\fs'(F), n\mapsto\Ad(n^{-1})(Z)-Z $$
is an isomorphism of $F$-analytic varieties. From the proof of \cite[Proposition 6.3.(ii)]{MR3414387}, its Jacobian is
$$ c(X):=|D^{\fs'}(X)|_F^{1/2}|D^{\fm_{Q'}\cap\fs'}(X)|_F^{-1/2}>0. $$
Then
\[\begin{split}
 J_{M'}^{\eta, Q'}(X,f')=&c(X)^{-1} \int_{H'_X(F)\bs M_{Q'}(F)\cap H'(F)} \int_{\fn_{Q'}(F)\cap\fs'(F)} f'(\Ad(m^{-1})(X)+U) \eta(\det(m)) v_{M'}^{M_{Q'}}(m) dUdm \\
=&c(X)^{-1} \int_{H'_X(F)\bs M_{Q'}(F)\cap H'(F)} f'_Q(\Ad(m^{-1})(X)) \eta(\det(m)) v_{M'}^{M_{Q'}}(m) dm \\
=&c(X)^{-1} J_{M'}^{\eta, M_{Q'}}(X, f'_{Q'}). 
\end{split}\]
\end{proof}

\subsubsection{End of the proof}

\begin{lem}\label{eoplem}
If $v$ is odd, then 
$$ \Psi_v=0. $$
\end{lem}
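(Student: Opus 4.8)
The statement to prove is: if $v = \val_F(\det(A))$ is odd, then $\Psi_v = 0$. Recall that $\Psi_v(g) = \int_{GL_n(F)} \Phi_v(x_2, x_2^{-1}g)\eta(\det(x_2))\,dx_2$, where $\Phi_v$ is the characteristic function of the set of pairs $(x_1,x_2)$ with $|x_1|\le 1$, $|x_2|\le 1$ and $\val_F(\det(x_1 x_2)) = v$. The plan is to exhibit an explicit measure-preserving involution on the domain of integration under which the integrand changes sign, forcing the integral to vanish. This is the standard device for proving vanishing of $\eta$-twisted integrals when a valuation parity obstruction is present.

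First I would fix a uniformizer $\varpi$ of $F$ and recall that $\eta$ is the quadratic character of $F^\times/NE^\times$ attached to the unramified extension $E/F$; since $E/F$ is unramified, $\eta$ is unramified and $\eta(\varpi) = -1$, while $\eta(u) = 1$ for all $u \in \CO_F^\times$. Next, in the integral defining $\Psi_v(g)$, for $x_2$ to contribute we need $x_2^{-1}g$ and $x_2$ to both satisfy the support conditions of $\Phi_v$; in particular $\val_F(\det(x_2)) + \val_F(\det(x_2^{-1}g)) = v$ and both $|x_2|\le 1$, $|x_2^{-1}g|\le 1$. Write $k := \val_F(\det(x_2))$, so $\val_F(\det(x_2^{-1}g)) = v - k$. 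The key point: the support conditions together with $|x_2|\le 1$ and $|x_2^{-1}g|\le 1$ pin down $k$ — more precisely, $|x_2|\le 1$ forces $x_2 \in \fg\fl_n(\CO_F)$ hence $k \ge 0$, and $|x_2^{-1}g|\le 1$ forces $v - k \ge 0$, so $0 \le k \le v$. The involution I would use is $x_2 \mapsto x_2' $ where $x_2'$ is built to swap the roles of the two "halves", e.g. relating to replacing $x_2$ by something whose determinant valuation is $v - k$; concretely, one checks that the map sending $(x_1,x_2)$ with $\Phi_v(x_1,x_2)=1$ to $(x_2,x_1)$ preserves $\Phi_v$ (the defining conditions are symmetric in $x_1,x_2$) but multiplies $\eta(\det(x_1))$ by $\eta(\det(x_1 x_2)) = \eta(\varpi^v) = (-1)^v = -1$ when $v$ is odd. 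Translating this symmetry through the substitution $x_1 = x_2$, $x_2 \mapsto x_2^{-1}g$ inside $\Psi_v$: I would change variables $x_2 \mapsto g x_2^{-1}$ (a measure-preserving involution on $GL_n(F)$ up to the modulus, which is trivial here), under which $\Phi_v(x_2, x_2^{-1}g) = \Phi_v(gx_2^{-1}, x_2)$ becomes $\Phi_v(x_2', (x_2')^{-1}g)$ again by symmetry of $\Phi_v$, while $\eta(\det(x_2)) = \eta(\det(g))\eta(\det(x_2))^{-1}$, and since $\eta$ is quadratic $\eta(\det(x_2))^{-1} = \eta(\det(x_2))$. So the sign comes out as $\eta(\det(g))$ times the original, which is not obviously $-1$.

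The cleaner route, which I expect is the intended one, is to observe directly that $\eta(\det(x_2)) = \eta(\varpi)^{\val_F(\det x_2)}$ and that on the support of the integrand $\val_F(\det(x_2))$ ranges over a set symmetric about $v/2$ under $x_2 \mapsto x_2^{-1}g$-type reflections, but more simply: decompose the integral according to $k = \val_F(\det(x_2)) \in \{0,1,\dots,v\}$, and on the piece $k$ show the contribution is minus the contribution of the piece $v - k$, via a measure-preserving bijection between them (multiplication by a suitable element or the transpose-inverse twist used in the base change literature). When $v$ is odd there is no fixed piece ($k = v/2$ is not an integer), so the pieces cancel in pairs and $\Psi_v = 0$. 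I would model the explicit bijection on the argument in \cite[proof of Lemma 5.18]{MR3414387} and \cite[\S3]{MR1382478}, where exactly this parity cancellation appears.

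The main obstacle will be producing the honest measure-preserving involution that interchanges the "valuation-$k$" and "valuation-$(v-k)$" strata of the support of $\Phi_v(x_2, x_2^{-1}g)$ while carrying $\eta(\det(x_2))$ to $\eta(\varpi)^{v}\eta(\det(x_2)) = -\eta(\det(x_2))$; naively $x_2 \mapsto x_2^{-1}$ does not stay in $\fg\fl_n(\CO_F)$, so one must instead use something like $x_2 \mapsto$ (adjugate of $x_2$) $\cdot \varpi^{?}$ or work with the Cartan decomposition $x_2 = k_1 \diag(\varpi^{a_1},\dots,\varpi^{a_n}) k_2$ and reflect the tuple $(a_i)$ appropriately, being careful that $x_2^{-1}g$ also stays integral. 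Once the correct involution is written down, the sign computation and the conclusion that $\Psi_v = 0$ for $v$ odd are immediate; I would therefore spend the bulk of the proof verifying that the involution is well-defined on the support, measure-preserving, and has the claimed effect on $\eta(\det(\cdot))$.
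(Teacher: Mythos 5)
There is a genuine gap. Your first attempt correctly performs the change of variables $x_2 \mapsto g x_2^{-1}$ to reach $\Psi_v(g)=\eta(\det(g))\int_{GL_n(F)}\Phi_v(gx^{-1},x)\eta(\det(x))\,dx$, but then stalls: you try to invoke the argument-swap symmetry $\Phi_v(a,b)=\Phi_v(b,a)$, which gives $\Phi_v(gx^{-1},x)=\Phi_v(x,gx^{-1})$, and this does \emph{not} have the convolution form $\Phi_v(x',(x')^{-1}g')$ needed to reconstitute a $\Psi_v$-type integral, because $gx^{-1}\neq x^{-1}g$ in general. You notice the argument does not close ("the sign comes out as $\eta(\det(g))$ times the original, which is not obviously $-1$") and pivot to a stratification-by-$\val_F(\det(x_2))$ idea, but you explicitly leave the needed involution unconstructed and flag it as "the main obstacle." So the proof is not complete.

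The device you are missing is the \emph{transpose} symmetry $\Phi_v(x_1,x_2)=\Phi_v(x_2^t,x_1^t)$ rather than the argument-swap symmetry. With that identity, $\Phi_v(gx^{-1},x)=\Phi_v(x^t,(x^t)^{-1}g^t)$, which after the measure-preserving substitution $y=x^t$ (and $\eta(\det(x))=\eta(\det(y))$) gives $\Psi_v(g)=\eta(\det(g))\,\Psi_v(g^t)$. The second ingredient is that $\Psi_v$ lies in the spherical Hecke algebra $\CH(GL_n(F),GL_n(\CO_F))$, so by the Cartan decomposition $\Psi_v(g^t)=\Psi_v(g)$. Combining these yields $\Psi_v(g)=\eta(\det(g))\,\Psi_v(g)$; then, exactly as you anticipated at the end, if $v$ is odd and $\Psi_v(g)\neq 0$ the support condition forces $\val_F(\det g)=v$, whence $\eta(\det g)=-1$ since $E/F$ is unramified, giving $\Psi_v(g)=-\Psi_v(g)=0$. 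Your endgame is correct; what is absent is the transpose-plus-Cartan mechanism that produces the functional equation $\Psi_v(g)=\eta(\det(g))\Psi_v(g)$ in the first place, and the stratification route you sketch is not a substitute for it without further work (as you yourself note, $x_2\mapsto x_2^{-1}$ leaves $\fg\fl_n(\CO_F)$).
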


\begin{proof}
This is essentially included the proof of \cite[Proposition 3.7 and Corollary 3.7]{MR1382478}. In fact, our assertion is equivalent to \cite[the first line in p. 138]{MR1382478} since $E/F$ is unramified. But we shall also give a direct proof as follows. 

Let $
g\in GL_n(F)$. By the change of variables $x_2=gx^{-1}$, we obtain
$$ \Psi_v(g)=\int_{GL_n(F)} \Phi_v(x_2,x_2^{-1}g)\eta(\det(x_2))dx_2=\eta(\det(g))\int_{GL_n(F)} \Phi_v(gx^{-1},x)\eta(\det(x))dx. $$
For all $x_1,x_2\in GL_n(F)$, we notice that
$$ \Phi_v(x_1,x_2)=\Phi_v(x_2^t,x_1^t), $$
where the transpose of $x\in GL_n(F)$ is denoted by $x^{t}$. 
Therefore, we have
$$ \int_{GL_n(F)} \Phi_v(gx^{-1},x)\eta(\det(x))dx=\int_{GL_n(F)} \Phi_v(x^t,(x^t)^{-1} g^t)\eta(\det(x))dx. $$
By the change of variables $x^t\mapsto x$, we see that the last integral is equal to $\Psi_v(g^t)$. Thus
$$ \Psi_v(g)=\eta(\det(g))\Psi_v(g^t). $$
Because $\Psi_v\in \CH(GL_n(F), GL_n(\CO_F))$, by Cartan decomposition, we have
$$ \Psi_v(g^t)=\Psi_v(g). $$
Then
\begin{equation}\label{eqeoplem}
 \Psi_v(g)=\eta(\det(g))\Psi_v(g). 
\end{equation}

Suppose that $v$ is odd. 
We see from the definition that $\Psi_v(g)=0$ unless $\val_F(\det(g))=v$, in which case we have $\det(g)\notin NE^\times$ since $E/F$ is unramified. Thus $\eta(\det(g))=-1$ in this case, which implies that $\Psi_v(g)=0$ by (\ref{eqeoplem}).  
\end{proof}

\begin{proof}[Proof of Theorem \ref{wfl}]
For (1) in Definition \ref{defstrass}, it suffices to consider $X=\mat(0,1_n,A,0)$ and $Y=B$, where $B\in M_n(E)$ is such that $A=B\ov{B}$ belongs to $M_n(F)$ and is regular semi-simple in $GL_n(F)$. By Corollaries \ref{corbclemcas1} and \ref{corbclemcas2}, we obtain
\begin{equation}\label{equationwoi3}
J_{(M_n\times M_n)\rtimes\sigma'}^{(Q_n\times Q_n)\rtimes\sigma'}((1_n,A),\Theta_w)=J_{(\Res_{E/F} M_{n,E})\rtimes\sigma}^{(\Res_{E/F} Q_{n,E})\rtimes\sigma}(B,\Xi_w). 
\end{equation}
Combining the formulas \eqref{equationwoi1}, \eqref{equationwoi2} and \eqref{equationwoi3}, we obtain
$$ \kappa(X)J_{M'}^{\eta,Q'}(X, f'_0)=J_M^Q(Y, f_0). $$

For (2) in Definition \ref{defstrass}, it suffices to consider $X=\mat(0,1_n,A,0)$ with $A\in M_n(F)$ being regular semi-simple in $GL_n(F)$ such that $\xi(A)\notin NE^\times$ for some $\xi\in X(M_{Q_n})_F$. We still have Corollary \ref{corbclemcas1}. For the case $Q'=G$, we conclude by Lemma \ref{eoplem} since $E/F$ is unramified. We now consider a general $Q'$. Applying the reduction formula (Proposition \ref{redfor}) to $f'_0$, we may write 
\begin{equation}\label{equapplyredfortofun}
 J_{M'}^{\eta,Q'}(X, f'_0)=|D^{\fs'}(X)|_F^{-1/2}|D^{\fm_{Q'}\cap\fs'}(X)|_F^{1/2} J_{M'}^{\eta, M_{Q'}}(X, f_0^{M_{Q'}}). 
\end{equation}
Suppose that
$$ M_{Q'}\simeq GL_{2n_1}\times\cdot\cdot\cdot\times GL_{2n_l} $$
and that
$$ M'\simeq M'_1\times\cdot\cdot\cdot\times M'_l, $$
where $\sum\limits_{i=1}^{l} n_i=n$ and $M'_i$ is an $\omega$-stable Levi subgroup of $GL_{2n_i}$ for $1\leq i\leq l$. We have
$$ f_0^{M_{Q'}}=f'_{0,1}\otimes\cdot\cdot\cdot\otimes f'_{0,l} $$
and
$$ X=(X_1,\cdot\cdot\cdot,X_l), $$
where $f'_{0,i}$ (resp. $X_i$) is an analogue of $f'_0$ (resp. $X$) when $n$ is replaced by $n_i$ for $1\leq i\leq l$. Then
$$ J_{M'}^{\eta, M_{Q'}}(X, f_0^{M_{Q'}})=\prod_{i=1}^{l} J_{M'_i}^{\eta, GL_{2n_i}}(X_i, f'_{0,i}). $$
Our condition on $A$ and the special case $Q'=G$ above tell us that at least one factor $J_{M'_i}^{\eta, GL_{2n_i}}(X_i, f'_{0,i})$ in the above product vanishes. Thus $J_{M'}^{\eta,Q'}(X, f'_0)=0$ by (\ref{equapplyredfortofun}). 
\end{proof}

\bibliography{References}
\bibliographystyle{plain}

\medskip

\begin{flushleft}
Universit\'{e} de Paris, Sorbonne Universit\'{e}, CNRS, Institut de Math\'{e}matiques de Jussieu-Paris Rive Gauche, F-75013 Paris, France \\
\medskip
E-mail: huajie.li@imj-prg.fr \\
\end{flushleft}

\end{document}